\newtheorem{theorem}{Theorem}[section]
\newtheorem{proposition}[theorem]{Proposition} 
\newtheorem{definition}[theorem]{Definition}
\newtheorem{lemma}[theorem]{Lemma}
\newtheorem{corollary}[theorem]{Corollary} 
\newtheorem{example}[theorem]{Example}
\newtheorem{remark}[theorem]{Remark}
\newtheorem{maintheorem}{Theorem}
\newcommand*{\da@rightarrow}{\mathchar"0\hexnumber@\symAMSa 4B }
\newcommand*{\da@leftarrow}{\mathchar"0\hexnumber@\symAMSa 4C }
\newcommand*{\xdashrightarrow}[2][]{%
  \mathrel{%
    \mathpalette{\da@xarrow{#1}{#2}{}\da@rightarrow{\,}{}}{}%
  }%
}
\newcommand{\xdashleftarrow}[2][]{%
  \mathrel{%
    \mathpalette{\da@xarrow{#1}{#2}\da@leftarrow{}{}{\,}}{}%
  }%
}
\newcommand*{\da@xarrow}[7]{%
  % #1: below
  % #2: above
  % #3: arrow left
  % #4: arrow right
  % #5: space left 
  % #6: space right
  % #7: math style 
  \sbox0{$\ifx#7\scriptstyle\scriptscriptstyle\else\scriptstyle\fi#5#1#6\m@th$}%
  \sbox2{$\ifx#7\scriptstyle\scriptscriptstyle\else\scriptstyle\fi#5#2#6\m@th$}%
  \sbox4{$#7\dabar@\m@th$}%
  \dimen@=\wd0 %
  \ifdim\wd2 >\dimen@
    \dimen@=\wd2 %   
  \fi
  \count@=2 %
  \def\da@bars{\dabar@\dabar@}%
  \@whiledim\count@\wd4<\dimen@\do{%
    \advance\count@\@ne
    \expandafter\def\expandafter\da@bars\expandafter{%
      \da@bars
      \dabar@ 
    }%
  }%  
  \mathrel{#3}%
  \mathrel{%   
    \mathop{\da@bars}\limits
    \ifx\\#1\\%
    \else
      _{\copy0}%
    \fi
    \ifx\\#2\\%
    \else
      ^{\copy2}%
    \fi
  }%   
  \mathrel{#4}%
}
\newcommand{\RR}{\mathbb{R}}
\newcommand{\ZZ}{\mathbb{Z}}
\newcommand{\NN}{\mathbb{N}}
\newcommand{\Def}{\operatorname{Def}}
\newcommand{\Perm}{\operatorname{Perm}}
\newcommand{\D}{\operatorname{D}^{\text{b}}}
\newcommand{\kk}{\mathbbm{k}}
\newcommand{\Poly}{\operatorname{Poly}}
\newcommand*{\sheafhom}{\mathscr{H}\kern -.5pt om}
\newcommand{\rk}{\operatorname{rk}}
\newcommand{\sat}{\operatorname{sat}}
\newcommand{\one}{\mathbbm{1}}
\newcommand{\Coh}{\operatorname{Coh}}
\newcommand{\BP}{\operatorname{BP}}
\newcommand{\IP}{\operatorname{IP}}
\newcommand{\FP}{\operatorname{FP}}
\newcommand{\Stell}{\operatorname{Stell}}
\newcommand{\Sch}{\operatorname{Sch}}
\newcommand{\McM}{\operatorname{McM}}
\newcommand{\Hom}{\operatorname{Hom}}
\newcommand{\IE}{\operatorname{IE}}
\title[Derived Categories of Permutahedral Varieties]{Derived Categories of Permutahedral and Stellahedral Varieties}
\author{Mario Sanchez}
\date{}
\thanks{}
\begin{document}
\begin{abstract}

We utilize the coherent-constructible correspondence to construct full strongly exceptional collections of nef line bundles in the derived category of a toric variety through the combinatorics of constructible sheaves built from polytopes. To show that sequences are full, we build exact complexes of line bundles that categorify the relations in the McMullen polytope algebra. We compute the homomorphisms between certain constructible sheaves on polytopes and use this to reduce the question of exceptionality to showing that certain set differences of polytopes are contractible.

As an application of our method, we construct full strongly exceptional collections of nef line bundles for the toric varieties associated to the permutahedron, stellahedron, and the type $B_n$ Coxeter permutahedron. The line bundles in our collections are indexed by base polytopes of loopless Schubert matroids, independence polytopes of all Schubert matroids, and feasible polytopes of loopless Schubert delta matroids, respectively.

Our collections satisfy a number of nice properties: First, the quiver with relations that encodes the endomorphism algebra of the tilting sheaf can be described matroid-theoretically as a slight extension of the notion of weak maps and inclusion of matroids; Second, our collections are invariant under the natural symmetries of the corresponding fans; Finally, the induced semi-orthogonal decomposition of the derived categories refines the cuspidal semi-orthogonal decomposition as studied by Castravet and Tevelev. This gives a full strongly exceptional collection of nef line bundles for the cuspidal parts of the derived categories of our varieties indexed by loopless and coloopless Schubert matroids and Schubert delta matroids.

\end{abstract}
\maketitle
\tableofcontents
\section{Introduction}

The derived category $\D(X)$ of a smooth variety $X$ has proven to be an important invariant that reveals subtle connections between many disparate fields. Despite its importance, it is a difficult invariant to calculate and explicitly describe. One approach to describing this category is to find a sheaf $\mathcal{T}$, called a \textbf{tilting sheaf}, that classically generates the derived category, has $\operatorname{Ext}^i(\mathcal{T},\mathcal{T}) = 0$ for all $i > 0$, and for which its endomorphism algebra $A = \operatorname{End}_{\mathcal{O}}(\mathcal{T}, \mathcal{T})$ has finite global homological dimension. Given such a sheaf, there is an equivalence of categories $\D(X) \cong \D(\operatorname{mod}(A^{op}))$ determined by the functor
 \[\operatorname{RHom}(\mathcal{T}, -): \D(X) \to \D(\operatorname{mod}(A^{op})).\]

If the tilting sheaf decomposes into line bundles $\mathcal{T} = \bigoplus_{i=1}^{\ell} \mathcal{L}_{i}$, then the line bundles satisfy the following three properties:

\begin{itemize}
    \item (Fullness) The smallest thick triangulated subcategory of $\D(X)$ containing $\{\mathcal{L}_i\}$ is all of $\D(X)$.
    \item (Exceptionality) There exists some linear order on our line bundles $(\mathcal{L}_1,\ldots, \mathcal{L}_{\ell})$ such that
     \[ \operatorname{Ext}^p(\mathcal{L}_i,\mathcal{L}_j) = 0,\]
    for all $p \geq 0$ and $i > j$.
    \item (Strong Exceptionality) For all $i,j$, we have
     \[ \operatorname{Ext}^p(\mathcal{L}_{i}, \mathcal{L}_j) =0\]
     for all $p \geq 1$.
\end{itemize}
Conversely, if we have such a collection, then their sum $\bigoplus_i \mathcal{L}_i$ is a tilting sheaf. A collection of line bundles that satisfy all of these properties is known as a \textbf{full strongly exceptional collection of line bundles} for $\D(X)$. The algebra $A$ is called the \textbf{tilting algebra} of the collection. Tilting sheaves do not in general exist for smooth and complete varieties.

In \cite{King97}, King conjectured that all smooth toric varieties had a full strongly exceptional collection of line bundles, and hence, a tilting sheaf. This was ultimately proven false by Hille and Perling in \cite{Hille2006}. Later, Micha\l{}ek found an infinite family of counter-examples in \cite{Mich2011}. Despite this general failure, there are many cases in which this conjecture holds \cite{Costa2012, Hara2017, Larson}. The focus on the conjecture has shifted to trying to find conditions on the toric varieties that ensure existences of tilting sheaves.

In a weaker direction, it is also useful to find full exceptional collections of $\D(X)$ which are collections of sheaves that satisfy the fullness and exceptionality conditions above and such that
\[\operatorname{Ext}^p(\mathcal{E}, \mathcal{E}) = \begin{cases}
    \kk & \text{if $p =0$}\\
    0 & \text{otherwise.}
\end{cases} \]
for all sheaves in the collection. Such a collection gives a semi-orthogonal decomposition of the derived category and implies various nice properties of cohomological invariants. The most common method for constructing a full exceptional collection is to use results of Orlov \cite{Orlov1993} to transfer full exceptional collections from a variety to a blow-up of the variety or a projective bundle over the variety. However, this procedure destroys exceptionality and does not in general output line bundles, even if one begins with line bundles, and so it is not useful for studying King's conjecture.

In this paper, we give a new combinatorial and convex-geometric method for constructing full strongly exceptional collections of line bundles in terms of the polytopal structure of the nef line bundles of the variety. We use this to construct such collections for the permutahedral variety, the stellahedral variety, and the type $B_n$ permutahedral variety further increasing the class of examples for which King's conjecture holds.

\subsection{Valuations and the Coherent-Constructible Correspondence}
Our main inspiration is the combinatorial theory of valuations of polytopes and the McMullen polytope ring. 

Let $\Sigma$ be a strictly convex polyhedral fan in a lattice $\ZZ^n$ and $\Def(\Sigma)$ be the set of lattice polytopes in $\RR^n$ whose normal fan coarsens $\Sigma$.

The $\ZZ$-module of indicator functions $\mathbb{I}(\Sigma)$ is the $\ZZ$-module spanned by the functions $\one_P: \RR^n \to \ZZ$ given by
 \[ \one_P(x) = \begin{cases}
     1 & \text{ if $x \in P$} \\
     0 & \text{otherwise,}
 \end{cases}\]
for all $P \in \Def(\Sigma)$. This forms a ring with multiplication $\one_{P} \cdot \one_{Q} = \one_{P+Q}$ given by Minkowski sum of polytopes. For our purposes\footnote{The general definition of the polytope ring is different but it coincides with this definition in our examples.}, the \textbf{McMullen polytope ring} $\McM(\Sigma)$ is the quotient of $\mathbb{I}(\Sigma)$ by the translation action of the lattice $M$. 

The main utility of the McMullen polytope ring in combinatorics is that many important combinatorial invariants, known as translation-invariant valuations, factor through this ring, see \cite{Ardila2023} for many examples. Therefore, finding bases of these rings amounts to finding universal invariants from which all other translation-invariant valuations can be calculated.

For a projective toric variety $X_{\Sigma}$, there is a bijection between $T$-equivariant nef line bundles of $X_{\Sigma}$ and polytopes in $\Def(\Sigma)$. We will use $\mathcal{L}_P$ to denote the nef line bundle associated to $P$. Morelli proved\footnote{Morelli's original statement is in a different form. See the appendix of \cite{EHL22} for more information.} the following beautiful result relating the convex geometry of $\McM(\Sigma)$ and the K-theory of $X_{\Sigma}$.
\begin{theorem}[\cite{Morelli1993}]\label{thm: Morelli's Theorem}
Let $\Sigma$ be a smooth, projective toric variety with character lattice $M \cong \ZZ^n$. There are isomorphisms
\[K_0^T(X_{\Sigma}) \cong \mathbb I(\Def(\Sigma)) \quad \quad \text{and} \quad \quad K_0(X_\Sigma) \cong \mathbb{I}(\Def(\Sigma))/M \]
given by $[\mathcal{L}_P]_T \mapsto \one_P$ and $[\mathcal{L}_P] \mapsto [\one_P]$, respectively.
\end{theorem}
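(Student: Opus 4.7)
The plan is to construct a homomorphism $\Phi: \mathbb{I}(\Def(\Sigma)) \to K_0^T(X_\Sigma)$ sending $\one_P \mapsto [\mathcal{L}_P]_T$, prove it is an isomorphism via equivariant fixed-point localization, and then descend to the non-equivariant statement by quotienting by lattice translations.

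For well-definedness, I would first use the classical valuation-theoretic fact (due to McMullen) that $\mathbb{I}(\Def(\Sigma))$ is presented by inclusion-exclusion relations of the form $\one_P + \one_Q - \one_{P \cup Q} - \one_{P \cap Q} = 0$ whenever all four polytopes lie in $\Def(\Sigma)$. For each such relation, I would construct a $T$-equivariant short exact sequence of nef line bundles
\[
0 \to \mathcal{L}_{P \cap Q} \to \mathcal{L}_P \oplus \mathcal{L}_Q \to \mathcal{L}_{P \cup Q} \to 0,
\]
assembled from the lattice-point decomposition of global sections over each affine chart $U_\sigma$: locally the sequence reduces to the evident short exact sequence of characters $\chi^m$ indexed by the vertex cones. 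This lifts each inclusion-exclusion relation to equivariant $K$-theory, establishing well-definedness of $\Phi$.

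For surjectivity, I would use that on a smooth projective toric variety $K_0^T(X_\Sigma)$ is generated as a $\ZZ[M]$-algebra by classes of equivariant line bundles. Any equivariant line bundle can be written as $\mathcal{L}_P \otimes \mathcal{L}_Q^\vee$ with $P, Q \in \Def(\Sigma)$ (possible after adding a sufficiently ample nef divisor), and Grothendieck's nilpotency relation $(1-[\mathcal{L}_Q]_T)^{n+1} = 0$ expresses $[\mathcal{L}_Q]_T^{-1}$ as a $\ZZ[M]$-polynomial in nef line bundle classes. Combined with multiplicativity $[\mathcal{L}_P]_T \cdot [\mathcal{L}_Q]_T = [\mathcal{L}_{P+Q}]_T$, every element lies in the image of $\Phi$. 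For injectivity, I would apply $T$-equivariant fixed point localization: the restriction map $K_0^T(X_\Sigma) \hookrightarrow \bigoplus_{\sigma \in \Sigma_{\max}} \ZZ[M]$ is injective by smoothness, and sends $[\mathcal{L}_P]_T \mapsto (e^{v_\sigma(P)})_\sigma$, where $v_\sigma(P)$ is the vertex of $P$ dual to the maximal cone $\sigma$. The analogous vertex-evaluation map on $\mathbb{I}(\Def(\Sigma))$ is injective by Brion's polytope decomposition theorem, so $\Phi$ is injective.

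The non-equivariant statement $K_0(X_\Sigma) \cong \mathbb{I}(\Def(\Sigma))/M$ then follows by tensoring the equivariant isomorphism with $\ZZ$ over $\ZZ[M]$ via the augmentation $e^m \mapsto 1$; on the polytope side this augmentation identifies translated polytopes $P$ and $P+m$, precisely recovering the quotient by the $M$-translation action. The main obstacle will be the construction in the first paragraph: producing the short exact sequences of nef line bundles attached to each inclusion-exclusion relation requires careful local analysis at each chart to ensure the kernel is in fact a line bundle of the predicted type rather than a merely coherent sheaf. Establishing that McMullen's inclusion-exclusion relations really do generate all linear relations in $\mathbb{I}(\Def(\Sigma))$ is itself a nontrivial combinatorial fact about the polytope ring that underlies the entire argument.
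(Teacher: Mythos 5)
The paper does not prove this theorem: it is cited from Morelli (1993), with a footnote directing the reader to the appendix of Eur--Huh--Larson for a modern account. So there is no internal argument to compare against, and your proposal is necessarily an independent reconstruction. With that said, two points deserve attention.

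First, there is a genuine error in your surjectivity step. You invoke the Grothendieck nilpotency $(1-[\mathcal{L}_Q]_T)^{n+1}=0$ to express $[\mathcal{L}_Q]_T^{-1}$ in terms of nef classes, but this identity fails in \emph{equivariant} $K$-theory. It holds non-equivariantly because $1-[\mathcal{L}]$ lies in the topological/$\gamma$-filtration, which terminates for a scheme of finite dimension; equivariantly that filtration does not terminate. Already for $X_\Sigma=\mathbb{P}^1$ one has $K_0^T(\mathbb{P}^1)\cong \ZZ[t^{\pm 1}][\mathcal{L}]/((\mathcal{L}-1)(\mathcal{L}-t))$, in which $(\mathcal{L}-1)^2=(t-1)(\mathcal{L}-1)\neq 0$. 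The fix is not hard but must be different: use the fact (already quoted elsewhere in this paper from Fang's work) that $\operatorname{Perf}_T(X_\Sigma)$ is classically generated by $T$-equivariant ample line bundles for smooth projective $\Sigma$, hence $K_0^T(X_\Sigma)$ is $\ZZ$-spanned by classes $[\mathcal{L}_P]_T$ with $P\in\Def(\Sigma)$.

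Second, as you acknowledge, the well-definedness step rests on the claim that $\mathbb{I}(\Def(\Sigma))$ is presented by inclusion-exclusion relations $\one_P+\one_Q-\one_{P\cup Q}-\one_{P\cap Q}$ in which \emph{all four} polytopes lie in $\Def(\Sigma)$. The classical Volland--McMullen presentation is for the full indicator-function group; restricting the generators does not automatically restrict the relations, and the convexity of $P\cup Q$ together with the requirement that $P\cup Q$ and $P\cap Q$ remain in $\Def(\Sigma)$ is quite restrictive. This is exactly the kind of fact that the cited appendix of Eur--Huh--Larson establishes (in a different form, via the Brianchon--Gram relation rather than binary inclusion-exclusion), and it should not be taken as given. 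The remaining ingredients of your argument are sound: the local split-exactness of the sequence $0\to\mathcal{L}_{P\cap Q}\to\mathcal{L}_P\oplus\mathcal{L}_Q\to\mathcal{L}_{P\cup Q}\to 0$ follows from the multiset identity $\{v_\sigma(P),v_\sigma(Q)\}=\{v_\sigma(P\cup Q),v_\sigma(P\cap Q)\}$ for each maximal cone $\sigma$ (a consequence of the valuativity of the support function when $P\cup Q$ is convex); the injectivity via fixed-point localization and Brion's formula is a reasonable and standard strategy; and the descent to the non-equivariant statement by killing the augmentation ideal of $\ZZ[M]$ is correct granted Merkurjev's theorem relating $K_0(X)$ to $K_0^T(X)/I_T K_0^T(X)$ for a torus acting with a dense orbit.
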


This relationship between K-theory and indicator functions of polytopes was categorified in \cite{Fang2011} by Fang, Melissa-Liu, Treumann, and Zaslow as a dg-enhanced equivariant version of the coherent-constructible correspondence first announced by Bondal \cite{Bondal2006}.  See Section \ref{Sec: Background} for descriptions of the categories involved.

\begin{theorem}\cite{Fang2011}
    Let $\Sigma$ be a projective fan. Then, there is an embedding of triangulated dg-categories
     \[\kappa: \operatorname{Perf}_T(X_{\Sigma}) \to \operatorname{Sh}_{cc}(\RR^n). \]
    Further, the image of this functor is the subcategory $\operatorname{Sh}_{cc}(\RR^n,\Lambda_{\Sigma})$ of constructible sheaves whose singular support lies in a particular conical Lagrangian $\Lambda_{\Sigma} \subseteq T^*(\RR^n)$ associated to the fan.
\end{theorem}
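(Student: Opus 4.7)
The plan is to follow the Fang--Liu--Treumann--Zaslow strategy in four stages, each handling one ingredient of the stated embedding.

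First, I would define $\kappa$ on the generating objects. Every $T$-equivariant line bundle on $X_\Sigma$ corresponds to a continuous piecewise linear function $\varphi$ on the support of $\Sigma$, which in the nef case is the support function of a polytope $P \in \Def(\Sigma)$. To each such $\varphi$ I would attach a constructible ``polytope sheaf'' $\Theta_\varphi$ on $\RR^n$ whose stalks encode whether a point lies in $P$ after translation by lattice characters. A direct inspection of the stratification cut out by $\varphi$ shows that $\Theta_\varphi$ is constructible and that its singular support is contained in the conical Lagrangian $\Lambda_\Sigma$ built from the cones of $\Sigma$ together with their lattice translates.

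Second, extend $\kappa$ to a dg-functor on all of $\operatorname{Perf}_T(X_\Sigma)$. Since the equivariant perfect category is classically generated by the line bundles $\{\mathcal{L}_P\}$, an assignment defined on these generators extends uniquely to the pretriangulated envelope, provided it respects morphism complexes; this compatibility is the content of the next step.

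Third, and most importantly, prove full faithfulness by matching $\operatorname{RHom}$ complexes on both sides. On the coherent side, the equivariant affine cover $\{U_\sigma\}_{\sigma \in \Sigma}$ provides a \v{C}ech model that computes $\operatorname{RHom}(\mathcal{L}_P, \mathcal{L}_Q)$ as a complex indexed by the cones of $\Sigma$ with terms given by $M$-graded pieces of the rings of sections on the affine charts. On the constructible side, the common refinement of the stratifications associated to $\Theta_P$ and $\Theta_Q$ is indexed by the same combinatorial data, and a direct computation expresses $\operatorname{RHom}(\Theta_P, \Theta_Q)$ by a complex with matching indexing and coefficients. I would then construct an explicit quasi-isomorphism between the two complexes, cell by cell. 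The main obstacle is producing this identification at the dg-level rather than merely on cohomology, which alone would only recover Morelli's statement in $K$-theory.

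Finally, identify the essential image. The inclusion $\kappa(\operatorname{Perf}_T(X_\Sigma)) \subseteq \operatorname{Sh}_{cc}(\RR^n, \Lambda_\Sigma)$ follows from the singular support computation of stage one together with the fact that singular support is stable under shifts and cones. For the reverse inclusion, I would argue that any constructible sheaf with singular support in $\Lambda_\Sigma$ is constructible with respect to a stratification by convex cells parallel to cones of $\Sigma$, and that the constant sheaf on each such cell can be assembled from polytope sheaves $\Theta_P$ by taking shifts and cones; an induction on the dimension of the strata then completes the identification.
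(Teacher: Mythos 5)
The paper does not prove this theorem; it is quoted as a black box from Fang, Liu, Treumann, and Zaslow \cite{Fang2011}, so there is no in-paper argument to compare against. What you have written is a plausible high-level reconstruction of the FLTZ strategy: define $\kappa$ on $T$-equivariant line bundles via a constructible sheaf built from the associated piecewise-linear data, verify the singular-support bound defining $\Lambda_\Sigma$, prove full faithfulness by matching $\operatorname{RHom}$ complexes, and then pin down the essential image.

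Three places where your sketch underdetermines the actual argument, or slightly misstates it. First, the functor is not built only from nef line bundles: FLTZ attach a twisted polytope sheaf (the complex of standard sheaves on translated dual cones, what this paper calls the Brianchon--Gram complex, Proposition \ref{prop: Brianchon-Gram complex and CCC}) to an arbitrary $T$-equivariant line bundle given by an arbitrary piecewise-linear $\varphi$, convex or not; only for duals of ample bundles does the image collapse to a single standard sheaf on the interior of a polytope (Proposition \ref{prop: ccc standard sheaf quasiisomorphism}). Restricting stage one to the nef case is therefore not how the functor is actually defined, though it generates. Second, stage three is the real technical core, and ``construct an explicit quasi-isomorphism cell by cell'' is a plan rather than an argument: the published proof obtains the chain-level identification by combining \v{C}ech models on the toric side with the Morse-theoretic computation of constructible $\operatorname{Hom}$ due to Nadler--Zaslow \cite{Nadler2008}, and this is precisely where the upgrade from Morelli's $K$-theory statement (Theorem \ref{thm: Morelli's Theorem}) to a dg-equivalence happens. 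Third, the essential-surjectivity claim---that any object of $\operatorname{Sh}_{cc}(\RR^n, \Lambda_\Sigma)$ is assembled from polytope sheaves by shifts and cones---is the other nontrivial input; one needs to exhibit, for each cell in a stratification adapted to $\Lambda_\Sigma$, an explicit generation by objects already in the image, and ``induction on dimension'' names the shape of the argument without supplying the devissage that makes it go. None of this is wrong in outline, but as written it would not yet constitute a proof of the cited theorem.
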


When $\Sigma$ is smooth and projective, then the K-theory of the left-hand side is the algebraic K-theory of the variety and the K-theory of the right-hand side is isomorphic to $\mathbb{I}(\Def(\Sigma))$ by the map that sends a class in $K(\operatorname{Sh}_{cc}(\RR^n,\Lambda{\Sigma}))$ to the function $f: \RR^n \to \ZZ$ where $f(x)$ is the Euler characteristic of the stalk at $x$ of the class. This categorifies Morelli's theorem in the sense that the induced map on K-theory $K(X_{\Sigma}) \to \mathbb{I}(\Sigma)$ is the map $[\mathcal{L}_P] \mapsto \one_P$. One can obtain the non-equivariant part of Morelli's theorem through Bondal's non-equivariant version of the coherent-constructible correspondence.

The coherent-constructible correspondence translates the study of coherent sheaves on $X_{\Sigma}$ to the study of certain constructible sheaves supported on polyhedra on $\RR^n$. In particular, for a projective toric variety $X_\Sigma$, where $\Sigma$ is the normal fan of a polytope $Q$, the nef line bundle $\mathcal{L}_P$ is mapped to a complex of sheaves supported on translations of the tangent cones of $Q$. This is a categorification of the Brianchon-Gram formula which expresses an indicator function of a polytope as an alternating sum of indicator functions of its tangent cones.

Our approach exploits the fact that constructible sheaves on polytopes behave similarly to indicator functions of polytopes in order to transport the combinatorial intuition of $\mathbb{I}(\Sigma)$ into $\operatorname{Perf}_T(X_{\Sigma})$.

\subsection{Convex Geometric Approach to Full Strongly Exceptional Collections}

Our method involves two distinct ideas: the first is to use the homotopy theory of set differences of polytopes to prove strong exceptionality, and the second is to use a categorification of the defining relations of the McMullen polytope algebra to prove fullness.

For the first, we use constructible sheaves and the homotopy theory of set differences of polytopes to calculate the homomorphisms between nef line bundles in the dg categories $\operatorname{Perf}_T(X_{\Sigma})$ and $\operatorname{Perf}(X_{\Sigma})$.

\begin{maintheorem}\label{mainthm: dg homs calculation}
    Let $\Sigma$ be a projective fan. Let $P$ and $Q$ be two polytopes in $\Def(\Sigma)$. Let $M$ be the weight lattice of $X_{\Sigma}$. Then, we have quasi-isomorphisms
     \[ \hom_T(\mathcal{L}_P, \mathcal{L}_Q) \cong \widetilde{C}(P \backslash Q)[-1] \]
    in $\operatorname{Perf}_T(X_{\Sigma})$ and
    \[ \hom(\mathcal{L}_P, \mathcal{L}_Q) \cong \bigoplus_{m \in M} \widetilde{C}(P \backslash Q + m)[-1]\]
    in $\operatorname{Perf}(X_{\Sigma})$, where $\widetilde{C}(U)$ is the reduced singular cochain complex of $U$ over $\kk$.
\end{maintheorem}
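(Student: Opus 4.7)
The plan is to transport the hom computation to the constructible side of the Fang--Liu--Treumann--Zaslow coherent-constructible correspondence, where the image of each nef line bundle is an explicit polytope sheaf, and then read off the answer from an open/closed distinguished triangle together with the contractibility of $P$. Since $\kappa$ is a full dg-embedding, one has $\hom_T(\mathcal L_P, \mathcal L_Q) \simeq R\Hom(\kappa(\mathcal L_P), \kappa(\mathcal L_Q))$ computed inside $\operatorname{Sh}_{cc}(\RR^n, \Lambda_\Sigma)$. The first step is to recall from the background section the explicit model of $\kappa(\mathcal L_P)$: up to quasi-isomorphism this is a standard polytope sheaf, which I will take to be $(j_{P^\circ})_! \kk$, the extension-by-zero of the constant sheaf on the open polytope $P^\circ$.

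Given this identification, the $(j_!, j^*)$-adjunction yields
\[
R\Hom\bigl((j_{P^\circ})_! \kk,\, (j_{Q^\circ})_! \kk\bigr) \;\simeq\; R\Gamma\bigl(P^\circ,\, k_! \kk\bigr),
\]
where $k : P^\circ \cap Q^\circ \hookrightarrow P^\circ$ is the open inclusion. Applying the open/closed distinguished triangle $k_! \kk \to \kk_{P^\circ} \to \kk_{P^\circ \setminus Q^\circ} \to [1]$ on $P^\circ$ and taking hypercohomology produces
\[
R\Hom(\kappa(\mathcal L_P), \kappa(\mathcal L_Q)) \longrightarrow C^*(P^\circ; \kk) \longrightarrow C^*(P^\circ \setminus Q^\circ; \kk) \xrightarrow{+1}.
\]
Since $P^\circ$ is contractible, the middle term collapses to $\kk$, and the induced long exact sequence of the pair $(P^\circ, P^\circ \setminus Q^\circ)$ identifies the fiber with the shifted reduced cochain complex $\widetilde C(P^\circ \setminus Q^\circ)[-1]$. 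A standard deformation retraction argument for polytopes shows $P^\circ \setminus Q^\circ$ is homotopy equivalent to $P \setminus Q$, yielding the claimed equivariant formula.

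For the non-equivariant version, I would decompose $\hom(\mathcal L_P, \mathcal L_Q)$ as a direct sum over $M$-weights by the standard equivariance-forgetting argument, giving $\bigoplus_{m \in M} \hom_T(\mathcal L_P, \mathcal L_Q \otimes \chi_m)$, and apply the equivariant result together with the identification $\mathcal L_Q \otimes \chi_m \simeq \mathcal L_{Q + m}$ (up to sign convention) to get the desired direct sum of shifted cochain complexes of translated set differences. The main obstacle will be the sheaf-theoretic book-keeping in the first half: pinning down the precise model of $\kappa(\mathcal L_P)$ together with the compatible $!/*$- and orientation conventions so that the $R\Hom$ computation produces the set difference $P \setminus Q$ rather than $Q \setminus P$ or some boundary variant of it. Once that is settled, the remaining argument is formal, depending only on the six-functor formalism and contractibility of $P$.
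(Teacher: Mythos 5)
Your overall strategy is the paper's strategy in spirit (pass through the coherent--constructible correspondence, identify the images of nef line bundles with sheaves built from the polytopes, use an open/closed triangle plus contractibility, and finish with a homotopy identification of a set difference; the weight-decomposition argument for the non-equivariant statement matches the paper's $\operatorname{Ind}/\operatorname{Res}$ step). However, the two places you defer as ``book-keeping'' are exactly where the content lies, and as written both have genuine gaps. First, the identification $\kappa(\mathcal{L}_P) \simeq (j_{P^{\circ}})_!\kk$ is not what the correspondence provides: the statement available (and the one the paper uses) is that for \emph{ample} $P$ the \emph{dual} $\mathcal{L}_P^{-1}$ goes to the standard sheaf $i_{P^{\circ}*}\kk$, while $\mathcal{L}_P$ itself goes to the Brianchon--Gram (twisted polytope) complex, whose stalks along $\partial P$ are nonzero -- so it is not the extension by zero from $P^{\circ}$, even up to the dimension shift and orientation/antipode twist that a Verdier-duality argument would introduce. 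You must either prove a costandard-type model with the correct shift (and check it is compatible with composition in the dg category) or do what the paper does: work with the standard sheaves $i_{P^{\circ}*}\kk$ and compute $\hom(i_{P^{\circ}*}\kk, i_{Q^{\circ}*}\kk) \simeq C(P\cap Q^{\circ}, \partial P\cap Q^{\circ})$ via the Nadler--Zaslow adjunction calculus, then pass to $\widetilde{C}(\partial P\cap Q^{\circ})[-1]$ using contractibility of $P\cap Q^{\circ}$.

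Second, your argument only makes sense when $P$ and $Q$ are full-dimensional (ample); the theorem is for arbitrary $P,Q\in\Def(\Sigma)$, and the reduction from nef to ample is a real step in the paper (tensor with an ample $\mathcal{L}_{P'}$ and invoke the Minkowski-sum homotopy invariance $P\setminus Q \sim (P+P')\setminus(Q+P')$, itself proved with Smale's theorem). Without that reduction your final claim fails outright: e.g.\ if $Q$ is a segment crossing a two-dimensional $P$, then $P\setminus Q$ has two components while $P^{\circ}\setminus Q^{\circ}$ is contractible, so ``$P^{\circ}\setminus Q^{\circ} \simeq P\setminus Q$'' is false in general. Even in the ample case this equivalence is not a routine deformation retraction (retract toward what point?); the analogous boundary-versus-interior comparison in the paper, Lemma~\ref{lem: intersection to set difference homotopy} ($\partial P\cap Q^{\circ}$ weakly equivalent to $Q\setminus P$), requires the Chebyshev-set retraction, the controlled enlargements $Q_{>\mathbf{a}}$, Smale's theorem, a colimit argument, and Whitehead's theorem to upgrade weak equivalences. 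So the proposal is the right outline, but the sheaf model, the nef-to-ample reduction, and the homotopy identification all need to be supplied before it constitutes a proof.
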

By taking cohomologies, this recovers the calculation of extensions of nef line bundles given by \cite{Altmann2023}.

Let $\leq_M$ be the order on polytopes in $\Def(\Sigma)$ given by $P \leq_M Q$ if there exists $m \in M$ so that $P + m \subseteq Q$. With this, we say that a collection of polytopes $\{P_i\}$ is \textbf{exceptional} if there is some linear order $(P_1, \ldots, P_{\ell})$ that is a linear extension of the order $\leq_M$ and such that for all $i < j$, we have $\widetilde{H}^k(P_j \backslash P_i + m) = 0$ for all $m \in M$ and all $k \geq 0$. We say it is \textbf{strongly exceptional} if it satisfies the conditions $\widetilde{H}^{k}(P_j \backslash P_i + m)$ for all $i$ and $j$. Any strongly exceptional collection of polytopes can be ordered to be exceptional, say by non-decreasing number of lattice points. Exceptional collections of polytopes give rise to exceptional collections of nef line bundles. This reduces the study of exceptionality to the interesting problem of showing that set differences of polytopes have no reduced cohomology. We will achieve this in all of our examples by showing that the set differences are contractible.

In order to study fullness of collections, we categorify the valuative relations of the ring of indicator functions by using Bj\"orner's theory of CW posets. Recall that a CW poset is a poset $\mathbb{P}$ of cardinality greater than $1$ with a minimal element $\hat{0}$ such that the realization of the order complex of any interval $(\hat{0},x)$ for $x \in \mathbb{P}\backslash \{\hat{0}\}$ is homeomorphic to a sphere. We will view $\Def(\Sigma)$ as a poset under (the usual) inclusion of polytopes.

For any CW poset $\mathbb{P}$ and order-preserving map $F: \mathbb{P} \to \Def(\Sigma)$, we associate a complex $\IE^{\bullet}$ with 
 \[\IE^{k} = \bigoplus_{\substack{x \in \mathbb{P} \\ \rk(x) = k}} \mathcal{L}_{F(x)}\] which we call the \textbf{inclusion-exclusion complex} of $F$. By studying the stalks of the image of this complex under the coherent-constructible correspondence, we give a simple criterion for checking exactness of these complexes.
\begin{maintheorem}\label{mainthm: Exactness Criterion}
    Let $\mathbb{P}$ be a CW poset and $f: \mathbb{P} \to \Def(\Sigma)$ be an order-preserving map such that for each $m \in M$, the upper ideal $I_m = \{x \in \mathbb{P} \; \lvert \; m \in F(x)\}$ has $|I_m| \geq 2$. Then, $\IE^{\bullet}$ is an exact complex.
\end{maintheorem}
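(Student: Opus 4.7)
The plan is to apply the coherent-constructible correspondence $\kappa$ from \cite{Fang2011} and check exactness of $\kappa(\IE^\bullet)$ stalkwise in $\operatorname{Sh}_{cc}(\RR^n)$; since $\kappa$ is a fully faithful dg-embedding, this suffices to conclude exactness of $\IE^\bullet$ in $\operatorname{Perf}_T(X_\Sigma)$. Exactness of a complex of constructible sheaves is checked pointwise on stalks, so the proof reduces to a combinatorial check at each $m \in \RR^n$.

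First I would use the Brianchon-Gram categorification of \cite{Fang2011} to describe $\kappa(\mathcal{L}_P)$: at each $m \in \RR^n$, the cohomology of the stalk of $\kappa(\mathcal{L}_P)$ is concentrated in a single degree and is one-dimensional over $\kk$ when $m$ is contained in $P$ (with the precise convention dictated by the tangent-cone model and the singular support $\Lambda_\Sigma$), and vanishes otherwise. Under these identifications, the natural inclusion-induced maps $\mathcal{L}_{F(x)} \to \mathcal{L}_{F(y)}$ reduce to identity maps of $\kk$ on the relevant stalks.

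Consequently, at each $m \in \RR^n$ the stalk complex becomes the combinatorial cochain complex with $\kappa(\IE^\bullet)^k_m = \bigoplus_{x \in I_m,\, \rk(x) = k} \kk$ and differential given by the cellular coboundary from the regular CW complex realizing $\mathbb{P}$ (existence guaranteed by Bj\"orner's theorem characterizing CW posets). This identifies the stalk complex with the augmented cellular cochain complex of a subspace of the realization of $\mathbb{P}$ determined by the upper ideal $I_m$.

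The final and principal step is to show this cochain complex is acyclic whenever $|I_m| \geq 2$, and I expect this to be the main obstacle. The sphericity of each open interval $(\hat{0}, x)$ in a CW poset constrains the cellular incidence numbers enough to produce a contracting homotopy on the stalk complex, or equivalently to identify the associated subspace as contractible. The excluded singleton case $|I_m| = 1$ is precisely the degenerate situation where a lone $\kk$ has no cancelling differentials; every upper ideal of size at least two can then be handled by an inductive d\'evissage, peeling off an extremal element of $I_m$ and using the sphere boundary relation from the CW poset hypothesis.
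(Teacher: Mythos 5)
Your overall strategy — pass through the coherent--constructible correspondence, reduce to a combinatorial complex at each point, and invoke Bj\"orner's theory of CW posets — matches the paper's. But there are two genuine gaps, one in the reduction step and one, more serious, in the final acyclicity argument.

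On the reduction: you write that the stalk cohomology of $\kappa(\mathcal{L}_P)$ is $\kk$ or $0$ depending on whether $m \in P$, so that the inclusion maps ``reduce to identity maps of $\kk$ on the relevant stalks.'' The first claim is true on cohomology, but the actual stalk of $\kappa(\mathcal{L}_P)$ is a Brianchon--Gram complex, not a single vector space in a single degree, and there is no canonical quasi-isomorphism compatible with all the inclusion maps at once. The paper addresses this in Proposition~\ref{prop:checking exactness at T-stalks}: one introduces a separate, purely combinatorial ``$T$-stalk'' functor sending $\mathcal{L}_P$ to $\kk$ or $0$, and relates it to the genuine stalk of $\kappa(\IE^\bullet)$ via a double complex whose columns are the augmented Brianchon--Gram complexes of Proposition~\ref{prop: Augmented Brianchon-Gram complex}. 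Those columns are exact, so the acyclic assembly lemma lets you trade the genuine stalk for the combinatorial $T$-stalk. This step is not a formality and should not be elided.

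The serious gap is in the acyclicity of the combinatorial complex. You propose a d\'evissage argument showing that any upper ideal $I_m$ of cardinality at least $2$ gives an acyclic complex; this is false. Take $\mathbb{P}$ to be the face poset of a $1$-simplex (the Boolean poset on $\{1,2\}$), and consider the upper ideal $\{\{1\},\{2\},\{1,2\}\}$, which arises when $m$ lies in $F(\{1\})$ and $F(\{2\})$ but not in $F(\hat{0})$. The corresponding $T$-stalk complex is $0 \to \kk^2 \to \kk \to 0$ with differential $(1,-1)$, which has one-dimensional $H^1$. The correct hypothesis, used in the body statement Theorem~\ref{prop: Exact inclusion-exclusion complexes}, is that each nonempty $I_m$ is an \emph{interval} of cardinality greater than $1$; acyclicity is then exactly the content of Lemma~\ref{lem: Bjoner Brenti} (the Bj\"orner--Brenti exactness lemma for intervals in CW posets). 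Your proposal neither states nor uses the interval condition, and no peeling-off argument can repair this, since the obstruction in my example is topological (the upper ideal is not contractible). In all the paper's applications the maps $F$ are designed so that $I_m$ is automatically an interval (a principal upper ideal for subdivision Koszul complexes, and similarly for truncated Brianchon--Gram complexes), so the stronger hypothesis is both needed and satisfied in practice.
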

% These complexes categorify the relations in $\McM(\Sigma)$. For instance, the relation $P = P_1 + P_2 - P_3$ given by the polytopes
%  \[ 0\]
% becomes the exact complex
%  \[ 0 \rightarrow \mathcal{L}_P \rightarrow \mathcal{L}_{P_1} \oplus \mathcal{L}_{P_2} \rightarrow \mathcal{L}_{P_3} \rightarrow 0.\]
As a special case, these include the Koszul complexes of polytopes studied by \cite{Altmann2023}.

Given a collection of polytopes $\mathcal{P}$, we iteratively construct a family of polytopes $\langle \mathcal{P} \rangle$. First, we set $\langle \mathcal{P} \rangle_0$ to be the set of polytopes $\{P + m \; \lvert \; P \in \mathcal{P}, m \in M\}$. Then, we define $\langle \mathcal{P} \rangle_k$ by $Q \in \langle \mathcal{P} \rangle_k$ if there exists a CW poset $\mathbb{P}$ and an order-preserving map $F: \mathbb{P} \to \Def(\Sigma)$ with the following properties:

\begin{enumerate}
    \item there is an $x$ with $F(x) = Q$
    \item for all $y$ with $\rk(y) \not = \rk(x)$ we have $F(y) \in \langle \mathcal{P} \rangle_{k-1}$.
    \item For any $m \in M$, the upper ideal $\{x \in \mathbb{P} \; \lvert \; m \in F(x)\}$ is an interval of cardinality greater than $1$.
\end{enumerate}
Define the set of polytopes generated by $\mathcal{P}$ as the set $\langle \mathcal{P} \rangle = \bigcup_{i=0}^{\infty} \langle \mathcal{P} \rangle_i$. We say that $\mathcal{P}$ is \textbf{full} if $\langle \mathcal{P} \rangle = \Def(\Sigma)$. 

Theorems A and Theorems B then imply the following criterion for constructing full strongly exceptional collections.

\begin{maintheorem}\label{mainthm: Polytopal Criterion}
      Let $\Sigma$ be a smooth projective fan. If an ordered collection of polytopes $(P_1,\ldots, P_m)$ for $P_i \in \Def(\Sigma)$ is a full (strongly) exceptional collection of polytopes in $\Def(\Sigma)$, then the ordered collection of nef line bundles $(\mathcal{L}_{P_1}, \ldots, \mathcal{L}_{P_m})$ is a full (strongly) exceptional collection for $\D(X_{\Sigma})$.
\end{maintheorem}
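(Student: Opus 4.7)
The plan is to split the theorem into its two halves---(strong) exceptionality and fullness---and handle them via Main Theorems A and B respectively.

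For the (strong) exceptionality half, I would apply Main Theorem A to obtain
\[
\operatorname{Ext}^p(\mathcal{L}_{P_a}, \mathcal{L}_{P_b}) \;\cong\; \bigoplus_{m \in M} \widetilde{H}^{p-1}\bigl(P_a \backslash (P_b + m);\,\kk\bigr).
\]
The vanishing $\widetilde{H}^k(P_j \backslash (P_i + m)) = 0$ built into the definition of a (strongly) exceptional collection of polytopes immediately forces $\operatorname{Ext}^p = 0$ in the required ranges for $p \geq 1$. The remaining degree-zero case (for $a > b$, $p = 0$) reduces to showing that $\widetilde{H}^{-1}(P_a \backslash (P_b + m))$ vanishes; this reduced cohomology is nonzero precisely when the set difference is empty, i.e.\ when $P_a \leq_M P_b$, and this is ruled out for $a > b$ since the chosen order is a linear extension of $\leq_M$. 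Finally, $\operatorname{Hom}(\mathcal{L}_P, \mathcal{L}_P) = \kk$ because only the $m = 0$ summand produces an empty set difference, contributing a single copy of $\widetilde{H}^{-1}(\emptyset) = \kk$.

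For fullness, I would induct on $k$ to show that every $\mathcal{L}_Q$ with $Q \in \langle \mathcal{P}\rangle_k$ lies in the smallest thick triangulated subcategory $\mathcal{T} \subseteq \D(X_\Sigma)$ containing $\{\mathcal{L}_P : P \in \mathcal{P}\}$. The base case $k = 0$ is immediate: each such $Q$ has the form $P + m$ and non-equivariantly $\mathcal{L}_{P+m} \cong \mathcal{L}_P$. For the inductive step, take $Q \in \langle \mathcal{P}\rangle_k$ with its CW poset data $(\mathbb{P}, F, x)$; condition (3) is strictly stronger than the hypothesis of Main Theorem B (it further requires the upper ideal to be an interval), so $\IE^{\bullet}$ is exact. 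Slicing this exact complex of line bundles into the short exact sequences $0 \to Z^j \to \IE^j \to Z^{j+1} \to 0$ with $Z^j = \ker d^j$ produces distinguished triangles in $\D(X_\Sigma)$; iterating cones shows that if every $\IE^j$ with $j \neq \rk(x)$ lies in $\mathcal{T}$---which is exactly what condition (2) supplies via the inductive hypothesis---then $\IE^{\rk(x)}$ lies in $\mathcal{T}$ as well. Since $\mathcal{T}$ is thick and hence closed under direct summands, the summand $\mathcal{L}_Q$ of $\IE^{\rk(x)}$ also lies in $\mathcal{T}$. Running the induction up to $\langle \mathcal{P}\rangle = \Def(\Sigma)$ places every nef line bundle in $\mathcal{T}$, and the standard fact that nef line bundles classically generate $\D(X_\Sigma)$ for a smooth projective toric variety (via ample line bundles and the coherent-constructible correspondence) yields $\mathcal{T} = \D(X_\Sigma)$.

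I expect the main obstacle to be the bookkeeping in the inductive step: truncating the exact complex $\IE^{\bullet}$ so that the rank-$\rk(x)$ term is the one ``solved for'' in terms of the others, and using direct-summand closure of $\mathcal{T}$ to isolate $\mathcal{L}_Q$ from its ambient direct sum $\IE^{\rk(x)}$ (condition (2) permits other rank-$\rk(x)$ elements besides $x$, so summand closure is genuinely needed). The remaining pieces---the Ext identification, the $\leq_M$-ordering subtleties, and the final generation-by-nef step---are either direct translations from Main Theorem A or standard invocations.
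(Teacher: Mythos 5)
Your proposal is correct and takes essentially the same route as the paper: exceptionality follows from the Ext/reduced-cohomology computation of Theorem~\ref{mainthm: dg homs calculation} (the paper phrases this through the dual bundles $\mathcal{L}_{P_i}^{-1}$ with the order reversed, which is the same computation), and fullness follows by splicing the exact inclusion-exclusion complexes guaranteed by Theorem~\ref{mainthm: Exactness Criterion} into short exact sequences inside the thick subcategory and then invoking generation of $\operatorname{Perf}(X_{\Sigma})$ by $T$-equivariant ample/nef line bundles. Your write-up simply makes explicit the induction on $\langle \mathcal{P} \rangle_k$, the truncation/splicing bookkeeping, and the direct-summand step isolating $\mathcal{L}_Q$ from the rank-$\rk(x)$ term, details the paper delegates to its background discussion of classical generation.
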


We demonstrate the method in small examples in Section \ref{sec: examples}. 

\subsection{The Permutahedral Varieties}

In the second half of this paper, we apply our method to the permutahedral variety, the stellahedral variety, and the type $B_n$-analogue of the permutahedral variety.  These beautiful varieties appear in many different contexts including in the study of Chow quotients of Grassmannians, compactifications of $M_{0,n}$, Hesenberg varieties, wonderful compactifications of hyperplane arrangements, and flag varieties. Most notably, they have played pivotal roles in the resolution of the Rota-Welsh conjecture and related conjectures \cite{Katz2016,Braden2022}. 

From the perspective of combinatorics, the class of polytopes that index the nef line bundles of these varieties include polytopal models of virtually all objects studied in algebraic combinatorics including matroids, graphs, posets, simplicial complexes, Schubert polynomials, Schur functions, and cluster complexes. This gives a natural context for studying relationships between different combinatorial objects. 

Chief among these objects is the class of matroids and its generalizations which have recently taken the spotlight in algebraic combinatorics. Given a matroid $M$ on ground set $[n]$ with bases $\mathcal{B}(M)$ and independent sets $\mathcal{I}(M)$, we will use the two type of polytopes
 \[\BP(M) = \operatorname{conv}(e_B \; \lvert \; B \in \mathcal{B}(M)) \quad \quad \IP(M) = \operatorname{conv}(e_I \; \lvert \; I \in \mathcal{I}(M)), \]
where $e_A = \sum_{i \in A} e_i$ for the standard basis vectors $e_i \in \RR^n$. 

We will also encounter the theory of delta matroids, which is a generalization of matroids first studied in optimization. These are related to the type $B_n$-permutahedral variety since they are in bijection with the minuscule type $B_n$ Coxeter matroids. One can associate a polytope to a delta matroid $\mathcal{M}$ with feasible sets $\mathcal{F}(\mathcal{M})$ by
\[ \FP(\mathcal{M}) = \operatorname{conv}(e_B \; \lvert \; B \in \mathcal{F}(\mathcal{M})). \]

The nef line bundles in our collection are indexed by the classes of Schubert matroids and Schubert delta matroids. These are the matroids whose polytopes appear as moment map images of Schubert varieties (over all possible Schubert decompositions) in the Grassmannian and Lagrangian Grassmannian. 

We are now ready to state our main calculations.

\begin{maintheorem}\label{mainthm: FSEC for Perm}
The sequence of line bundles $(\mathcal{L}_{\BP(\Omega)})$ indexed all loopless Schubert matroids on $[n]$ is a full strongly exceptional collection for $\D(\Perm_n)$ when ordered by non-decreasing number of lattice points.
\end{maintheorem}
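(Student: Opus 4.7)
The plan is to apply Theorem~\ref{mainthm: Polytopal Criterion}: it suffices to prove that the set of base polytopes $\mathcal{P}=\{\BP(\Omega):\Omega\text{ a loopless Schubert matroid on }[n]\}$ forms a full strongly exceptional collection of polytopes in $\Def(\Sigma_n)$, where $\Sigma_n$ is the braid fan and the character lattice is $M=\ZZ^n/\langle e_1+\cdots+e_n\rangle$. The ordering by non-decreasing number of lattice points is automatically a linear extension of $\leq_M$, since $P\leq_M Q$ implies $|P\cap\ZZ^n|\leq|Q\cap\ZZ^n|$, so it remains only to verify strong exceptionality and fullness at the level of polytopes.

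For strong exceptionality, Theorem~\ref{mainthm: dg homs calculation} reduces the problem to showing $\widetilde{H}^k(\BP(\Omega_1)\setminus(\BP(\Omega_2)+m))=0$ for every $k$, every pair of loopless Schubert matroids $\Omega_1,\Omega_2$, and every $m\in M$. I would prove the stronger geometric statement that each such set difference is empty or contractible. Schubert matroid polytopes are alcoved, cut from the hypersimplex by Gale-order inequalities $\sum_{i\leq k}x_i\geq c_k(\Omega_1)$ against a reference basis, while a translate $\BP(\Omega_2)+m$ contributes only the opposite inequalities $\sum_{i\leq k}x_i<c'_k$. I would single out the lexicographically largest vertex $v$ of $\BP(\Omega_1)$ not lying in $\BP(\Omega_2)+m$ and check that the straight-line homotopy towards $v$ stays inside the set difference, using the lattice-path description of Schubert matroid polytopes to confirm that each defining half-space is preserved along the homotopy.

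For fullness I would apply Theorem~\ref{mainthm: Exactness Criterion} inductively. The combinatorial input is the theorem of Derksen--Fink that indicator functions of loopless Schubert matroid polytopes form a $\ZZ$-basis of the matroid valuation module, together with its extension to arbitrary generalized permutahedra via polymatroid truncations. Each resulting valuative identity expressing $P\in\Def(\Sigma_n)$ as an alternating sum of translates of loopless Schubert matroid polytopes must be lifted to a CW poset $\mathbb{P}$ and an order-preserving map $F:\mathbb{P}\to\Def(\Sigma_n)$ such that $P$ is in the image, all other values lie in the previous stage $\langle\mathcal{P}\rangle_{k-1}$, and each upper ideal $I_m=\{x:m\in F(x)\}$ is an interval of cardinality at least two. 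The CW structure would be inherited from Bruhat intervals on the permutations indexing the Schubert pieces, which are known to be CW posets by Bj\"orner--Wachs. The main obstacle I expect is precisely this combinatorial lift: producing a single CW poset that simultaneously encodes the Derksen--Fink identity and satisfies the interval condition on every upper ideal $I_m$ as $m$ ranges over all of $M$. The exceptionality step, while conceptually cleaner, may also require a careful case analysis when $\BP(\Omega_2)+m$ meets $\BP(\Omega_1)$ along several facets, but I expect the alcoved structure to keep each such case tractable.
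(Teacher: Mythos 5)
Your outline follows the same global architecture as the paper: reduce to Theorem~\ref{mainthm: Polytopal Criterion}, handle strong exceptionality by showing set differences of base polytopes are contractible, and handle fullness by lifting the Derksen--Fink identity to exact inclusion--exclusion complexes via Theorem~\ref{mainthm: Exactness Criterion}. The two halves merit separate assessments.

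For strong exceptionality your proposed retraction is genuinely different from the paper's, and essentially sound once made precise. The paper does not use a straight-line homotopy: it uses the ``greedy deformation retract'' (Proposition~\ref{prop:deformation retract part of greedy algorithm}), a composition of piecewise-linear flows along the positive roots $e_i-e_j$, $i<j$, in lexicographic stages, which manifestly only increases the prefix sums $\sum_{i\le\ell}x_i$. Your straight-line retraction toward the Gale-extreme vertex $v$ of the ambient matroid polytope has the same monotonicity, since that vertex is precisely the one whose prefix sums all attain $\rk_{\Omega}([\ell])$ simultaneously and hence dominate those of any point of the polytope. Two things need to be said carefully: (i) you must first make the usual reductions --- $m\in[-1,1]^n$ via the cube bound, then $m=0$ via the face/translation analysis, then use the $S_n$-action to put the \emph{removed} Schubert polytope in standard form so that Proposition~\ref{prop: submodular function only needs intervals} applies to it; and (ii) the direction of the inequality matters. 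If the removed polytope is cut by upper bounds $\sum_{i\le\ell}x_i\le\mu([\ell])$ you must retract toward the vertex \emph{maximizing} prefix sums; if by lower bounds (as you wrote: $\sum_{i\le k}x_i\ge c_k$), toward the vertex \emph{minimizing} them. Your text mixes these: you write $\ge$-inequalities yet retract toward the ``lexicographically largest'' vertex, which is the wrong endpoint for that convention. One should also note that the global Gale-extreme vertex automatically lies outside the removed polytope whenever the difference is non-empty (if it were inside, every point of the ambient polytope would be too), so the qualifier ``not lying in $\BP(\Omega_2)+m$'' is automatic rather than a choice.

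For fullness you have correctly identified both the source (Derksen--Fink) and the bottleneck (producing CW posets that realize the valuative identity and satisfy the interval condition), but your proposed fix --- Bruhat intervals on the permutations indexing the Schubert pieces --- is not what is needed and does not obviously encode the Derksen--Fink relation. The paper's resolution is more concrete and does not involve Bruhat order at all. Two specific CW posets suffice: first, the opposite face poset $\mathcal{F}(\Sigma_{A_{n-1}})^{op}$ of the braid fan (a CW poset because it is the face lattice of the permutahedron) truncated by the hypersimplex $\Delta_{k,n}$, which yields the truncated Brianchon--Gram complex expressing $\BP(M)$ through base polytopes of Schubert matroids (Lemma~\ref{lem: matroid truncated Brianchon-Gram complexes}); and second, boolean posets $2^{[k]}$ giving the subdivision Koszul complexes for the intersection of a generalized permutahedron with translates of the unit cube (Lemma~\ref{lem: subdividing generalized permutahedra into matroids}), which reduces an arbitrary polytope in $\Def(\Sigma_{A_{n-1}})$ to matroid base polytopes. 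Both families satisfy the interval hypothesis of Theorem~\ref{mainthm: Exactness Criterion} by inspection of their $T$-stalks. Without identifying these (or equivalent) constructions, the fullness half of your argument remains a stated obstacle rather than a proof.
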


\begin{maintheorem}\label{mainthm: FSEC for Stell}
The sequence of line bundles $(\mathcal{L}_{\IP(\Omega)})$ indexed by all Schubert matroids $\Omega$ on $[n]$ is a full strongly exceptional collection for $\D(\Stell_n^B)$ when ordered by non-decreasing number of lattice points.

\end{maintheorem}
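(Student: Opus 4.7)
The plan is to apply Main Theorem C directly: it suffices to verify that the sequence $(\mathcal{L}_{\IP(\Omega)})_{\Omega}$, with $\Omega$ ranging over all Schubert matroids on $[n]$ and ordered by nondecreasing number of lattice points, comes from a full strongly exceptional collection of polytopes in $\Def(\Stell_n^B)$. This separates into two essentially independent combinatorial problems: strong exceptionality, which is a statement about vanishing reduced cohomology of set differences of polytopes, and fullness, which is a statement about categorifying valuative identities through CW posets.

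For strong exceptionality, I must show that for any two Schubert matroids $\Omega_1, \Omega_2$ on $[n]$ and any $m \in \ZZ^n$, the set $\IP(\Omega_2) \setminus (\IP(\Omega_1) + m)$ is either empty or has vanishing reduced cohomology. I would establish contractibility explicitly, leveraging the fact that each Schubert independence polytope is cut out by a nested family of rank inequalities $\sum_{i \in S_j} x_i \leq r_j$ together with the nonnegativity constraints $x_i \geq 0$, where the $S_j$ form a chain. Using the linear order underlying this nested chain, the plan is to produce a coordinatewise sweep that pushes each point of $\IP(\Omega_2) \setminus (\IP(\Omega_1) + m)$ toward a distinguished face: either into a coordinate hyperplane of $\IP(\Omega_2)$ on which the obstructing facet of $\IP(\Omega_1) + m$ is vacuous, or further outside a specific facet of $\IP(\Omega_1) + m$ that the point already violates. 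This should yield a deformation retraction onto a contractible cell. The argument needs a case analysis controlled by the relative position of $m$ with respect to the two Schubert chains.

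For fullness, I would exploit the fact that independence polytopes of Schubert matroids form a valuative basis for all polytopes in $\Def(\Stell_n^B)$, so that any $P \in \Def(\Stell_n^B)$ admits a valuative identity $\one_P = \sum_\Omega c_\Omega \one_{\IP(\Omega) + m_\Omega}$. Guided by Main Theorem B, the task is to lift each such identity to a CW poset $\mathbb{P}$ together with an order-preserving map $F \colon \mathbb{P} \to \Def(\Stell_n^B)$ whose top element maps to $P$, whose remaining images are translates of Schubert independence polytopes or polytopes previously generated, and whose upper ideals $\{x : m \in F(x)\}$ are intervals of size at least two for every $m \in M$. I would construct $\mathbb{P}$ inductively on a complexity measure (for instance, the number of non-Schubert facets of $P$), realising each step as a shelling of an appropriate interval of the inclusion poset of polymatroid polytopes whose atoms are translated Schubert independence polytopes; Bj\"orner's criterion then supplies the CW structure.

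The main obstacle is the fullness step: abstract valuative identities exist by general theory, but the upper-ideal condition of Main Theorem B and the CW-poset hypothesis are genuine combinatorial constraints, and I expect the hardest work of the proof to lie in isolating exactly which shellable subposet of $\Def(\Stell_n^B)$ controls each polymatroid $P$ and in verifying the CW condition uniformly. The strong exceptionality half is topologically more delicate but essentially mechanical once the correct coordinatewise retraction is identified, whereas fullness requires constructive combinatorial identities that faithfully encode the Schubert basis relations at the sheaf level.
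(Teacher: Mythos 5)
Your framing is correct: reduce to Theorem C and split the work into strong exceptionality of the polytopes and fullness. Both halves of your sketch, however, miss the structural bridge that makes the paper's proof go through, namely the reduction of the stellahedral problem to the permutahedral one already solved in Sections \ref{Sec: Schubert matroids are Exceptional} and \ref{Sec: Schubert Matroids Generate} via the identity $\IP(M) = (\BP(M) + [-1,0]^n) \cap [0,1]^n$.

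For strong exceptionality, your proposal of a coordinatewise sweep is in the right spirit, but the retraction you describe is underspecified in a way that matters: you speak of pushing a point ``further outside a specific facet of $\IP(\Omega_1)+m$ that the point already violates,'' which risks discontinuity when several facets are violated, and you propose retracting onto a face rather than a point. The paper instead runs a single uniform retraction in two stages. First, the independence-base retraction $F_i(x,t) = x + t\,\sat(x,i)e_i$ pushes $\IP(\Omega_2)$ onto $\BP(\Omega_2)$ coordinate by coordinate; this restricts to the set difference because $\IP(\Omega_1)$ is down-closed (if $x\in\IP(\Omega_1)$ then $x-\lambda e_i\in\IP(\Omega_1)$), so points outside stay outside. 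Second, after observing that $\BP(\Omega_2)\cap\IP(\Omega_1) = \BP(\Omega_2)\cap\BP(T_r(\Omega_1))$ where $T_r$ is truncation to rank $r = \rk(\Omega_2)$, the problem reduces to $\BP(\Omega_2)\setminus\BP(\Omega')$ for another Schubert matroid $\Omega'$, which Theorem \ref{thm: Schuberts are Exceptional} already handles via the greedy retraction onto a vertex. You would also need the preliminary reduction to $m=0$, which uses that intersecting $\IP(\Omega_1)+m$ with $[0,1]^n$ again yields a Schubert independence polytope (Proposition \ref{prop: Schubert matroids are closed under operations}). None of these three reduction steps appears in your sketch.

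For fullness, your plan is to induct on ``non-Schubert facets,'' build intervals of an inclusion poset, and argue that shellability supplies the CW structure. This is both vaguer and harder than what the paper actually does, and I do not see how the upper-ideal condition of Theorem B would be verified in your scheme. The paper uses only two explicit, simple CW posets: the boolean poset $2^{[k]}$ (subdivision Koszul complexes from tiling by lattice translates of $[0,1]^n$, Proposition \ref{prop: polymatroid cube intersection}) and the face poset of the complete fan (truncated Brianchon-Gram complexes). The concrete engine is then Proposition \ref{prop: tensor product inclusion-exclusion} and Corollary \ref{cor: truncations of exact complexes are exact}: take the truncated Brianchon-Gram complex of $\BP(M)$ whose non-$P$ terms are base polytopes of Schubert matroids (Lemma \ref{lem: matroid truncated Brianchon-Gram complexes}), Minkowski-sum every term with $[-1,0]^n$, and truncate by $[0,1]^n$. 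By Lemma \ref{lem: relation between IP and BP} the resulting complex has $\IP(M)$ at the bottom and Schubert independence polytopes elsewhere. Without this route through the base-polytope case, and without Proposition \ref{prop: independence polytopes in the cube} (polymatroid independence polytopes in a cube are matroid independence polytopes), your induction has no clear base case and no mechanism for producing the required exact sequences.
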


\begin{maintheorem}\label{mainthm: FSEC for PermB}
The sequence of line bundles $(\mathcal{L}_{\FP(\Delta)})$ indexed by loopless Schubert delta matroids on $[n]$ is a full strongly exceptional collection for $\D(\Perm_n^B)$ when ordered by non-decreasing number of lattice points.
\end{maintheorem}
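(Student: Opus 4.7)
The strategy is to apply Theorem \ref{mainthm: Polytopal Criterion}: it suffices to verify that the collection
\[\mathcal{P} = \{\FP(\Delta) : \Delta \text{ a loopless Schubert delta matroid on } [n]\}\]
is a full strongly exceptional collection of polytopes in $\Def(\Sigma)$, where $\Sigma$ is the normal fan of $\Perm_n^B$, and that ordering by non-decreasing number of lattice points is a linear extension of $\leq_M$. This last condition is immediate, since any $M$-translate preserves lattice point counts and $\FP(\Delta_i) + m \subseteq \FP(\Delta_j)$ forces $|\FP(\Delta_i) \cap \ZZ^n| \leq |\FP(\Delta_j) \cap \ZZ^n|$.

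For strong exceptionality, by Theorem \ref{mainthm: dg homs calculation} I must show
\[\widetilde{H}^k\bigl(\FP(\Delta_j) \setminus (\FP(\Delta_i) + m)\bigr) = 0\]
for all pairs $i, j$, all $k$, and all $m \in M$. The plan is to exhibit contractibility of each nonempty such set difference via an explicit star-convexity argument: pin down a distinguished vertex of $\FP(\Delta_j)$ (the type $B_n$ analog of the Schubert extremal vertex of a Schubert matroid polytope), and show that the straight-line homotopy from any point of the set difference to this vertex remains in the set difference. The signed structure of delta matroids forces the argument to track both coordinate magnitudes and sign choices, but I expect the skeleton of the argument to parallel the permutahedral and stellahedral cases treated earlier, with the hyperoctahedral symmetry reducing to a single representative orbit.

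The main work, and what I expect to be the principal obstacle, is fullness: showing $\langle \mathcal{P} \rangle = \Def(\Sigma)$. The plan is induction on the number of lattice points. For the inductive step, given $Q \in \Def(\Sigma)$ not already in $\langle \mathcal{P} \rangle_{k-1}$, I need to exhibit a CW poset $\mathbb{P}$ and an order-preserving map $F : \mathbb{P} \to \Def(\Sigma)$ whose bottom element maps to $Q$, whose higher-rank elements map to polytopes already in $\langle \mathcal{P} \rangle_{k-1}$ with strictly fewer lattice points, and such that for each $m \in M$ the upper ideal $\{x : m \in F(x)\}$ is an interval of cardinality $\geq 2$. Theorem \ref{mainthm: Exactness Criterion} then places $Q$ in $\langle \mathcal{P} \rangle_k$.

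The combinatorial content required is a signed analog of the Schubert matroid decomposition that drives Theorem \ref{mainthm: FSEC for Perm}: a canonical CW decomposition of an arbitrary type $B_n$ generalized permutahedron into loopless Schubert delta matroid polytopes. I plan to build this by iteratively peeling off facets of $Q$ corresponding to reflections in the hyperoctahedral group, using the correspondence between minuscule type $B_n$ Coxeter matroids and delta matroids to identify each piece as a translate of a Schubert delta matroid polytope. The hardest step will be engineering the CW poset structure so that both the interval-hypothesis of Theorem \ref{mainthm: Exactness Criterion} and the looplessness constraint hold simultaneously — the loopless restriction is what corresponds to nef rather than merely globally generated line bundles, and preserving it through the recursion will likely require a well-chosen order on the hyperoctahedral reflections, analogous to the lexicographic order used on transpositions in type $A$.
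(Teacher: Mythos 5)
Your fullness argument is where the proposal breaks down, and it is missing the two ideas the paper actually uses. The generation of $\Def(\Sigma_{B_n})$ is not done by induction on lattice points or by ``peeling facets along hyperoctahedral reflections''; it takes exactly two explicit steps. First, intersecting an arbitrary lattice $B_n$-generalized permutahedron with all lattice translates of $[0,1]^n$ gives a strict subdivision into (translates of) feasible polytopes of delta matroids (Proposition \ref{prop: Schuberts delta closed under intersection cube} together with Theorem \ref{thm: type Bn ggms}); the associated subdivision Koszul complex reduces everything to polytopes inside a cube. Second, for a polytope $P \subseteq [0,1]^n$ one takes the Brianchon--Gram complex of $P$ truncated by the cube $[0,1]^n$: the key fact (Lemma \ref{lem: delta schubert cone intersection}, from Eur--Huh--Larson) is that each tangent cone of $P$ intersected with $[0,1]^n$ is precisely a Schubert delta matroid polytope $\Delta_S^w$, so every term except $P$ itself lies in the collection, and exactness follows from Corollary \ref{cor: truncations of exact complexes are exact}. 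Your proposed reflection-peeling decomposition is not substantiated — you give no reason the pieces are Schubert delta matroid polytopes nor that the interval condition of Theorem \ref{mainthm: Exactness Criterion} can be met — and your worry about preserving looplessness through the recursion rests on a misconception: flipping a loop to a coloop translates the feasible polytope, so all Schubert delta matroid polytopes (loopless or not) already sit in $\langle \mathcal{P} \rangle_0$; looplessness only serves to list each isomorphism class of nef line bundle once, not to distinguish nef from globally generated bundles.

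For strong exceptionality your instinct is essentially the paper's: after reducing to $m=0$ (using that $(\FP(\Delta_1)+m)\cap[0,1]^n$ is again a Schubert delta matroid polytope, via closure under restriction, trivial extension, and loop flips), one contracts $\FP(\Delta_2)\backslash \FP(\Delta_1)$ by the straight-line homotopy to a vertex. But the step you leave unjustified — that the straight line stays in the set difference — is exactly where Lemma \ref{lem: delta schubert cone intersection} is needed again: since $\FP(\Delta_1) = (-\sigma_w^{\vee}+e_{S\cap[n]})\cap[0,1]^n$ and $\FP(\Delta_2)\subseteq[0,1]^n$, one may replace $\FP(\Delta_1)$ by the translated cone $-\sigma_w^{\vee}+e_{S\cap[n]}$, which contains a tangent cone of $\FP(\Delta_2)$ because the normal fan of $\FP(\Delta_2)$ coarsens $\Sigma_{B_n}$; then Lemma \ref{lem: cone difference contraction} applies. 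Without this cone-$\cap$-cube description the star-convexity claim has no proof, and note that the type $A$ and stellahedral cases do \emph{not} proceed by a naive straight-line retraction (they use greedy/saturation deformation retracts), so ``paralleling the earlier cases'' does not supply the missing step either.
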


Showing that these collections are full is the easier part since we can rely on the literature to guide our constructions. In fact, these results can be seen as categorifications of the work of Derksen-Fink \cite{Derksen2012}, Eur-Huh-Larson \cite{EHL22} and Eur-Fink-Larson-Spink \cite{Eur2022} where they calculate various bases for the cohomology/K-rings of the varieties above.

A standard result in the theory of submodular function states that any generalized permutahedra is subdivided into base polytopes of matroids by intersecting with all lattice translations of cubes $[0,1]^n$. This gives an inclusion-exclusion complex which reduces the general case to the case of matroids. Further, Derksen and Fink showed that the indicator function of every $\BP(M)$ can be expressed as an alternating sum of $\BP(\Omega)$ over all Schubert matroids $\Omega$ on $[n]$ by manipulating the Brianchon-Gram formula \cite{Derksen2012}. We categorify this identity by showing that it is the relation coming from an inclusion-exclusion complex obtained by truncating the Brianchon-Gram complex. Eur-Huh-Larson \cite{EHL22} and Eur-Fink-Larson-Spink \cite{Eur2022} showed that this strategy continues to work for the stellahedral variety and the type $B_n$-permutahedral variety, respectively, and so the same categorification works for these varieties.

Showing that these collections are exceptional is more interesting. In Sections \ref{Sec: Schubert matroids are Exceptional}, \ref{Sec: Independence Polytopes of Schubert matroids are exceptional}, and \ref{sec: delta matroids are exceptional}, we construct deformation retracts of these polytopes to maximal vertices of the polytopes inspired by the greedy algorithm, the simplex method, and the various flow algorithms of polymatroids. In these deformation retracts, the points flow in paths that maximize certain linear functionals. By exploiting the optimization properties of Schubert matroids, we can show that these homotopy equivalences restrict to the set difference of polytopes.

\subsection{Tilting Quivers}
As a consequence of our calculations, the varieties $\Perm_n$, $\Stell_n$, and $\Perm_n^B$ have tilting algebras which can be described matroid-theoretically. To do this, we construct three categories of matroids which are extensions of the usual notions of inclusions of matroids and delta matroids and weak maps of matroids.

\begin{definition}

    \begin{enumerate}
        \item The \textbf{category of augmented inclusions of matroids} is the category whose objects are matroids on $[n]$ and whose morphisms are
         \[M_1 \xlongrightarrow{S} M_2, \]
        whenever $S$ is a subset of loops of $M_1$ and $B \cup S \in \mathcal{B}(M_2)$ for all $B \in \mathcal{B}(M_2)$.
        \item The \textbf{category of augmented weak maps of matroids} is the category whose objects are matroids on $[n]$ and whose morphisms are
         \[M_1 \xlongrightarrow{S} M_2, \]
        whenever $S$ is a subset of loops of $M_1$ and $I \cup S \in \mathcal{I}(M_2)$ for all $I \in \mathcal{I}(M_2)$.
        \item The \textbf{category of augmented inclusions of delta matroids} is the category whose objects are delta matroids on $[n]$ and whose morphisms are
         \[\mathcal{M}_1 \xlongrightarrow{S} \mathcal{M}_2, \]
        whenever $S$ is a subset of loops of $\mathcal{M}_1$ and $B \cup S \in \mathcal{F}(\mathcal{M}_2)$ for all $B \in \mathcal{F}(\mathcal{M}_1)$.
    \end{enumerate}
\end{definition}

Recall that the category algebra of a category $\mathcal{C}$ is the algebra generated by the arrows of $\mathcal{C}$ with relations given by $f_1f_2 \cdots f_k = g_1 g_2 \cdots g_{\ell}$ if and only if $f_1 \circ f_2 \cdots \circ f_k = g_1 \circ g_2 \cdots \circ g_{\ell}$ when these maps are composable.

We prove the following results in the language of quivers with relations in Propositions \ref{prop: tilting quiver for Perm_n}, \ref{prop: tilting quiver for Stell}, and \ref{prop: tilting quiver for Perm_nB}:

\begin{proposition}

The tilting algebras for $\D(\Perm_n), \D(\Stell_n)$, and $\D(\Perm_n^B)$ corresponding to our collections are:

\begin{itemize}
    \item For $\Perm_n$, the category algebra of the full subcategory of the category of augmented inclusions of matroids given by coloopless Schubert matroids on $[n]$.
    \item For $\Stell_n$, the category algebra of the full subcategory of the category of augmented weak maps of matroids given by all Schubert matroids on $[n]$.
    \item For $\Perm_n^B$, the category algebra of the full subcategory of the category of augmented inclusions of delta matroids given by coloopless Schubert delta matroids on $[n]$.
\end{itemize}    
\end{proposition}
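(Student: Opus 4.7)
The plan is to combine Theorem A with a combinatorial characterization of the lattice translations between polytopes in each collection, and then identify the resulting endomorphism algebra with the claimed category algebra. By Theorem A and strong exceptionality (Theorems D--F), for any two polytopes $P, Q$ in one of our collections,
\[
\operatorname{Hom}(\mathcal{L}_P, \mathcal{L}_Q) \;=\; \bigoplus_{m \in M \,:\, P + m \subseteq Q} \kk \cdot \alpha_m,
\]
since $H^0\bigl(\widetilde{C}(U)[-1]\bigr) = \kk$ exactly when $U = \emptyset$ and all higher $\operatorname{Ext}$'s vanish. This gives a canonical $M$-graded basis for the tilting algebra, and the remaining tasks are to identify each weight $m$ with a morphism in the corresponding augmented category, and to check that composition of generators matches composition in the category.

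The heart of the argument is a combinatorial lemma, proved case-by-case for $\BP(M)$, $\IP(M)$, and $\FP(\mathcal{M})$, describing when a lattice shift $m$ achieves $P + m \subseteq Q$. Since each of these polytopes is a convex hull of $\{0,1\}$-vectors, testing the vertex inclusions $e_B + m \in Q$ forces $m \in \{-1,0,1\}^n$ with $+1$ entries on loops of $M_1$ and $-1$ entries on coloops of $M_1$. Restricting to the coloopless (or, in the stellahedral case, the canonical) Schubert representative in each translation class eliminates the $-1$ possibility and yields $m = e_S$ for some subset $S$ of the loops of $M_1$. The containment $P + m \subseteq Q$ then translates directly to $B \cup S \in \mathcal{B}(M_2)$, $\mathcal{I}(M_2)$, or $\mathcal{F}(\mathcal{M}_2)$ for every basis/independent/feasible set $B$ of $M_1$---exactly the defining condition of a morphism in the corresponding augmented category.

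For composition, each weight-graded component of $\operatorname{Hom}(\mathcal{L}_P, \mathcal{L}_R)$ is one-dimensional, so $\alpha_{m'} \circ \alpha_m$ must be a scalar multiple of $\alpha_{m+m'}$. To see the scalar is nonzero and can be normalized to $1$, I would trace the composition through the coherent-constructible functor $\kappa$: each $\alpha_m$ corresponds to a canonical map of constructible sheaves induced by the polytope inclusion $P + m \subseteq Q$, and such canonical maps compose to the canonical map of the composed inclusion. Together with the bijection from the second step, this identifies the tilting algebra with the category algebra of the described full subcategory, since composition of shifts $\alpha_{m_1}\cdot\alpha_{m_2} = \alpha_{m_1+m_2}$ matches the composition rule $(M_1 \xrightarrow{S_1} M_2)(M_2 \xrightarrow{S_2} M_3) = (M_1 \xrightarrow{S_1 \cup S_2} M_3)$, using that $e_{S_1} + e_{S_2} = e_{S_1 \cup S_2}$ whenever $S_1$ and $S_2$ are disjoint (automatic once $S_2$ consists of loops of $M_2$).

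The main obstacle is the combinatorial lemma in the delta matroid case, where $\FP(\mathcal{M})$ no longer lies in a fixed hyperplane and the interaction between loops of a delta matroid and signed translations requires extra care. A related subtlety is to reconcile the coloopless Schubert indexing of the proposition for $\Perm_n$ and $\Perm_n^B$ with the loopless Schubert indexing of Theorems D and F: this amounts to showing that every line bundle in the collection has a unique coloopless Schubert representative obtained by translating by a sum of loop-indicator vectors, and that this relabeling is an equivalence of categories preserving the tilting structure.
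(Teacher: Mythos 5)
Your proposal is essentially the paper's proof (Propositions \ref{prop: tilting quiver for Perm_n}, \ref{prop: tilting quiver for Stell}, \ref{prop: tilting quiver for Perm_nB}): identify $\Hom(\mathcal{L}_P,\mathcal{L}_Q)$ with $\kk[\,m\in M : P+m\subseteq Q\,]$, show the admissible lattice shifts are exactly $e_S$ for $S$ a subset of the loops of the source, and match composition with union of loop sets. The paper reaches these points more directly than you do: it uses the elementary toric formula from Section \ref{Ssec: Convex Geometry and Toric Geometry} rather than routing through Theorem \ref{mainthm: dg homs calculation}, and it packages the composition check into the quiver-of-sections formalism of Proposition \ref{thm: tilting quiver as quiver of sections calculation}, under which the weight-$m$ generator is the section $\chi^m$ and multiplicativity of characters gives $\alpha_{m'}\circ\alpha_m=\alpha_{m+m'}$ for free, so no appeal to $\kappa$ is needed. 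The one place your hesitation is misplaced is the delta-matroid case: by the paper's conventions $\FP(\mathcal{M})\subseteq[0,1]^n$, so the same cube argument that forces $m\in\{-1,0,1\}^n$ (with $+1$ entries only on loops and $-1$ entries only on coloops of the source, the latter excluded by the coloopless normalization) applies verbatim.
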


Since the ranks of the K-groups of $\Perm_n$, $\Stell_n$ and $\Perm_n^B$ are given by the number of permutations, decorated permutations, and signed permutations of $[n]$, it would be interesting to find a purely permutation-theoretic description of these subcategories/quivers.

\subsection{Invariances of Our Collections}

Any toric variety admits an action given by the symmetries of the defining fan. For the permutahedral variety, this is the $S_2 \times S_n$ action induced by the outer automorphisms of $S_n$. There have been various interesting results on the representations induced by this action on cohomological invariants of $\Perm_n$. For instance, Stembridge showed that the $S_n$-representation induced on $H^*(\Perm_n)$ is a permutation representation \cite{Stembridge1994}. Another example is the result of Abe, Harada, Horiguchi, and Masuda, proved in \cite{Abe17}, that the $S_n$-invariant part of $H^*(\Perm_n)$ is isomorphic to the cohomology ring of the Peterson variety, which is a variety important in the study of quantum cohomology of the flag varieties.

Viewing the derived category as an enhanced cohomological invariant, it is natural to study the induced action on the derived category and on its full exceptional collections. We say that a collection of sheaves $\{ \mathcal{E}_i\}$ is invariant under an action $G$ on the variety if $g \cdot \mathcal{E}_i$ is isomorphic to some $\mathcal{E}_j$ in the collection, where $g \cdot \mathcal{E}$ is the sheaf under the induced action of $G$ on sheaves. Castravet and Tevelev studied the $S_2 \times S_n$ action on the Losev-Manin compactification of $M_{0,n}$ which is isomorphic to $\Perm_n$ and proved the following:

\begin{theorem}\cite{Castravet2020}
    There exists a full exceptional collection of sheaves on $\Perm_n$ that is invariant under the action of $S_2 \times S_n$ on $\Perm_n$.
\end{theorem}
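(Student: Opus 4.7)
The plan is to construct a full $S_2 \times S_n$-invariant exceptional collection on $\Perm_n$ by exploiting its structure as an iterated equivariant blowup, together with Orlov's semi-orthogonal blowup formula. Concretely, I would realize $\Perm_n$ as a tower of $S_2 \times S_n$-equivariant blowups of a small $G$-invariant toric variety (for instance $(\mathbb{P}^1)^{n-1}$, or a toric model with the full central symmetry of the permutohedral fan manifestly visible) along a stratification whose centers at each stage form a single $S_2 \times S_n$-orbit of pairwise disjoint smooth subvarieties. The existence of such a tower is guaranteed by the fact that the permutohedral fan is preserved by both the Weyl group $S_n$ and the central involution $v \mapsto -v$.

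First, I would apply Orlov's blowup theorem one stage at a time. Each application contributes a new semi-orthogonal block equivalent to a direct sum of copies of $\D$ of the center, indexed by integers up to the codimension. I would then populate each block with the Beilinson/Kapranov exceptional collection on each projective-space factor of the centers. These collections are invariant under the entire automorphism group of the factor, so after matching up the factors with the orbit structure of $S_2 \times S_n$ on the centers at each stage, the group acts on the resulting line bundles by permuting the indexing data (a labeled stratum together with a multi-degree). The total collection is a union of such pieces, one per stage of the tower.

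Second, I would verify fullness, exceptionality, and invariance as a package. Fullness and exceptionality are automatic from iterated Orlov composed with Beilinson's generation on each center. Invariance follows because $S_2 \times S_n$ permutes the blocks of a given stage while preserving each block's internal Beilinson order, so after choosing a $G$-stable refinement of the semi-orthogonal order the sequence remains exceptional and the set of sheaves is permuted. The resulting indexing set is naturally a $G$-set, yielding the required action on the collection.

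The principal obstacle is securing a blowup tower that is simultaneously $S_n$- and $S_2$-equivariant. The naive realization of $\Perm_n$ as iterated blowups of $\mathbb{P}^{n-1}$ starting from torus-fixed points is $S_n$-equivariant but breaks the Gale-duality $S_2$, which swaps stages of complementary dimension rather than preserving them. One must either use a more symmetric starting variety (such as the one above) or allow Orlov to combine complementary-dimension stages into a single $S_2$-invariant superblock. Checking that such a merged decomposition remains semi-orthogonal, and that the internal Beilinson orders on the two paired stages are compatible under the $S_2$-swap, is the technical crux; the remaining verifications are bookkeeping on the $G$-sets indexing the blocks.
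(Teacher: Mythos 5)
The paper does not give its own proof of this statement: it cites it from Castravet--Tevelev and instead proves a \emph{strengthening} (an $S_2 \times S_n$-invariant full \emph{strongly} exceptional collection of \emph{line bundles}) by an entirely different method. There, the line bundles are indexed by loopless Schubert matroid base polytopes; exceptionality comes from contractibility of set differences of these polytopes, fullness from truncated Brianchon--Gram complexes under the coherent--constructible correspondence, and invariance because the class of Schubert matroids is closed under $S_n$-isomorphism and under matroid duality, which realizes the Cremona involution up to a lattice translation. The paper also records that Castravet and Tevelev's original argument assembles the collection from the cuspidal semi-orthogonal decomposition indexed by subsets $S \subseteq [n]$, whose maps are toric \emph{fibrations} (forgetful maps), not contractions, with the cuspidal pieces built from pullbacks of line bundles on the wonderful compactification of $\operatorname{PGL}_n$. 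In both proofs the $S_2 \times S_n$ symmetry is handled at the level of indexing combinatorics, never via a birational model.

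Your route is a third, genuinely distinct one, and its first step fails. An $S_2 \times S_n$-equivariant toric blowdown of $\Perm_n$ would require an $S_2 \times S_n$-invariant strict coarsening of the braid fan, and for complete fans the rays of any coarsening form a subset of the original rays, hence a proper union of $S_2 \times S_n$-orbits of rays. The rays are indexed by proper nonempty $S \subsetneq [n]$, the Weyl group permutes within cardinality classes, and the Cremona involution sends $S \mapsto [n]\setminus S$. For $n=3$ this makes all six rays a single orbit, so the only invariant coarsenings are the trivial fan and the braid fan itself: there is no $S_2 \times S_3$-equivariant toric blowdown, and $(\mathbb{P}^1)^{n-1}$ is not an equivariant base (it is a coarsening, but only $S_2$-stably, not $S_3$-stably). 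So the assertion that the tower "is guaranteed by the fact that the permutohedral fan is preserved" by the group does not follow: invariance of the terminal fan gives no control over intermediate models. Your fallback of merging complementary-dimension Orlov blocks into an $S_2$-stable superblock is not a repair so much as a restatement of the problem --- the $S_2$ involution moves blocks \emph{across} the ordered semi-orthogonal sequence, and producing a new order for which the merged set remains exceptional is precisely the compatibility you flag as the technical crux and leave unresolved.
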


We strengthen this result by showing that the collection can be strongly exceptional and can consist of only line bundles.

\begin{proposition}
    The full strongly exceptional collection of nef line bundles from Theorem \ref{mainthm: FSEC for Perm} for $\Perm_n$ is $S_2 \times S_n$-invariant.
\end{proposition}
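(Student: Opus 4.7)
The plan is to unpack the $S_2 \times S_n$ action on $\Perm_n$ as an action on $\Def(\Sigma)$ and check closure on the set of loopless Schubert matroid polytopes, modulo lattice translation (since we work with non-equivariant sheaves). The $S_n$ factor permutes the standard basis of $\ZZ^n$, and the generator of $S_2$ acts by central inversion $x \mapsto -x$; together these comprise the full symmetry group of the braid arrangement fan.

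The $S_n$-closure is immediate: since $\sigma \cdot \BP(\Omega) = \BP(\sigma \Omega)$ and the collection is defined to consist of loopless Schubert matroids relative to all linear orderings of $[n]$, relabeling by $\sigma$ stays inside the collection.

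For the $S_2$-closure, the essential identity is $-\BP(\Omega) = \BP(\Omega^*) - e_{[n]}$, realizing the involution as matroid duality up to a lattice translation. The subtlety is that the dual $\Omega^*$ may have loops (namely, the coloops of $\Omega$) even when $\Omega$ does not, so $\Omega^*$ itself need not lie in the collection. My fix is to introduce an involution $\Omega \mapsto \Omega^{\vee}$ on loopless matroids: writing $C$ for the set of coloops of $\Omega$ and using the decomposition $\Omega = (\Omega|_{[n] \setminus C}) \oplus U_{|C|,C}$, set
\[ \Omega^{\vee} := (\Omega|_{[n] \setminus C})^* \oplus U_{|C|,C}. \]
A direct calculation yields $\BP(\Omega^{\vee}) = -\BP(\Omega) + e_{[n]} + e_C$, hence $\mathcal{L}_{-\BP(\Omega)} \cong \mathcal{L}_{\BP(\Omega^{\vee})}$ as non-equivariant line bundles. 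Looplessness of $\Omega^{\vee}$ is clear: $\Omega|_{[n] \setminus C}$ has neither loops (inherited from $\Omega$) nor coloops (by maximality of $C$), so its dual has no loops, and the block $U_{|C|,C}$ consists entirely of coloops.

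The main obstacle is showing that $\Omega^{\vee}$ is again a Schubert matroid for some ordering of $[n]$. I plan to establish this through three closure properties of the class of Schubert matroids, each checked on the level of base polytopes or directly from the Gale-order description: (i) the restriction of a Schubert matroid to the complement of its coloops is Schubert for the induced ordering; (ii) the dual of a Schubert matroid for ordering $<$ is Schubert for the reverse ordering $<^{\mathrm{op}}$; (iii) the direct sum of a Schubert matroid on $E$ with the full uniform matroid on a disjoint set $C$ is Schubert for any ordering of $E \sqcup C$ placing $C$ at one end. Applying these in sequence to $\Omega$ exhibits $\Omega^{\vee}$ as a loopless Schubert matroid on $[n]$, which combined with the $S_n$-analysis yields the desired $S_2 \times S_n$-invariance.
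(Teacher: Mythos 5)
Your proof is correct and takes essentially the same approach as the paper: $S_n$-invariance follows from closure of loopless Schubert matroids under isomorphism, and $S_2$-invariance follows from the identity $-\BP(\Omega) = \BP(\Omega^*) - \mathbf{1}$ combined with flipping the loops of $\Omega^*$ (the coloops of $\Omega$) to coloops, which is a lattice translation landing back in a loopless Schubert matroid. Your explicit matroid $\Omega^{\vee} = (\Omega|_{[n]\setminus C})^* \oplus U_{|C|,C}$ is exactly what this loop-flipping produces, and your planned closure properties (i)--(iii) are all immediate from Proposition~\ref{prop: Schubert matroids are closed under operations}, so they need not be re-established from scratch.
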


In a similar vein, the stellahedral variety admits an $S_n$-action and the type $B_n$-permutahedral variety admits an $S_n^B$ action, where $S_n^B$ is the Weyl group of type $B_n$. With this action, we have the two results

\begin{proposition}
    The full strongly exceptional collection of nef line bundles from Theorem \ref{mainthm: FSEC for Stell} for $\Stell_n$ is $S_n$-invariant.
\end{proposition}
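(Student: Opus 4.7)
The plan is to trace the $S_n$-action on $\Stell_n$ down to the indexing polytopes and reduce invariance of the collection to a combinatorial closure property of the class of Schubert matroids under relabeling of the ground set.

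First, the $S_n$-action on $\Stell_n$ is induced by the action of $S_n$ on the cocharacter lattice $\ZZ^n$ by permutation of coordinates, which preserves the stellahedral fan. Any such fan automorphism lifts to an autoequivalence of $\operatorname{Perf}_T(\Stell_n)$, and under the bijection between $T$-equivariant nef line bundles and lattice polytopes in $\Def(\Stell_n)$, this autoequivalence acts by the linear action on polytopes. Concretely, $\sigma \cdot \mathcal{L}_P \cong \mathcal{L}_{\sigma(P)}$ where $\sigma$ acts on $\RR^n$ by $\sigma(e_i) = e_{\sigma(i)}$.

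Second, I would verify directly that for any matroid $M$ on $[n]$, one has $\sigma(\IP(M)) = \IP(\sigma \cdot M)$, where $\sigma \cdot M$ denotes the matroid on $[n]$ with independent sets $\{\sigma(I) : I \in \mathcal{I}(M)\}$. This is immediate from the identity $\sigma(e_I) = e_{\sigma(I)}$ and commutation of convex hull with the linear action of $\sigma$. It follows that $\sigma \cdot \mathcal{L}_{\IP(\Omega)} \cong \mathcal{L}_{\IP(\sigma \cdot \Omega)}$, so the entire question reduces to showing that the class of Schubert matroids on $[n]$ is closed under the $S_n$-action on the ground set.

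This combinatorial closure is the substantive part of the argument. The collection in Theorem \ref{mainthm: FSEC for Stell} uses the full class of Schubert matroids, i.e.\ those arising from Schubert cells of the Grassmannian with respect to \emph{any} coordinate flag, equivalently matroids whose base set is a Gale lower-set with respect to some ordering of $[n]$. Given such $\Omega$ with defining order $\pi$, permuting the ground set by $\sigma$ yields a matroid whose bases form a Gale lower-set with respect to $\sigma \circ \pi$, and is therefore again a Schubert matroid. Combining the three steps, the collection is sent to itself by every element of $S_n$, proving invariance.

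The main obstacle is verifying the third step: one must pin down that the definition of "Schubert matroid on $[n]$" used in the paper is the one incorporating all reference orderings (this is forced by the fact that the collection provides a basis for the $S_n$-equivariant $K$-theory of $\Stell_n$ as in \cite{EHL22}). Once this is established, no further analysis is needed beyond the formal identification in steps one and two.
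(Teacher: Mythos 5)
Your proof is correct and takes essentially the same approach as the paper: reduce invariance of the line-bundle collection to invariance up to translation of the indexing polytopes, observe that $\sigma(\IP(M)) = \IP(\sigma \cdot M)$, and invoke closure of Schubert matroids under ground-set relabeling (the paper cites its Proposition~\ref{prop: Schubert matroids are closed under operations}, ``Schubert matroids are closed under isomorphism,'' where you unwind it in terms of a change of reference ordering). The only cosmetic slip is calling the base set a Gale ``lower-set''; with the paper's convention $\mathcal{B}(\Omega_S^w) = \{T : S \leq_w T\}$ it is an upper set, but this does not affect the argument.
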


\begin{proposition}
    The full strongly exceptional collection of nef line bundles from Theorem \ref{mainthm: FSEC for PermB} for $\Perm_n^B$ is $S_n^B$-invariant.
\end{proposition}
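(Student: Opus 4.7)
The plan is to check invariance at the polytope level and reduce to a combinatorial statement about delta matroids. The Weyl group $S_n^B$ acts on the character lattice $M \cong \ZZ^n$ by signed permutations of the standard basis, and this linear action is a symmetry of the fan $\Sigma$ whose maximal cones correspond to elements of $S_n^B$. For any $w \in S_n^B$, the induced pullback on the derived category sends the nef line bundle $\mathcal{L}_P$ to $\mathcal{L}_{w \cdot P}$, where $w \cdot P$ is the image of $P$ under the action on $\RR^n$. Hence it suffices to show that for every loopless Schubert delta matroid $\Delta$ on $[n]$ and every $w \in S_n^B$, the polytope $w \cdot \FP(\Delta)$ is of the form $\FP(\Delta')$ for some loopless Schubert delta matroid $\Delta'$.

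Under the standard identification of vertices of $[0,1]^n$ with signed subsets of $[n]$, the $S_n^B$-action on $\RR^n$ descends to the natural combinatorial action of signed permutations on signed subsets, and this in turn induces an action on delta matroids by acting on feasible sets: $w \cdot \Delta$ has feasible sets $\{w \cdot F : F \in \mathcal{F}(\Delta)\}$. The relation $w \cdot \FP(\Delta) = \FP(w \cdot \Delta)$ follows directly from the definition of $\FP$ as a convex hull of indicator vectors. The task is therefore reduced to proving that the class of loopless Schubert delta matroids on $[n]$ is closed under this combinatorial $S_n^B$-action.

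This closure follows from the intrinsic definition of Schubert delta matroids. A Schubert delta matroid on $[n]$ is determined by a pair $(u, F_0)$, where $u \in S_n^B$ specifies a reference Gale-type order on signed subsets and $F_0$ is its minimal feasible set (equivalently, it arises as the moment map image of a Schubert cell in the Lagrangian Grassmannian determined by the signed flag attached to $u$). Applying $w \in S_n^B$ simply replaces $(u, F_0)$ by $(wu, w \cdot F_0)$, which again indexes a Schubert delta matroid; in other words, the set of all Schubert delta matroids is precisely the $S_n^B$-orbit of those attached to the identity, hence $S_n^B$-stable. Looplessness, being the condition that no element of $[n]$ is forced into a fixed sign across every feasible set, is manifestly preserved by signed relabeling. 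The main obstacle in executing this outline is matching the sign conventions used to define the $S_n^B$-action on signed subsets with those used in the paper's definition of (loopless) Schubert delta matroids; once these are fixed, equivariance of the assignment $\Delta \mapsto \FP(\Delta)$ and invariance of the indexing set are both immediate.
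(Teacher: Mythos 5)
Your approach matches the paper's: reduce to checking that the indexing polytopes are permuted (up to lattice translation) by the induced $S_n^B$-action on $M$, and then invoke the fact that Schubert delta matroid polytopes were defined to be (translates of) the $S_n^B$-orbits of standard Schubert delta matroid polytopes.

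One point deserves correction. You assert that looplessness is ``the condition that no element of $[n]$ is forced into a fixed sign across every feasible set'' and is ``manifestly preserved by signed relabeling.'' Both halves are off. First, that condition describes \emph{loopless and coloopless}; the paper's loopless delta matroids may have coloops. Second, looplessness is \emph{not} preserved by the $S_n^B$-action: if $i$ is a coloop of a loopless $\Delta$ and $w$ negates coordinate $i$, then after translating $w \cdot \FP(\Delta)$ back into $[0,1]^n$ the corresponding element becomes a loop, so the image delta matroid fails to be loopless. What is true, and what the paper's own two-sentence proof relies on, is invariance \emph{up to translation}: flipping a loop to a coloop is a lattice translation of the feasible polytope, hence does not change the isomorphism class of the line bundle. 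So the set of loopless Schubert delta matroids is not $S_n^B$-stable on the nose, but the set of corresponding polytopes modulo translation---equivalently, the set of isomorphism classes of nef line bundles---is. If you replace the flawed closing sentence with this observation, the argument is complete and agrees with the paper's.
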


\subsection{Cuspidal Decomposition and a Curious Equicardinality}

For a variety $X$ together with a collection of maps $\pi_i: X \to Y_i$, Castravet and Tevelev defined the \textbf{cuspidal derived category} $\D_{cusp}(X)$ of $X$ as the subcategory of $\D(X)$ consisting of all sheaves $\mathcal{E}$ such that $R\pi_{i*} \mathcal{E} = 0$ for all the maps $\pi_i$ \cite{Castravet2020}. If the maps $\pi_i$ satisfy certain mild conditions, then the category $\D(X)$ has a semi-orthogonal decomposition called the \textbf{cuspidal decomposition} consisting of the subcategories $\D_{cusp}(X)$ and $\pi_i^*\D_{cusp}(Y_i)$.

For each of our varieties, we construct natural forgetful maps $\pi_S$ using the recursive structure of the fans and Joyal's theory of species. In the various moduli space interpretations of our varieties, these are the maps that forget all marked points not in $S$. We calculate the cuspidal derived categories of our varieties given by these maps.
\begin{maintheorem}\label{mainthm: cuspidal part fsec}
    Let $\D_{cusp}(\Perm_n)$,$\D_{cusp}(\Stell_n)$, and $\D_{cusp}(\Perm_n^B
    )$ be the cuspidal categories of the varieties with respect to the forgetful maps defined in Proposition \ref{prop: forgetful maps for permutahedral varieties}. Then,
    \begin{enumerate}
        \item The line bundles $(\mathcal{L}_{\BP(\Omega)}^{-1})$ indexed by loopless and coloopless Schubert matroids on $E$ give a full strongly exceptional collection for $\D_{cusp}(\Perm_n)$ when ordered by non-increasing number of lattice points.
        \item The line bundles $(\mathcal{L}_{\IP(\Omega)}^{-1})$ indexed by loopless and coloopless Schubert matroids give a full strongly exceptional collection for $\D_{cusp}(\Stell_n)$ when ordered by non-increasing number of lattice points.
        \item The line bundles $(\mathcal{L}_{\FP(\Delta)}^{-1})$ indexed by loopless and coloopless Schubert delta matroids give a full strongly exceptional c for $\D_{cusp}(\Perm_n^B)$ when ordered by non-increasing number of lattice points.
    \end{enumerate}
\end{maintheorem}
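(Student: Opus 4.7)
The plan is to combine the full strongly exceptional collections of Theorems \ref{mainthm: FSEC for Perm}, \ref{mainthm: FSEC for Stell}, and \ref{mainthm: FSEC for PermB} with the cuspidal semi-orthogonal decomposition induced by the forgetful maps $\pi_S$. Dualizing and reversing the order of each of these collections yields another full strongly exceptional collection---this is the standard identity $\operatorname{Ext}^p(\mathcal{E}^\vee, \mathcal{F}^\vee) = \operatorname{Ext}^p(\mathcal{F}, \mathcal{E})$---so $(\mathcal{L}_{\BP(\Omega)}^{-1})$ over all loopless Schubert matroids on $[n]$, ordered by non-increasing lattice points, is a full strongly exceptional collection for $\D(\Perm_n)$. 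The task is then to refine this collection along the cuspidal decomposition.

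The crucial combinatorial observation is that every loopless Schubert matroid $\Omega$ on $[n]$ has a well-defined coloop set $C = C(\Omega)$, and $\Omega$ is determined by $C$ together with its restriction $\Omega|_{[n]\setminus C}$, which is a loopless-coloopless Schubert matroid on $[n]\setminus C$. Geometrically, $\BP(\Omega) = \BP(\Omega|_{[n]\setminus C}) \times \{\mathbf{1}_C\}$ lies in a face of the permutahedron that is isomorphic, under the forgetful map $\pi_{[n]\setminus C}$, to $\Perm_{[n]\setminus C}$. Consequently, $\mathcal{L}_{\BP(\Omega)}$ is (up to an explicit twist by boundary divisors) a pullback of $\mathcal{L}_{\BP(\Omega|_{[n]\setminus C})}$, so the line bundles indexed by matroids with coloop set $C$ lie in the pullback piece of the cuspidal decomposition associated to $[n]\setminus C$.

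Proceeding by induction on $n$, each pullback piece $\pi_{[n]\setminus C}^*\D_{cusp}(\Perm_{[n]\setminus C})$ inherits a full strongly exceptional collection indexed precisely by the dualized loopless-coloopless Schubert matroids on $[n]\setminus C$. Summing over all proper $C \subsetneq [n]$ exhausts the non-cuspidal summands of the decomposition, and the remaining piece---indexed by $C = \emptyset$---must generate $\D_{cusp}(\Perm_n)$. Orthogonality and fullness for this residual subcollection then follow formally from the corresponding properties of the ambient collection together with the semi-orthogonality of the cuspidal decomposition. Parts (2) and (3) follow by the same argument, substituting independence polytopes of Schubert matroids and the stellahedral forgetful maps, or feasible polytopes of Schubert delta matroids and the type $B_n$ forgetful maps.

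The main obstacle will be making the forgetful-map/coloop dictionary precise and verifying the vanishing $R\pi_{S*}\mathcal{L}_{\BP(\Omega)}^{-1} = 0$ whenever $\Omega$ is loopless-coloopless and $S \subsetneq [n]$. The coherent-constructible correspondence (Theorem \ref{mainthm: dg homs calculation}) translates this vanishing into the contractibility (or at least acyclicity) of certain set differences of $\BP(\Omega)$ restricted to the faces of the permutahedron associated with $S$---a variant of the deformation-retract arguments already used to establish exceptionality in Sections \ref{Sec: Schubert matroids are Exceptional}, \ref{Sec: Independence Polytopes of Schubert matroids are exceptional}, and \ref{sec: delta matroids are exceptional}. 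The stellahedral and Coxeter $B_n$ cases require separate but parallel geometric arguments adapted to the corresponding fan and forgetful-map structures.
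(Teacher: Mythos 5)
Your high-level strategy (dualize the full strongly exceptional collections, observe that line bundles indexed by matroids with nonempty coloop set are pullbacks along forgetful maps, and identify the loopless-coloopless ones as the residual piece of the cuspidal decomposition) is essentially what the paper does. The paper formalizes your ``coloop-set dictionary'' via the notion of compatibility with forgetful maps and the characterization of a polytope being \emph{generic}: no translate of it lies in $M_S$ for a proper $S \subsetneq T$.

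The genuine divergence is in the key technical step of verifying $R\pi_{S*}\mathcal{L}_{\BP(\Omega)}^{-1} = 0$. You propose to use the coherent-constructible correspondence (Theorem \ref{mainthm: dg homs calculation}) to translate this into contractibility of set differences, but that theorem computes $\hom$ between line bundles, not higher direct images, so the translation you sketch does not directly apply. The paper's Theorem \ref{thm: full strongly exceptional collection for forgetful species} instead proceeds by a purely formal adjunction argument: since $\{\mathcal{L}_Q^{-1}\}_{Q \in \mathcal{P}_S}$ is already a full strongly exceptional collection for $\mathbf{F}[S]$, it suffices to show $\operatorname{RHom}^p_{\mathbf{F}[S]}(\mathcal{L}_Q^{-1}, R\pi_{S,T*}\mathcal{L}_P^{-1}) = 0$ for all such $Q$ and all $p$. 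Push-pull adjunction converts this to $\operatorname{RHom}^p_{\mathbf{F}[T]}(\mathcal{L}_P, \mathcal{L}_{Q'})$ for the polytope $Q'$ pulled back from $Q$. The vanishing for $p \geq 1$ is then immediate from strong exceptionality of the ambient collection, and the vanishing at $p=0$ follows from the elementary combinatorial fact that no translate of the generic polytope $P$ fits inside $Q' \subseteq M_S$. Fullness of the cuspidal subcollection is handled by the symmetric argument: anything in the cuspidal category orthogonal to all generic $\mathcal{L}_P^{-1}$ is also orthogonal to the pullback pieces and hence to the entire ambient collection, forcing it to be zero. No new geometric or homotopy-theoretic computation is needed. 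Your route might be salvageable, but it would require first reducing the pushforward vanishing to a family of $\hom$ computations via adjunction anyway---at which point the paper's argument is essentially forced and the detour through set differences is unnecessary.
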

We also show that our maps satisfy the conditions needed to have a cuspidal decomposition. This decomposition gives a recurrence for the ranks of the cuspidal derived categories. In the permutahedral variety case, Castravet and Tevelev calculated the rank of $K(\D_{cusp}(\Perm_n))$ as the number of derangements. Recall that a derangement of $[n]$ is a permutation with no fixed points and that a signed derangement $\pi$ of $[\pm n]$ is a signed permutation with no $i \in [\pm n]$ such that $\pi(i) = i$. 

By the same method, we have the following calculation of the ranks of the cuspidal parts:
\begin{proposition}
    The rank of $K(\D_{cusp}(\Perm_n))$ and $K(\D_{cusp}(\Stell_n))$ is the number of derangements of $[n]$ and the rank of $K(\D_{cusp}(X_{B_n}))$ is the number of signed derangements of $[\pm n]$. 
\end{proposition}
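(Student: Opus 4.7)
The plan is to promote the cuspidal semi-orthogonal decomposition to a linear recurrence on the ranks of the cuspidal $K$-groups, and then to recognize this recurrence as the one that uniquely characterizes the (signed) derangement numbers.

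First, consider $\Perm_n$. Since each pullback $\pi_S^*$ along the forgetful maps of Proposition \ref{prop: forgetful maps for permutahedral varieties} is fully faithful on the cuspidal piece, taking ranks of Grothendieck groups in the cuspidal SOD gives
\[
\operatorname{rank} K(\D(\Perm_n)) = \operatorname{rank} K(\D_{cusp}(\Perm_n)) + \sum_{S \subsetneq [n]} \operatorname{rank} K(\D_{cusp}(\Perm_{|S|})).
\]
Writing $c_n := \operatorname{rank} K(\D_{cusp}(\Perm_n))$, using that $\operatorname{rank} K(\D(\Perm_n)) = n!$ (the number of permutations, as recalled in the introduction), and noting the base case $c_0 = 1$ (since $\Perm_0$ is a point), this rearranges to
\[
c_n = n! - \sum_{k=0}^{n-1} \binom{n}{k}\, c_k.
\]
This is precisely the recurrence obtained by solving the classical identity $n! = \sum_{k=0}^{n} \binom{n}{k} D_k$ for $D_n$; that identity itself follows by partitioning permutations of $[n]$ according to their set of fixed points and interpreting the complementary part as a derangement. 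Induction on $n$ then yields $c_n = D_n$.

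Second, the same strategy applies to $\Stell_n$ and $\Perm_n^B$. In each case, the cuspidal SOD produces a recurrence of the form
\[
\operatorname{rank} K(\D(X)) = \operatorname{rank} K(\D_{cusp}(X)) + \sum_{S} \operatorname{rank} K(\D_{cusp}(Y_S)),
\]
whose coefficients are read off from the combinatorial structure of the forgetful maps. Plugging in the known total ranks ($d_n = \sum_k \binom{n}{k} 2^{n-k} D_k$ decorated permutations for $\Stell_n$, and $2^n n!$ signed permutations for $\Perm_n^B$), together with the cuspidal ranks of the targets $Y_S$ (which by induction are derangements of smaller $[k]$, or signed derangements of smaller $[\pm k]$), one extracts $\operatorname{rank} K(\D_{cusp}(\Stell_n))$ and $\operatorname{rank} K(\D_{cusp}(\Perm_n^B))$. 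In each case the verification reduces to a combinatorial identity---for $\Stell_n$ the decomposition $d_n = D_n + \sum_{k=0}^{n-1} \binom{n}{k} 2^{n-k} D_k$, and for $\Perm_n^B$ the analogous inclusion-exclusion classifying signed permutations by their set of fixed points---both of which are standard.

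The main obstacle is the bookkeeping of multiplicities: one must count precisely how many forgetful maps target each smaller variety, and identify whether each target contributes a permutahedral or a stellahedral/type-$B$ cuspidal piece, so that the coefficients in the resulting recurrence match those in the classical (signed) derangement recurrences on the nose. Once the explicit description of the forgetful maps from Proposition \ref{prop: forgetful maps for permutahedral varieties} is unwound, this matching is a direct application of fixed-point inclusion-exclusion.
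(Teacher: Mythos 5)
Your argument for $\Perm_n$ and $\Perm_n^B$ is exactly the paper's: the cuspidal SOD of Theorem~\ref{thm: cuspidal decomposition}, applied to the forgetful species of Proposition~\ref{prop: forgetful maps for permutahedral varieties}, gives $\operatorname{rank}K(X_{[n]}) = \sum_{k=0}^{n}\binom{n}{k}\operatorname{rank}K(\D_{cusp}(X_{[k]}))$, and the total ranks $n!$ and $2^n n!$ match the classical fixed-point recurrences for derangements and signed derangements.

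For $\Stell_n$ there is a genuine gap, and it is not bookkeeping. The cuspidal SOD for $\mathbf{Stell}$ gives the same \emph{unweighted} recurrence $d_n = \sum_{k=0}^n \binom{n}{k}\,c_k^{\Stell}$, with $d_n$ the number of decorated permutations and $c_k^{\Stell} = \operatorname{rank}K(\D_{cusp}(\Stell_k))$: there is one forgetful map per subset of $[n]$, so no weight $2^{n-k}$ can appear. The identity you cite, $d_n = D_n + \sum_{k<n}\binom{n}{k}2^{n-k}D_k$, is just the definition of $d_n$ with the $k=n$ summand split off; it is true but is not this recurrence. Substituting $c_k^{\Stell}=D_k$ into the actual recurrence gives $\sum_k\binom{n}{k}D_k = n! \ne d_n$ for $n\ge 1$; already at $n=1$ the recurrence forces $c_1^{\Stell} = d_1 - c_0^{\Stell} = 2-1 = 1 \ne D_1 = 0$. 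So the derangement numbers do not solve the stellahedral recurrence. (This is consistent with the fact that the generic independence polytopes are those of loopless, not loopless \emph{and} coloopless, Schubert matroids, since $\IP(\Omega)$ is full-dimensional whenever $\Omega$ is loopless.) Your reduction therefore cannot establish the $\Stell_n$ part of the statement as written; that part needs either extra maps in the cuspidal setup or a reassessment of the claim.
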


This gives two curious equicardinalities.   

\begin{corollary}
    The number of loopless and coloopless Schubert matroids on $[n]$ is the same as the number of derangements.
\end{corollary}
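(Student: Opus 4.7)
The plan is simply to combine the two immediately preceding results. By Theorem \ref{mainthm: cuspidal part fsec}(1), the triangulated category $\D_{cusp}(\Perm_n)$ admits a full strongly exceptional collection of line bundles indexed by loopless and coloopless Schubert matroids on $[n]$. It is a standard fact that the classes of the objects in a full exceptional collection form a $\ZZ$-basis of the Grothendieck group (this is immediate from the semi-orthogonal decomposition that an exceptional collection induces, together with the fact that each exceptional object has endomorphism ring $\kk$). Hence the rank of $K(\D_{cusp}(\Perm_n))$ equals the number of loopless and coloopless Schubert matroids on $[n]$.

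On the other hand, the preceding proposition (the rank calculation for the cuspidal K-groups) asserts that this same rank equals the number of derangements of $[n]$. Equating the two expressions yields the corollary.

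The only step that requires any care is the invocation of the basis statement for exceptional collections in a triangulated category that is not a priori of the form $\D(X)$ for a smooth variety; here $\D_{cusp}(\Perm_n)$ is only a semi-orthogonal summand. However, since the ambient category $\D(\Perm_n)$ decomposes semi-orthogonally with $\D_{cusp}(\Perm_n)$ as one of its factors, and since our exceptional collection consists of line bundles (and hence of objects with $\operatorname{End} = \kk$), the standard argument goes through verbatim: the induced filtration on $K$-theory has associated graded $\bigoplus \ZZ \cdot [\mathcal{L}_{\BP(\Omega)}^{-1}]$, giving the desired rank. No further argument beyond this bookkeeping is needed, and the corollary follows directly.
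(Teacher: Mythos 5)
Your proof is correct and matches the paper's own (largely implicit) argument: the paper likewise deduces the equicardinality by observing that both the number of loopless and coloopless Schubert matroids (via Theorem~\ref{mainthm: cuspidal part fsec}) and the number of derangements (via Castravet--Tevelev's rank computation, reproved by the recurrence in the paper) compute the rank of $K(\D_{cusp}(\Perm_n))$. Your extra remark about why the K-theoretic basis statement applies to the semi-orthogonal summand $\D_{cusp}(\Perm_n)$ is sound and is already covered by the paper's Section~\ref{subsec:generators of derived categories and tilting sheaves} observation that $K$-theory is additive over semi-orthogonal decompositions.
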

\begin{corollary}
    The number of loopless and coloopless Schubert delta matroids on $[n]$ is the same as the number of signed derangements.
\end{corollary}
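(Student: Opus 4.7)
The plan is to deduce this corollary directly by combining the full strongly exceptional collection from Theorem \ref{mainthm: cuspidal part fsec}(3) with the preceding proposition computing the rank of $K(\D_{cusp}(\Perm_n^B))$. The key observation is that a full strongly exceptional collection in a triangulated category $\mathcal{T}$ gives a $\ZZ$-basis for $K(\mathcal{T})$; in particular, the cardinality of any such collection equals the rank of $K(\mathcal{T})$.

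Concretely, Theorem \ref{mainthm: cuspidal part fsec}(3) produces a full strongly exceptional collection of line bundles $(\mathcal{L}_{\FP(\Delta)}^{-1})$ in $\D_{cusp}(\Perm_n^B)$ indexed precisely by the loopless and coloopless Schubert delta matroids on $[n]$. Hence the rank of $K(\D_{cusp}(\Perm_n^B))$ equals the number of such delta matroids. On the other hand, the previous proposition identifies this rank with the number of signed derangements of $[\pm n]$. Comparing the two counts yields the asserted equicardinality.

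The only thing to be careful about is that our collection consists of duals $\mathcal{L}_{\FP(\Delta)}^{-1}$ rather than the line bundles themselves, but since dualization is an autoequivalence on the level of K-theory (indeed, $[\mathcal{L}^{-1}] = -[\mathcal{L}]$ only changes signs of basis elements), the cardinality count is unaffected. There is no substantive obstacle here: this corollary is a combinatorial shadow of the categorical structure established earlier, and the entire content lies in the two inputs it combines. The analogous argument for matroids (the preceding corollary on derangements) is identical in structure, using Theorem \ref{mainthm: cuspidal part fsec}(1) and the corresponding rank computation.
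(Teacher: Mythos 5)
Your argument is correct and essentially the paper's: the paper proves the equicardinality by observing that both quantities satisfy the recurrence $|S_n^B| = \sum_{A \subseteq [n]} f(n-|A|)$ with $f(0)=1$ --- the Schubert-delta-matroid count via the cuspidal decomposition together with the full strongly exceptional collection of Theorem \ref{mainthm: cuspidal part fsec}(3), the signed-derangement count by sorting signed permutations according to their fixed-point sets --- which is precisely the content of the rank proposition you invoke as a black box. One small slip in your aside: in $K_0$ one has $[\mathcal{L}^{-1}] = [\mathcal{L}]^{-1}$ multiplicatively, not $-[\mathcal{L}]$, but nothing in the count depends on this, since the collection has exactly one object per loopless and coloopless Schubert delta matroid in any case.
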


Since the matroids indexing these cuspidal parts are all loopless and coloopless, augmented inclusions and weak maps reduce to the usual notation of inclusions and weak maps. Therefore, the tilting algebras of these categories are just the incidence algebras of the posets of inclusions and weak maps of loopless and coloopless Schubert matroids and Schubert delta matroids. It would be interesting to find a description of these algebras as incidence algebras of posets on derangements and signed derangements.

\subsection*{Acknowledgements} We thank Allen Knutson for the many wonderful chats about algebraic geometry as well as for pointing out the existence Karshon and Tolman's paper \cite{Karshon1993} whose similarity with the theory of valuations eventually led us to this paper. We also thank Daniel Halpern-Leistner whose comments about derived categories led to a more self-contained paper and Chris Eur for showing us that the exceptionality of Schubert matroids is harder and more interesting than we first thought.

\section{Geometric Notation and Background}\label{Sec: Background}

In this section, we gather notation and basic results from various fields of geometry. We will work over an algebraically closed field $\kk$ of characteristic $0$ although many results involving the coherent-constructible correspondence work over $\ZZ$. Let $N \cong \ZZ^n$ be a lattice and $M$ be the dual lattice $\Hom(N,\ZZ)$. We will use the notation $M_\RR = M \otimes_{\ZZ} \RR \cong \RR^n$ and similarly for $N_\RR$. 

We will use $[n]$ to denote the set $\{1,\ldots,n\}$. For any subset $S \subseteq [n]$, we use $e_S = \sum_{i \in S} e_i$ where $e_i$ are the standard basis vectors of $\RR^n$.
\subsection{Convex Geometry} For a general reference on convex geometry see [Zig]. A \textbf{polyhedron} in $M_\RR$ is a non-empty intersection of half-spaces. A \textbf{polytope} is a bounded polyhedron. For any $\alpha \in N_\RR$, let $P_{\alpha}$ denote the face of $P$ maximized in direction $\alpha$. We say that $P$ is a \textbf{lattice polytope} if every vertex of $P$ is in $M$. 

A \textbf{(polyhedral) fan} is a collection of cones which are polyhedra in $M_\RR$ such that the intersection of any two cones is a face of both and is contained in the collection. We say that a cone is \textbf{strictly convex} if it does not contain any subspace of $N_{\RR}$. A fan is \textbf{strictly convex} if every cone is strictly convex. A fan is \textbf{complete} if the union of all cones is $M_\RR$. A fan $\Sigma$ \textbf{coarsens} $\Sigma'$ if every cone of $\Sigma$ is a union of cones of $\Sigma'$. 

The \textbf{normal fan} of a polyhedron $P \subseteq M_\RR$ is the polyhedral fan in $N_\RR$ consisting of the cones
 \[\sigma_F = \overline{\{ \alpha \in N_\RR \; \lvert \; P_{\alpha} = F\}} \]
for all faces of $P$. This is complete when $P$ is a polytope.

A fan is \textbf{projective} if it is the normal fan of some lattice polytope. For any projective fan $\Sigma$, the \textbf{deformation cone} of $P$ is the set of lattice polytopes whose normal fan coarsens $\Sigma$. It forms a cone under Minkowski sum of polytopes defined by 
     \[P+Q = \{x + y\; \lvert \; x \in P, y \in Q\}. \]

\subsection{Toric Geometry}\label{Ssec: Convex Geometry and Toric Geometry}
For general references on toric geometry see \cite{Cox2011} and \cite{Oda1987}. We will usually assume that our fan is complete and strictly convex. We will use the following notation:

\begin{itemize}
    \item $X_{\Sigma}$ is the toric variety, defined over $\kk$, associated to the fan $\Sigma$.
    \item We say $\Sigma$ is smooth if $X_{\Sigma}$ is smooth.
    \item For any cone $\sigma \in \Sigma$, let
    \[\sigma^{\vee} = \{x \in M_{\RR} \; \lvert \; \langle x, v \rangle \geq 0 \text{ for all $v \in \sigma$}\}. \]
    \item We denote the set of $k$-dimensional cones of $\Sigma$ by $\Sigma(k)$.
\end{itemize}

To every ray $\rho$ of $\Sigma$, we have a $T$-invariant divisor $D_{\rho}$ of $X_{\Sigma}$ and every $T$-invariant divisor can be expressed as a sum
 \[D = \sum_{\rho \in \Sigma(1)} a_{\rho} D_{\rho}, \]
where $a_{\rho} \in \ZZ$. For any divisor $T$-invariant divisor $D$, let $P_D$ be the polyhedron defined by
 \[P_D = \{x \in M_{\RR} \; \lvert \; \langle x, \rho \rangle \leq a_{\rho}, \text{ for all $\rho \in \Sigma(1)$}\}. \]
By construction, when $P_D$ is a polytope, its normal fan will coarsen the fan $\Sigma$. We say that a function $f: N_{\RR} \to N_{\RR}$ is piece-wise linear on $\Sigma$ if it is linear when restricted to any cone of $\Sigma$. In particular, every piece-wise linear function is determined by the values on the rays.

\begin{proposition}
    The T-invariant divisor $D = \sum_{\rho \in \Sigma(1)} a_{\rho} D_{\rho}$ is nef if and only if the piece-wise linear function $f$ defined by $f(v_{\rho}) = a_\rho$, where $v_{\rho}$ is the primitive lattice point of $\rho$, is convex.  The divisor $D$ is ample if and only if $f$ is strictly convex. In that case, the normal fan of $P$ is $\Sigma$. We will refer to such polytopes as ample.
\end{proposition}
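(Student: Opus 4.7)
The plan is to reduce the nef/ample condition, which a priori involves intersecting $D$ with every curve on $X_\Sigma$, to the much smaller test class of $T$-invariant curves, and then to translate the intersection numbers into a local convexity condition at each wall of $\Sigma$. Recall that on a complete toric variety, the $T$-invariant curves are precisely the closures $V(\tau)$ of the orbits corresponding to codimension-one cones $\tau \in \Sigma(n-1)$, and by a standard equivariant Kleiman/Mori-cone argument the cone $\overline{\mathrm{NE}}(X_\Sigma)$ is generated by the classes $[V(\tau)]$. Consequently $D$ is nef (respectively ample) if and only if $D \cdot V(\tau) \geq 0$ (respectively $> 0$) for every $\tau \in \Sigma(n-1)$.

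Next I would compute $D \cdot V(\tau)$ explicitly in terms of $f$. Fix a wall $\tau$ separating two maximal cones $\sigma_1,\sigma_2 \in \Sigma(n)$, with primitive generators $v_1$ of the ray of $\sigma_1$ not in $\tau$, $v_2$ of the ray of $\sigma_2$ not in $\tau$, and $v_3,\dots,v_n$ the primitive generators of the rays of $\tau$. Smoothness along $\tau$ (or, in general, the relevant multiplicities) gives the unique wall relation $v_1 + v_2 + \sum_{i=3}^{n} b_i v_i = 0$ with $b_i \in \ZZ$ (or $\QQ$). Pairing with the linear functional $\ell_{\sigma_1}$ that equals $f$ on $\sigma_1$ and using the toric intersection formula one obtains
\[
D \cdot V(\tau) \;=\; f(v_2) - \ell_{\sigma_1}(v_2) \;=\; f(v_2) + f(v_1) + \sum_{i=3}^{n} b_i f(v_i),
\]
so that the intersection number is exactly the \emph{convexity defect} of $f$ across the wall $\tau$: it measures by how much the value $f(v_2)$ exceeds the value predicted by the linear extension of $f\vert_{\sigma_1}$.

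With this formula in hand the proposition becomes almost formal. The function $f$ is convex on $\Sigma$ if and only if, for every wall $\tau$, the restriction of $f$ to $\sigma_1 \cup \sigma_2$ is convex, which is precisely $f(v_2) \geq \ell_{\sigma_1}(v_2)$; so $D$ is nef iff $f$ is convex, and strictly convex corresponds word-for-word to the strict inequality, i.e.\ ample. For the final assertion, assume $f$ is strictly convex. Then for each maximal cone $\sigma \in \Sigma(n)$ the corresponding vertex of $P_D$ is $m_\sigma \in M$ with $\langle m_\sigma, v_\rho\rangle = a_\rho$ for the rays of $\sigma$, and strict convexity forces $\langle m_\sigma, v_\rho\rangle < a_\rho$ for every $\rho \notin \sigma(1)$; this is the standard recognition criterion that $\sigma$ is exactly the normal cone of the vertex $m_\sigma$, so the normal fan of $P_D$ equals $\Sigma$.

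The main obstacle is bookkeeping rather than conceptual: one has to be careful about the wall relation in the non-smooth case (the coefficients $b_i$ and the multiplicity of $V(\tau)$ in the intersection formula) and about signs/conventions, since different references use inward versus outward normals. In our setting $\Sigma$ is assumed smooth and strictly convex, so the wall relation takes the clean form above and the computation of $D\cdot V(\tau)$ is the standard one found in Cox--Little--Schenck; the equivalence with (strict) convexity of $f$ then follows directly.
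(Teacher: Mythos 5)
The paper does not actually prove this proposition: it is stated as standard background and sourced to Cox--Little--Schenck and Oda, so your argument is precisely the standard textbook proof the paper implicitly invokes (toric Kleiman criterion reducing nef/ample to the $T$-invariant curves $V(\tau)$, the wall-relation computation identifying $D\cdot V(\tau)$ with the convexity defect of $f$ across $\tau$, and the Cartier-data argument for the normal fan of $P_D$). It is correct under the paper's standing hypotheses that $\Sigma$ is complete (needed for the reduction to invariant curves) and smooth (so the wall relation and the local data $m_\sigma$ take the clean form you use), and your signs are consistent with the paper's conventions $f(v_\rho)=a_\rho$ and $P_D=\{x \; \lvert \; \langle x,v_\rho\rangle\le a_\rho\}$.
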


\begin{proposition}
   There is a bijection between $T$-equivariant nef line bundles and the deformation cone $\Def(\Sigma)$. We let $\mathcal{L}_P$ denote the $T$-equivariant nef line bundle corresponding to $P \in \Def(\Sigma)$.

   Further, two nef line bundles $\mathcal{L}_P$ and $\mathcal{L}_Q$ are isomorphic if and only if $P + m = Q$ for some $m \in M$. Hence, the nef cone of $X_\Sigma$ is isomorphic to the quotient of $\Def(\Sigma)$ by the translation action on polytopes.
\end{proposition}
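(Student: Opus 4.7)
The plan is to exploit the standard dictionary between $T$-equivariant Cartier divisors on $X_\Sigma$, piecewise-linear functions on $\Sigma$, and the polyhedra $P_D$ that was set up in the preceding discussion, together with the previous proposition which identifies ample/nef divisors with strictly convex/convex PL functions.

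First I would establish the forward map $P \mapsto \mathcal{L}_P$. Given $P \in \Def(\Sigma)$, the support function
\[\psi_P(v) = \min_{x \in P}\langle x, v\rangle\]
is convex on $N_\RR$ and, since the normal fan of $P$ coarsens $\Sigma$, is linear on every cone of $\Sigma$. Setting $a_\rho = -\psi_P(v_\rho)$ for each primitive ray generator gives a $T$-invariant divisor $D_P = \sum_\rho a_\rho D_\rho$ whose associated PL function is convex, hence $\mathcal{L}_P := \mathcal{O}(D_P)$ is a $T$-equivariant nef line bundle by the previous proposition. Conversely, given a $T$-equivariant nef line bundle, choose a $T$-invariant representative $D = \sum a_\rho D_\rho$, and recover $P_D$ by the formula given in the excerpt; convexity of the PL function ensures $P_D$ is a lattice polytope whose normal fan coarsens $\Sigma$, hence $P_D \in \Def(\Sigma)$. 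I would then check that the two constructions are mutually inverse by a direct comparison on rays: the vertex of $P_D$ associated to a maximal cone $\sigma$ is precisely the lattice point $m_\sigma \in M$ with $\langle m_\sigma, v_\rho\rangle = a_\rho$ for $\rho \in \sigma(1)$, and the support function reads off these vertex values back to the $a_\rho$.

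For the isomorphism statement, I would use that a $T$-equivariant line bundle $\mathcal{L}_P$ has a distinguished $T$-linearization determined by the divisor $D_P$, and that two $T$-equivariant line bundles on a normal toric variety are isomorphic as non-equivariant line bundles iff their defining divisors differ by a principal divisor of the form $\operatorname{div}(\chi^m)$ for some character $m \in M$. Since $\operatorname{div}(\chi^m) = \sum_\rho \langle m, v_\rho\rangle D_\rho$, on the polytope side this corresponds exactly to replacing the inequalities $\langle x, v_\rho\rangle \leq a_\rho$ by $\langle x, v_\rho\rangle \leq a_\rho + \langle m, v_\rho\rangle$, i.e. translating $P$ by $m$. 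Thus $\mathcal{L}_P \cong \mathcal{L}_Q$ as line bundles iff $P + m = Q$ for some $m \in M$. Quotienting out this $M$-translation action on $\Def(\Sigma)$ then gives the nef cone of $X_\Sigma$.

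I do not anticipate a serious obstacle here, as the statement is a combination of well-known results in toric geometry; the only point requiring mild care is keeping the equivariant and non-equivariant categories straight, particularly in verifying that the translation action of $M$ on $\Def(\Sigma)$ corresponds precisely to the ambiguity in choosing a $T$-linearization of a fixed nef line bundle. If desired, I would cite \cite{Cox2011} for the detailed dictionary between $T$-Cartier divisors, PL functions, and their polyhedra.
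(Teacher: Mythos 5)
The paper states this proposition as background without proof, citing Cox--Little--Schenck and Oda for the general toric dictionary, so there is no internal argument to compare against; your reconstruction is the standard one and is essentially what those references contain. The logical skeleton is sound: recover a convex PL function from $P$, read off the divisor coefficients on rays, invoke the preceding proposition for nefness, and invert via $D \mapsto P_D$; then observe that for complete normal toric varieties two $T$-invariant divisors give isomorphic line bundles iff they differ by $\operatorname{div}(\chi^m)$, which on the polytope side is translation by $m$.

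One small point worth fixing for internal consistency with the paper's conventions: the paper writes $P_D = \{x : \langle x, v_\rho\rangle \leq a_\rho\}$ and declares $D$ nef when the PL function $f(v_\rho) = a_\rho$ is convex, which forces $a_\rho = \max_{x\in P}\langle x, v_\rho\rangle$ for the nef case. Your choice $\psi_P(v) = \min_{x\in P}\langle x, v\rangle$ (which is concave in the usual sense, not convex, though Cox--Little--Schenck's Definition 6.1.5 calls precisely this ``convex'') together with $a_\rho = -\psi_P(v_\rho)$ produces $a_\rho = \max_{x \in -P}\langle x, v_\rho\rangle$, so under the paper's $P_D$ formula your construction yields $-P$ rather than $P$. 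Replacing $\psi_P$ by the sup-side support function $\max_{x\in P}\langle x, \cdot\rangle$ and setting $a_\rho = \psi_P(v_\rho)$ makes the two constructions honestly inverse to each other and removes the discrepancy. This is purely a bookkeeping issue and does not affect the validity of the argument.
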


We will use $\mathcal{L}_P$ to denote both the $T$-equivariant nef line bundle and the underlying nef line bundle. When we compute homomorphisms, $\Hom_T$ will denote $T$-equivariant homomorphisms and $\Hom$ will denote non-equivariant homomorphisms. We have the following nice characterization of homomorphisms of nef line bundles: In the $T$-equivariant case,
 \[ \Hom_T( \mathcal{L}_P, \mathcal{L}_Q) = \begin{cases}
     \kk & \text{if $P \subseteq Q$} \\
     0 & \text{otherwise}
 \end{cases} \]
 and non-equivariantly
 \[\Hom \left ( \mathcal{L}_P, \mathcal{L}_Q \right ) \cong \kk[m \in M\; \lvert \; P+m \subseteq Q]. \]

\subsection{Category Theory}

For our purposes, a \textbf{dg category} is a $\kk$-linear additive category $\mathcal{C}$ where for any objects $A,B\in \mathcal{C}$, the homomorphisms $\hom(A,B)$ form a chain complex and for any $C$ the composition maps $\hom(A,B) \otimes \hom(B,C) \to \hom(A,C)$ are chain complex homomorphisms. We will prefer the cohomological convention in this paper. The \textbf{homotopy category} $H^{0}(\mathcal{C})$ of $\mathcal{C}$ is the category with the same objects as $\mathcal{C}$ and with homomorphisms $H^{0}(\hom(A,B))$. We will use $\hom$ to denote homomorphisms in a dg category and $\Hom$ for homomorphisms in an ordinary category.

For a variety $X$, we let $\Coh(X)$ denote the dg localization of the dg category of bounded complexes of coherent sheaves by acyclic complexes. We use $\Coh_G(X)$ to denote the related category of $G$-equivariant sheaves. We will often abuse notation by saying that $\mathcal{E}$ is a sheaf in $\Coh(X)$ when we mean the complex whose $0$th term is $\mathcal{E}$ and every other term is $0$. Let $\operatorname{Perf}_G(X)$ and $\operatorname{Perf}(X)$ denote the subcategories of $\Coh_G(X)$ and $\Coh(X)$ consisting of objects such that on every affine chart they are quasi-isomorphic to a bounded complex of vector bundles. These form triangulated categories.

We will use $\D(X)$ to denote the bounded derived category of the abelian category of coherent sheaves on $X$ and $\D_G(X)$ for the related derived category of $G$-equivariant sheaves. When $X$ is smooth, the homotopy categories of $\operatorname{Perf}(X_{\Sigma})$ and $\operatorname{Perf}_G(X_{\Sigma})$ coincide with $\D(X)$ and $\D_{G}(X)$, respectively.

\subsection{The Coherent-Constructible Correspondence}\label{Sec: Coherent-Constructible Correspondence}

While we will use the language of constructible sheaves, we will not need the exact definitions. Let $\operatorname{Sh}(\RR^n)$ denote the dg-localization at acyclic complexes of the category of complexes of sheaves on $\RR^n$. Let $\operatorname{Sh}_{cc}(\RR^n)$ be the subcategory of compactly supported (complexes of) sheaves which are constructible with respect to some Whitney stratification of $\RR^n$ and whose stalks are perfect complexes of modules. 
Given a map $f: X \to Y$, we will use $f_*, f^*, f_!, f^!$ to denote the dg-enhanced versions of the pushforward, pullback, proper pushforward, and proper pullback maps. These functions in general output a complex of sheaves. They satisfy the usual adjunctions from the six functor formalism.

The \textbf{standard constructible sheaf} associated to an open set $U \subseteq \RR^n$ is the complex given by $i_{U*} \kk$ where $i_{U}: U \hookrightarrow \RR^n$ is the inclusion map and $\kk$ is the constant sheaf on $U$. The \textbf{constandard constructible sheaf} associated to $U$ is the complex $i_{U!} \omega_{U}$ where $\omega_U$ is the orientation sheaf on $U$ shifted to degree $\dim U$.

If $U$ is an open set of a manifold $X$ and $i: V \to X$ is an inclusion. Then,
 \[\Gamma(U,i_{*}R_V) \cong C^*(U \cap Y) \] 
where $R_V$ is the $R$-constant sheaf on $V$ and $C^*(U \cap Y)$ is the $R$-valued singular cochain complex. Likewise,
 \[\Gamma(U,i_!R_Y) \cong C^*(U \cap \overline{Y}, U \cap \partial Y). \]
In \cite{Bondal2006}, Bondal announced a relationship between the derived category of coherent sheaves on a toric variety and constructible sheaves on $\RR^n$ which is referred to as the coherent-constructible correspondence. A dg-enhanced equivariant version of this result was proven by Fang, Melissa Liu, Tremann, and Zaslow in \cite{Fang2011}. The main form we will use is the following:

\begin{theorem}\label{thm: Coherent Constructible Correspondence}\cite{Fang2011}
    Let $\Sigma$ be a projective fan. Then, there is an embedding of triangulated dg-categories
     \[\kappa: \operatorname{Perf}_T(X_{\Sigma}) \to \operatorname{Sh}_{cc}(\RR^n). \]
    Further, the image of this functor is the subcategory $\operatorname{Sh}_{cc}(\RR^n,\Lambda_{\Sigma})$ of constructible sheaves whose singular support lies in a particular conical Lagrangian $\Lambda_{\Sigma} \subseteq T^*(\RR^n)$ associated to the fan.
\end{theorem}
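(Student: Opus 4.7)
The plan is to construct the functor $\kappa$ explicitly on a set of generators (the $T$-equivariant line bundles) and then verify full faithfulness by matching hom-complexes on the two sides. First, I would fix a piecewise linear function $\varphi_D$ on $\Sigma$ representing a $T$-Cartier divisor $D$, and define $\kappa(\mathcal{L}_D)$ to be the standard constructible sheaf
\[
\Theta_D := i_{U_D*}\,\kk_{U_D},
\]
where $U_D \subseteq \RR^n = M_\RR$ is the open region $\{x \in M_\RR \mid \text{$x$ lies strictly below the graph of $\varphi_D$ in each maximal cone}\}$, and $\kk_{U_D}$ is the constant sheaf. A direct check shows that the singular support of $\Theta_D$ lies in the conical Lagrangian
\[
\Lambda_\Sigma = \bigcup_{\sigma \in \Sigma} \bigl((-\sigma^\perp + M)\bigr) \times \sigma \;\subseteq\; T^*(M_\RR),
\]
so that $\Theta_D \in \operatorname{Sh}_{cc}(\RR^n,\Lambda_\Sigma)$. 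One then extends $\kappa$ to all of $\operatorname{Perf}_T(X_\Sigma)$ by picking, for each perfect complex, a resolution by direct sums of equivariant line bundles and applying $\kappa$ term by term; this is well-defined up to quasi-isomorphism once full faithfulness on line bundles is established.

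The central computation is the verification that $\kappa$ is fully faithful on generators. On the coherent side, the $T$-equivariant hom-complex $\hom_T(\mathcal{L}_{D_1},\mathcal{L}_{D_2})$ is computed via the \v{C}ech complex for the standard affine cover $\{U_\sigma\}$, and its cohomology is supported on lattice points $m \in M$ such that $m + \varphi_{D_1} \le \varphi_{D_2}$ on each maximal cone. On the constructible side, by the standard adjunctions,
\[
\hom(\Theta_{D_1}, \Theta_{D_2}) \;\cong\; R\Gamma(U_{D_1}, \Theta_{D_2}|_{U_{D_1}}),
\]
and one recognizes this as a sum over $M$-translates of terms computing the relative cohomology of certain polyhedral regions. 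A term-by-term comparison — essentially a polytopal version of the Borel--Weil--Bott theorem — produces a natural quasi-isomorphism between the two sides. I would then promote this to a quasi-isomorphism of full hom-complexes by checking that the composition structure, interpreted combinatorially on the Lagrangian side, matches multiplication of Laurent monomials on the coherent side.

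For the identification of the image with $\operatorname{Sh}_{cc}(\RR^n,\Lambda_\Sigma)$, the plan is a generation argument. The standard sheaves $\Theta_D$ together with their shifts and cones generate $\operatorname{Sh}_{cc}(\RR^n,\Lambda_\Sigma)$: any $\Lambda_\Sigma$-constructible sheaf admits a cell decomposition compatible with $\Sigma$ and can be built by iterated cones of standard sheaves on cells, because $\Lambda_\Sigma$ controls exactly which stratifications are allowed. On the other side, the $T$-equivariant line bundles generate $\operatorname{Perf}_T(X_\Sigma)$ in the smooth projective case (and more generally via resolution of the diagonal for toric varieties). Matching the two generating sets yields essential surjectivity of $\kappa$ onto $\operatorname{Sh}_{cc}(\RR^n,\Lambda_\Sigma)$.

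The main obstacle I anticipate is the hom-complex comparison: the na\"ive term-by-term match of cohomology groups is classical, but lifting it to a natural quasi-isomorphism of dg-complexes that is functorial in $D_1,D_2$ and respects composition is delicate. The cleanest route is to work microlocally, exhibiting both sides as global sections of the same sheaf on $T^*M_\RR$ cut out by $\Lambda_\Sigma$, so that the coincidence becomes a tautology once the microlocal models are set up. A secondary technical difficulty is verifying the singular support condition on $\Theta_D$ in full generality, which requires a careful local analysis at each cone $\sigma \in \Sigma$ and uses the assumption that $\Sigma$ is projective to globalize.
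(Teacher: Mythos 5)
This statement is not proven in the paper at all: it is cited verbatim from Fang--Liu--Treumann--Zaslow \cite{Fang2011}, and the paper uses it as an imported black box (together with Propositions~\ref{prop: Brianchon-Gram complex and CCC} and~\ref{prop: ccc standard sheaf quasiisomorphism}, also cited). So there is no ``paper's own proof'' to compare against, and nothing in the paper would be disturbed by filling in or omitting such a proof.

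That said, your sketch is a reasonable first-order outline of the FLTZ strategy, and the definition of $\kappa$ on equivariant line bundles via the PL function $\varphi_D$ and the open region $U_D$ matches their construction up to sign conventions (the region should be carved out of $M_\RR$ by inequalities $\langle m, v\rangle < \varphi_D(v)$ over $v \in N_\RR$, not as a ``graph''; worth being precise since for non-nef $D$ this is a twisted polytope, not the interior of $P_D$). The serious gap is the last step, essential surjectivity onto $\operatorname{Sh}_{cc}(\RR^n,\Lambda_\Sigma)$. You assert that ``any $\Lambda_\Sigma$-constructible sheaf admits a cell decomposition compatible with $\Sigma$ and can be built by iterated cones of standard sheaves on cells,'' but this is precisely the content of the theorem and is not a formal consequence of having microsupport in $\Lambda_\Sigma$: objects constructible with respect to a finite stratification refining $\Lambda_\Sigma$ are generated by cell standards, but $\operatorname{Sh}_{cc}(\RR^n,\Lambda_\Sigma)$ contains objects whose stratification involves $M$-periodic collections of affine walls and is not finite. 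The FLTZ argument for surjectivity is a nontrivial piece of microlocal sheaf theory (non-characteristic deformation, propagation estimates along the noncompact $M$-directions), and the later nonequivariant extensions (Kuwagaki) require even more. Without this, you have full faithfulness of $\kappa$ but not the identification of the image. A secondary but real issue is that the ``term-by-term comparison'' of hom-complexes establishes an isomorphism of graded vector spaces; promoting it to a dg-functor compatible with composition requires exhibiting a genuine map of complexes at the chain level (FLTZ do this by comparing \v{C}ech representatives directly to \v{C}ech representatives for the constructible sheaves, not by separately matching cohomologies). You acknowledge both difficulties, which is good, but as written neither is resolved.
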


They give a description of the images of $T$-equivariant line bundles under this correspondence. We will only need it for nef line bundles so we focus on that case. For any cone $\sigma \in \Sigma$ and any $m \in M$, let $j: (\sigma^{\vee} + m)^{\circ} \hookrightarrow \RR^n$ be the inclusion map and let $\Theta(\sigma, m) = j_{*} \kk$ be the standard constructible sheaf associated to this open cone. Whenever $(\tau^{\vee} + m) \subseteq (\sigma^{\vee} + m)$, we have a natural restriction map $\rho_{\sigma, \tau}: \Theta(\sigma,m) \to \Theta(\tau,m')$.

\begin{definition}
    Let $P \in \Def(\Sigma)$. For any cone $\sigma \in \Sigma$, let $a_{\sigma}$ be any point in $P$ maximized by a linear functional in the interior of $\sigma$. Then, the \textbf{Brianchon-Gram complex}\footnote{This complex is called a twisted polytope sheaf in \cite{Fang2011} and \cite{Zhou2019} in connection with the twisted polytopes of \cite{Karshon1993}. We choose this name to avoid using both polytopes and twisted polytopes as well as to highlight the connection to the theory of valuations.} is the complex
     \[ \operatorname{BG}^{\bullet}(P) = 
     \left ( \bigoplus_{\sigma_0 \in \Sigma(0)} \Theta(\sigma, a_{\sigma}) \rightarrow \bigoplus_{\sigma_1 \in \Sigma(1)} \Theta(\sigma, a_{\sigma}) \rightarrow \cdots \right ),\]
     with differentials
      \[ d_k = \sum_{\sigma_{k-1} \subseteq \sigma_{k}} \operatorname{sgn}(\sigma_{k-1},\sigma_k) \rho_{\sigma_k, \sigma_{k-1}}.\] 
    The sign $\operatorname{sgn}(\sigma_{k-1},\sigma_k) = \pm 1$ can be chosen so that this is a chain complex (see \cite{Zhou2019}). 
\end{definition}

\begin{proposition}\cite{Fang2011}\label{prop: Brianchon-Gram complex and CCC}
Let $P \in \Def(\Sigma)$ and $\Sigma$ be a projective fan. Then, $\kappa(\mathcal{L}_P)$ is quasi-isomorphic to $\operatorname{BG}^{\bullet}(P)$.
\end{proposition}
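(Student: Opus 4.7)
The plan is to realize $\operatorname{BG}^\bullet(P)$ as the image under $\kappa$ of a \v{C}ech-type resolution of $\mathcal{L}_P$ adapted to the $T$-invariant affine open cover $\{U_\sigma\}$ of $X_\Sigma$ indexed by the maximal cones of $\Sigma$.

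First I would establish the local case of the correspondence: for each affine toric variety $U_\sigma$ with inclusion $j_\sigma: U_\sigma \hookrightarrow X_\Sigma$ and any $m \in M$, the pushforward $j_{\sigma *}\mathcal{O}_{U_\sigma}(m)$ is sent by $\kappa$ to the standard constructible sheaf $\Theta(\sigma,m)$. This is essentially built into the construction of $\kappa$ in \cite{Fang2011}: the $T$-weight decomposition of sections of $\mathcal{O}_{U_\sigma}(m)$ is indexed exactly by $\sigma^\vee \cap M + m$, which matches the structure of sections of $\Theta(\sigma,m)$, and the functoriality of $\kappa$ in $T$-equivariant restriction maps along inclusions $U_\tau \hookrightarrow U_\sigma$ (for $\tau \subseteq \sigma$) corresponds to the restriction maps $\rho_{\sigma,\tau}$ between standard constructible sheaves.

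Next, I would use the trivializations $\mathcal{L}_P|_{U_\sigma} \cong \mathcal{O}_{U_\sigma}(a_\sigma)$ coming from the piecewise-linear support function of $P$. The key convex-geometric input is that whenever $\tau \subseteq \sigma$, the difference $a_\sigma - a_\tau$ lies in $\tau^\vee$ (since $a_\sigma$ maximizes every linear functional in $\sigma^\circ \subseteq \tau^{\perp}\oplus \tau^\circ$ on $P$ while $a_\tau$ maximizes only those in $\tau^\circ$), which produces a well-defined restriction map on overlaps and, under $\kappa$, corresponds to the inclusion $(\sigma^\vee + a_\sigma)^\circ \supseteq (\tau^\vee + a_\tau)^\circ$ of open cones. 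The \v{C}ech complex of $\mathcal{L}_P$ with respect to $\{U_\sigma\}$, reindexed by faces of $\Sigma$ rather than ordered tuples of maximal cones, then has $\bigoplus_{\sigma \in \Sigma(k)} j_{\sigma *}\mathcal{O}_{U_\sigma}(a_\sigma)$ in cohomological degree $k$, with differentials assembled from these restrictions and the standard \v{C}ech signs. Applying $\kappa$ termwise via the local case produces precisely $\operatorname{BG}^\bullet(P)$, and since the \v{C}ech complex is quasi-isomorphic to $\mathcal{L}_P$ we obtain $\kappa(\mathcal{L}_P) \simeq \operatorname{BG}^\bullet(P)$.

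The main obstacle is sign bookkeeping: the \v{C}ech coboundary comes with its own signs, while the differentials of $\operatorname{BG}^\bullet(P)$ use the signs $\operatorname{sgn}(\sigma_{k-1},\sigma_k)$ of \cite{Zhou2019}. Verifying these match amounts to choosing coherent orientations on the cones of $\Sigma$ so that the poset of cones becomes a CW-complex-like structure with compatible incidence numbers, and then translating the resulting cellular coboundary into the \v{C}ech coboundary. A secondary, more technical, step is checking that the naive \v{C}ech complex already computes $\mathcal{L}_P$ in the dg sense; this holds because each $U_\sigma$ is $T$-invariant affine, so $\mathcal{L}_P$ restricted there is a direct sum of characters and the $j_{\sigma *}$ are exact, yielding a genuine resolution that can be fed into the dg functor $\kappa$.
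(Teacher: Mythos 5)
Your high-level plan (establish the local identity $\kappa(j_{\sigma*}\mathcal{O}_{U_\sigma}(m)) \simeq \Theta(\sigma,m)$, then resolve $\mathcal{L}_P$ by such pushforwards and apply $\kappa$ termwise) is the right idea, and the local identity is indeed essentially the definition of $\kappa$ in \cite{Fang2011}. The gap is in the step where you claim the \v{C}ech complex ``reindexes by faces of $\Sigma$.''

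The \v{C}ech complex for the cover $\{U_\sigma\}_{\sigma\in\Sigma(n)}$ has degree-$p$ term indexed by $(p+1)$-tuples of maximal cones, and this does not biject with cones of $\Sigma$. Different tuples can have the same intersection, so terms appear with multiplicity, and the cohomological degree of a term is governed by the size of the tuple, not by the dimension of the intersection. Concretely, for the fan of $(\mathbb{P}^1)^3$ there are $8$ maximal cones, hence $\binom{8}{2}=28$ terms in \v{C}ech degree $1$, but only $12$ two-dimensional cones; the ``reindexing'' is simply not available. There is also a direction problem in your claim that the reindexed complex has $\bigoplus_{\sigma\in\Sigma(k)} j_{\sigma*}\mathcal{O}(a_\sigma)$ in degree $k$: the \v{C}ech complex has the maximal cones $\Sigma(n)$ in degree $0$ (each $U_\sigma$ is a single open of the cover) and smaller cones appear only in higher degree, whereas your indexing places the origin cone $\Sigma(0)$ in degree $0$.

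What one actually needs is a resolution of $\mathcal{L}_P$ whose terms are indexed directly by cones of $\Sigma$, one summand $j_{\tau*}\mathcal{O}_{U_\tau}(a_\tau)$ per cone $\tau$, with differential assembled from the covering-relation restrictions in the face poset of $\Sigma$ and signs making the poset into a regular CW structure. This is a genuinely different complex from the \v{C}ech complex of the maximal cover (even though both resolve $\mathcal{L}_P$), and its exactness has to be argued separately --- e.g.\ by checking on $T$-weight spaces, where it reduces to the exactness of the reduced cellular chain complex of the fan restricted to the cones containing a fixed character, much as in Proposition~\ref{prop: Augmented Brianchon-Gram complex} of the present paper. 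Once you replace ``the \v{C}ech complex reindexed'' with this cone-indexed resolution and fix the degree bookkeeping, the argument goes through; without that replacement the claimed identification with $\operatorname{BG}^\bullet(P)$ does not hold.
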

%Note that when $P \subseteq Q$, then every cone appearing in the decomposition of $P$ is contained in the corresponding cone of $Q$. Hence, we have map of chain complexes $\operatorname{BG}(P) \to \operatorname{BG}(Q)$ induced by the inclusion.

We also have the following useful description of the image of duals of ample line bundles which can be seen as categorification of the Brianchon-Gram formula.
\begin{proposition}\cite{Fang2011}\label{prop: ccc standard sheaf quasiisomorphism}
    Let $\mathcal{L}_{P}^{-1}$ be the dual of an ample line bundle on $X_{\Sigma}$. In other words, the normal fan of $P$ is $\Sigma$ and so it is full-dimensional. Let $i_P:P^{\circ} \to \RR^n$ be the inclusion of the interior of $P$ into $\RR^n$. Then, $\kappa(\mathcal{L}_P^{-1})$ is quasi-isomorphic to the standard constructible sheaf $i_{P*}(\kk)$.
\end{proposition}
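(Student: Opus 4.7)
The strategy is to leverage Proposition \ref{prop: Brianchon-Gram complex and CCC}, which identifies $\kappa(\mathcal{L}_P)$ with the Brianchon-Gram complex $\operatorname{BG}^{\bullet}(P)$, together with Verdier duality on the constructible side matched against inversion of line bundles on the coherent side. The plan is in two steps: first, identify $\kappa(\mathcal{L}_P)$ directly as a costandard sheaf supported on $P^\circ$ (up to shift); second, use the duality compatibility of the CCC to transfer this identification to $\mathcal{L}_P^{-1}$.

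For the first step, I would compute the cohomology sheaves of $\operatorname{BG}^{\bullet}(P)$ stalkwise. Since $P$ is ample, the open cones $(\sigma^{\vee} + a_{\sigma})^{\circ}$ supporting the summands $\Theta(\sigma, a_{\sigma})$ are precisely the interiors of the translated dual cones indexed by the faces of $P$. At a point $x \in \RR^n$, the stalk complex is indexed by the subposet $\Sigma_x \subseteq \Sigma$ of those cones $\sigma$ with $x \in (\sigma^{\vee} + a_{\sigma})^{\circ}$. Using the combinatorial Brianchon-Gram identity $\one_{P^\circ} = \sum_F (-1)^{\dim F} \one_{T_F^\circ}$ together with a standard contractibility argument for the order complex of $\Sigma_x$ (of the sort ubiquitous in the topological combinatorics of polytope tangent cones), one shows that the stalk complex is acyclic when $x \notin P^\circ$ and computes $\kk$ concentrated in a single degree when $x \in P^\circ$. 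This identifies $\kappa(\mathcal{L}_P)$ with the costandard sheaf $i_{P^\circ !}\kk$, up to a degree shift.

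For the second step, the CCC functor $\kappa$ intertwines Verdier duality on $\operatorname{Sh}_{cc}(\RR^n)$ with a corresponding duality on $\operatorname{Perf}_T(X_\Sigma)$ that sends $\mathcal{L}_P$ to $\mathcal{L}_P^{-1}$, up to shifts arising from the orientation of $\RR^n$ and the canonical bundle of $X_\Sigma$. Applying the standard formula $\mathbb{D}(j_!\kk_U) = j_*\kk_U[n]$ for open inclusions of full-dimensional $U$, the costandard sheaf $i_{P^\circ !}\kk$ is Verdier dual to $i_{P*}\kk[n]$, and after matching shifts one obtains $\kappa(\mathcal{L}_P^{-1}) \simeq i_{P*}\kk$. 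The main obstacle lies in rigorously establishing the duality intertwiner: Serre-Grothendieck duality on $X_\Sigma$ introduces a twist by $\omega_X = \mathcal{O}(-\sum_\rho D_\rho)$ which is generally nontrivial, but working $T$-equivariantly this twist corresponds on the constructible side to a lattice translation that can be absorbed into the definition of $\kappa$ on nef line bundles. Alternatively, the result can be read off directly from the explicit construction in \cite{Fang2011}, where $\kappa$ is built with this identification in view.
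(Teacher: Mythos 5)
The paper does not prove this proposition; it cites it directly from \cite{Fang2011}. Your attempt to reconstruct the proof has genuine gaps in both steps.

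In Step 1, the stalk computation is incorrect at boundary points. The summands $\Theta(\sigma,a_\sigma)$ of $\operatorname{BG}^\bullet(P)$ are derived pushforwards $j_*\kk$ from the \emph{open} tangent cone $(\sigma^\vee+a_\sigma)^\circ$, so the stalk of $\Theta(\sigma,a_\sigma)$ at a point $x$ lying on the boundary of that cone is still $\kk$ (small neighborhoods of $x$ intersect the open cone in a contractible nonempty set). Concretely, for $\mathbb{P}^1$ with $P=[0,1]$ the stalk of $\operatorname{BG}^\bullet(P)$ at $x=0\in\partial P$ is $[\kk\to\kk^2]$, which has $H^1=\kk$, not zero. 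So $\operatorname{BG}^\bullet(P)$ has nontrivial stalk cohomology on all of the closed polytope $P$, whereas the costandard sheaf $i_{P^\circ!}\kk[?]$ has zero stalks on $\partial P$. Your claimed identification of $\kappa(\mathcal{L}_P)$ with a costandard sheaf therefore fails at the stalk level, quite apart from the more basic point that stalk cohomology alone never determines a constructible complex up to quasi-isomorphism (the standard and costandard sheaves on $P^\circ$ already have identical stalk Euler characteristics on $P^\circ$ but are non-isomorphic; the extra gluing/specialization data is essential).

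In Step 2, the twist is not handled. Under the CCC the constructible Verdier dual should match Serre--Grothendieck duality, which sends $\mathcal{L}_P^{-1}$ to $\mathcal{L}_P\otimes\omega_{X}[\dim X]$ with $\omega_{X}=\mathcal{O}(-\sum_\rho D_\rho)$. Tensoring $\mathcal{L}_P$ with $\omega_X$ corresponds to contracting $P$ by one lattice unit in every facet-normal direction, which is emphatically \emph{not} a lattice translation (translations come from the characters $\chi^m$, whose underlying line bundle is trivial). So the claim that the $\omega_X$-twist ``corresponds to a lattice translation that can be absorbed'' is unfounded and, for most $X_\Sigma$, simply false; this is the exact discrepancy that distinguishes the costandard sheaf on $P^\circ$ from the image of $\mathcal{L}_P$ and has to be tracked, not waved away.
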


\subsection{Generators of Derived Categories and Tilting Sheaves}\label{subsec:generators of derived categories and tilting sheaves}

For general references on triangulated categories and derived categories of schemes see \cite{Weibel1994}. A subcategory of a triangulated category $\mathcal{C}$ is \textbf{thick} if it is closed under direct sums, taking direct summands, taking cones of morphisms, and shifts. We say that an object $X \in \mathcal{C}$ \textbf{classically generates $\mathcal{C}$} if the smallest thick subcategory of $X$ which contains $X$ is all of $\mathcal{C}$.

When $\mathcal{C}$ is the derived category of an abelian category $\mathcal{A}$, then a subcategory $\mathcal{D} \subseteq \mathcal{C}$ is closed under taking cones of morphisms if whenever we have a short exact sequence
 \[0 \rightarrow A \rightarrow B \rightarrow C \rightarrow 0 \]
with two of the terms in $\mathcal{D}$, then the third term is also in $\mathcal{D}$. Since every exact sequence splits into a collection of short exact sequences, if we have any exact sequences of $n$ terms where any $n-1$ terms are in $\mathcal{D}$, then all terms are in $\mathcal{D}$. This is how we usually check that an object classically generates the category.

The tilting algebra of a collection can be described as the path algebra of a quiver with relations called the \textbf{tilting quiver}. This quiver can be described algebraic geometrically as follows:

For a sequence of distinct line bundles $\mathcal{L}_1, \ldots, \mathcal{L}_k$, we say that a $T$-invariant section $s \in H^0(X, \mathcal{L}_i^{-1} \otimes \mathcal{L}_j)$ is \textbf{indecomposable} if there are no sections $s_i' \in H^0(X,\mathcal{L}_i^{-1} \otimes \mathcal{L}_k)$ and $s_i'' \in H^0(X,\mathcal{L}_k^{-1} \otimes \mathcal{L}_j)$ such that $\operatorname{div}(s) = \operatorname{div}(s') + \operatorname{div}(s'')$ for any $1 \leq k \leq n$.

\begin{definition}
    The \textbf{quiver of sections} of a full strongly exceptional collection $\mathcal{L}_1, \ldots, \mathcal{L}_n$ is the quiver with relations $(Q,I)$ defined as follows:

    \begin{enumerate}
        \item The vertices of the quiver are the line bundles $\mathcal{L}_i$.
        \item There is a directed edge from $\mathcal{L}_i$ to $\mathcal{L}_j$ for each indecomposable section in $H^0(X,\mathcal{L}_i^{-1} \otimes \mathcal{L}_j)$. We label the edge by the section $s$.
        \item For each path $p$ in this quiver, let $\operatorname{div}(p)$ be the sum of the divisors $\operatorname{div}(s_i)$ appearing in the labels of the path. Then, the relations are
         \[ I = \langle p - q \; \lvert \; \text{ $p,q:u \to v$ paths with $\operatorname{div}(p) = \operatorname{div}(q)$} \rangle \]
    \end{enumerate}
\end{definition}

\begin{proposition}\label{thm: tilting quiver as quiver of sections calculation}
    Let $X$ be a smooth variety over $\kk$ with a fully strong exceptional sequence $\mathcal{L}_1, \ldots, \mathcal{L}_n$ with quiver of sections $(Q,I)$. Then, the tilting algebra is isomorphic to the path algebra of the quiver of sections.
\end{proposition}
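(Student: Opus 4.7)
The plan is to construct an explicit $\kk$-algebra isomorphism $\bar\phi \colon \kk Q/I \to A$, where $A = \operatorname{End}(\bigoplus_i \mathcal{L}_i)$ is the tilting algebra. To pin down actual sections rather than sections up to scalar, I would first fix a $T$-equivariant trivialization of each $\mathcal{L}_i$ on the open torus orbit of $X$. Then every $T$-invariant section $s \in H^0(X, \mathcal{L}_i^{-1} \otimes \mathcal{L}_j)$ is identified canonically with a monomial $x^m$ on the open orbit, where $m \in M$ is determined by $\operatorname{div}(s)$, and composition of such sections viewed as homomorphisms of line bundles coincides with the usual product of monomials. In particular, composition of any two $T$-invariant sections adds their divisors.

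I would then define $\phi \colon \kk Q \to A$ by sending the idempotent at $\mathcal{L}_i$ to $\operatorname{id}_{\mathcal{L}_i}$, each edge labeled by $s$ to the homomorphism $s$, and extending to longer paths by composition. For a path $p \colon \mathcal{L}_u \to \mathcal{L}_v$, the image $\phi(p)$ is the unique $T$-invariant section of $\mathcal{L}_u^{-1} \otimes \mathcal{L}_v$ with divisor $\operatorname{div}(p)$. Hence $\phi(p) = \phi(q)$ whenever $\operatorname{div}(p) = \operatorname{div}(q)$, which shows $I \subseteq \ker\phi$ and produces the induced map $\bar\phi \colon \kk Q/I \to A$.

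Surjectivity amounts to showing every $T$-invariant section $s$ of $\mathcal{L}_i^{-1} \otimes \mathcal{L}_j$ lies in the image of $\bar\phi$. If $s$ is indecomposable, it labels an edge and is in the image; otherwise $s = s' \cdot s''$ with $s' \in H^0(X, \mathcal{L}_i^{-1} \otimes \mathcal{L}_k)$ and $s'' \in H^0(X, \mathcal{L}_k^{-1} \otimes \mathcal{L}_j)$ both having nonzero divisor (interpreting the definition so that trivial factorizations through the identity are disallowed). Noetherian induction on $\operatorname{div}(s)$, using that the effective sub-divisors bounded by a fixed polytope form a finite partially ordered set, terminates in products of indecomposable sections. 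Since the $T$-weight decomposition of $\Hom(\mathcal{L}_i, \mathcal{L}_j)$ has one-dimensional pieces indexed by lattice points of $P_j - P_i$, these $T$-invariant sections span $\Hom(\mathcal{L}_i, \mathcal{L}_j)$, yielding surjectivity.

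For injectivity I would use that $T$-invariant sections of distinct $T$-weight are linearly independent in $\Hom(\mathcal{L}_i, \mathcal{L}_j)$. The quotient $\kk Q/I$ has a spanning set indexed by divisor classes of paths from $\mathcal{L}_i$ to $\mathcal{L}_j$, and $\bar\phi$ carries distinct classes to sections of distinct $T$-weight. The main obstacle I anticipate is the surjectivity step: one must confirm that ``decomposable'' forces both factors to have strictly smaller divisor than $s$, so that the induction is well-founded, and that the resulting decomposition uses only bundles $\mathcal{L}_k$ in the finite collection. Once this factorization lemma is in hand, comparing the two sides weight-by-weight gives an isomorphism graded by $(i,j)$ and divisor, hence the desired algebra isomorphism.
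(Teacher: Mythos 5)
The paper states this proposition without proof (it is essentially the Craw--Smith quiver-of-sections result, folded in as background), so there is no in-text argument to compare against; I will assess your argument on its own merits. Your approach is correct and is the standard one. The only genuine issue is the one you already flag: the paper's definition of \emph{indecomposable} must be read as disallowing factorizations in which $\operatorname{div}(s')=0$ or $\operatorname{div}(s'')=0$, since otherwise every section decomposes trivially by taking $k=i$ and $s'$ the identity, the quiver would have no arrows, and the statement would be false. With that reading, your induction is well-founded: both factors then have strictly smaller nonzero $T$-invariant effective divisor, the finitely many lattice points of $P_j - P_i$ bound the recursion, and the intermediate bundles $\mathcal{L}_k$ stay in the given collection because the definition quantifies only over $1 \le k \le n$. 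For injectivity you can in fact say slightly more than ``spanning set'': the two-sided ideal generated by the elements $p-q$ is already the $\kk$-span of all such differences (pre- and post-composing by a path preserves the ``same source, target, divisor'' condition), so the equivalence classes of paths by $(\mathrm{source},\mathrm{target},\operatorname{div})$ form a basis of $\kk Q/I$, and $\bar\phi$ matches that basis bijectively with the $T$-weight basis of $\operatorname{End}(\bigoplus_i \mathcal{L}_i)$. A last remark worth recording: your argument never uses fullness or exceptionality of the collection; what it does use, and what the proposition leaves implicit by saying only ``smooth variety,'' is that $X$ is a smooth projective toric variety, which is needed for the one-dimensional $T$-weight decomposition of $\Hom(\mathcal{L}_i,\mathcal{L}_j)$.
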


A \textbf{semi-orthogonal decomposition} of a triangulated category $\mathcal{C}$ is a sequence of strictly full triangulated subcategories $\mathcal{A_1},\ldots \mathcal{A}_k$ such that the smallest triangulated subcategory containing all $\mathcal{A}_i$ is $\mathcal{C}$ and for all $i < j$ we have no homomorphisms between any object in $\mathcal{A}_i$ and any object in $\mathcal{A}_j$. Every full strong exceptional collection $\mathcal{E}_1, \ldots, \mathcal{E}_k$ gives rise to a semi-orthogonal decomposition where $\mathcal{A}_i$ is the category generated by $\mathcal{E}_i$. If we have a semi-orthogonal decomposition, then $K(\mathcal{C}) = K(\mathcal{A}_1) \oplus \cdots \oplus K(\mathcal{A}_k)$. Therefore, if we have a full exceptional collection for $\D(X)$, then the sheaves in the collection form a basis for $K(X)$.

\section{Combinatorial Approach to Full Strongly Exceptional Sequences}\label{Sec: Combinatorial Approach}

\subsection{Homotopy Theory of Set Differences of Polytopes}

We begin by proving some results on the homotopy theory of differences of polytopes.

Let $P$ be a polytope defined by a collection of affine linear functionals $\phi_i$ for $i \in [\ell]$. That is $P = \{x \in \RR^n \; \lvert \; \phi_i(x) \geq 0 \text{ for all $i \in [\ell]$}\}$. Then, for any collection of $a_i \in \RR^n$ with $a_i > 0$ for $i \in [\ell]$, define
 \[P_{> \mathbf{a}} = \{x \in \RR^n \; \lvert \; \phi_i(x) > - a_i \text{ for all $i \in [\ell]$}\}.\]
The set $P_{> \mathbf{a}}$ is convex, open, and contains $P$. If we do not specify the choice of linear functionals, then we mean that the result holds for any choice. The following result is a minor modification of a result by \cite{Altmann2020}.

\begin{proposition}\cite{Altmann2020}\label{prop: P Q enlarging Q}
    Let $P$ and $Q$ be polytopes in $\RR^n$. Then, there exists a constant $c > 0$ such that for all $a_1, \ldots, a_{\ell}$ with $0 < a_i < c$ for all $i$, we have a deformation retract from $P \backslash Q$ to $P \backslash Q_{> \mathbf{a}}$.
\end{proposition}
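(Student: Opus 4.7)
The plan is to follow the strategy of \cite{Altmann2020}, adapted to allow distinct widths $a_1, \ldots, a_\ell$ on the defining facets of $Q$. The underlying geometric picture is that $P \cap \overline{Q_{>\mathbf{a}} \setminus Q}$ is a thin collared shell inside $P$ when $c$ is small, and one builds an explicit strong deformation retract by pushing points of this shell outward through $\partial Q_{>\mathbf{a}}$, while leaving points already in $P \setminus Q_{>\mathbf{a}}$ fixed.

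First I would choose $c > 0$ small enough that two combinatorial conditions hold: (i) the normal fan of $Q_{>\mathbf{a}}$ agrees with that of $Q$, so that the facets of $Q_{>\mathbf{a}}$ are indexed by those of $Q$ via $\{\phi_i = -a_i\}$; and (ii) every face $F$ of $P$ disjoint from $Q$ remains disjoint from $Q_{>\mathbf{a}}$. Both follow from compactness of $P$ and $Q$ together with continuity of the defining functionals, and (ii) guarantees that the support of the retraction is a prescribed collection of faces of $P$ independent of $\mathbf{a}$.

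Next I would build a continuous outward vector field $v$ on the closed shell $P \cap \overline{Q_{>\mathbf{a}} \setminus Q^{\circ}}$. A natural choice is $v(x) = x - \pi_Q(x)$, where $\pi_Q$ denotes Euclidean projection onto $Q$: it is continuous, vanishes exactly on $\partial Q$, and by convexity of $Q_{>\mathbf{a}}$ its flow lines escape $Q_{>\mathbf{a}}$ after traveling a distance of order $\|\mathbf{a}\|$. The deformation $H \colon (P \setminus Q) \times [0,1] \to P \setminus Q$ would then send a shell point $x$ a fraction $t$ of the way along this flow (scaled by a continuous function $r(x)$ calibrated so that $H(x,1) \in \partial Q_{>\mathbf{a}}$), and would act by the identity on points of $P \setminus Q_{>\mathbf{a}}$.

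The main obstacle is keeping the trajectories inside $P$. On the interior of $P$ this is automatic, but at a point $x$ lying on a face $F$ of $P$, the vector $v(x)$ may point out of $P$. I would resolve this by replacing $v$ by its projection $\widetilde v$ onto the tangent cone $T_x P$, yielding a still-continuous vector field. Condition (ii) ensures that at any shell point on $F$, the face $F$ is not contained in $Q$, so $\widetilde v(x)$ is nonzero and still drives the flow outward across $\partial Q_{>\mathbf{a}}$. A compactness argument over the face poset of $P$ then gives a uniform lower bound on the exit speed, producing a single $c$ that works for all admissible $\mathbf{a}$ and yielding the asserted deformation retract.
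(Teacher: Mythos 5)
The paper does not give a proof of this proposition; it cites \cite{Altmann2020} and moves on, so the only question is whether your attempt holds up on its own.  The overall plan --- flow shell points outward along $v(x) = x - \pi_Q(x)$, projected onto $T_xP$ --- is geometrically sensible, but the step you attribute to condition~(ii) does not give what you claim.  You assert that because the face $F$ carrying a shell point $x$ is not contained in $Q$, the projected vector $\widetilde v(x)$ is nonzero.  That implication is false: $\widetilde v(x) = 0$ precisely when $v(x) \in N_xP$, the outward normal cone, and this can happen even though $F \not\subseteq Q$.  A one-dimensional example already shows it: $P=[0,2]$, $Q=[0,1]$, $x=2$, where $v(x)=1 \in N_2P = [0,\infty)$ although the face $\{2\}$ is disjoint from $Q$.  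What actually rescues the construction is a different compactness statement --- that the closed set $\{x \in P\setminus Q : v(x) \in N_xP\}$ has positive distance from $\partial Q$ (under a suitable hypothesis on $P \cap Q$), so that $c$ can be chosen small enough to keep the shell clear of it.  That statement has to be proved, and your appeal to ``$F$ is not contained in $Q$'' is not a proof of it; in fact I do not see how to derive it directly from the face condition.

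There are two further points that need attention.  First, $\widetilde v$ is \emph{discontinuous} as a vector field on $P$: the tangent cone $T_xP$ jumps as $x$ crosses from the relative interior of a face into a lower-dimensional one, so $\|\widetilde v\|$ can drop abruptly.  The flow you want is therefore a projected dynamical system (Moreau sweeping process), and its existence, uniqueness, and continuity in the initial data must be invoked explicitly rather than treated as though $\widetilde v$ were an ordinary continuous field; similarly, the continuity of the calibration $r(x)$ near $\partial Q$, where the unscaled exit time diverges, requires an argument rather than an assertion.  Second, your condition~(i) --- that the normal fan of $Q_{>\mathbf a}$ agrees with that of $Q$ --- can fail when the chosen inequality presentation of $Q$ is redundant (perturbing a redundant facet inequality may cut a new facet), and the paper explicitly allows any presentation.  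The construction does not actually need (i), so this is a minor issue, but as written it is a false simplifying assumption.
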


We will use the following theorem of Smale:

\begin{theorem}[Smale's Theorem]\label{thm: Smale's Theorem}
    Let $X$ and $Y$ be connected, locally compact separable metric spaces.  Assume also that $X$ is locally contractible.  Consider a proper surjective continuous map $f: X \rightarrow Y$.  Assume that for all $y \in Y$, the space $f^{-1}(y)$ is contractible and locally contractible.  Then $f$ is a weak homotopy equivalence.
\end{theorem}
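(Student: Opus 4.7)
The plan is to verify directly that $f_* : \pi_n(X,x_0) \to \pi_n(Y,f(x_0))$ is a bijection for all $n \geq 0$, which suffices by the definition of weak equivalence (connectedness handles $n=0$). Both surjectivity and injectivity follow from a single lifting principle: given a finite CW pair $(K,L)$, a continuous map $g : K \to Y$, and a partial lift $\widetilde{g}_0 : L \to X$ with $f \circ \widetilde{g}_0 = g|_L$, there exists $\widetilde{g} : K \to X$ extending $\widetilde{g}_0$ together with a homotopy $f \circ \widetilde{g} \simeq g$ rel $L$. Taking $(K,L) = (S^n,\mathrm{pt})$ gives surjectivity of $f_*$, and taking $(K,L) = (D^{n+1}, S^n)$ gives injectivity.

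The technical heart of the argument is a \emph{fibered local contractibility} lemma: for every $y \in Y$ and every open neighborhood $W$ of the fiber $f^{-1}(y)$ in $X$, there is an open neighborhood $V \ni y$ with $f^{-1}(V) \subseteq W$ such that the inclusion $f^{-1}(V) \hookrightarrow W$ is null-homotopic. This fuses the three hypotheses: properness forces $f^{-1}(y)$ to be compact and ensures small neighborhoods of $y$ have preimages close to $f^{-1}(y)$; contractibility of $f^{-1}(y)$ produces a contraction inside any neighborhood; and local contractibility of $X$, together with compactness of $f^{-1}(y)$, lets us cover the fiber by finitely many contractible opens and glue their local contractions into one on a tubular neighborhood $f^{-1}(V)$.

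With fibered local contractibility in hand, I would prove the lifting principle by induction on the skeleta of $K \setminus L$. First, subdivide $K$ finely enough that each simplex $\sigma$ maps into an inner neighborhood $V_\sigma \subseteq W_\sigma$ as in the lemma; this uses separability and local compactness of $Y$ to extract a well-behaved refinement of the cover. Over each new vertex, choose any preimage under $f$. Over a $k$-simplex $\sigma$, the boundary is already lifted to a map $\partial\sigma \to f^{-1}(W_\sigma)$ by induction, and fibered local contractibility supplies a null-homotopy inside a slightly larger $f^{-1}(W'_\sigma)$, which extends the lift to $\sigma$. Concatenating the null-homotopies over all simplices gives the required homotopy $f \circ \widetilde{g} \simeq g$ rel $L$.

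The main obstacle is the bookkeeping in the skeletal step: the outer neighborhoods $W_\sigma$ used to extend over $\sigma$ must nest properly with the inner neighborhoods $V_\tau$ used for adjacent simplices $\tau$, so that the growing lift stays controlled and the final homotopy closes up rel $L$. Handling this cleanly is the point at which properness and the metric hypotheses are genuinely used, since one needs to pass to a subordinate refinement and invoke uniform continuity on compact subsets of $K$ to shrink $V_\sigma$ simultaneously over the whole cover.
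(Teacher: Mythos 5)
The paper does not supply a proof of this statement; it is quoted as a classical result (this is S.~Smale, \emph{A Vietoris mapping theorem for homotopy}, Proc.\ Amer.\ Math.\ Soc.\ 8 (1957), 604--610), so there is no in-paper argument to compare against. Your sketch does capture the overall architecture of Smale's original proof: a lifting-and-homotopy principle for maps from finite CW pairs, established by skeletal induction and underpinned by a local connectivity statement for preimages of small neighborhoods in $Y$.

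One point worth flagging is the strength of your ``fibered local contractibility'' lemma. As stated, you want a neighborhood $V$ of $y$ with $f^{-1}(V)\subseteq W$ such that the \emph{entire inclusion} $f^{-1}(V)\hookrightarrow W$ is null-homotopic. Producing this in one shot essentially asks the compact fiber $f^{-1}(y)$ to be an ANR with deformation-retractible neighborhoods; for compact metric spaces, local contractibility yields the ANR property only with an added finite-dimensionality hypothesis (Borsuk), so this is not automatic from the stated assumptions. Smale's proof sidesteps the issue by working one dimension at a time under $LC^{n}$ hypotheses: his local lemma only asserts that maps of $k$-spheres (for $k$ in a bounded range) from $f^{-1}(V)$ extend over disks into $W$, which is precisely what the skeletal induction consumes, since the relevant $(K,L)$ is always a finite-dimensional CW pair. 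I would recommend reformulating your local lemma in that bounded-degree form --- it is strictly weaker, hence easier to extract from the hypotheses, and it is fully sufficient because weak homotopy equivalence is tested only against $S^{n}$ and $D^{n+1}$. Apart from this, the plan is sound and the bookkeeping concerns you raise in the last paragraph (nesting of inner/outer neighborhoods, uniformity over the compact $K$) are exactly the places where properness and the metric hypotheses do the work.
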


\begin{proposition}\label{prop: Minkowski sum deformation retract}
    Let $P_1,P_2$ and $Q$ be polytopes in $\RR^n$. Then, we have a homotopy equivalence between $P_1 \backslash P_2$ and $(P_1 + Q) \backslash (P_2 + Q)$. We also have the open version $P_1 \backslash P_2^{\circ} \sim (P_1 + Q) \backslash (P_2 + Q)^{\circ}$.
\end{proposition}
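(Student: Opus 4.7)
The plan is to apply Smale's theorem (Theorem \ref{thm: Smale's Theorem}) to the Minkowski sum map $\sigma : P_1 \times Q \to P_1 + Q$, $(p, q) \mapsto p + q$. The fiber of $\sigma$ over any $z \in P_1 + Q$ is homeomorphic to the non-empty intersection $P_1 \cap (z - Q)$ of convex polytopes, which is convex and hence contractible. Since $P_1 \times Q$ and $P_1 + Q$ are compact semialgebraic sets (in particular, locally contractible, locally compact, separable metric spaces), Smale's theorem yields a weak homotopy equivalence; and since the same fiber description applies verbatim to any restriction of $\sigma$, the restriction to $A_{\circ} := \sigma^{-1}\bigl((P_1+Q) \setminus (P_2+Q)^{\circ}\bigr)$ also yields a weak homotopy equivalence $\sigma|_{A_{\circ}} : A_{\circ} \to (P_1+Q) \setminus (P_2+Q)^{\circ}$, with $A_{\circ}$ compact because the subtracted set is open.

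We then relate $A_{\circ}$ to $P_1 \setminus P_2^{\circ}$ via the first-coordinate projection $\pi_1 : A_{\circ} \to P_1$. A Hahn-Banach argument shows the image of $\pi_1$ is exactly $P_1 \setminus P_2^{\circ}$: if $p \notin P_2^{\circ}$, choosing a separating or supporting linear functional $\phi$ and $q^* \in Q$ maximizing $\phi$ yields $p + q^* \notin (P_2+Q)^{\circ}$, while if $p \in P_2^{\circ}$ then $p + q \in (P_2+Q)^{\circ}$ for every $q \in Q$. For each $p \in P_1 \setminus P_2^{\circ}$ the fiber $F_p = Q \setminus \bigl((P_2+Q)^{\circ} - p\bigr)$ is non-empty, and we claim it is contractible. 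The key geometric input is that the convex set $C_p := (P_2+Q)^{\circ} - p$ has Minkowski width in every direction at least that of $Q$; this precludes $C_p$ from creating a proper hole inside $Q$, and allows the face $Q_\phi \subseteq F_p$ of $Q$ maximizing $\phi$ to serve as a deformation retract of $F_p$. With contractibility of the fibers in hand, Smale's theorem applied to the proper map $\pi_1|_{A_{\circ}}$ gives a weak homotopy equivalence $A_{\circ} \simeq P_1 \setminus P_2^{\circ}$. Chaining the two weak equivalences and invoking Whitehead's theorem, which applies because all spaces are semialgebraic and hence have the homotopy type of CW complexes, yields the open version.

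The closed version $P_1 \setminus P_2 \simeq (P_1+Q) \setminus (P_2+Q)$ then follows by applying Proposition \ref{prop: P Q enlarging Q} to both sides to replace the subtracted polytopes by small open enlargements, which are the interiors of slight closed perturbations, thereby reducing to the already-established open version. The main obstacle in this plan is the contractibility of the fiber $F_p$: the Minkowski-width property rules out annular holes, but exhibiting an explicit deformation retract onto $Q_\phi$ requires routing points of $F_p$ lying ``below'' $C_p$ in the $\phi$-direction around $C_p$ rather than through it, which entails a short case analysis depending on how $C_p$ meets the boundary of $Q$.
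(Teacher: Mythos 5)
Your overall route is the paper's route: two applications of Smale's theorem, first to the Minkowski-sum map $P_1\times Q\to P_1+Q$ and then to the first-coordinate projection, followed by Whitehead's theorem; your Hahn--Banach identification of the image of the projection as $P_1\setminus P_2^{\circ}$ is correct (and cleaner than the justification one might give via a single point of $Q$). The genuine gap is exactly the step you flag and defer: contractibility of the fibers $F_p=Q\setminus\bigl((P_2+Q)^{\circ}-p\bigr)$. This is the entire content of the second Smale application (which in addition needs these fibers connected and locally contractible, connectedness being what lets you match components), and the ``key geometric input'' you offer --- that $C_p$ has width at least that of $Q$ in every direction --- cannot carry it. A convex set containing a translate of $Q$ can still disconnect $Q$: with $Q=[0,1]^2$ and $C=\operatorname{conv}\bigl(\{(-10,\tfrac12)\}\cup([10,11]\times[0,1])\bigr)$, the set $C$ contains a translate of $Q$, yet $C\cap Q$ is a thin band crossing $Q$ from its left edge to its right edge, so $Q\setminus C$ has two components and retracts onto no single face $Q_\phi$; in $\RR^3$ an analogous wedge meets the cube in a thin rod and leaves a complement homotopy equivalent to a circle. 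Hence no ``short case analysis'' resting only on the width property can exist; any proof must use the finer structure of $C_p$ as a Minkowski sum with $Q$ --- a union of translates of $Q$ over the convex set $P_2-p$ whose interior misses the origin --- and convexity of $P_2$ is precisely what forbids, say, two translates anchored at opposite corners of the cube from merging into a separating band. As submitted, the argument is incomplete at its central point.

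For comparison, the paper disposes of this same step by asserting that the fibers of the projection are convex; they are of the form $Q\setminus\bigl((P_2+Q)-p\bigr)$, so you are right that convexity is not literally available and that contractibility is the real issue --- you have located the crux, but not resolved it. The remaining pieces of your plan (restricting Smale to $A_{\circ}$, and deducing the closed statement from the open one via Proposition \ref{prop: P Q enlarging Q}) are fine in outline, the latter needing only the routine check that $(P_2)_{\geq \mathbf a}+Q$ is an admissible enlargement of $P_2+Q$ in the sense of that proposition.
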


\begin{proof}
    Without loss of generality, we assume that $0 \in Q$. We prove the first equivalence and the second one follows by the same argument.
    
    Consider the morphism $m: P_1 \times Q \to P_1 + Q$ given by $m(x,y) = x + y$ and the inclusion $i: (P_2 + Q) \cap (P_1 + Q) \hookrightarrow P_1 + Q$. Let $F$ be the pullback of these maps. That is, we have the Cartesian square

\begin{equation*} 
    \begin{tikzcd} 
        F \ar[hookrightarrow]{r} \ar[d] &  P_1 \times Q \ar[d,"m"]  \\
        (P_2 + Q) \cap (P_1 + Q)  \ar[hookrightarrow]{r}{i}& P_1 + Q
    \end{tikzcd}
\end{equation*}
    Then, $F$ consists of all pairs $(x,y)$ with $x \in P_1$ and $y \in Q$ such that $x + y$ is in $P_2 + Q$. It is clear that the image of the restriction of $m$ to $P_1 + Q \backslash F$ is $P_1 + Q \backslash P_2 + Q$ and further the fiber at $z \in P_1 + Q \backslash P_2 + Q$ is $\{(x,y) \in P_1 \times Q\; \lvert \;  z = x + y\}$ which is convex and hence contractible and locally contractible. This map is a closed map with compact fibers so it is proper. Since each fiber is connected, this means that this map induces a bijection between connected components of the image and the domain. By restricting the function to each connected component and then applying Smale's theorem, we have that each connected component of $P_1 \times Q \backslash F$ is weakly homotopy equivalent to the corresponding connected component of $P_1 + Q \backslash P_2 + Q$.

    Let $\pi: P_1 \times Q \backslash F \to  \RR^n$ be the proper surjective map given by projection onto the first component. If $x \in P_1 \backslash P_2$, then $(x,0)$ is in $P_1 \times Q \backslash F$ hence the image contains $P_1 \backslash P_2$. Further, if $x \in P_2 \cap P_1$, then $x + y \in P_2 + Q$ for all $y \in Q$ by definition. So, this is a surjective proper map $\pi: P_1 \times Q \backslash F \to P_1 \backslash P_2$.

    The fiber of $x \in P_1 \backslash P_2$ is
     \[\pi^{-1}(x) = \{ (x,y) \in P_1 \times Q \; \lvert \; x + y \not \in P_2\} = \{(x,y) \in P_1 \times Q \; \lvert \; y \in P_2 - x \cap Q\}. \]
     The fiber is convex and hence contractible and locally contractible. Again, each fiber is connected and so by Smale's theorem, we have that each connected component $P_1 \times Q \backslash F$ is weakly homotopy equivalent to the corresponding component of $P_1 \backslash P_2$. Combining this with the previous equivalence we have a weak homotopy equivalence between components of $P_1 \backslash P_2$ and $(P_1 + Q) \backslash (P_2 + Q)$.

     By Proposition \ref{prop: P Q enlarging Q}, we have that $P \backslash P'$ is homotopy equivalent to $P \backslash P'_{> \mathbf{a}}$ which is a disjoint union of CW complexes for any two polytopes $P$ and $P'$. By Whitehead's theorem, a weak homotopy equivalence between spaces homotopy equivalent to CW complexes is automatically a homotopy equivalence. Therefore, we have a homotopy equivalence component-wise of $(P_1+Q) \backslash (P_2+Q)$ and $P_1 \backslash P_2$ and hence a homotopy equivalence on the entire spaces.
\end{proof}

We will use the following result in the proof of Theorem \ref{mainthm: dg homs calculation}.

\begin{lemma}\label{lem: intersection to set difference homotopy}
    Let $P$ and $Q$ be full-dimensional polytopes in $\RR^n$ with the same normal fan $\Sigma$. Let $P^{\circ}$ and $Q^{\circ}$ denote their relative interior. Then, there is a weak homotopy equivalence from $\partial P \cap Q^{\circ}$ to $Q \backslash P$, when the first is non-empty.
\end{lemma}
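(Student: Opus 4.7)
The plan is to construct a genuine homotopy equivalence (stronger than the stated weak one) via radial projection from a common interior point, building the inverse-up-to-homotopy explicitly rather than appealing to Smale's theorem -- the natural radial-projection map is not proper because its fibers over points near $\partial Q$ escape to the boundary, so the Smale-style approach used in Proposition \ref{prop: Minkowski sum deformation retract} is not directly available here. As a preliminary step, I would show that $P^{\circ} \cap Q^{\circ}$ is non-empty under the standing hypothesis $\partial P \cap Q^{\circ} \ne \emptyset$: given $y \in \partial P \cap Q^{\circ}$ and any $p_1 \in P^{\circ}$, the convex combination $(1 - t) y + t p_1$ lies in $P^{\circ}$ for every $t \in (0, 1]$ by the standard convex-body fact, and lies in the open set $Q^{\circ}$ for small enough $t$. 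Fix such a basepoint $p_0 \in P^{\circ} \cap Q^{\circ}$.

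For each $x \in \RR^n$ I consider the ray $\gamma_x(t) = p_0 + t(x - p_0)$ and let $t_P(x), t_Q(x)$ be its exit parameters from $P$ and $Q$; both are continuous in $x$ because $p_0$ lies in the interior of both polytopes. Define $f \colon Q \backslash P \to \partial P \cap Q^{\circ}$ by $f(x) = \gamma_x(t_P(x))$. Since $x \in Q \backslash P$ sits at parameter $t = 1$, which is in $Q$ but not in $P$, we have $t_P(x) < 1 \le t_Q(x)$. The strict inequality $t_P(x) < t_Q(x)$ is exactly what places $f(x)$ inside $Q^{\circ}$ rather than on $\partial Q$, using that any convex combination with positive weight on the interior point $p_0$ stays in $Q^{\circ}$.

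In the reverse direction, I define $g \colon \partial P \cap Q^{\circ} \to Q \backslash P$ by sending $y$ to the midpoint $\gamma_y\bigl(\tfrac{1}{2}(t_P(y) + t_Q(y))\bigr)$ on the ray between $y$ (where $t_P(y) = 1$) and the exit from $Q$ (where $t_Q(y) > 1$ since $y \in Q^{\circ}$), so the midpoint lies in $Q^{\circ} \backslash P$. The composition $f \circ g$ is the identity because $g(y)$ lies on the same ray through $y$ from $p_0$, and the exit of this ray from $P$ is $y$ itself. To see $g \circ f \simeq \mathrm{id}_{Q \backslash P}$, I would use the linear homotopy $H_s(x) = \gamma_x\bigl((1-s) + s \cdot \tfrac{1}{2}(t_P(x) + t_Q(x))\bigr)$ along each ray; the interpolated parameter stays in the half-open interval $(t_P(x), t_Q(x)]$ throughout (as a convex combination of two points in that interval), so $H_s(x) \in Q \backslash P$ for all $s \in [0, 1]$.

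The main technical point is the well-definedness of $f$, i.e.\ ensuring $f(x) \in Q^{\circ}$ rather than $\partial Q$; this is where the choice of basepoint $p_0 \in P^{\circ} \cap Q^{\circ}$ is essential, and it reduces cleanly to the strict inequality $t_P(x) < t_Q(x)$ above. All remaining verifications (continuity of the exit parameters $t_P$ and $t_Q$ as functions on the relevant subsets of $\RR^n$, invariance of the straight-line homotopy inside $Q \backslash P$, and $f \circ g = \mathrm{id}$) are routine convex-geometric checks.
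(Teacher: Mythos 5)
Your proof is correct, and it takes a genuinely different route from the paper. The paper argues in two stages: first a nearest-point (Chebyshev) retraction of $Q^{\circ} \backslash P^{\circ}$ onto $\partial P \cap Q^{\circ}$, and then a chain of identifications relating $Q \backslash P$ to $Q^{\circ} \backslash P^{\circ}$ via the enlargement result of Altmann et al.\ (Proposition \ref{prop: P Q enlarging Q}), the Minkowski-sum equivalence proved with Smale's theorem (Proposition \ref{prop: Minkowski sum deformation retract}), and a direct-limit argument over the dilations $(1-\tfrac{1}{n})Q \backslash P^{\circ}$; it is this second stage that uses the common normal fan (to arrange $P_{>\mathbf{a}} = (P+\varepsilon Q)^{\circ}$) and is the reason the conclusion is only a weak equivalence. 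You instead fix a basepoint $p_0 \in P^{\circ} \cap Q^{\circ}$ (correctly deduced from $\partial P \cap Q^{\circ} \neq \emptyset$ and full-dimensionality of $P$) and use the radial exit parameters $t_P, t_Q$, which are continuous since they are reciprocals of Minkowski gauges centered at $p_0$; the verifications that $t_P(x) < 1 \leq t_Q(x)$ on $Q \backslash P$, that $f(x) \in Q^{\circ}$ because a convex combination with positive weight on the interior point $p_0$ stays interior, that $t_P(y)=1 < t_Q(y)$ on $\partial P \cap Q^{\circ}$, that $f \circ g = \mathrm{id}$, and that the straight-line homotopy along rays stays in the parameter window $(t_P(x), t_Q(x)]$ all go through. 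What your approach buys is a strictly stronger and more self-contained statement: a genuine homotopy equivalence, with no appeal to Smale's theorem, no enlargement or colimit step, and no use of the hypothesis that $P$ and $Q$ share a normal fan (only convexity, compactness, and $P^{\circ} \neq \emptyset$ are used); what the paper's route buys is uniformity with the toolkit it reuses elsewhere (the Smale-type fiber arguments and the $P_{>\mathbf{a}}$ enlargements). Since the lemma is only invoked to identify reduced singular cohomology in Theorem \ref{thm: calculation of homs in Perf}, your stronger conclusion is more than sufficient.
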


\begin{proof}
    First, we build a strong deformation retract from $Q^{\circ} \backslash P^{\circ}$ to $\partial P \cap Q^{\circ}$. First note that $P \cap Q^{\circ}$ is a convex and closed subset of $Q^{\circ}$. It is a standard fact that such a subset (of a metric space) is a Chebyshev set. This means that for each $x \in Q^{\circ}$ there is a unique point in $r(x) \in P \cap Q^{\circ}$ with minimal distance $d(x,r(x))$. Further, the function $r(x)$ is continuous. We can restrict this function to obtain a function $r: Q^{\circ} \backslash P^{\circ} \to \partial P \cap Q^{\circ}$ since the closest point $r(x)$ for $x \in Q^{\circ} \backslash P^{\circ}$ is necessarily on the boundary of $P$.

    This gives a homotopy $H(x,t) = (1-t)x + t(r(x))$ which is a strong deformation retract.

    Now, choose affine linear functionals $\phi_i$ indexed by the rays of $\Sigma$ such that
     \[P = \{x \in \RR^n \; \lvert \; \phi_i(x) \geq 0 \text{ for all $i$}\}\]
     Let $\varepsilon > 0$ and consider the enlargement of $Q$ given by $P + \varepsilon Q$. Since $P$ and $Q$ share their normal fan, we can choose $\mathbf{a}$ with $a_i > 0$ such that $P_{ > \mathbf{a}} = (P + \varepsilon Q)^{\circ}$.

     By Proposition \ref{prop: P Q enlarging Q}, we have a homotopy equivalence
      \[ Q \backslash P \sim Q \backslash P_{> \mathbf{a}} = Q \backslash (P + \varepsilon Q)^{\circ}.\]

      By Proposition \ref{prop: Minkowski sum deformation retract}, we can subtract $\varepsilon Q$ from both the terms of the set difference to get the equivalence
       \[ Q \backslash (P + \varepsilon Q)^{\circ} \sim ((1- \varepsilon) Q) \backslash P^{\circ}.\]

        Considering the sequence
         \[ \cdots \hookrightarrow \left(1 - \frac{1}{n} \right )Q \backslash P^{\circ} \hookrightarrow \left(1 - \frac{1}{n+1} \right)Q \backslash P^{\circ} \hookrightarrow \cdots ,\]
    we obtain a weak homotopy equivalence between $\left( 1- \varepsilon \right )Q\backslash P^{\circ}$ for small $\varepsilon$ and the direct limit of this sequence which is $Q^{\circ} \backslash P^{\circ}$.
        
    All these homotopy equivalences together complete the proof.
      
\end{proof}

\subsection{Calculation of Homomorphisms and Exceptionality of Polytopes}

We now turn to calculating the homomorphisms between nef line bundles in the dg categories $\operatorname{Perf}_T(X_{\Sigma})$ and $\operatorname{Perf}(X_{\Sigma})$ through the coherent-constructible correspondence. As a consequence, we will recover a result of \cite{Altmann2020} that calculates the extensions between nef line bundles. We will use $C(V)$ denote the singular cochain complex on $V \subseteq \RR^n$ over $\kk$ and let $\widetilde{C}(V)$ denote the augmented complex which has $\kk$ at degree $-1$. For $U \subseteq V$, let $C(V,U)$ denote the relative cochains of the pair $(V,U)$.

\begin{theorem}[Theorem \ref{mainthm: dg homs calculation}]\label{thm: calculation of homs in Perf}
     Let $X_\Sigma$ be a projective toric variety and let $P,Q \in \Def(\Sigma)$. Then, we have the quasi-isomorphisms
     \[ \hom_T(\mathcal{L}_P^{-1}, \mathcal{L}_Q^{-1}) \cong \hom_T(\mathcal{L}_Q, \mathcal{L}_P) \cong \widetilde{C}(Q \backslash P)[-1] \]
     in $\operatorname{Perf}_T(X_{\Sigma})$ and
      \[ \hom(\mathcal{L}_{P}^{-1}, \mathcal{L}_Q^{-1}) \cong \hom(\mathcal{L}_Q, \mathcal{L}_P) \cong  \bigoplus_{m \in M} \widetilde{C}(Q \backslash (P + m))[-1]\]
    in $\operatorname{Perf}(X_{\Sigma})$.
\end{theorem}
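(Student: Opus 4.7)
The plan is to transport the hom computation to the constructible side via Theorem~2.5 and identify the resulting sheaf-theoretic expression as a singular cochain complex using Lemma~3.3. I will treat the $T$-equivariant statement first and deduce the non-equivariant version by decomposing into weight spaces. Observe that $\hom(\mathcal{L}_Q, \mathcal{L}_P) \cong \hom(\mathcal{L}_P^{-1}, \mathcal{L}_Q^{-1})$ (by tensoring both arguments with $\mathcal{L}_P^{-1} \otimes \mathcal{L}_Q^{-1}$, a self-equivalence), so it suffices to compute the latter.

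\emph{Reduction to the ample case.} Choose any ample $R \in \Def(\Sigma)$. Since the normal fan of a Minkowski sum is the common refinement of the summands' normal fans, both $P+R$ and $Q+R$ have normal fan equal to $\Sigma$ and hence are ample. Tensoring by $\mathcal{L}_R^{-1}$ is a self-equivalence of $\operatorname{Perf}_T(X_\Sigma)$, so $\hom_T(\mathcal{L}_P^{-1}, \mathcal{L}_Q^{-1}) \cong \hom_T(\mathcal{L}_{P+R}^{-1}, \mathcal{L}_{Q+R}^{-1})$; simultaneously, Proposition~3.2 provides a homotopy equivalence $(Q+R) \setminus (P+R) \simeq Q \setminus P$. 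I may therefore assume $P$ and $Q$ are full-dimensional with normal fan $\Sigma$.

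\emph{Sheaf-theoretic calculation.} In this ample case, Proposition~2.11 gives $\kappa(\mathcal{L}_P^{-1}) \cong (i_{P^\circ})_* \kk$ and likewise for $Q$. A stalk-by-stalk check (using local convexity at boundary points, so that the link of each boundary point inside $P^\circ$ is contractible) identifies this standard sheaf with the pushforward of $\kk$ along the \emph{closed} inclusion $P \hookrightarrow \RR^n$; call this sheaf $\kk_P$. The adjunction $((i_{Q^\circ})^*, (i_{Q^\circ})_*)$ yields
\[\hom\bigl(\kk_P,\ (i_{Q^\circ})_* \kk\bigr) \cong \hom_{\operatorname{Sh}(Q^\circ)}\bigl(\kk_K,\ \kk_{Q^\circ}\bigr), \qquad K := P \cap Q^\circ,\]
and the closed-inclusion adjunction $((i_K)_*, i_K^!)$ then identifies this with the local-cohomology complex $R\Gamma_K(Q^\circ, \kk)$. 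Its defining triangle
\[R\Gamma_K(Q^\circ, \kk) \longrightarrow R\Gamma(Q^\circ, \kk) \longrightarrow R\Gamma(Q^\circ \setminus P, \kk) \xrightarrow{+1}\]
together with the contractibility $R\Gamma(Q^\circ, \kk) \cong \kk$ exhibits the hom as the homotopy fiber of the augmentation $\kk \to C(Q^\circ \setminus P)$, which is precisely $\widetilde{C}(Q^\circ \setminus P)[-1]$. Finally, the string of homotopy equivalences used in the proof of Lemma~3.3 gives $Q^\circ \setminus P \simeq Q \setminus P$, upgrading the answer to $\widetilde{C}(Q \setminus P)[-1]$.

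\emph{Non-equivariant statement and main obstacle.} The non-equivariant line bundle $\mathcal{L}_P$ admits the $M$-indexed family of equivariant lifts $\{\mathcal{L}_{P+m}\}_{m \in M}$, producing a weight decomposition $\hom(\mathcal{L}_P^{-1}, \mathcal{L}_Q^{-1}) \cong \bigoplus_{m \in M} \hom_T(\mathcal{L}_P^{-1}, \mathcal{L}_{Q-m}^{-1})$; applying the equivariant formula to each summand and translating by $m$ yields $\bigoplus_{m \in M} \widetilde{C}(Q \setminus (P+m))[-1]$, as claimed. I expect the main obstacle to be boundary bookkeeping: verifying carefully that the standard constructible sheaf on the open interior $P^\circ$ really agrees with the pushforward along the closed inclusion of $P$, and that the various open/closed variants of $Q \setminus P$ arising in and around Lemma~3.3 are all weakly homotopy equivalent. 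Both facts follow from local convexity, but must be tracked precisely to bridge the sheaf-theoretic and topological sides of the argument.
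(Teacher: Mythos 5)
Your proposal is correct in substance, and at the key step it takes a genuinely different route from the paper. The paper also reduces to the ample case by Minkowski-adding an ample polytope and also gets the non-equivariant statement by summing over the $M$-family of equivariant structures (via the $\operatorname{Res}/\operatorname{Ind}$ adjunction), so those parts coincide with yours. The difference is in the sheaf computation: the paper keeps the standard sheaf $i_{P^{\circ}*}\kk$ and invokes the Nadler--Zaslow identification $\hom(i_{P^{\circ}*}\kk, i_{Q^{\circ}*}\kk)\cong C(P\cap Q^{\circ},\partial P\cap Q^{\circ})$, then uses the short exact sequence of relative/reduced cochains (with $P\cap Q^{\circ}$ convex, hence acyclic middle term) to rewrite this as $\widetilde{C}(\partial P\cap Q^{\circ})[-1]$, and finally applies its Lemma on intersections versus set differences to replace $\partial P\cap Q^{\circ}$ by $Q\setminus P$. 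You instead first replace $i_{P^{\circ}*}\kk$ by the constant sheaf on the closed polytope $P$ and identify the hom complex with local cohomology $R\Gamma_{P\cap Q^{\circ}}(Q^{\circ},\kk)$, whose defining triangle together with contractibility of $Q^{\circ}$ lands directly on $\widetilde{C}(Q^{\circ}\setminus P)[-1]$. This buys you a shorter, more standard sheaf-theoretic argument that avoids both the Nadler--Zaslow formula and the $\partial P\cap Q^{\circ}$ intermediary; what it costs you is a different topological comparison at the end (see below). One small point to make explicit: the identification $i_{P^{\circ}*}\kk\cong\kk_{P}$ should be phrased as the natural comparison map $\kk_{P}\to Ru_{*}\kk_{P^{\circ}}$ (for $u:P^{\circ}\hookrightarrow P$) being a quasi-isomorphism, checked on stalks via convexity; matching stalks alone is not an isomorphism.

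The one step you must actually supply is the weak equivalence $Q^{\circ}\setminus P\simeq Q\setminus P$. You attribute it to ``the string of homotopy equivalences'' in the paper's intersection-to-set-difference lemma, but that lemma produces $Q\setminus P\simeq Q^{\circ}\setminus P^{\circ}$ and $Q^{\circ}\setminus P^{\circ}\simeq\partial P\cap Q^{\circ}$ --- note the open $P^{\circ}$, not the closed $P$ --- so your statement is a genuine variant, not a quotation. It is true and provable with the same toolkit: for instance, after centering $0\in Q^{\circ}$, write $Q^{\circ}\setminus P=\bigcup_{\varepsilon>0}\bigl((1-\varepsilon)Q\setminus P\bigr)$ and use the compactness/colimit argument on homotopy groups, combined with the Minkowski-sum equivalence of Proposition~\ref{prop: Minkowski sum deformation retract} and the enlargement equivalence of Proposition~\ref{prop: P Q enlarging Q} to see that each stage $(1-\varepsilon)Q\setminus P$ is weakly equivalent to $Q\setminus P$ for $\varepsilon$ small. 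So this is not a fatal gap, but it is a lemma-sized statement that your write-up currently treats as already available; once it is proved, your argument goes through and yields the same conclusion as the paper's.
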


\begin{proof}
We begin with the case where $P$ and $Q$ are ample polytopes. That is, their normal fan is exactly $\Sigma$. Since $\Sigma$ is strictly convex, this means that $P$ and $Q$ are full-dimensional polytopes in $M_{\RR}$. It suffices to calculate the homomorphisms after applying the coherent-constructible correspondence. Since $P$ and $Q$ are ample, we have that $\kappa(\mathcal{L}_P^{-1})$ is the standard constructible sheaf $i_{P^{\circ}*}\kk$ where $i$ is the inclusion map $P^{\circ} \to \RR^n$ and likewise for $Q$ by Proposition \ref{prop: ccc standard sheaf quasiisomorphism}.

In \cite{Nadler2008}, Nadler and Zaslow calculated the set of homomorphisms between constructible sheaves on open sets $U_0$ and $U_1$. By a standard collection of identities and adjunctions, we have
    \begin{align*}
        \hom(i_{U_0 *}\kk, i_{U_1 *}\kk) &\cong \Gamma(\RR^n, \sheafhom(i_{U_0*} \kk, i_{U_1*} \kk)) \\
        &\cong \Gamma(\RR^n, i_{U_1*} \sheafhom(i_{U_1}^*i_{U_0*} \kk, i_{U_0} \kk)) \\
        &\cong \Gamma(\RR^n, i_{U_1 *} \sheafhom(\omega_{U_1}, i_1^{!}i_{0!} \omega_{U_0})) \\
        &\cong \Gamma(\RR^n, i_{U_1*}i_{U_1}^*i_{U0!} \kk),
    \end{align*}
    where $\omega_{U_0}$ and $\omega_{U_1}$ are the dualizing complexes. Therefore, 
     \[\hom(i_{U_0 *}\kk, i_{U_1 *}\kk) \cong \Gamma(U_1, i_{U_0!}\kk) \cong C(\overline{U_0} \cap U_1, \partial U_0 \cap U_1).\]
    Applying this to our situation, we have
      \[ \hom(i_{P^{\circ}*}\kk, i_{Q^{\circ}*} \kk) \cong C(P \cap Q^{\circ}, \partial P \cap Q^{\circ})\]
    in $\operatorname{Sh}_{cc}(\RR^n, \Lambda_{\Sigma})$. If $P \cap Q^{\circ}$ is empty, then this complex is the zero complex and $\widetilde{C}(P/Q)[-1]$ is acyclic so they are quasi-isomorphic. Suppose otherwise and consider the usual exact sequence of complexes
     \[0 \rightarrow C(P \cap Q^{\circ}, \partial P \cap Q^{\circ}) \to \widetilde{C}(P \cap Q^{\circ}) \rightarrow \widetilde{C}(\partial P \cap Q^{\circ}) \rightarrow 0. \]
     Since the intersection of $P$ and $Q^{\circ}$ is non-empty and both are convex sets, we have that $P \cap Q^{\circ}$ is contractible, and hence the middle complex is acyclic. Since this exact sequence is split, we get that the connecting homomorphism of the corresponding long exact sequence is given by a geniune map of chain complexes $\delta: \widetilde{C}(\partial P \cap Q^{\circ}) \to C(P \cap Q^{\circ}, \partial P \cap Q^{\circ})[1]$. Since the middle term has no cohomology, this map is a quasi-isomorphism. Thus,
      \[ \hom(\kappa(\mathcal{L}^{-1}_P), \kappa(\mathcal{L}^{-1}_Q)) \cong C(P \cap Q^{\circ}, \partial P \cap Q^{\circ}) \cong \widetilde{C}(\partial P \cap Q^{\circ})[-1].\]
     
    By Lemma \ref{lem: intersection to set difference homotopy}, $\partial P \cap Q^{\circ}$ is weakly homotopy equivalent to $Q \backslash P$ which completes the calculation for ample $T$-equivariant nef line bundles since weak homotopy equivalences induce isomorphisms of singular cohomology.

     For the ample and non-equivariant case, let $\operatorname{Res}: \operatorname{Sh}_T(X) \to \operatorname{Sh}(X)$ be the restriction functor and $\operatorname{Ind}$ be its right adjoint as in \cite{bernstein2006}. We have that $\operatorname{Ind} \circ \operatorname{Res}(\mathcal{L}_P) \cong \bigoplus_{m \in M} \mathcal{L}_{P + m}$ with the quasi-isomorphism being given by the maps $\operatorname{Ind} \circ \operatorname{Res} (\mathcal{L}_P) \to \mathcal{L}_{P+m}$ induced by the isomorphisms $\operatorname{Res}(\mathcal{L}_P) \to \operatorname{Res}(\mathcal{L}_{P+m})$ through the adjunction. This amounts to considering every possible equivariant structure on the nef line bundle $\operatorname{Res}(\mathcal{L}_{P})$. Then by the adjunction between induction and restriction, we have
    \begin{align*}
    \hom(\operatorname{Res}(\mathcal{L}_P^{-1}), \operatorname{Res}(\mathcal{L}_Q^{-1})) 
     &=\cong\hom_T(\mathcal{L}_{P}^{-1}, \operatorname{Ind} \circ \operatorname{Res} (\mathcal{L}_{Q}^{-1})) \\
        &\cong \bigoplus_{m \in M} C((Q+m) \cap P^{\circ}, \partial (Q+m) \cap P^{\circ}) \\
    &\cong  \bigoplus_{m \in M} \widetilde{C}(Q \backslash (P + m))[-1]. 
     \end{align*}
    Note that only finitely many of the terms on the right are not acyclic and hence nonzero in $\operatorname{Sh}(\RR^n)$.

    Finally, we consider the general case where $P, Q \in \Def(\Sigma)$ might not be ample. Then, $P$ and $Q$ are not necessarily full-dimensional which precludes us from using Nadler and Zaslow's calculation. We fix this by perturbing $P$ and $Q$ to ample polytopes.

    Let $P'$ be an ample polytope in $\Def(\Sigma)$. We have the standard facts that the map 
     \[ \mathcal{E} \mapsto \mathcal{E} \otimes \mathcal{L}_{P'} \]
     gives an autoequivalence of the category $\operatorname{Perf}_T(X_{\Sigma})$ since $\mathcal{L}_Q$ is invertible and that for any two polytopes $P$ and $Q$ we have $\mathcal{L}_P \otimes \mathcal{L}_Q = \mathcal{L}_{P + Q}$ where $P + Q$ is the Minkowski sum of the polytopes. This gives us
      \[ \hom_T(\mathcal{L}_{P}^{-1}, \mathcal{L}_{Q}^{-1}) \cong \hom_T(\mathcal{L}_{P + P'}^{-1}, \mathcal{L}_{Q + P'}^{-1})). \]
    We are now in the ample case since Minkowski summing with an ample polytope gives an ample polytope. By Proposition \ref{prop: Minkowski sum deformation retract}, $Q \backslash P$ is homotopy equivalent to $(Q + P') \backslash (P + P')$ which completes the proof.
\end{proof}

\begin{remark}
    The same proof calculates the homomorphisms between $\mathcal{O}(D)$ and $\mathcal{O}(D')$ for any nef $\RR$-divisors $D$ and $D'$ in terms of the corresponding non-lattice polytopes whose normal fan coarsen $\Sigma$. We will only focus on the line bundle case so we leave this to the reader.
\end{remark}

With this, we are led to the following combinatorial category:

\begin{definition}
    The \textbf{equivariant polytope category} $\Poly_T(\Sigma)$ of $\Sigma$ is the dg category whose objects are polytopes in $\Def(\Sigma)$ and
     \[\hom_T(P,Q) := \hom_T(\mathcal{L}_{P}, \mathcal{L}_{Q}) \cong \widetilde{C}(P \backslash Q)[-1].  \]
    The \textbf{non-equivariant polytope category} $\Poly(\Sigma)$ of $\Sigma$ is the dg category whose objects are polytopes in $\Def(\Sigma)$ up to translation and
     \[ \hom(P,Q) :=\hom(\mathcal{L}_{P}, \mathcal{L}_Q) \cong  \bigoplus_{m \in M} \widetilde{C}(P \backslash (Q + m))[-1]. \]
    Both of these categories are monoidal with respect to $P \otimes Q := P+Q$ where $+$ is the Minkowski sum of polytopes.
\end{definition}

When $\Sigma$ is projective and smooth, the category $\operatorname{Perf}_T(X_{\Sigma})$ is generated by $T$-equivariant ample line bundles. The previous calculation shows that the functor $\Poly_T(\Sigma) \to \operatorname{Perf}_T(X_{\Sigma})$ given by $P \mapsto \mathcal{L}_P^{-1}$ gives rise to an.equivalence $\operatorname{Tr}(\Poly_T(\Sigma)) \cong \operatorname{Perf}_T(X_{\Sigma})$ where $\operatorname{Tr}(\mathcal{C})$ is a triangulated envelope of the dg category $\mathcal{C}$ (through say twisted complexes). 

By taking cohomology we obtain another proof of the following theorem of the result of \cite{Altmann2020}.

\begin{corollary}\label{cor: Calculation of Exts of nef line bundles}
    Let $P$ and $Q$ be polytopes in $\Def(\Sigma)$ for a projective fan $\Sigma$. Then,
      \[\operatorname{Ext}_T^p(\mathcal{L}_{P},\mathcal{L}_Q) \cong \tilde{H}^{p-1}(P\backslash Q) \]
    and
     \[\operatorname{Ext}^p(\mathcal{L}_{P},\mathcal{L}_Q) \cong \bigoplus_{m \in M}\tilde{H}^{p-1}(P\backslash Q + m),\]
     for all $p \geq 0$.
\end{corollary}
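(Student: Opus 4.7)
The plan is to deduce this directly from Theorem \ref{thm: calculation of homs in Perf} by passing from the dg-enhanced hom-complex to its cohomology. Recall that in a dg-enhanced triangulated category one has $\operatorname{Ext}^p(A,B) = H^p(\hom(A,B))$, and that the subcategories $\operatorname{Perf}_T(X_\Sigma)$ and $\operatorname{Perf}(X_\Sigma)$ are dg-enhancements of $\D_T(X_\Sigma)$ and $\D(X_\Sigma)$ respectively, so this identification applies to Ext groups of line bundles.

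First I would invoke Theorem \ref{thm: calculation of homs in Perf} to obtain the quasi-isomorphism
\[
\hom_T(\mathcal{L}_P, \mathcal{L}_Q) \;\cong\; \widetilde{C}(P \backslash Q)[-1]
\]
in $\operatorname{Perf}_T(X_\Sigma)$, and then take $p$-th cohomology of both sides. The left-hand side gives $\operatorname{Ext}_T^p(\mathcal{L}_P, \mathcal{L}_Q)$ by the preceding remark. On the right-hand side, the shift by $[-1]$ translates into a shift of cohomological index, so
\[
H^p\bigl(\widetilde{C}(P\backslash Q)[-1]\bigr) \;=\; H^{p-1}\bigl(\widetilde{C}(P\backslash Q)\bigr) \;=\; \widetilde{H}^{p-1}(P\backslash Q),
\]
which yields the equivariant formula. (In particular, the reduced convention ensures the formula is uniform across $p \geq 0$, including the case $p=0$.)

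For the non-equivariant statement, I would apply the same procedure to the second quasi-isomorphism of Theorem \ref{thm: calculation of homs in Perf}, namely
\[
\hom(\mathcal{L}_P, \mathcal{L}_Q) \;\cong\; \bigoplus_{m \in M} \widetilde{C}(P \backslash (Q + m))[-1].
\]
Taking $H^p$ commutes with the direct sum (only finitely many summands are non-acyclic, as noted in the proof of Theorem \ref{thm: calculation of homs in Perf}), and then the same shift argument gives the desired identification with $\bigoplus_{m \in M} \widetilde{H}^{p-1}(P\backslash Q + m)$.

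There is essentially no obstacle: the whole content of the corollary is packaged into the quasi-isomorphisms of Theorem \ref{thm: calculation of homs in Perf}, and passing to cohomology is a bookkeeping step. The only point worth stating explicitly in the write-up is that the cohomological conventions and the shift $[-1]$ are consistent with the usual Ext indexing, so that $\operatorname{Ext}^0$ corresponds to $\widetilde{H}^{-1}$, which vanishes unless $P \backslash Q$ (respectively $P \backslash (Q+m)$) is empty, in agreement with the earlier description $\Hom_T(\mathcal{L}_P,\mathcal{L}_Q) = \kk$ when $P \subseteq Q$ and $0$ otherwise.
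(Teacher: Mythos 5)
Your proposal is correct and is exactly the paper's route: the corollary is stated there as an immediate consequence of Theorem \ref{thm: calculation of homs in Perf} ("by taking cohomology"), and your bookkeeping of the shift $[-1]$, the direct sum over $m \in M$, and the $p=0$ consistency check with $\Hom_T(\mathcal{L}_P,\mathcal{L}_Q)$ is the intended argument.
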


If we are only interested in finding a basis for $K_0(X_{\Sigma})$ for smooth projective varieties, then it suffices to study exceptionality of polytopes.

\begin{proposition}\label{prop: exceptionality gives basis}
    Let $\Sigma$ be a smooth projective fan and let $(P_1,\ldots,P_{\ell})$ be an exceptional collection of polytopes in $\Poly(\Sigma)$ where $\ell$ is the number of maximal cones of $\Sigma$. Then, the classes $\{[\mathcal{L}_{P_i}]\}$ form a basis for $K_0(X_{\Sigma}) \cong \McM(\Sigma)$.
\end{proposition}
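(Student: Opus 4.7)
The plan is to use the Euler pairing $\chi(\mathcal{E}, \mathcal{F}) := \sum_p (-1)^p \dim_\kk \operatorname{Ext}^p(\mathcal{E}, \mathcal{F})$ on $K_0(X_\Sigma)$, show that its Gram matrix on the classes $\{[\mathcal{L}_{P_i}]\}$ is upper triangular with ones on the diagonal, and then conclude by a short determinantal argument that these $\ell$ classes form a $\ZZ$-basis.

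First I would translate the polytopal exceptionality into vanishing of Ext-groups of line bundles via Corollary \ref{cor: Calculation of Exts of nef line bundles}. For $i > j$, the hypothesis $\widetilde{H}^k(P_i \backslash (P_j + m)) = 0$ for all $m \in M$ and all $k \geq 0$ immediately yields $\operatorname{Ext}^p(\mathcal{L}_{P_i}, \mathcal{L}_{P_j}) = 0$ for $p \geq 1$. Because the linear order is a linear extension of $\leq_M$, one also has $P_i \not\leq_M P_j$ for $i > j$, so $P_i \backslash (P_j + m)$ is non-empty for every $m$ and hence $\widetilde{H}^{-1}(P_i \backslash (P_j + m)) = 0$; this handles the $p = 0$ case, giving $\operatorname{Hom}(\mathcal{L}_{P_i}, \mathcal{L}_{P_j}) = 0$. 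For the diagonal entries, completeness of $X_\Sigma$ gives $\operatorname{Ext}^p(\mathcal{L}_{P_i}, \mathcal{L}_{P_i}) = H^p(X_\Sigma, \mathcal{O}_{X_\Sigma})$, which vanishes for $p \geq 1$ and equals $\kk$ for $p = 0$. Therefore the matrix $M_{ij} := \chi(\mathcal{L}_{P_i}, \mathcal{L}_{P_j})$ is upper triangular with ones on the diagonal, and $\det M = 1$.

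Next I would run a change-of-basis argument. For a smooth projective toric variety, $K_0(X_\Sigma)$ is a free abelian group of rank equal to the topological Euler characteristic $|\Sigma(n)| = \ell$ (from the Bialynicki-Birula cellular decomposition, or equivalently from Morelli's Theorem \ref{thm: Morelli's Theorem}). Fix any $\ZZ$-basis $\{b_1, \ldots, b_\ell\}$, write $[\mathcal{L}_{P_i}] = \sum_k c_{ki} b_k$ for an integer matrix $C = (c_{ki})$, and let $B = (\chi(b_k, b_l))$ be the Gram matrix of $\chi$ in the $b$-basis. Then $M = C^\top B C$, so
\[
1 \;=\; \det M \;=\; (\det C)^2 \det B.
\]
Since $\det B$ and $\det C$ are integers and $(\det C)^2 \geq 0$, this forces $\det C = \pm 1$, so $C \in GL_\ell(\ZZ)$ and $\{[\mathcal{L}_{P_i}]\}$ is a $\ZZ$-basis of $K_0(X_\Sigma)$. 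Under Morelli's isomorphism $[\mathcal{L}_P] \mapsto [\one_P]$ this transports to the asserted basis of $\McM(\Sigma)$.

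The main obstacle is almost entirely bookkeeping: one must line up the direction of Ext, the chosen ordering convention (linear extension of $\leq_M$), and the sign in $\chi$ so that $M$ is indeed upper triangular with ones on the diagonal. Once that is in place the rest is a one-line integer determinant argument and needs no further geometric input beyond the standard freeness and rank of $K_0(X_\Sigma)$ for a smooth projective toric variety.
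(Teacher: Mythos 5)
Your proof is correct, but it takes a genuinely different route from the paper. The paper argues categorically: the exceptional collection of nef line bundles induces a semi-orthogonal decomposition $\operatorname{Perf}(X_{\Sigma}) = \langle \mathcal{A}, E_1,\ldots,E_{\ell}\rangle$, hence a direct-sum decomposition $K(\operatorname{Perf}(X_{\Sigma})) = K(\mathcal{A}) \oplus \bigoplus_i K(E_i)$, which gives linear independence of the classes $[\mathcal{L}_{P_i}]$; it then concludes by comparing $\ell$ with the rank of $K_0$ (the Euler characteristic, i.e.\ the number of maximal cones) and invoking Morelli. You instead stay entirely at the level of $K_0$ with the Euler pairing: Corollary \ref{cor: Calculation of Exts of nef line bundles} plus the linear extension of $\leq_M$ gives vanishing of all $\operatorname{Ext}^p(\mathcal{L}_{P_i},\mathcal{L}_{P_j})$ for $i>j$, the diagonal is $\chi(X,\mathcal{O}_X)=1$, so the Gram matrix is unipotent upper triangular, and the identity $1 = (\det C)^2 \det B$ forces the change-of-basis matrix into $GL_{\ell}(\ZZ)$. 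What your route buys is a transparent treatment of integrality: the determinant argument directly yields a $\ZZ$-basis (given freeness of $K_0$, which you correctly supply via the cell decomposition or Morelli), whereas the paper's ``linearly independent plus correct cardinality'' phrasing quietly relies on the stronger fact that the direct-sum decomposition makes the classes span a direct summand. What the paper's route buys is that it needs no Euler-form bookkeeping and fits the narrative of semi-orthogonal decompositions used later (e.g.\ for the cuspidal decomposition). Your only implicit assumption, shared with the paper, is that the $P_i$ represent distinct objects of $\Poly(\Sigma)$ (distinct translation classes), so that the linear extension of $\leq_M$ really gives $P_i \not\leq_M P_j$ for $i>j$.
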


\begin{proof}
    Since $(P_1,\ldots, P_{\ell})$ is an exceptional collection of polytopes, we have an exceptional collection of nef line bundles $(\mathcal{L}_{P_{1}}, \mathcal{L}_{P_{2}}, \ldots, \mathcal{L}_{P_{\ell}})$ in $\operatorname{Perf}(X_{\Sigma})$. This gives a semiorthogonal decomposition $\operatorname{Perf}(X_{\Sigma}) = \langle \mathcal{A}, E_1,\ldots, E_{\ell}\rangle$ where $E_i$ is the category generated by $\mathcal{L}_{P_i}$ and $\mathcal{A}$ is the category orthogonal to $\langle E_1, \ldots, E_{\ell} \rangle$.

    At the level of $K$-theory, this implies
     \[K(\operatorname{Perf}(X_{\Sigma})) = K(\mathcal{A}) \oplus \bigoplus_{i=1}^{\ell} K(E_{i}). \]
    Since $K(E_i)$ is the subgroup of $K(\operatorname{Perf}(X_{\Sigma}))$ generated by the class of $\mathcal{L}_{P_i}$, the classes $\{[\mathcal{L}_{P_i}]\}$ are then linearly independent. 

    Since $X_{\Sigma}$ is smooth, we know that the rank of $K(\operatorname{Perf}(X_{\Sigma}))$ is equal to the Euler characteristic of the variety. From standard results (see \cite{Fulton1993}), the Euler characteristic of a toric variety is the number of maximal cones. Therefore, $\ell$ is the rank of $K(\operatorname{Perf}(X_{\Sigma}))$ and so the elements $[\mathcal{L}_{P_i}]$ are a basis of $K^0(X_{\Sigma})$. The second isomorphism follows from Morelli's Theorem.
    \end{proof}

\begin{remark}
    In the previous proof, we saw that every exceptional collection of size equal to the number of maximal cones gives a decomposition of the category into $\langle \mathcal{A}, E_1,\ldots,E_{\ell} \rangle$. It follows from the arguments that $K(\mathcal{A}) = 0$. This collection is full if and only if $\mathcal{A} = 0$. For general schemes $X$, there exist (admissible) subcategories of $\operatorname{Perf}(X)$ that are non-zero but whose K-group is $0$. These are known as phantom categories. If we could show that our varieties do not have phantom categories, then these exceptional collections would automatically be full.
\end{remark}

\subsection{Inclusion-Exclusion Complexes}\label{Sec: Inclusion-Exclusion Complexes}

In order to prove fullness of collections in a derived category, it is helpful to have access to many exact sequences. Since morphisms between the nef line bundles $\mathcal{L}_P$ and $\mathcal{L}_Q$ correspond to inclusions of polytopes, it is natural to build complexes out of inclusion-exclusion of polytopes. The only worry is the typical problem of assigning signs to these inclusions to obtain chain complexes. We will handle this sign bookkeeping by employing Bj\"orner's theory of CW posets from \cite{Bjo1984}. This theory will also help us study exactness of these complexes.

For a poset $\mathbb{P}$, let $\operatorname{Cov}(\mathbb{P})$ denote the set of covering relations $x \lessdot y$. We say that $\mathbb{P}$ is \textbf{thin} if every interval of length $2$ has four elements, i.e. is a diamond. An \textbf{order-preserving} map $f: \mathbb{P} \to \mathbb{P'}$ is a map on the elements of the posets such that $x \leq y$ implies $f(x) \leq f(y)$ and an \textbf{order-reversing map} is one where $x \leq y$ implies $f(y) \leq f(x)$. A \textbf{lower ideal} is a subset $I \subseteq \mathbb{P}$ where if $y \in I$ and $x \leq y$, then $x \in I$.  An \textbf{upper ideal} is a subset $I \subseteq \mathbb{P}$ where if $x \in I$ and $x \leq y$, then $y \in I$. The \textbf{order complex} of $\mathbb{P}$ is the simplicial complex $\Delta(\mathbb{P})$ whose simplicies are chains in $\mathbb{P}$ and inclusion of simplicies is given by refinement of chains. Given a poset $\mathbb{P}$, we define the \textbf{dual} or \textbf{opposite} poset $\mathbb{P}^{op}$ to be the poset on the same with the reversed order. In other words, $x \leq y$ in $\mathbb{P}^{op}$ if and only if $y \leq x$ in $\mathbb{P}$. We refer the reader to \cite{Stanley2011} for more information on posets.

We will consider $\Def(\Sigma)$ as a poset under inclusion of polytopes. 
\begin{definition}
    A \textbf{CW poset} is a poset $\mathbb{P}$ such that
    \begin{itemize}
        \item $\mathbb{P}$ has a bottom element $\hat{0}$.
        \item $| \mathbb{P} | > 1$.
        \item For any $x \in \mathbb{P} - \{\hat{0}\}$, the realization of the order complex of the open interval $(\hat{0},x)$ is homoemorphic to a sphere.
    \end{itemize}
\end{definition}

We will only consider finite CW posets for this paper.

\begin{lemma}
    Let $\mathbb{P}$ be a CW poset.

    \begin{enumerate}
        \item $\mathbb{P}$ is the face poset of a regular CW complex and every face poset of a regular CW complex is a CW poset.
        \item $\mathbb{P}$ is a ranked poset. We use $\rk(x)$ to denote the rank function on $\mathbb{P}$.
        \item $\mathbb{P}$ is thin.
        \item Any lower ideal $I$ of $\mathbb{P}$ of cardinality greater than $1$ is again a CW poset.
    \end{enumerate}
\end{lemma}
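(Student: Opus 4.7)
The plan is to invoke Björner's foundational paper \cite{Bjo1984} and give a streamlined argument for each part, with the key technical input being that every interval in a CW poset is itself a CW poset.

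For part (1), I would proceed by induction on $|\mathbb{P}|$ to construct a regular CW complex $\mathcal{K}(\mathbb{P})$ whose face poset (augmented by a minimum) is $\mathbb{P}$. The base case attaches a single $0$-cell for each atom. Inductively, suppose the cells for all elements of rank less than $k$ have been constructed, yielding a subcomplex whose face poset realizes the lower set of elements of rank $< k$. For each element $x$ of rank $k$, the subcomplex corresponding to $(\hat{0}, x)$ has geometric realization homeomorphic to a sphere $S^{k-1}$ by hypothesis, so it may serve as the attaching boundary of a closed $k$-cell. Regularity follows because the attaching map is a homeomorphism onto its image. Conversely, if $\mathcal{K}$ is regular, then for any closed cell $\bar{e}$, the subcomplex of proper faces is the boundary sphere, so its face poset has the sphere property.

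For part (2), once the CW structure from (1) is in hand, set $\rk(x)$ to be the dimension of the cell associated to $x$. Covering relations correspond to attaching a cell of codimension one to its boundary, so every covering increases rank by exactly one, confirming that $\mathbb{P}$ is ranked.

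For part (3), the essential input is the lemma that every interval $[u,v]$ of a CW poset is again a CW poset. This follows from (1): the closed cell $\bar{e}_v$ is itself a regular CW ball, and the star of $e_u$ inside $\bar{e}_v$ is a regular CW ball whose face poset realizes the interval $[u,v]$. Granting this, take $[u,v]$ of length two; the open interval $(u,v)$ consists only of elements covering $u$ (equivalently, of elements covered by $v$), and these form an antichain. Its order complex is a discrete set, which is homeomorphic to a sphere only when it is $S^0$, i.e.\ has exactly two points. Hence $|[u,v]| = 4$.

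For part (4), the argument is essentially tautological. A non-empty lower ideal $I$ of cardinality greater than $1$ contains the minimum $\hat{0}$ of $\mathbb{P}$, which remains the minimum of $I$. For any $x \in I \setminus \{\hat{0}\}$, everything below $x$ lies in $I$ by the lower-ideal property, so the interval $(\hat{0},x)$ computed in $I$ agrees with the one in $\mathbb{P}$, and its order complex is therefore homeomorphic to a sphere. The main obstacle in the whole argument is the interval-hereditary property used in (3); I would isolate it as a preliminary proposition proved via the regular CW complex from (1), after which the remaining parts follow with little additional work.
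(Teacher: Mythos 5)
The paper itself offers no proof of this lemma---it is quoted as part of Bj\"orner's theory and implicitly cited to \cite{Bjo1984}---so the only question is whether your argument is sound. Parts (1), (2) and (4) are essentially Bj\"orner's original argument and are fine (in (2) you silently use that a covering relation forces codimension one, which needs the observation that in a regular CW decomposition of the sphere $\partial\bar e_v$ every cell lies in the closure of a top-dimensional cell, but this is a repairable gloss).

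Part (3), however, rests on a key lemma that is false: it is \emph{not} true that every interval $[u,v]$ of a CW poset is again a CW poset. Take a triangulation $T$ of the Poincar\'e homology $3$-sphere; by the double suspension theorem $|S^0 * S^0 * T|$ is homeomorphic to $S^5$, so one may attach a single $6$-cell along a homeomorphism of its boundary onto this subcomplex. The result is a regular CW complex, and its augmented face poset is a CW poset (all lower intervals are barycentric subdivisions of boundaries of cells, hence spheres). But if $u$ is the edge spanned by one suspension vertex from each $S^0$ and $v$ is the top $6$-cell, the order complex of $(u,v)$ is the barycentric subdivision of the link of that edge, which is $T$ itself---a homology $3$-sphere that is not homeomorphic to $S^3$. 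So $[u,v]$ is not a CW poset, and your proposed proof of interval-heredity cannot work; indeed the sketch given is already incorrect on its own terms, since the face poset of the star of $e_u$ in $\bar e_v$ is not $[u,v]$ (it contains faces of cells above $u$ that are not themselves above $u$), and the relevant object, the link or ``face figure,'' need not be a sphere in a non-PL regular CW complex---that is exactly the double-suspension phenomenon. Thinness itself is nevertheless true, but the correct argument is local and confined to codimension two: inside the sphere $\partial\bar e_v \cong S^{\rk(v)-2}$ the cell $e_u$ has codimension one, and a local homology computation at a point of $e_u$ (using regularity) shows that exactly two top-dimensional cells of $\partial\bar e_v$ contain $e_u$ in their closure, so $(u,v)$ has exactly two elements; alternatively one simply cites Bj\"orner's thinness result directly, as the paper does.
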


\begin{example}\label{ex: Examples of CW posets}
Here are some interesting and important examples of CW posets:
\begin{itemize}
    \item The \textbf{boolean} poset $2^I$ for a finite set which is the poset consisting of all subsets of $I$ including the empty set.
    \item For any polytope $P$, the \textbf{face lattice} $\mathcal{L}_P$ is the poset of faces of $P$ including the empty set. The opposite poset $\mathcal{L}_{P}^{opp}$ is also a CW poset since it is the face of the dual polytope of $P$. This opposite poset is isomorphic to the face lattice $\mathcal{F}(\Sigma)$ of the normal fan $\Sigma$ whose elements are cones of $\Sigma$ under inclusion as well as a top element $\mathcal{1}$ that does not correspond to a cone.
    \item For any Coxeter group $W$, the weak and strong Bruhat orders on $W$ are CW posets.
\end{itemize}
\end{example}

By the previous lemma, there is a regular CW complex $\Gamma$ whose face poset is $\mathbb{P}$. Define a function $c: Cov(\mathbb{P}) \to \{-1,+1\}$ where $c(x \lessdot y)$ is the incidence number of the cells $\sigma_x$ and $\sigma_y$ in $\Gamma$. Set $c(\hat{0} \lessdot x) = +1$ for all $x$ covering $\hat{0}$.

\begin{definition}
Given a CW poset $\mathbb{P}$ and on order-preserving map $F: \mathbb{P} \to \operatorname{Def}(\Sigma)$, the \textbf{inclusion-exclusion complex} $\IE^{\bullet} = \IE_{F}^{\bullet}$ is the complex of $T$-equivariant nef line bundles given by
 \[\IE_{F}^{k} = \bigoplus_{\substack{x \in \mathbb{P} \\ \operatorname{rk}(x) = k}} \mathcal{L}_{F(x)} \quad \quad \]
and differential $d$ given by
 \[ d_k = \sum_{\substack{x \in \mathbb{P} \\ \rk(x) = k}} \sum_{x \lessdot y} c(x \lessdot y) 1_{F(x), F(y)}, \]
where $1_{P,Q}: \mathcal{L}_P \to \mathcal{L}_Q$ is the map induced by the inclusion $P \subset Q$.
\end{definition}

\begin{remark}
    When $\mathcal{P}$ is a poset whose dual is a CW poset and $f$ is an order-reversing map, we can associate an inclusion-exclusion complex to the opposite map $f: \mathcal{P}^{op} \to \Def(\Sigma)$.
\end{remark}

This is a complex since the signs are transferred from the signs of the (reduced) cellular complex of the regular CW complex $\Gamma$. We will typically omit $c$ from the notation.

We now record two simple operations on inclusion-exclusion complexes which we will use in the future. Given a poset $\mathbb{P}$ and order-preserving map $\phi: \mathbb{P} \to \Def(\Sigma)$, we say that $Q$ \textbf{intersects stably} with $\phi$ if $Q \cap \phi(x) \in \Def(\Sigma)$ for every $x \in \mathbb{P}$.

\begin{definition}
    Let $\IE_{\phi}^{\bullet}$ be an inclusion-exclusion complex and $Q$ be a polytope that intersects stably with $\phi$. The \textbf{truncation} of $\IE^{\bullet}$ by $Q$ is the complex given by the map 
     \[ \overline{\phi}(x) = \phi(x) \cap Q.\]
\end{definition}

We will see soon that truncating an exact inclusion-exclusion complex gives another exact complex. We also have the following simple operation.

\begin{proposition}\label{prop: tensor product inclusion-exclusion}
    Let $\phi: \mathbb{P} \to \operatorname{Def}(\Sigma)$ be an exact order-preserving map.

    \begin{enumerate}
        \item If $P \in \Def(\Sigma)$, then the inclusion-exclusion complex corresponding to
         \[ \phi'(x) = \phi(x) + P \]
        is exact.
        \item If $P \in \Def(\Sigma)$, and every $\phi(x)$ for $x \in \mathbb{P}$ has $P$ as a Minkowski summand, then the inclusion-exclusion complex corresponding to
         \[ \phi'(x) = \phi(x) - P\]
         is exact.
    \end{enumerate}
\end{proposition}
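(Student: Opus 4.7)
The plan is to deduce both parts from the single observation that tensoring by an invertible sheaf is an autoequivalence of $\operatorname{Perf}_T(X_\Sigma)$, and therefore carries exact complexes to exact complexes. Combined with the identity $\mathcal{L}_A \otimes \mathcal{L}_B \cong \mathcal{L}_{A+B}$ for $A,B \in \Def(\Sigma)$, Minkowski translation of the indexing polytopes corresponds to tensoring every term of the inclusion-exclusion complex by a fixed line bundle.

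For part (1), I would tensor $\IE_\phi^\bullet$ termwise with $\mathcal{L}_P$. The $k$th term becomes $\bigoplus_{\rk(x)=k} \mathcal{L}_{\phi(x)} \otimes \mathcal{L}_P \cong \bigoplus_{\rk(x)=k} \mathcal{L}_{\phi(x)+P}$, which matches $\IE_{\phi'}^k$. For each cover $x \lessdot y$ in $\mathbb{P}$, the inclusion $\phi(x) \subseteq \phi(y)$ implies $\phi(x)+P \subseteq \phi(y)+P$, and because the space of $T$-equivariant homomorphisms between two nef line bundles is one-dimensional whenever the underlying polytope containment holds, the tensored morphism $1_{\phi(x),\phi(y)} \otimes \mathrm{id}_{\mathcal{L}_P}$ agrees up to canonical identification with $1_{\phi(x)+P,\phi(y)+P}$. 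The signs $c(x \lessdot y)$ depend only on the CW structure of $\mathbb{P}$, not on $\phi$, so the differentials match and the resulting complex is precisely $\IE_{\phi'}^\bullet$. Exactness is therefore preserved.

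For part (2), the Minkowski summand hypothesis first ensures that each $\phi(x) - P$ is a well-defined element of $\Def(\Sigma)$: writing $\phi(x) = R_x + P$, the polytope $R_x$ has a normal fan that coarsens $\Sigma$, so $\phi'(x) := R_x$ lies in $\Def(\Sigma)$. The same argument as in (1), with $- \otimes \mathcal{L}_P^{-1}$ in place of $- \otimes \mathcal{L}_P$, then identifies the tensored complex with $\IE_{\phi'}^\bullet$, provided one checks that $\phi(x) \subseteq \phi(y)$ forces $R_x \subseteq R_y$. This is exactly R\aa dstr\"om's cancellation law for compact convex sets: $R_x + P \subseteq R_y + P$ implies $R_x \subseteq R_y$.

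The main obstacle here is largely notational: both parts are essentially formal consequences of the autoequivalence, and the only genuine geometric input is R\aa dstr\"om's cancellation in part (2). I would expect the bulk of the written proof to be the bookkeeping verification that the tensored differentials agree, under the canonical isomorphism $\mathcal{L}_A \otimes \mathcal{L}_P \cong \mathcal{L}_{A+P}$, with the canonical inclusion maps of the translated complex.
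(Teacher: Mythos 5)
Your proposal is correct and follows the same approach as the paper: the paper's proof is a one-liner observing that the new complex is obtained by tensoring every term by $\mathcal{L}_P$ or $\mathcal{L}_P^{-1}$, which preserves exactness since tensoring by a line bundle is exact. You fill in the identification bookkeeping the paper leaves implicit — in particular, your observation that R\aa dstr\"om cancellation is what makes $\phi'$ order-preserving in part (2) is a genuine (if minor) detail the paper's terse proof does not spell out.
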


\begin{proof}
    The new complex is obtained by tensoring every term of the previous complex by either $\mathcal{L}_{P}$ or $\mathcal{L}_P^{-1}$. Since tensoring by vector bundles is exact, this preserves exact sequences.
\end{proof}

\subsection{Exactness Criterion}

Inclusion-exclusion complexes are not in general exact, but there is an easy criterion for exactness by considering the stalks on the constructible side.

\begin{definition}
    The $T$-stalk functor at $m \in M_{\RR}$ is the functor $\mathcal{L}_P \to (\mathcal{L}_P)_{(m)}$ defined on $T$-equivariant nef line bundles by
     \[(\mathcal{L}_P)_{(m)} = \begin{cases}
         \kk & \text{if $m \in P$} \\
         0 & \text{otherwise}
     \end{cases} \]
     such that the map in $\Hom_T(\mathcal{L}_P, \mathcal{L}_Q)$ induced by the inclusion map is sent to the identity map $(\mathcal{L}_P)_{(m)} \to (\mathcal{L}_Q)_{(m)}$ with $m$ is in both $P$ and $Q$ and is the zero map otherwise.
\end{definition}

\begin{lemma}\cite{Bjo1984}\label{lem: Bjoner Brenti}
    Let $\mathbb{P}$ be a CW poset. Let $I$ be a finite interval $[a,b]$ in $\mathbb{P}$ of cardinality greater than $1$. Then the complex
     \[ 0 \rightarrow C^{0} \rightarrow C^{1} \rightarrow \cdots \rightarrow C^{r} \rightarrow 0 \]
    where $C_k$ is the free vector space generated by elements in the poset of the interval $[a,b]$ of rank $\rk(a) + k$ and morphisms
     \[ d_k(x) = \sum_{x\lessdot y \leq b} c(x \lessdot y) y \]
    is exact.
\end{lemma}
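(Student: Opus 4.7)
The plan is to identify the complex in the statement with the reduced cellular cochain complex of a regular CW ball, whose acyclicity is immediate from contractibility.

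First, I would apply Bj\"orner's theorem that a CW poset is precisely the face poset (together with the empty face) of a regular CW complex. Since any interval of cardinality greater than one in a CW poset is itself a CW poset (a fact recorded just before the statement), the interval $[a,b]$ is the face poset of a regular CW complex $\Delta$. Under this correspondence, the bottom element $a$ plays the role of the empty face, each $x \in (a,b]$ corresponds to a cell $\sigma_x$ of dimension $\rk(x) - \rk(a) - 1$, and the covering relations in $[a,b]$ correspond to codimension-one face incidences.

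Next, I would show that $\Delta$ is a closed topological ball of dimension $r - 1$. The top element $b$ gives the unique maximal cell $\sigma_b$ of $\Delta$, and applying the CW poset axiom to the interval $[a,b]$ yields that the order complex of $(a,b)$ is homeomorphic to a sphere. Under Bj\"orner's bijection, this order complex is realized as the subcomplex of $\Delta$ spanned by the cells strictly below $\sigma_b$, namely $\partial \sigma_b$. Hence $\Delta = \overline{\sigma_b}$ is a regular closed ball of dimension $r-1$, and in particular is contractible.

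With this in hand, I would identify the given complex with the reduced cellular cochain complex of $\Delta$ over $\kk$, shifted so that the augmentation in degree $-1$ sits in degree $0$: the summand $C^0 = \kk \cdot a$ is the augmentation term; for $k \geq 1$, $C^k$ is the free $\kk$-module on the $(k-1)$-cells of $\Delta$; the differential $d_0$ sends $a$ to the sum of the vertices (using the convention $c(a \lessdot y) = +1$ that applies since $a$ is the bottom of $[a,b]$); and for $k \geq 1$, the differential $d_k$ is the cellular coboundary read off via the incidence numbers $c(x \lessdot y) = [\sigma_x : \sigma_y]$. Exactness of the given complex then follows from $\widetilde{H}^\ast(\Delta;\kk) = 0$.

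The step I expect to be the most delicate is the sign bookkeeping: one must check that the incidence function $c$, defined a priori from the global CW structure attached to $\mathbb{P}$, still defines a valid (squaring-to-zero) cellular cochain complex when restricted to the covering relations of $[a,b]$, and that the convention for $c(a \lessdot \cdot)$ is compatible with the augmentation. This is exactly the content of Bj\"orner's construction of incidence functions on CW posets; alternatively, any two admissible sign choices differ by a reorientation of individual cells and so produce chain-isomorphic complexes with the same cohomology, which suffices for exactness.
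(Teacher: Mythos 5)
Your overall strategy---identify the complex with a cellular cochain complex and deduce exactness from contractibility---is in the right spirit, but the first step has a genuine gap. You write that ``any interval of cardinality greater than one in a CW poset is itself a CW poset (a fact recorded just before the statement).'' The paper records something weaker: that any \emph{lower ideal} of cardinality $>1$ is again a CW poset. The interval $[a,b]$ is a lower ideal only when $a=\hat 0$, so this does not cover the general case. More seriously, the claim is simply false for general CW posets. By the double suspension theorem one can take $\Gamma$ to be a regular CW ball $D^6$ whose boundary sphere is $\Sigma^2\mathcal P$, a double suspension of (a regular CW structure on) the Poincar\'e homology $3$-sphere $\mathcal P$. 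The face poset $\mathbb P$ of $\Gamma$ (with $\hat 0$ adjoined) is a CW poset, but if $a$ is one of the suspension vertices and $b$ the top $6$-cell, the open interval $(a,b)$ is isomorphic to $\mathcal F(\Sigma\mathcal P)\setminus\{\hat 0\}$, whose order complex is homeomorphic to $\Sigma\mathcal P$---a homotopy $4$-sphere but not a topological sphere. So $[a,b]$ fails the CW poset axiom, the complex $\Delta$ in your proof does not exist, and your argument that $\Delta$ is a regular CW ball breaks down.

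The conclusion can still be salvaged by working entirely inside the ambient regular CW complex $\Gamma$ with face poset $\mathbb P$, rather than trying to make $[a,b]$ into its own CW complex. The complex in the lemma is, up to a degree shift, the relative cellular cochain complex $C^\bullet_{\mathrm{cell}}(\overline{\sigma_b},\,Y)$, where $Y$ is the subcomplex of $\overline{\sigma_b}$ consisting of cells $\tau$ with $\sigma_a\not\leq\tau$. Now $\overline{\sigma_b}$ is a closed ball by regularity. And $Y$ is homotopy equivalent to $\overline{\sigma_b}\setminus\sigma_a$ (a standard fact about removing an open cell from a regular CW complex and deformation retracting onto the antistar), which is contractible: $\sigma_a$ lies in the boundary sphere $\partial\sigma_b$, so the radial contraction of the ball $\overline{\sigma_b}$ to its center never passes through $\sigma_a$ for $t>0$, and it restricts to a deformation retraction of $\overline{\sigma_b}\setminus\sigma_a$. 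Since both $\overline{\sigma_b}$ and $Y$ are contractible, $H^\ast(\overline{\sigma_b},Y)=0$ and the complex is exact. This is close in spirit to what you wrote but avoids ever asserting that $[a,b]$ is itself a CW poset. Note also that the paper's own proof is just a citation to Bj\"orner and Brenti, so your intended argument is in any case an independent one.
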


\begin{proof}
    This was proven by Bj\"orner and Brenti for intervals of the weak order of a Coxeter group in \cite{Bjo1984} which forms a CW poset; however, their proof holds for all CW posets.
\end{proof}

\begin{proposition}\label{prop: Augmented Brianchon-Gram complex}
    For any $m \in M_{\RR}$ and any $P$, the following complex
     \[ \overline{\operatorname{BG}}(P)_m = 0  \rightarrow \operatorname{BG}(P)^{0}_m \rightarrow \cdots \rightarrow \operatorname{BG}(P)^n_m \rightarrow (\mathcal{L}_P)_{(m)} \rightarrow 0\]
     is acyclic, where the last map is the zero map if $m \not \in P$ and otherwise it is the direct sum of the identity maps when $m \in P$.
\end{proposition}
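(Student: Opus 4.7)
My plan is to identify $\overline{\operatorname{BG}}(P)_m$ with an exact Björner–Brenti complex attached to the CW poset $\mathcal F(\Sigma) = \Sigma \cup \{\hat 1\}$, handling $m \in P$ and $m \notin P$ separately.

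The first step is to read off stalks. Since $\Theta(\sigma, a_\sigma)$ is $j_*\kk$ for the inclusion of the convex open cone $(\sigma^\vee + a_\sigma)^\circ$, its stalk at $m$ is $\kk$ precisely when $m$ lies in the closed cone $\sigma^\vee + a_\sigma$. Setting $I_m = \{\sigma \in \Sigma : m \in \sigma^\vee + a_\sigma\}$, the fact that $\sigma^\vee + a_\sigma$ enlarges when $\sigma$ is replaced by a face makes $I_m$ a lower ideal in $\mathcal F(\Sigma)$, and the $k$-th term of $\operatorname{BG}^\bullet(P)_m$ is the free $\kk$-module on the rank-$k$ elements of $I_m$.

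If $m \in P$, then every such cone contains $P$, so $m \in \sigma^\vee + a_\sigma$ for every $\sigma$ and $I_m = \Sigma$. The T-stalk $(\mathcal L_P)_{(m)} = \kk$ plays the role of $\hat 1$, and the sum-of-identities final map records the covering relations $\tau \lessdot \hat 1$ for $\tau \in \Sigma(n)$. After checking that the Brianchon–Gram signs agree with the CW incidence numbers $c(x \lessdot y)$, the augmented complex is exactly the Björner–Brenti complex of the interval $[\hat 0, \hat 1]$, and Lemma \ref{lem: Bjoner Brenti} gives exactness.

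If $m \notin P$, then $(\mathcal L_P)_{(m)} = 0$ and I need $\operatorname{BG}^\bullet(P)_m$ itself to be acyclic. I would cover $I_m$ by the principal intervals $[\hat 0, \sigma_i^*]$ over its maximal elements and run an iterated Mayer–Vietoris argument: since $\Sigma$ is closed under intersection of cones, every finite intersection $\bigcap_j [\hat 0, \sigma_{i_j}^*] = [\hat 0, \bigcap_j \sigma_{i_j}^*]$ is again an interval of $\mathcal F(\Sigma)$, and each such interval has Björner–Brenti-exact complex whenever its meet cone has positive dimension. The main technical step—and the place where the argument is most subtle—will be showing these meet cones are always nontrivial. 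I would establish this from the convexity of $|I_m| = \bigcup_{\sigma \in I_m} \sigma$: as a positively homogeneous sublevel set of a concave function built from $h_P$ and $\langle m, \cdot\rangle$, $|I_m|$ is a convex subcone of $N_\RR$, and convexity forces the maximal cones of $\Sigma$ inside it to share nontrivial common faces. With this in hand, the Mayer–Vietoris long exact sequences will propagate the individual Björner–Brenti exactnesses to exactness of $\operatorname{BG}^\bullet(P)_m$.
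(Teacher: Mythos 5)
Your treatment of the case $m \in P$ matches the paper: there the augmented stalk complex is exactly the Bj\"orner--Brenti complex of the full interval $\mathcal{F}(\Sigma)$, and Lemma \ref{lem: Bjoner Brenti} gives exactness. The gap is in the case $m \notin P$. The paper does not argue this combinatorially at all: it simply invokes the proof of Theorem 3.7 of \cite{Fang2011}, where the stalk of $\operatorname{BG}(P)$ at a point outside $P$ is shown to be acyclic. Your replacement argument hinges on the claim that all meets of the maximal elements of $I_m$ are nontrivial cones, justified by convexity of $|I_m|=\bigcup_{\sigma\in I_m}\sigma$, and this is where it breaks. First, $|I_m|$ is not the superlevel set $C_m=\{v: \langle v,m\rangle \ge h_P(v)\}$ of the concave function $\langle \cdot,m\rangle-h_P$; it is the union of those cones of $\Sigma$ that are \emph{entirely contained} in $C_m$, and that union need not be convex. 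Second, even when $|I_m|$ happens to be convex, convexity only forces \emph{adjacent} maximal cones of $I_m$ to share a positive-dimensional face, not arbitrary pairs: take a convex polygon with at least four edges weakly visible from $m$ (e.g.\ an octagon viewed from a generic far-away point). Then $I_m$ contains three or more vertex normal cones forming a chain, and the first and last meet only in the origin. For such a pair the corresponding term of your cover is the singleton interval $\{\hat 0\}$, whose Bj\"orner--Brenti complex is $\kk$ concentrated in one degree and not exact, so the iterated Mayer--Vietoris argument as you set it up does not close: one would additionally have to control the nerve of the cover, which is exactly the nontrivial content you were trying to avoid.

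The statement is of course still true, and there are two honest ways to finish this half: either cite \cite{Fang2011} as the paper does, or argue directly that $I_m\setminus\{\hat 0\}$ is the face poset of the region of $\partial P$ whose incident facets are all weakly beyond $m$ --- a (shellable) ball --- so that $\operatorname{BG}(P)_m$ is the augmented cellular cochain complex of a contractible regular CW complex and hence exact. As written, though, the $m \notin P$ case of your proposal has a genuine gap.
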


\begin{proof}
    In the proof of Theorem 3.7 in \cite{Fang2011}, it is shown that the stalk of $\operatorname{BG}(P)$ at $m \in M$ is quasi-isomorphic to the zero complex if $m \not \in M$. Thus, when $m \not \in P$, the complex $\overline{\operatorname{BG}}(P)_m$ is just the stalk of $\operatorname{BG}(P)$ at $m$ and so it is acyclic.

    If $m \in P$, then every tangent cone $(\sigma^{\vee} + v_{P,\sigma})$ contains $m$ and so this complex is the complex (or rather its dual) from Lemma \ref{lem: Bjoner Brenti} corresponding to the entire poset $\mathcal{F}(\Sigma)$, which is an interval, and so it is exact.
\end{proof}
The following proposition shows that as with the usual stalk functor, it suffices to check exactness of complexes of nef line bundles at $T$-stalks to verify exactness of the whole complex.

\begin{proposition}\label{prop:checking exactness at T-stalks}
    Let $C^{\bullet}$ be a complex of the form 
    \[ 0\rightarrow \bigoplus_{i \in I_0} \mathcal{L}_{P_i} \rightarrow \bigoplus_{i \in I_1} \mathcal{L}_{P_i} \rightarrow \cdots \rightarrow \bigoplus_{i \in I_k} \mathcal{L}_{P_i} \rightarrow 0 .\]
    Then, $C^{\bullet}$ is exact if and only if the $T$-stalk complex $(C^{\bullet})_{(m)}$ is exact for all $m \in M_{\RR}$.
\end{proposition}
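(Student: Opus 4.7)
The plan is to apply the coherent-constructible correspondence to transfer the question from $\operatorname{Perf}_T(X_\Sigma)$ to the category of constructible sheaves on $\RR^n$, where acyclicity can be checked pointwise on stalks. Since the embedding $\kappa$ of Theorem \ref{thm: Coherent Constructible Correspondence} is a fully faithful embedding of triangulated dg-categories, the complex $C^\bullet$ is acyclic in $\operatorname{Perf}_T(X_\Sigma)$ if and only if its image $\kappa(C^\bullet)$ is acyclic in $\operatorname{Sh}_{cc}(\RR^n)$. Moreover, since $\kappa(C^\bullet)$ is a bounded complex of constructible sheaves on $\RR^n$, it is acyclic if and only if every stalk $\kappa(C^\bullet)_m$ is acyclic for each $m \in \RR^n$. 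The task then reduces to showing that the ordinary stalk complex $\kappa(C^\bullet)_m$ is naturally quasi-isomorphic (up to a uniform degree shift) to the $T$-stalk complex $(C^\bullet)_{(m)}$.

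To make this comparison, I would replace each $\kappa(\mathcal{L}_{P_i})$ with the Brianchon-Gram complex $\operatorname{BG}(P_i)$ using Proposition \ref{prop: Brianchon-Gram complex and CCC}, and then invoke Proposition \ref{prop: Augmented Brianchon-Gram complex}. The acyclicity of the augmented complex $\overline{\operatorname{BG}}(P)_m$ is precisely the statement that the stalk $\operatorname{BG}(P)_m$ has cohomology concentrated in degree $n$, where it equals $(\mathcal{L}_P)_{(m)}$. This yields a canonical quasi-isomorphism $\operatorname{BG}(P)_m \xrightarrow{\sim} (\mathcal{L}_P)_{(m)}[-n]$, induced by the augmentation map at the top of the complex. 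Since this shift is independent of both $P$ and $m$, applying it termwise to $C^\bullet$ gives candidate quasi-isomorphisms that must be assembled into a chain map.

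The main bookkeeping obstacle is to verify that this quasi-isomorphism is natural with respect to the inclusion morphisms $\mathbf{1}_{P,Q}\colon \mathcal{L}_P \to \mathcal{L}_Q$. Concretely, one must check that the map of Brianchon-Gram complexes $\operatorname{BG}(P) \to \operatorname{BG}(Q)$ induced by an inclusion $P \subseteq Q$ restricts on stalks at $m$ to a chain map whose induced map on top cohomology coincides with the $T$-stalk morphism $(\mathcal{L}_P)_{(m)} \to (\mathcal{L}_Q)_{(m)}$, namely the identity when $m \in P$ and zero otherwise. This reduces to tracking how the restriction maps $\rho_{\sigma,\tau}$ behave on stalks and fit into a commuting square with the augmentation of Proposition \ref{prop: Augmented Brianchon-Gram complex}, which is a direct compatibility check from the definitions. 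Once naturality is established, we obtain a termwise quasi-isomorphism of complexes $\kappa(C^\bullet)_m \simeq (C^\bullet)_{(m)}[-n]$ for every $m \in \RR^n$; since degree shifts preserve acyclicity, the equivalence of acyclicity between the ordinary stalk complex and the $T$-stalk complex follows, completing the proof.
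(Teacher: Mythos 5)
Your proposal is correct and follows essentially the same route as the paper's own proof: both pass through $\kappa$ to reduce to acyclicity of constructible sheaves checked at ordinary stalks, replace each nef line bundle by its Brianchon--Gram complex, and use the acyclic augmented complex $\overline{\operatorname{BG}}(P)_m$ of Proposition \ref{prop: Augmented Brianchon-Gram complex} to compare the ordinary stalk with the $T$-stalk. The only difference is in packaging: the paper builds the extended double complex whose columns are the augmented BG stalk complexes and invokes the acyclic assembly lemma, whereas you phrase the comparison as a natural column-wise quasi-isomorphism of double complexes; these are equivalent framings of the same argument, and the naturality you flag as the "main bookkeeping obstacle" is exactly the compatibility needed (and used implicitly) in the paper to make the extended double complex well-defined.
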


\begin{proof}
    We have that $C^{\bullet}$ is the same as the total complex of the complex of complexes
     \[0 \rightarrow C^{0} \rightarrow C^{1} \rightarrow C^{2} \rightarrow \cdots\]
     where we view each $C^{i}$ as a complex concentrated at degree $0$. Applying the coherent-constructible correspondence map $\kappa$, we have that $\kappa(C^{\bullet})$ is quasi-isomorphic to the total complex of the double complex $T^{\bullet,\bullet}$ of sheaves on $\RR^n$ given by
      \[ 0 \rightarrow \bigoplus_{i \in I_0} \operatorname{BG}(P_i)^{\bullet} \rightarrow \bigoplus_{i \in I_1} \operatorname{BG}(P_i)^{\bullet} \rightarrow \cdots .\]

    Since $\kappa$ maps acyclic complexes to acyclic complexes, we only need to check that the total complex of the previous complex is acyclic which we do by showing exactness at all stalks for $m \in M_{\RR}$. Let $T^{\bullet,\bullet}_m$ denote this stalk double complex.

     Extend the double complex $T^{\bullet,\bullet}_m$ by the terms $(\mathcal{L}_{P_i})_{(m)}$ to get the double complex whose columns are direct sums of \[ 0 \rightarrow \operatorname{BG}^{0}(P_i)_m \rightarrow \cdots \rightarrow \operatorname{BG}(P_i)_m^n \rightarrow (\mathcal{L}_{P_i})_{(m)} \rightarrow 0,\]
     as in Lemma \ref{lem: Bjoner Brenti}. This double complex is bounded and has exact columns and hence the total complex is acyclic by the acyclic assembly lemma. This implies that $(C^{\bullet})_{(m)}$ is quasi-isomorphic to the total complex of $T^{\bullet,\bullet}_m$.

    Therefore, exactness of $(C^{\bullet})_{(m)}$ for all $m \in M_{\RR}$ implies that the total complex of $T^{\bullet,\bullet}_m$ is acyclic for all $m \in M_{\RR}$ which further implies that $C^{\bullet}$ is acyclic.
\end{proof}

We now turn to showing that exactness of an inclusion-exclusion complex follows from an easily verifiable property of the CW family. We refer to this as the exactness criterion.

\begin{theorem}[Theorem \ref{mainthm: Exactness Criterion}]\label{prop: Exact inclusion-exclusion complexes}
    Let $F: \mathbb{P} \to \Def(\Sigma)$ be an order-preserving map with inclusion-exclusion complex $\IE^{\bullet}$. If for all $m \in M_{\RR}$, the upper ideal $I_m = \{x \in \mathbb{P} \; \lvert \; m \in F(x)\}$ is either empty or an interval of cardinality greater than $1$, then $\IE^{\bullet}$ is exact.
\end{theorem}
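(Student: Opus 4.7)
The plan is to combine the two tools already assembled: Proposition~\ref{prop:checking exactness at T-stalks}, which reduces exactness of a complex of nef line bundles to exactness at every $T$-stalk, and Lemma~\ref{lem: Bjoner Brenti}, which gives exactness of the cellular-style complex attached to an interval in a CW poset. So the first step is to fix an arbitrary $m \in M_{\RR}$ and compute the $T$-stalk $(\IE^{\bullet})_{(m)}$. By definition of the $T$-stalk functor, $(\mathcal{L}_{F(x)})_{(m)} = \kk$ precisely when $m \in F(x)$, i.e.\ when $x \in I_m$, and zero otherwise. Consequently, the $k$th term of $(\IE^{\bullet})_{(m)}$ is the free $\kk$-vector space on $\{x \in I_m : \rk(x) = k\}$, and the differential sends $x$ to $\sum_{x \lessdot y} c(x \lessdot y)\, y$, where the sum is implicitly over $y \in I_m$ (the elements $y \notin I_m$ contribute zero because their stalk is $0$).

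Next I would observe that this sum does not need the restriction at all, because $I_m$ is automatically an upper ideal: if $x \in I_m$ and $x \leq y$ then $F(x) \subseteq F(y)$, so $m \in F(y)$ and $y \in I_m$. Hence every $y$ with $x \lessdot y$ is already in $I_m$, and the differential of $(\IE^{\bullet})_{(m)}$ is literally the differential in Lemma~\ref{lem: Bjoner Brenti} applied to $I_m$ (up to a harmless global degree shift by $\rk(a)$, which does not affect acyclicity).

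Now I split into the two cases of the hypothesis. If $I_m = \emptyset$ then $(\IE^{\bullet})_{(m)}$ is the zero complex, which is exact. If $I_m$ is an interval $[a,b]$ of cardinality greater than $1$, then Lemma~\ref{lem: Bjoner Brenti} applies directly to $[a,b]$ and yields exactness of the associated complex, hence of $(\IE^{\bullet})_{(m)}$. Since this holds for every $m \in M_{\RR}$, Proposition~\ref{prop:checking exactness at T-stalks} gives that $\IE^{\bullet}$ itself is exact.

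The only genuinely delicate point is the sign comparison: one must check that the incidence signs $c(x \lessdot y)$ used in defining $\IE^{\bullet}$ really do match the signs Bj\"orner uses inside the interval $[a,b]$. This is not a difficulty so much as a bookkeeping verification: both sign systems come from the incidence numbers of the same regular CW realization of $\mathbb{P}$, and restricting a regular CW complex to the closed cells indexed by $[a,b]$ preserves incidence numbers. Given that, the argument is essentially a one-line reduction, which is the main advantage of having done the homological work inside Proposition~\ref{prop:checking exactness at T-stalks} and the combinatorial work inside Lemma~\ref{lem: Bjoner Brenti} beforehand.
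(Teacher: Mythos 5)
Your argument is correct and follows exactly the same route as the paper: reduce to $T$-stalks via Proposition~\ref{prop:checking exactness at T-stalks}, identify the stalk complex with the Bj\"orner--Brenti complex for the interval $I_m$, and invoke Lemma~\ref{lem: Bjoner Brenti} (with the empty case handled trivially). Your observation that $I_m$ is automatically an upper ideal, and your sign discussion, are harmless bookkeeping that the paper leaves implicit; in particular the sign concern evaporates because Lemma~\ref{lem: Bjoner Brenti} is stated for an interval $[a,b]$ inside a CW poset $\mathbb{P}$ and uses the \emph{same} incidence function $c$ on $\mathbb{P}$ that defines $\IE^{\bullet}$, so there is only one sign system in play.
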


\begin{proof}
    By the previous proposition, it suffices to check at the level of $T$-stalks. For any $m \in M_\RR$ that is not contained in any of the polytopes $\mathbb{P}(x)$, we have that the $T$-stalk complex $\IE^{\bullet}_{(m)}$ is the zero complex and so it is exact. 
    
    If $m$ is contained in one of the polytopes $F(x)$ then it is contained in all $F(y)$ for $x \leq y$, thus the stalk complex $\IE^{\bullet}_{(m)}$ is the complex described in the Lemma \ref{lem: Bjoner Brenti} for the poset $I_m$. This complex is exact since it is an interval of cardinality greater than $1$.
\end{proof}

\subsection{Subdivision and Truncated Brianchon-Gram Complexes}\label{Sec: Subdivision and Brianchon-Gram Complexes}

For our main calculations, it will suffice to study two special classes of inclusion-exclusion complexes.

\begin{definition}
    Let $P \in \Def(\Sigma)$ and $k \geq 1$. We say that a collection of polytopes $\mathcal{P} = \{P_1, \ldots, P_k \}$ in $\Def(\Sigma)$ is a \textbf{weak subdivision} of $P$ if
     \begin{enumerate}
         \item $\bigcup_{i=1}^{k} P_i = P$
         \item $P_i \cap P_j \in \Def^+(\Sigma)$ for all $i,j$.
     \end{enumerate}
    It is a \textbf{strict subdivision} if the intersection $P_i \cap P_j$ is a face of both $P_i$ and $P_j$. 
\end{definition}

\begin{definition}
    Given a subdivision (weak or strict) $\mathcal{P} = \{P_1, \ldots, P_k\}$ of $P \in \Def(\Sigma)$ with $k \geq 2$, consider the order-preserving map $F: 2^{[k]} \to \Def(\Sigma)$ defined by
     \[ F(S) = P_S = \bigcap_{i \not \in S} P_i,\]
    where $F([k]) = P_{\emptyset} = P$. The \textbf{subdivision Koszul complex} is the corresponding inclusion-exclusion complex.
\end{definition}
It is clear that the exactness criterion applies and hence every subdivision Koszul complex is exact. This complex first appeared in the work of \cite{Altmann2023}. The name comes from the fact that the $T$-stalk complexes are Koszul complexes.

\begin{example} Consider the del Pezzo surface $dP_5 = \operatorname{Bl}_2(\mathbb{CP}^2)$. It is the smooth projective toric variety of the polytope $P$ below

\begin{center}
    \begin{polyhedron}{}
     \vertex{point={(-1,2)},color=black}
    \vertex{point={(0,0)},color=black}
    \vertex{point={(2,0)},color=black}
    \vertex{point={(1,2)},color=black}
    \vertex{point={(3,2)},color=black}
    \vertex{point={(0,4)},color=black}
    \vertex{point={(2,4)},color=black}
    \vertex{point={(1,6)},color=black}
        \polygon{points = {(0,0),(2,0),(3,2),(1,6),(-1,2)},status = open}
    \end{polyhedron}
\end{center}
Consider the weak subdivision of $P$ given by the three polytopes 
    \begin{center}
\begin{polyhedron}{}
    \vertex{point={(-1,2)},color=black}
    \vertex{point={(0,0)},color=black}
    \vertex{point={(2,0)},color=black}
    \vertex{point={(1,2)},color=black}
    \vertex{point={(3,2)},color=black}
    \vertex{point={(0,4)},color=black}
    \vertex{point={(2,4)},color=black}
    \vertex{point={(1,6)},color=black}
      %\polygon{points={(0,0),(2,0),(3,2),(2,4),(1,6),(-1,2)},status=open}
       \polygon{points={(0,0),(2,0),(0,4),(-1,2)},color=yellow,thickness=2.5,status=open}
       \polygon{points={(0,0),(2,0),(3,2),(2,4)},thickness=2.5,color=orange,status=open}
       \polygon{points={(1,2),(2,4),(1,6),(0,4)},thickness=2.5,color=blue,status=open}
\end{polyhedron}
\end{center}
The corresponding subdivision Koszul complex is
 \[ 0 \rightarrow \mathcal{L}_{P_0} \rightarrow \mathcal{L}_{P_1} \oplus \mathcal{L}_{P_2} \oplus \mathcal{L}_{P_3} \rightarrow \mathcal{L}_{P_4} \oplus \mathcal{L}_{P_5} \oplus \mathcal{L}_{P_6} \rightarrow \mathcal{L}_P \rightarrow 0\]
 where

 \[ \begin{polyhedron}{} \vertex{point={0,-0.5},opacity=0,text=$P_0$} \vertex{point = {0,0}} \end{polyhedron} 
 \quad \quad 
 \begin{polyhedron}{} \vertex{point={0,-0.5},opacity=0,text=$P_1$}
 \polygon{points = {(-0.5,0),(0.5,0),(0,1)},status=open} \end{polyhedron} \quad \quad 
 \begin{polyhedron}{} \vertex{point={0,-0.5},opacity=0,text=$P_2$} \polygon{points = {(0,0),(-0.5,1)},status=open} \end{polyhedron}
 \quad \quad
 \begin{polyhedron}{} \vertex{point={0,-0.5},opacity=0,text=$P_3$} \polygon{points = {(0,0),(0.5,1)},status=open} \end{polyhedron}
  \quad \quad 
 \begin{polyhedron}{} \vertex{point={0,-0.5},opacity=0,text=$P_4$} \polygon{points = {(-0.5,0),(0.5,0),(-0.5,2),(-1,1)},status=open} \end{polyhedron}\quad \quad 
 \begin{polyhedron}{} \vertex{point={0,-0.5},opacity=0,text=$P_5$} \polygon{points = {(-0.5,0),(0.5,0),(1,1),(0.5,2)}, status=open} \end{polyhedron}\quad \quad 
 \begin{polyhedron}{} \vertex{point={0,-0.5},opacity=0,text=$P_6$} \polygon{points = {(0,0),(0.5,1),(0,2),(-0.5,1)}, status = open} \end{polyhedron}\] 
\end{example}

The second class of complexes will come from truncating the Brianchon-Gram complex.

\begin{definition}
    Let $\Sigma$ be a projective fan and $P$ be a polytope in $\Def(\Sigma)$ with Brianchon-Gram complex $\operatorname{BG}(P)$. Let $Q$ be a polytope that contains $P$ such that for every tangent cone $\tau = (\sigma^{\vee} + v_{P,\sigma})$ of $P$ we have that $Q \cap \tau$ is in $\Def(\Sigma)$. Then, we say that $Q$ \textbf{truncates} $\operatorname{BG}(P)$.

    The \textbf{truncated Brianchon-Gram complex} $\operatorname{TBG}_Q^{\bullet}(P)$ is the inclusion-exclusion complex given by the order-preserving map $\phi: \mathcal{F}(\Sigma)^{op} \to \Def(\Sigma)$ defined by
     \[\phi(\sigma) = Q \cap (\sigma^{\vee} + v_{P,\sigma}) \]
    and $\phi(\hat{1}) = P$.
\end{definition}

This complex is exact since the $T$-stalks for $m \in Q$ are just the complexes from Proposition \ref{lem: Bjoner Brenti} and are the zero complex otherwise.

\begin{example}
    Consider the toric variety $\mathbb{CP}^1 \times \mathbb{CP}^1$. It's fan is determined by the coordinate axes in $\RR^2$. We will truncate the Brianchon-Gram complex of the line segment $P$ by the square $Q$ below

    \begin{figure}[H]
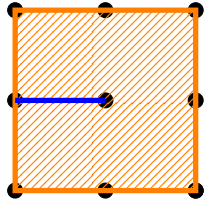

        \centering
        \begin{polyhedron}{}
            \vertex{point = {0,0},color=black}
            \vertex{point = {2,0},color=black}
            \vertex{point = {4,0},color=black}
            \vertex{point = {2,2},color=black}
            \vertex{point = {2,4},color=black}
            \vertex{point = {4,0},color=black}
            \vertex{point = {4,2},color=black}
            \vertex{point = {4,4},color=black}
            \vertex{point = {0,2},color=black}
            \vertex{point = {0,4},color=black}
            \polygon{points = {(0,0),(4,0),(4,4),(0,4)},color = orange, status = open,thickness=2}
            \polygon{points = {(0,2),(2,2)}, color = blue, status = open,thickness=2}
        \end{polyhedron}
        \caption{The line segment $P$ contained in the square $Q$.}
        \label{fig:P in Q}
    \end{figure}
The cones that appear in the Brianchon-Gram complex are the four orthants, the four half-spaces cut out by the axes, and the entire space $\RR^2$. Intersecting with the four orthants, we get the polytopes
   \begin{figure}[H]
        \centering
        \begin{polyhedron}{}
        \vertex{point={1,-0.5},opacity=0,text=$P_1$}
            \vertex{point = {0,0},color=black}
            \vertex{point = {1,0},color=black}
            \vertex{point = {2,0},color=black}
            \vertex{point = {0,1},color=black}
            \vertex{point = {1,1},color=black}
            \vertex{point = {2,1},color=black}
            \vertex{point = {0,2},color=black}
            \vertex{point = {1,2},color=black}
            \vertex{point = {2,2},color=black}
            \polygon{points = {(0,0),(2,0),(2,1),(0,1)},color = orange, status = open,thickness=2}
            %\polygon{points = {(0,1),(1,1)}, color = blue, status = open,thickness=2}
        \end{polyhedron}
        \quad
            \begin{polyhedron}{}\vertex{point={1,-0.5},opacity=0,text=$P_2$}
            \vertex{point = {0,0},color=black}
            \vertex{point = {1,0},color=black}
            \vertex{point = {2,0},color=black}
            \vertex{point = {0,1},color=black}
            \vertex{point = {1,1},color=black}
            \vertex{point = {2,1},color=black}
            \vertex{point = {0,2},color=black}
            \vertex{point = {1,2},color=black}
            \vertex{point = {2,2},color=black}
            \polygon{points = {(0,1),(2,1),(2,2),(0,2)},color = orange, status = open,thickness=2}
            %\polygon{points = {(0,1),(1,1)}, color = blue, status = open,thickness=2}
        \end{polyhedron}
        \quad
               \begin{polyhedron}{}\vertex{point={1,-0.5},opacity=0,text=$P_3$}
            \vertex{point = {0,0},color=black}
            \vertex{point = {1,0},color=black}
            \vertex{point = {2,0},color=black}
            \vertex{point = {0,1},color=black}
            \vertex{point = {1,1},color=black}
            \vertex{point = {2,1},color=black}
            \vertex{point = {0,2},color=black}
            \vertex{point = {1,2},color=black}
            \vertex{point = {2,2},color=black}
            \polygon{points = {(0,0),(1,0),(1,1),(0,1)},color = orange, status = open,thickness=2}
            %\polygon{points = {(0,1),(1,1)}, color = blue, status = open,thickness=2}
        \end{polyhedron}
        \quad
               \begin{polyhedron}{}\vertex{point={1,-0.5},opacity=0,text=$P_4$}
            \vertex{point = {0,0},color=black}
            \vertex{point = {1,0},color=black}
            \vertex{point = {2,0},color=black}
            \vertex{point = {0,1},color=black}
            \vertex{point = {1,1},color=black}
            \vertex{point = {2,1},color=black}
            \vertex{point = {0,2},color=black}
            \vertex{point = {1,2},color=black}
            \vertex{point = {2,2},color=black}
            \polygon{points = {(0,1),(1,1),(1,2),(0,2)},color = orange, status = open,thickness=2}
            %\polygon{points = {(0,1),(1,1)}, color = blue, status = open,thickness=2}
        \end{polyhedron}
        %\caption{Intersections of the $Q$ by the four orthants in the Brianchon-Gram complex. We have drawn the line segment $P$ to keep track of translations.}
        \label{fig:intersection cones}
    \end{figure}
Intersecting by the four half-spaces, we get the polytopes
 \begin{figure}[H]
        \centering
        \begin{polyhedron}{}
         \vertex{point={1,-0.5},opacity=0,text=$P_5$}           \vertex{point = {0,0},color=black}
            \vertex{point = {1,0},color=black}
            \vertex{point = {2,0},color=black}
            \vertex{point = {0,1},color=black}
            \vertex{point = {1,1},color=black}
            \vertex{point = {2,1},color=black}
            \vertex{point = {0,2},color=black}
            \vertex{point = {1,2},color=black}
            \vertex{point = {2,2},color=black}
            \polygon{points = {(0,0),(2,0),(2,1),(0,1)},color = orange, status = open,thickness=2}
            %\polygon{points = {(0,1),(1,1)}, color = blue, status = open,thickness=2}
        \end{polyhedron} \quad\quad
            \begin{polyhedron}{}
             \vertex{point={1,-0.5},opacity=0,text=$P_6$}           \vertex{point = {0,0},color=black}
            \vertex{point = {1,0},color=black}
            \vertex{point = {2,0},color=black}
            \vertex{point = {0,1},color=black}
            \vertex{point = {1,1},color=black}
            \vertex{point = {2,1},color=black}
            \vertex{point = {0,2},color=black}
            \vertex{point = {1,2},color=black}
            \vertex{point = {2,2},color=black}
            \polygon{points = {(0,1),(2,1),(2,2),(0,2)},color = orange, status = open,thickness=2}
            %\polygon{points = {(0,1),(1,1)}, color = blue, status = open,thickness=2}
        \end{polyhedron}\quad\quad
               \begin{polyhedron}{}
                \vertex{point={1,-0.5},opacity=0,text=$P_7$}
            \vertex{point = {0,0},color=black}
            \vertex{point = {1,0},color=black}
            \vertex{point = {2,0},color=black}
            \vertex{point = {0,1},color=black}
            \vertex{point = {1,1},color=black}
            \vertex{point = {2,1},color=black}
            \vertex{point = {0,2},color=black}
            \vertex{point = {1,2},color=black}
            \vertex{point = {2,2},color=black}
            \polygon{points = {(0,0),(1,0),(1,2),(0,2)},color = orange, status = open,thickness=2}
            %\polygon{points = {(0,1),(1,1)}, color = blue, status = open,thickness=2}
        \end{polyhedron}\quad \quad
        \begin{polyhedron}{}
         \vertex{point={1,-0.5},opacity=0,text=$P_8$}
            \vertex{point = {0,0},color=black}
            \vertex{point = {1,0},color=black}
            \vertex{point = {2,0},color=black}
            \vertex{point = {0,1},color=black}
            \vertex{point = {1,1},color=black}
            \vertex{point = {2,1},color=black}
            \vertex{point = {0,2},color=black}
            \vertex{point = {1,2},color=black}
            \vertex{point = {2,2},color=black}
            \polygon{points = {(0,0),(2,0),(2,2),(0,2)},color = orange, status = open,thickness=2}
            %\polygon{points = {(0,1),(1,1)}, color = blue, status = open,thickness=2}
        \end{polyhedron}
        %\caption{Intersections of the $Q$ by the four half-spaces in the Brianchon-Gram complex of $P$.}
        \label{fig:intersectino half-spaces}
    \end{figure}
We also have the polytopes $P$ and $Q$ appearing in the complex. The polytope $Q$ appears as the intersection with the cone consisting of all of $\RR^n$ and $P$ appears as $\phi(\hat{1})$. The complex then is
 \[ 0 \rightarrow \mathcal{L}_{P} \rightarrow \mathcal{L}_{P_1}\oplus\mathcal{L}_{P_2}\oplus\mathcal{L}_{P_3}\oplus\mathcal{L}_{P_4} \rightarrow \mathcal{L}_{P_5}\oplus\mathcal{L}_{P_6}\oplus\mathcal{L}_{P_7}\oplus\mathcal{L}_{P_8}\rightarrow \mathcal{L}_{Q}\rightarrow 0.\]
\end{example}

\begin{corollary}\label{cor: truncations of exact complexes are exact}
    If $\IE^{\bullet}$ is an exact inclusion-exclusion complex, then so is any truncation.
\end{corollary}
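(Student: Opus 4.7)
The plan is to reduce the exactness question to the level of $T$-stalks using Proposition \ref{prop:checking exactness at T-stalks}, and then to observe that passing from $\IE^{\bullet}_{\phi}$ to its truncation $\IE^{\bullet}_{\overline{\phi}}$ either leaves the stalk unchanged or kills it outright. Concretely, for each $m \in M_{\RR}$ I would compute the $T$-stalk $(\IE^{\bullet}_{\overline{\phi}})_{(m)}$ and show it is either the zero complex or canonically identified with the (exact) stalk $(\IE^{\bullet}_{\phi})_{(m)}$.

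The computation goes as follows. By the definition of the $T$-stalk, the summand $\mathcal{L}_{\overline{\phi}(x)}$ contributes $\kk$ at $m$ precisely when $m \in \phi(x) \cap Q$. Thus, if $m \notin Q$, every summand of $\IE^{\bullet}_{\overline{\phi}}$ vanishes at $m$, so $(\IE^{\bullet}_{\overline{\phi}})_{(m)} = 0$, which is trivially exact. If $m \in Q$, then the condition $m \in \phi(x) \cap Q$ reduces to $m \in \phi(x)$, and so the indexing set of nonzero stalk summands is exactly the upper ideal $I_m = \{x \in \mathbb{P} \;\lvert\; m \in \phi(x)\}$ appearing in the stalk of the original complex. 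The differentials of an inclusion-exclusion complex are determined by the CW sign function $c(x \lessdot y)$ on $\mathbb{P}$ together with the maps $1_{F(x),F(y)}$ induced by inclusion; after taking $T$-stalks at $m$, the maps $1_{F(x),F(y)}$ become identity maps whenever both $F(x)$ and $F(y)$ contain $m$. Since $c$ depends only on $\mathbb{P}$ and not on $\phi$, the stalk complexes $(\IE^{\bullet}_{\overline{\phi}})_{(m)}$ and $(\IE^{\bullet}_{\phi})_{(m)}$ are equal term-by-term with matching differentials.

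Putting this together: since $\IE^{\bullet}_{\phi}$ is assumed to be exact, Proposition \ref{prop:checking exactness at T-stalks} gives that $(\IE^{\bullet}_{\phi})_{(m)}$ is exact for every $m$. The identification above then shows that every $T$-stalk of the truncation $\IE^{\bullet}_{\overline{\phi}}$ is exact as well. One more application of Proposition \ref{prop:checking exactness at T-stalks} yields exactness of $\IE^{\bullet}_{\overline{\phi}}$.

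There is no serious obstacle here; the only point requiring a bit of care is verifying that the differentials on the two stalk complexes genuinely agree, which is why it is important that the sign data $c(x \lessdot y)$ depends only on the CW poset $\mathbb{P}$ and not on the image map $\phi$. Once this is noted, the result is immediate.
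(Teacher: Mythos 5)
Your argument is correct and follows the same route as the paper: reduce to $T$-stalks, note the stalk is zero when $m\notin Q$ and coincides with the (exact) stalk of the original complex when $m\in Q$. Your extra remark that the sign function $c$ depends only on the CW poset and not on the map is a nice bit of care, but the substance is identical to the paper's proof.
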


\begin{proof}
    Say $Q$ is the polytope we used to truncate. If $m \not \in Q$, then the $T$-stalk of the truncation of $\IE^{\bullet}$ is the zero complex and otherwise the $T$-stalk complex of the truncation $\IE^{\bullet}_{(m)}$ is just the stalk of $\IE^{\bullet}_{(m)}$ which is exact by assumption.
\end{proof}

\subsection{The Polytopal Criterion}

We now state the polytopal criterion for constructing full strong exceptional collections.

\begin{definition}
    Let $\mathcal{P} = \{P_i\}$ be a set of polytopes in $\Def(\Sigma)$. We recursively define $\langle \mathcal{P} \rangle_k$ by $\langle \mathcal{P} \rangle_0 = \{P + m \; \lvert \; P \in \mathcal{P}, m\in M\}$ 
and $Q \in \langle {P} \rangle_k$ if and only if there is a CW poset $\mathbb{P}$ and an order-preserving map $F: \mathbb{P} \to \operatorname{Def}(\Sigma)$ satisfying:

\begin{enumerate}
    \item there is an $x$ with $F(x) = Q$
    \item for all $y$ with $\rk(y) \not = \rk(x)$ we have $F(y) \in \langle \mathcal{P} \rangle_{k-1}$.
    \item For any $m \in M$, the upper ideal $\{x \in \mathbb{P} \; \lvert \; m \in F(x)\}$ is an interval of cardinality greater than $1$.
\end{enumerate}
Then, we define 
 \[\langle \mathcal{P} \rangle = \bigcup_{k=0}^{\infty} \langle \mathcal{P} \rangle_k. \]
 We say that the collection $\mathcal{P}$ is \textbf{full} in $\Poly(\Sigma)$ if $\langle \mathcal{P} \rangle = \Def(\Sigma)$.
\end{definition}

\begin{theorem}[\ref{mainthm: Polytopal Criterion}]\label{thm: Polytopal Criterion}
    Let $\Sigma$ be a smooth projective fan. If an ordered collection of polytopes $(P_1,\cdots, P_m)$ for $P_i \in \Def(\Sigma)$ is a full (strongly) exceptional collection of polytopes in $\Poly(\Sigma)$ then the ordered collection of nef line bundles $(\mathcal{L}_{P_1}, \ldots, \mathcal{L}_{P_m})$ is a full (strongly) exceptional collection for $\operatorname{Perf}(X_{\Sigma})$ and hence for $\D(X_{\Sigma})$.
\end{theorem}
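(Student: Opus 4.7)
The plan is to handle the exceptionality and fullness halves of the statement separately, with exceptionality following directly from Theorem \ref{mainthm: dg homs calculation} and fullness proceeding by induction using the exactness criterion of Theorem \ref{mainthm: Exactness Criterion}.

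For exceptionality, I would invoke Corollary \ref{cor: Calculation of Exts of nef line bundles}: $\operatorname{Ext}^p(\mathcal{L}_{P_i}, \mathcal{L}_{P_j}) \cong \bigoplus_{m \in M} \widetilde{H}^{p-1}(P_i \setminus (P_j + m))$. Translation-invariance of the indexing over $M$ converts the strong-exceptionality hypothesis $\widetilde{H}^k(P_j \setminus P_i + m) = 0$ ($k \geq 0$) directly into $\operatorname{Ext}^p(\mathcal{L}_{P_i}, \mathcal{L}_{P_j}) = 0$ for all $p \geq 1$ and all $i,j$, yielding strong exceptionality of the line bundles. In the merely exceptional case, the linear extension of $\leq_M$ forces the $p = 0$ vanishing when $i > j$: if $i > j$ then $P_i \not\leq_M P_j$, so $P_i \setminus (P_j + m)$ is never empty, and the only summand $\widetilde{H}^{-1}$ that could contribute to $\operatorname{Hom}$ vanishes.

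For fullness, I would show by induction on $k$ that every $Q \in \langle \mathcal{P} \rangle_k$ has $\mathcal{L}_Q$ in the thick subcategory $\mathcal{T}$ of $\D(X_{\Sigma})$ generated by $\{\mathcal{L}_{P_1},\ldots,\mathcal{L}_{P_m}\}$. The base case $k = 0$ is immediate because $\mathcal{L}_{P+m} \cong \mathcal{L}_P$ as non-equivariant line bundles. For the inductive step, the CW poset $\mathbb{P}$ and order-preserving map $F:\mathbb{P} \to \Def(\Sigma)$ witnessing $Q \in \langle \mathcal{P} \rangle_k$ satisfy exactly the upper-ideal hypothesis of Theorem \ref{mainthm: Exactness Criterion}, so $\IE_{F}^{\bullet}$ is exact. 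By the inductive hypothesis, each term at rank different from $\rk(x)$ lies in $\mathcal{T}$, and the standard fact that in an exact complex of objects in a triangulated category, membership of all but one term in a thick subcategory forces the last term into that subcategory (via successive stupid truncations and closure of $\mathcal{T}$ under cones and shifts) places $\bigoplus_{\rk(y) = \rk(x)} \mathcal{L}_{F(y)}$ in $\mathcal{T}$. Closure of $\mathcal{T}$ under direct summands then gives $\mathcal{L}_Q \in \mathcal{T}$.

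Once $\mathcal{L}_P \in \mathcal{T}$ for every $P \in \Def(\Sigma)$, I would finish by fixing an ample polytope $P_0 \in \Def(\Sigma)$ so that every dilation $nP_0$ also lies in $\Def(\Sigma)$; since the corresponding powers $\mathcal{L}_{P_0}^{\otimes n}$ classically generate $\D(X_{\Sigma})$ on the smooth projective variety $X_{\Sigma}$, we conclude $\mathcal{T} = \D(X_{\Sigma})$. The main obstacle will be the inductive step of fullness: one has to verify carefully that the CW-poset bookkeeping really delivers an exact inclusion-exclusion complex in which precisely the unknown polytopes sit at a single rank, and then use the triangulated structure to extract that rank as a consequence of the rest. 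The exceptionality half is, by contrast, essentially cohomological dictionary-chasing once Theorem \ref{mainthm: dg homs calculation} is in hand.
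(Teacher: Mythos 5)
Your argument is correct and follows essentially the same route as the paper's proof: exceptionality is a dictionary translation via the $\operatorname{Ext}$ calculation, and fullness is an induction that turns each witness $(\mathbb{P},F)$ for $Q \in \langle\mathcal{P}\rangle_k$ into an exact inclusion-exclusion complex, closes the thick subcategory under cones, shifts, and summands, and finishes by generating from the ample polytope. The only cosmetic differences are that the paper routes exceptionality through the reversed sequence of inverse line bundles $(\mathcal{L}_{P_m}^{-1},\ldots,\mathcal{L}_{P_1}^{-1})$ and invokes a cited result of Fang--Liu--Treumann--Zaslow for generation of $\operatorname{Perf}(X_\Sigma)$ by $T$-equivariant ample line bundles, whereas you compute $\operatorname{Ext}^p(\mathcal{L}_{P_i},\mathcal{L}_{P_j})$ directly and appeal to the standard fact that powers of an ample line bundle on a smooth projective variety classically generate.
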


\begin{proof}
    By the definition of exceptionality of polytopes, we immediately see that $P_1,\ldots$, $P_m$ are (strongly) exceptional if and only if $(\mathcal{L}_{P_m}^{-1}, \ldots, \mathcal{L}_{P_1}^{-1})$ is (strongly) exceptional. Since taking inverses of line bundles preserves exceptionality while flipping the order, we obtain the exceptionality part of the statement.

    For fullness, if $(P_1,\ldots,P_m)$ is full, then we have exact sequences obtained from inclusion-exclusion complexes that show that the smallest triangulated category of $\operatorname{Perf}(X_{\Sigma})$ containing $\{ \mathcal{L}_{P_i}\}$ for $i \in [k]$ and $m \in M$ contains the category generated by $T$-equivariant nef line bundles. By \cite{Fang2012}, we have that $\operatorname{Perf}(X_{\Sigma})$ is generated by the $T$-equivariant ample line bundles for smooth $\Sigma$. Hence, our collection of nef line bundles generates the entire category. Since they generate $\operatorname{Perf}(X_{\Sigma})$, they also generate its homotopy category which is $\D(X_{\Sigma})$.
\end{proof}

\begin{remark}
    The bottleneck for extending this result to other projective fans is showing that the category $\operatorname{Perf}(X_{\Sigma})$ is generated by $T$-equivariant nef line bundles. Everything else holds for all projective fans.
\end{remark}

\section{First Examples}\label{sec: examples}
We demonstrate our method in small examples. The results here have been proven through various other methods. It will suffice for these examples to consider subdivision Koszul complexes but this will not be true for our main calculations.

\subsection{Projective Space}
We begin with a combinatorial proof of the calculation of the derived category of $\mathbb{CP}^n$ given by Beilinson \cite{Beilinson1978}. Arguably, this is the calculation that set off the serious study of derived categories.

The variety $\mathbb{CP}^n$ is the toric variety associated to the normal fan\footnote{Technically speaking, this fan is not a strictly convex fan since the simplex is not full-dimensional in $\RR^n$. This is fixed by considering the fan in the subspace where the coordinates sum to $1$. We will not worry about this minor difference for exposition reasons.} of the standard simplex 
 \[ \Delta_{n} = \left \{ x \in \RR^n \; \lvert \; 0 \leq x_i \leq 1 \text{ for all $1 \leq i \leq n$ and } \sum_{i=1}^n x_i = 1. \right \}.\]
Any polytope combinatorially equivalent to a simplex is known to be indecomposable under Minkowski sum \cite{Grunbaum1967}. This means that the deformation cone of $\Delta_{n}$ only consists of dilations of the polytope (including the polytope which is a point). Under the toric geometry dictionary, the $k$-th dilation $k \Delta_{n}$ corresponds to the nef line bundle $\mathcal{O}_{\mathbb{CP}^n}(k)$.

We will show that the dilations $\operatorname{pt}, \Delta_{n}, 2\Delta_{n}, \ldots, (n-1)\Delta_{n}$ form a full strongly exceptional collection of polytopes, where $\operatorname{pt}$ is the polytope which is just the point $0$.

We first show they generate the category. Consider the dilated simplex $k \Delta_{n}$ with $k \geq n$ and the collection of simplices $P_i$ for $i \in [n]$ defined as
 \[ P_i = k\Delta_{n} \cap \{ x \in \RR^{n} \; \lvert \; x_i \geq 1\}.\] 
 For any point $x \in k \Delta_{n}$, at least one of the coordinates of $x$ must have a value greater or equal to $1$ since we have $n$ non-negative numbers summing to $k \geq n$. This means that $\bigcup P_i = k \Delta_{n}$. Further, the intersection of these simplices are again simplices in $\Def(\Sigma)$. Therefore this gives a weak subdivision of $k\Delta_n$ in terms of translations of the $P_i$. Using this construction, we can inductively generate any polytope $k\Delta_{n}$ from the polytopes $\{a \Delta_{n}\; \lvert \; 0 \leq  a \leq n-1\}$.

 \begin{figure}[h]
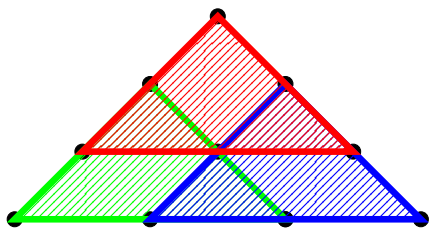

     \centering
 \begin{polyhedron}{} 
 \vertex{point={0,0},color=black}
 \vertex{point={1.5,1.5},color=black}
 \vertex{point={3,0},color=black}
 \vertex{point={-1.5,1.5},color=black}
 \vertex{point={3,0},color=black}
 \vertex{point={4.5,1.5},color=black}
 \vertex{point={-3,0},color=black}
 \vertex{point={6,0},color=black}
 \vertex{point={0,3},color=black}
 \vertex{point={3,3},color=black}
 \vertex{point={1.5,4.5},color=black}
 \polygon{points={(-3,0),(3,0),(0,3)},color=green,thickness=2.5,status=open}
 \polygon{points={(0,0),(6,0),(3,3)},color=blue,thickness=2.5,status=open}
 \polygon{points={(-1.5,1.5),(4.5,1.5),(1.5,4.5)},color=red,thickness=2.5,status=open}
 \end{polyhedron}
     \caption{Weak subdivision of $3 \Delta_{3}$ into three translations of $2\Delta_{3}$. The pairwise intersections are translations of $\Delta_3$ and the intersection of all three is a translation of $\{\operatorname{pt}\}$.}
     \label{fig:Decomposition of Triangle}
 \end{figure}

Next, we need to show that this collection is strongly exceptional. By Proposition \ref{prop: Minkowski sum deformation retract}, we have a homotopy equivalence

\[k_1 \Delta_n \backslash k_2 \Delta_n \sim (k_1 - k_3) \Delta_n \backslash (k_2 - k_3) \Delta_n \]
where $k_3$ is the minimum of $k_1$ and $k_2$. So, this set difference is either $\operatorname{pt} \backslash (k\Delta+m)$ or $k\Delta \backslash (\operatorname{pt}+m)$ where $k < n$. The first is clearly either empty or contractible. The second is contractible whenever $m$ is not an interior lattice point of $k \Delta$. By the standard convex geometric fact that no dilation $k \Delta_n$ with $k < n$ has interior lattice points, we see that this collection is strongly exceptional.

Since full strongly exceptional collections of sheaves are closed under tensoring by line bundles, we recover Beilinson's result.
\begin{theorem}\cite{Beilinson1978}
    The line bundles $(\mathcal{O}(m),\mathcal{O}(m+1),\ldots, \mathcal{O}(m+n-1))$ are a full strongly exceptional collection for $\operatorname{D}^b(\mathbb{CP}^n)$ for any $m \in \NN$.
\end{theorem}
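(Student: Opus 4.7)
The plan is to apply Theorem \ref{thm: Polytopal Criterion} to the ordered collection of polytopes $(P_0, P_1, \ldots, P_{n-1}) = (\operatorname{pt}, \Delta_n, 2\Delta_n, \ldots, (n-1)\Delta_n)$ in $\Def(\Sigma)$, where $\Sigma$ is the normal fan of the standard simplex $\Delta_n$. Since the theorem's statement allows the tensor shift $\mathcal{O}(m)$, it suffices to produce one such collection and invoke the fact that tensoring a full strongly exceptional collection by a line bundle preserves all three properties. So the proof reduces to two claims about polytopes: the collection is strongly exceptional, and it is full.

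For strong exceptionality, I would use Theorem \ref{mainthm: dg homs calculation} to reduce to showing that $\widetilde{H}^k((j\Delta_n) \backslash (i\Delta_n + m)) = 0$ for all $k \geq 0$, all $m \in M$, and all $0 \leq i, j \leq n-1$. By Proposition \ref{prop: Minkowski sum deformation retract}, I may cancel a common Minkowski summand of the smaller simplex from both sides, so each set difference is homotopy equivalent to either $(k\Delta_n) \backslash (\operatorname{pt} + m)$ or $\operatorname{pt} \backslash (k\Delta_n + m)$ for some $0 \leq k \leq n-1$. The second is either empty or a single point, hence acyclic in reduced cohomology. The first is $k\Delta_n$ minus a single lattice point; since $k < n$ the dilate $k\Delta_n$ has no interior lattice points, so removing $m$ either leaves $k\Delta_n$ unchanged (contractible) or removes a boundary point (still contractible by straight-line deformation to any fixed interior vertex). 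This disposes of strong exceptionality.

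For fullness, I would use the weak subdivision described in the text around Figure \ref{fig:Decomposition of Triangle}. For $k \geq n$, cover $k\Delta_n$ by the $n$ translated subsimplices $P_i = k\Delta_n \cap \{x_i \geq 1\}$, each of which is a translate of $(k-1)\Delta_n$. The intersection $\bigcap_{i \in S} P_i = k\Delta_n \cap \{x_i \geq 1 \text{ for } i \in S\}$ is a translate of $(k-|S|)\Delta_n$ whenever $|S| \leq k$, and is empty for $|S| > k$ (though we only need $|S|\le n\le k$). This gives an order-preserving map $F : 2^{[n]} \to \Def(\Sigma)$ whose inclusion-exclusion complex is the subdivision Koszul complex; its exactness is guaranteed by Theorem \ref{prop: Exact inclusion-exclusion complexes} since the upper ideal $\{S : m \in F(S)\}$ for any $m$ is the interval of those $S$ witnessing that $m$ lies in every $P_i$ for $i \in S$, which is a nonempty boolean interval. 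Inductively on $k$, this shows $k\Delta_n \in \langle\{i\Delta_n : 0 \leq i \leq n-1\}\rangle$ for every $k$.

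The only real subtlety is the base case management in the fullness induction: I need the largest rank term of the subdivision complex to land in the previously generated family. Since $F([n]) = k\Delta_n$ (the chosen element), all other terms $F(S)$ for $|S| < n$ are translates of $j\Delta_n$ with $j = k - (n - |S|)$, and by induction these are already in $\langle \mathcal{P} \rangle_{k-1}$ as soon as $k - 1 < k$, which is automatic. Combining exceptionality and fullness, Theorem \ref{thm: Polytopal Criterion} gives that $(\mathcal{L}_{P_0}, \ldots, \mathcal{L}_{P_{n-1}}) = (\mathcal{O}, \mathcal{O}(1), \ldots, \mathcal{O}(n-1))$ is a full strongly exceptional collection for $\D(\mathbb{CP}^n)$, and tensoring with $\mathcal{O}(m)$ yields the general form stated.
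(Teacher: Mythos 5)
Your proposal is correct and follows essentially the same route as the paper: the same collection $(\operatorname{pt},\Delta_n,\ldots,(n-1)\Delta_n)$, the same reduction of exceptionality via Proposition \ref{prop: Minkowski sum deformation retract} and the absence of interior lattice points in $k\Delta_n$ for $k<n$, and the same weak subdivision $P_i = k\Delta_n \cap \{x_i \geq 1\}$ giving a subdivision Koszul complex for the fullness induction, followed by tensoring with $\mathcal{O}(m)$. The only blemishes are cosmetic: in your fullness paragraph you first write $F(S)=\bigcap_{i\in S}P_i$ but later use the paper's convention $F(S)=\bigcap_{i\notin S}P_i$ (only the latter makes $F$ order-preserving with $F([n])=k\Delta_n$), and ``interior vertex'' should read ``interior point.''
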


\subsection{Hirzebruch Surfaces}

For any $n \in \mathbb{N}$, the \textbf{Hirzebruch Surface} $\mathcal{H}_n$ is the two-dimensional smooth projective variety given as the projective bundle over $\mathbb{CP}^1$ corresponding to the divisor $\mathcal{O}_{\mathbb{CP}^1} \oplus \mathcal{O}_{\mathbb{CP}^1}(-n)$.  This is a toric variety associated to the complete fan $\Sigma_n \subseteq N \cong \ZZ^2$ determined by the rays
 \[ u_1 = -e_1, u_2 = e_2, u_3 = -e_2, u_4 = e_1 + ne_2.\]
The corresponding fan is the normal fan of the polytope $\operatorname{conv}(0,(n+1)e_1, e_2, e_1+e_2)$.

The deformation cone of the fan $\Sigma_n$ is generated under Minkowski sums by the polytopes $P_0 = \{\operatorname{pt}\},  P_1=\operatorname{conv}(0,e_1),P_3 = \operatorname{conv}(0,ne_1,e_2)$, and $P_4 = \operatorname{conv}(0,(n+1)e_1,e_2,e_1+e_2)$.
\begin{figure}[H]
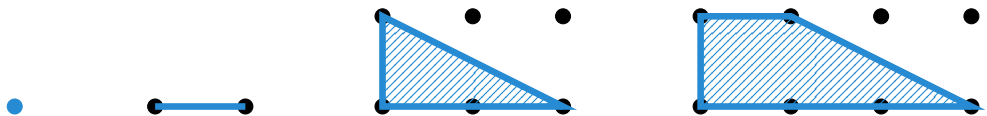

\[\begin{polyhedron}{}
    \vertex{point={0,0}}
\end{polyhedron}
\quad \quad \quad \quad
\begin{polyhedron}{}
    \vertex{point={0,0},color=black}
    \vertex{point={2,0},color=black}
    \polygon{points={(0,0),(2,0)},thickness=2.5,status=open}
\end{polyhedron} \quad \begin{polyhedron}{}
    \vertex{point={-2,0},opacity=0}
    \vertex{point={0,0},color=black}
    \vertex{point={2,0},color=black}
    \vertex{point={4,0},color=black}
    \vertex{point={0,2},color=black}
    \vertex{point={2,2},color=black}
    \vertex{point={4,2},color=black}
    \polygon{points={(0,0),(4,0),(0,2)},thickness=2.5,status=open}
\end{polyhedron} \quad \begin{polyhedron}{}
    \vertex{point={-2,0},opacity=0}
    \vertex{point={0,0},color=black}
    \vertex{point={2,0},color=black}
    \vertex{point={4,0},color=black}
    \vertex{point={0,2},color=black}
    \vertex{point={2,2},color=black}
    \vertex{point={4,2},color=black}
    \vertex{point={6,0},color=black}
    \vertex{point={6,2},color=black}
    \polygon{points={(0,0),(6,0),(2,2),(0,2)},thickness=2.5,status=open}
\end{polyhedron}\]
\caption{The four polytopes $P_0,P_1,P_2,$ and $P_3$ that generate the deformation cone of the fan of $\mathcal{H}_2$.}
\end{figure}
We claim that the four polytopes above give a full strongly exceptional collection for $\D(\mathcal{H}_n)$.

We leave the proof that these polytopes form a strongly exceptional collection as a fun exercise for the reader. Remember that it suffices to show that $P \backslash Q + m$ is contractible for $m \in M$ and that we only consider lattice translations.

Showing that this is a full collection is also fun, but we will spoil it. Given $P \in \Def(\Sigma)$, consider the polytopes obtained by intersecting $P$ by all the strips $a \leq x_1 \leq a + 1$ for $a \in \ZZ$. It is clear that intersecting by these strips preserves all edge directions and gives lattice polytopes, so this weakly subdivides $P$ into polytopes in $\Def(\Sigma)$. Further, the intersections of these pieces are all dilations of the edge $P_1$ and so we get a subdivision Koszul complex that expresses any polytope $P \in \Def(\Sigma)$ in terms of translations of polytopes contained in the strip $0 \leq x_1 \leq 1$.

\begin{figure}[H]
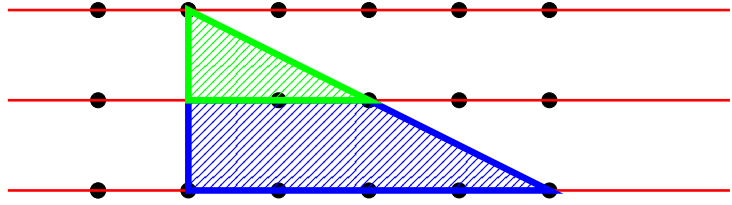

\centering
\begin{polyhedron}{}
\vertex{point = {-2,0},color=black}
\vertex{point = {0,0},color=black}
\vertex{point = {2,0},color=black}
\vertex{point = {4,0},color=black}
\vertex{point = {6,0},color=black}
\vertex{point = {8,0},color=black}
\vertex{point = {-2,0},color=black}
\vertex{point = {-2,2},color=black}
\vertex{point = {2,2},color=black}
\vertex{point = {4,2},color=black}
\vertex{point = {6,2},color=black}
\vertex{point = {8,2},color=black}
\vertex{point = {-2,4},color=black}
\vertex{point = {0,4},color=black}
\vertex{point = {2,4},color=black}
\vertex{point = {4,4},color=black}
\vertex{point = {6,4},color=black}
\vertex{point = {8,4},color=black}

\edge{points = {(-4,2),(12,2)},color=red}
\edge{points = {(-4,4),(12,4)},color=red}
\edge{points = {(-4,0),(12,0)},color=red}
\polygon{points ={(0,0),(8,0),(4,2),(0,2)},thickness=2.5,color=blue,status=open}
\polygon{points ={(0,2),(4,2),(0,4)},thickness=2.5,color=green,status=open}
\end{polyhedron}
\caption{The weak subdivision of $P_2 + P_2$ given by intersecting with the strips. The maximal pieces are a translation of $P_2$ and $P_1 + P_2$ and they intersect at a translation of $2P_1$.}
\end{figure}

By this, it suffices to generate all polytopes contained in the strip $0 \leq x_1 \leq 1$. The only polytopes contained in this strip not already in our collection are $kP_1$ for $k >1$ and $kP_1 + P_3 = (k+1)P_1 + P_2$ for $k \geq 1$. It is clear that the dilated edge $kP_1$ has a weak subdivision given by $k$ copies of $P_1$. 

There is a weak subdivision of $P_1 + P_3$ by $P_3$ and $P_3 + e_1$ where the intersection of the two polytopes is $P_2+ e_1$.
\begin{figure}[H]
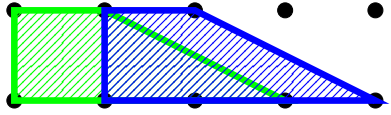

\centering
\begin{polyhedron}{}
\vertex{point = {0,0},color=black}
\vertex{point = {2,0},color=black}
\vertex{point = {4,0},color=black}
\vertex{point = {6,0},color=black}
\vertex{point = {8,0},color=black}

\vertex{point = {0,2},color=black}
\vertex{point = {2,2},color=black}
\vertex{point = {4,2},color=black}
\vertex{point = {6,2},color=black}
\vertex{point = {8,2},color=black}

\polygon{points ={(0,0),(6,0),(2,2),(0,2)},thickness=2.5,color = green, status=open}
\polygon{points ={(2,0),(8,0),(4,2),(2,2)},thickness=2.5,color = blue, status=open}
\end{polyhedron}
\caption{The weak subdivision of $P_1 + P_3$ by two translations of $P_3$ for $\mathcal{H}_2$. Their intersection is $P_2 + e_1$.}
\end{figure}
If we have a weak subdivision of $P$ by $\{P_i\}$ and $Q$ is another polytope in $\Def(\Sigma)$, then we have a weak subdivision of $P+Q$ by $\{P_i + Q\}$. Applying this to $kP_1 + P_3$ which is $P_1 + P_3 + (k-1)P_1$, we have that $\langle P_0,P_1,P_2,P_3 \rangle$ contains all polytope contained in $0 \leq x_1 \leq 1$ which means that it contains all the polytopes in $\Def(\Sigma)$.

With this, we have a combinatorial proof of the following result of Orlov:

\begin{theorem}\cite{Orlov1993}
    The nef line bundles $\{\mathcal{L}_{P_0}, \mathcal{L}_{P_1}, \mathcal{L}_{P_2}, \mathcal{L}_{P_3} \}$ are a fully strong exceptional sequence for $\mathcal{H}_n$.
\end{theorem}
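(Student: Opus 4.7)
The plan is to invoke the polytopal criterion (Theorem \ref{thm: Polytopal Criterion}), so it suffices to show that the ordered collection $(P_0, P_1, P_2, P_3)$ is a full strongly exceptional collection of polytopes in $\Poly(\Sigma_n)$. This reduces the theorem to two independent tasks: verifying strong exceptionality via the homotopy type of set differences, and verifying fullness via subdivision Koszul complexes.

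For strong exceptionality, I would check directly that for every $i, j \in \{0,1,2,3\}$ and every $m \in M = \ZZ^2$, the set difference $P_i \backslash (P_j + m)$ is either empty or contractible, so that its reduced cohomology vanishes in all degrees. Since $P_0$ is a point and the remaining $P_i$ are very simple (an edge, a triangle, and a trapezoid), one can reduce to a finite list of translation parameters $m$ using Proposition \ref{prop: Minkowski sum deformation retract}: subtracting a common Minkowski summand from $P_i$ and $P_j + m$ preserves the homotopy type of the set difference. This reduces every check to computing $\operatorname{pt} \backslash Q$ or $Q \backslash \operatorname{pt}$ for a polytope $Q$ built from the generators, after which contractibility is immediate by inspection (or by noting that all lattice points of these polytopes are vertices, so no interior lattice points create a hole).

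For fullness, I would follow the strategy sketched in the Hirzebruch example. Fix $P \in \Def(\Sigma_n)$, and weakly subdivide $P$ by intersecting with the strips $\{a \le x_1 \le a+1\}$ for $a \in \ZZ$. Because every edge direction of a polytope in $\Def(\Sigma_n)$ is compatible with the fan $\Sigma_n$, each piece and each pairwise intersection lies in $\Def(\Sigma_n)$, and the consecutive pairwise intersections are translations of dilations of $P_1$. The corresponding subdivision Koszul complex is exact by the exactness criterion (Theorem \ref{prop: Exact inclusion-exclusion complexes}), so it lets us express $P$ inside $\langle \mathcal{P} \rangle$ in terms of polytopes contained in a single strip $\{0 \le x_1 \le 1\}$, together with translations of dilations of $P_1$. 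The polytopes in that strip are, up to translation, either multiples $kP_1$ or translates $kP_1 + P_3$. For $kP_1$ with $k \ge 2$, I would use the obvious subdivision into $k$ translates of $P_1$; for $P_1 + P_3$, the subdivision by $P_3$ and $P_3 + e_1$ with intersection $P_2 + e_1$ (as illustrated) gives the needed inclusion-exclusion complex; and then Minkowski summing with $(k-1)P_1$ handles $kP_1 + P_3$ for all $k \ge 1$ by Proposition \ref{prop: tensor product inclusion-exclusion}.

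The main obstacle is really just the bookkeeping in the exceptionality check: one has to enumerate enough translation parameters $m$ to confirm that $P_i \backslash (P_j + m)$ never produces a hole for any $n$. The fullness argument, by contrast, is essentially a direct application of the subdivision Koszul machinery, and the lattice-polytope/strip subdivision works uniformly in $n$ because the rays $u_1, u_2, u_3, u_4$ all have integer $x_1$-component, so intersecting with vertical strips preserves membership in $\Def(\Sigma_n)$.
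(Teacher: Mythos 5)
Your fullness argument is essentially identical to the paper's: weakly subdivide by vertical strips $\{a \le x_1 \le a+1\}$, reduce to polytopes in one strip (which are $kP_1$ or $kP_1 + P_3$), subdivide $kP_1$ into $k$ copies of $P_1$, subdivide $P_1 + P_3$ by two translates of $P_3$ meeting in $P_2 + e_1$, and handle the general $kP_1 + P_3$ by Minkowski-summing the $P_1 + P_3$ subdivision with $(k-1)P_1$. The paper leaves the strong exceptionality check as an explicit exercise, so your sketch there fills a gap rather than duplicating anything.

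Two points in your exceptionality sketch do not hold as stated, though. First, the claim that ``all lattice points of these polytopes are vertices'' is false: $P_2 = \operatorname{conv}(0, ne_1, e_2)$ has non-vertex boundary lattice points $(1,0),\dots,(n-1,0)$, and similarly for $P_3$. The relevant (and true) fact is that none of $P_0,\dots,P_3$ has an \emph{interior} lattice point, since $P_2$ and $P_3$ both have lattice width $1$ in the $x_2$-direction; this is what prevents a translate $P_j + m$ from sitting strictly inside $P_i^{\circ}$ and creating a hole. Second, the proposed reduction of every check to ``$\operatorname{pt}\backslash Q$ or $Q\backslash\operatorname{pt}$'' via Proposition \ref{prop: Minkowski sum deformation retract} does not work uniformly. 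That proposition lets you cancel a \emph{common} Minkowski summand of $P_i$ and $P_j + m$, but for instance $P_2$ is an indecomposable triangle, so you cannot peel $P_1$ off $P_2\backslash(P_1+m)$ to land in the point-vs-polytope case. A correct uniform argument instead observes that for all $i,j$ and all $m$, either $P_j + m$ meets the boundary of $P_i$, or $P_j + m\cap P_i = \emptyset$; in both cases $P_i\backslash(P_j+m)$ is star-shaped (hence contractible) or empty. Alternatively, since $P_3 = P_1 + P_2$ and all four polytopes share vertices with each other's translates, one can do the finitely many cases by hand for $n\le 2$ and invoke the fact that increasing $n$ only elongates $P_2$ and $P_3$ in the $x_1$-direction, which cannot introduce a hole.
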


\section{Permutahedral Variety and Base Polytopes of Matroids}\label{Sec: Matroids and the Permutahedral Variety}

    The \textbf{braid arrangement} $\mathcal{Br}_{n}$ is the collection of hyperplanes 
     \[\{H_{ij}\; \lvert \; 1 \leq i < j \leq n+1\}\] 
     given by $H_{ij} = \{x \in \RR^{n} \; \lvert \; x_i = x_j \}$. This hyperplane arrangement is non-essential since the line where $x_i = x_j$ for all $i,j \in [n]$ is contained in all hyperplanes. To fix this, we will consider the hyperplane arrangement $\overline{\mathcal{Br}}_{n}$ obtained by intersecting all the hyperplanes with the subspace $V \subseteq \RR^n$ given by
      \[V = \{ x \in \RR^n\; \lvert \; \sum_{i=1}^{n} x_i = 0\}. \]
     Let $N$ be the lattice $\ZZ^n \cap V$.
     
     The \textbf{braid fan} $\Sigma_{A_{n-1}}$ is the fan whose maximal cones are the chambers of the complement of $\overline{\mathcal{Br}}_n$ in $N$. Let $M$ be the dual lattice. In particular, we have $M_{\RR} \cong \RR^{n}/ \mathbf{1} \RR^n$ where $\mathbf{1}$ is the all one vector.

The facial structure of the braid fan is well-understood. Recall that an \textbf{ordered set partition} of a finite set $E$ is an ordered tuple $A = (A_1,\ldots, A_k)$ of disjoint non-empty sets called blocks such that $A_1 \sqcup A_2 \sqcup \cdots \sqcup A_k = E$.

\begin{proposition}\label{prop: cone descrpition of permutahedral fan}
    There is a bijection between ordered set partitions of $[n]$ and cones of $\Sigma_{A_{n-1}}$. The cone $\sigma_A$ corresponding to $A$ is the set of points in $N$ such that $x_i = x_j$ if $i$ and $j$ are in the same block and $x_i < x_j$ if $i$ is in a block that comes before the block containing $j$.

    In particular, the dimension of the cone $\sigma_A$ is the number of blocks minus one.
\end{proposition}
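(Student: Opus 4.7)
The plan is to identify cones with refinements of a total ordering of the coordinates, then translate this into the language of ordered set partitions. First I would describe the maximal cones: each hyperplane $H_{ij}$ cuts $V$ according to the sign of $x_i - x_j$, so the chambers of the complement of $\overline{\mathcal{Br}}_n$ in $V$ are precisely the open cones
\[ C_\pi = \{x \in V \; \lvert \; x_{\pi(1)} < x_{\pi(2)} < \cdots < x_{\pi(n)}\} \]
for $\pi \in S_n$; each $C_\pi$ is nonempty (evaluate at a decreasing vector) and has dimension $n-1$. The closure $\overline{C_\pi}$ is cut out by the weak inequalities $x_{\pi(1)} \leq \cdots \leq x_{\pi(n)}$, so it is indeed a cone of the fan.

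Next I would describe the face structure of a chamber. A face of $\overline{C_\pi}$ is obtained by turning some of the inequalities $x_{\pi(i)} \leq x_{\pi(i+1)}$ into equalities. Choosing which indices to promote to equalities is the same as choosing a sequence $0 = i_0 < i_1 < \cdots < i_k = n$; this groups $\pi$ into blocks $A_j = \{\pi(i_{j-1}+1), \ldots, \pi(i_j)\}$ and yields the ordered set partition $A = (A_1, \ldots, A_k)$. The resulting face is exactly the $\sigma_A$ described in the statement, and its relative interior is cut out by the strict inequalities $x_\ell < x_m$ for $\ell \in A_j, m \in A_{j'}, j < j'$, together with the equalities inside blocks.

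To conclude the bijection, I would observe that distinct ordered set partitions produce cones with distinct relative interiors, because the relative interior of $\sigma_A$ determines both the coincidence classes of coordinates (the blocks) and their ordering (the block order, from the strict inequalities). This yields a well-defined injection from cones of $\Sigma_{A_{n-1}}$ to ordered set partitions of $[n]$. Surjectivity follows since, given any ordered set partition $A$, choosing any permutation $\pi$ that lists $A_1$, then $A_2$, and so on, makes $\sigma_A$ a face of $\overline{C_\pi}$ by the previous paragraph.

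Finally, for the dimension count, the linear span of $\sigma_A$ equals the subspace $L_A = \{x \in \RR^n \; \lvert \; x_\ell = x_m \text{ whenever } \ell, m \in A_j\}$ intersected with $V$. Since $L_A \subseteq \RR^n$ has dimension $k$ (one degree of freedom per block) and $V$ is a hyperplane not containing $L_A$ when $k \geq 1$, we get $\dim(\sigma_A) = \dim(L_A \cap V) = k - 1$. There is no serious obstacle here; the one place to be careful is checking that $\sigma_A$ as described really is the relative interior of a genuine face of $\overline{C_\pi}$ (not, say, a proper subset of it), which is why I would emphasize that the relative interior is cut out by exactly the strict inequalities between distinct blocks.
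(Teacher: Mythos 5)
The paper states Proposition \ref{prop: cone descrpition of permutahedral fan} without proof, treating it as a standard fact about the braid arrangement; your argument supplies a correct and essentially canonical proof of it. Your identification of chambers with permutations, faces with coarsenings into ordered set partitions, and the dimension count via the span $L_A$ intersected with the sum-zero hyperplane are all sound, and you are right to flag the one subtlety — that the paper's $\sigma_A$ is really the relative interior of a face of $\overline{C_\pi}$ rather than the closed cone, so one must check that the strict inequalities between distinct blocks carve out exactly that relative interior, not a proper subset.
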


 We will label the ray $\sigma_{(A_1,A_2)}$ by the subset $A_1$. In this way, the rays are labeled by non-empty and proper subsets of $[n]$.

The \textbf{permutahedral variety} $\Perm_n$ is the toric variety of the braid fan $\Sigma_{A_{n-1}}$. This variety is a smooth, projective variety of dimension $n-1$. The name comes from the fact that this is the toric variety of the permutahedron
 \[\pi_n = \operatorname{conv}( g \cdot (1,2,\ldots,n) \; \lvert \; g \in S_n)\]
in $\RR^n / \RR \mathbf{1}$.
\begin{remark}
    This variety appears in many natural contexts.

    \begin{enumerate}
        \item The first appearance of this variety is probably in Kapranov's work on the Chow quotient of the Grassmannian \cite{Kapranov1992}.
        \item When $n = 3$, this toric variety is isomorphic to the del Pezzo surface $\operatorname{Bl}_3(\mathbb{CP}^2)$. This is the largest $n$ for which $\Perm_n$ is Fano. This is simple to see since the permutahedron for $n \geq 4$ has more than one interior lattice point.
        \item The permutahedral variety $\Perm_n$ embeds into the type $A_n$ flag variety $\operatorname{GL}_n/B$ as the torus orbit closure of a generic point. 
        \item Losev and Manin gave a moduli space interpretation of $\Perm_n$ as a toric compactification of $M_{0,n}$ \cite{Losev2000}.
        \item When $h: [n] \to [n]$ is the function $h(i) = i+1$ for $i \leq n-1$ and $h(n) = n$, then $\Perm_n$ is isomorphic to the semisimple Hessenberg variety associated to $h$ \cite{HessenbergVar}.
        \item It is the wonderful compactification of the hyperplane arrangement given by the coordinate hyperplanes in the sense of \cite{DeConcini95}.
    \end{enumerate}
\end{remark}

\subsection{Matroids and Generalized Permutahedra}\label{Sec: Matroids and Generalized Permutahedra}

The polytopes in the deformation cone of the braid arrangement have been studied by many people in different fields. The first appearance was in optimization under the name polymatroids studied by Edmonds \cite{Edmonds2003} and they were later studied combinatorially under the name generalized permutahedra by Postnikov \cite{Postnikov2009}. There is a slight difference in the definition of polymatroids and generalized permutahedra which is usually insignificant; however, the difference will be important for us. For that reason, we will study generalized permutahedra in this section and polymatroids in the next.

\begin{definition}
    A \textbf{generalized permutahedron} is a polytope in $\RR^n$ such that every edge is parallel to a vector $e_i - e_j$ where $e_i$ is the $i$th standard basis vector of $\RR^n$. 

    Equivalently, $P$ is a generalized permutahedron if and only if the normal fan of $P$ coarsens the fan whose maximal cones are the chambers of $\mathcal{Br}_n$.
\end{definition}

From this definition, we immediately see that every generalized permutahedron is contained in a hyperplane of the form $x_1 + \cdots + x_n = k$ for some $k \in \RR^n$. Further, $\Def(\Sigma)$ consists of the projections of lattice generalized permutahedra $P$ under the map $\RR^n \mapsto \RR^n / \mathbf{1}\RR$, or equivalently, of lattice generalized permutahedra up to translation by $\mathbf{1}$.

Since the nef cone of $\Perm_n$ consists of line bundles indexed by generalized permutahedra up to translation, there is no harm in abusing notation by identifying $\Def(\Sigma)$ with the set of lattice generalized permutahedra in $\RR^n$ when studying $\D(\Perm_n)$. We will freely do this identification for the remainder of this section.

A \textbf{submodular function} $\mu: 2^{[n]} \to \RR$ is a function such that $\mu(\emptyset) = 0$ and
 \[ \mu(A) + \mu(B) \geq \mu(A \cap B) + \mu(A \cup B),\]
for all $A,B \subseteq [n]$. The \textbf{base polyhedron} of $\mu$ is the polytope
 \[P_{\mu} = \{ x \in \RR^n \; \lvert \; \sum_{i =1}^n x_i = \mu([n]), \; \sum_{i \in A} x_i \leq \mu(A) \text{ for all $A \subsetneq [n]$}\}. \]
The map $\mu \mapsto P_{\mu}$ gives a bijection between submodular functions and generalized permutahedra. Further, pointwise sums of submodular functions correspond to Minkowski sum of generalized permutahedra. If we restrict to $\ZZ$-valued submodular functions then we get a cone isomorphic to $\Def(\Sigma_{A_{n-1}})$.

This bijection can be seen geometrically by considering the description of the line bundles $\mathcal{L}_P$ in terms of their Weil divisor. Let $D_S$ denote the $T$-invariant Weil divisor of $\Perm_n$ corresponding to the ray indexed by $\emptyset \not = S \subsetneq [n]$. Then, we have
 \[c_1(\mathcal{L}_{P_{\mu}}) = \sum_{\emptyset \not = S \subseteq [n]} \mu(S) D_S. \]

Our nef line bundles will come from the theory of matroids. See \cite{White1986} for a general reference on matroids or \cite{Katz2016} for an introduction aimed at algebraic geometers. 

For a matroid $M$ on ground set $[n]$, we will use $\mathcal{B}(M)$ to denote its bases, $\mathcal{I}(M)$ to denote its independent sets, and $\rk:2^{[n]} \to \ZZ$ to denote its rank function. The \textbf{base polytope} of a matroid $M$ is the polytope
 \[\BP(M) = \operatorname{conv}(e_B \; \lvert \; B \in \mathcal{B}(M)) \]
where $e_B = \sum_{i \in B}e_i$. Every base polytope of a matroid is a generalized permutahedron with a submodular function given by the rank function of the matroid. In fact, Gelfand, Goresky, MacPherson, and Serganova found the following characterization of base matroid polytopes:

\begin{theorem}\label{thm: ggms}\cite{Gelfand1987}
    A polytope in $\RR^n$ is the base polytope of a matroid if and only if it is a generalized permutahedron whose vertices are contained in the vertices of the cube $[0,1]^n$.
\end{theorem}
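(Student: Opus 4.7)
The plan is to prove both directions of the equivalence. The forward implication is easy: the vertices of $\BP(M)$ are the indicator vectors $e_B$ for $B \in \mathcal{B}(M)$, which lie in $\{0,1\}^n$, and the classical fact that two vertices $e_{B_1}, e_{B_2}$ are joined by an edge of $\BP(M)$ exactly when $B_1 \triangle B_2 = \{i, j\}$ (a basis exchange) shows that every edge direction is $e_i - e_j$; hence $\BP(M)$ is a generalized permutahedron.

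For the backward direction, suppose $P$ is a generalized permutahedron with vertices in $\{0,1\}^n$. Define $\mu : 2^{[n]} \to \ZZ$ by
\[\mu(A) \;=\; \max_{x \in P}\,\sum_{i \in A} x_i.\]
I would show that $\mu$ is the rank function of a matroid $M_\mu$ on $[n]$ and then conclude $P = \BP(M_\mu)$ using the fact that a generalized permutahedron is uniquely determined by its support function restricted to the vectors $e_A$: each maximal cone of the braid fan contains the all-ones vector together with a nested family of $e_A$'s forming a basis, and since the normal fan of $P$ coarsens the braid fan, the support function is linear on each such cone, hence determined by its values at these vectors. The axioms $\mu(\emptyset) = 0$, integer-valuedness (the maximum is attained at a $\{0,1\}$-vertex), monotonicity (vertices lie in the non-negative orthant), and the unit-growth bound $\mu(A \cup \{i\}) - \mu(A) \leq 1$ (each coordinate at a vertex is at most $1$) are all immediate from the hypothesis on the vertex set.

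The main obstacle is submodularity
\[\mu(A) + \mu(B) \;\geq\; \mu(A \cup B) + \mu(A \cap B),\]
which is essentially the polymatroidal content of the generalized permutahedron hypothesis. I would establish it by a standard exchange argument: pick vertices $x^*, y^* \in P$ maximizing $e_A$ and $e_B$ respectively, and then, since every edge of $P$ has direction $e_i - e_j$, walk along edges of $P$ starting from the pair $(x^*, y^*)$, performing compensating coordinate swaps (decreasing some coordinate of $x^*$ and increasing the corresponding coordinate of $y^*$, or vice versa) that preserve the sum $x^* + y^*$, until one reaches a pair $(u, v) \in P \times P$ with $u$ optimal for $e_{A \cup B}$ and $v$ optimal for $e_{A \cap B}$. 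The coarsening condition on the normal fan is exactly what guarantees that such edge swaps are always available whenever the current pair is not yet of the desired form. Summing the defining inequalities at $u$ and $v$ yields the submodular inequality. Once submodularity is confirmed, $\mu$ is a matroid rank function, and the final identification $P = \BP(M_\mu)$ completes the proof.
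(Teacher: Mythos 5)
The paper does not prove this theorem; it cites it as a known result of Gelfand--Goresky--MacPherson--Serganova, so there is no proof in the paper to compare against. Evaluating your proposal on its own merits:

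The forward direction is clean and correct, and your overall plan for the converse (show that $\mu(A) = \max_{x\in P}\sum_{i\in A}x_i$ is a matroid rank function, then identify $P = \BP(M_\mu)$ via the support function) is the right one. The axioms other than submodularity --- integrality, monotonicity, $\mu(\emptyset)=0$, and unit growth --- are correctly dispatched from the vertex-in-$\{0,1\}^n$ hypothesis, and the identification $P = \BP(M_\mu)$ via linearity of the support function on maximal braid cones is sound.

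The gap is in submodularity, which is the crux of the converse. Your ``walk along edges of $P$ performing compensating coordinate swaps'' is not a proof: it is not explained why such swaps exist at every step, why the process terminates at the desired pair $(u,v)$, or, most importantly, why the quantity you would want to control is monotone along the walk. Tracing through, if one arrives at $(u,v)$ with $u+v = x^*+y^*$, $u$ optimal for $e_{A\cup B}$, $v$ optimal for $e_{A\cap B}$, the resulting identity is
$\mu(A)+\mu(B) + e_A(y^*)+e_B(x^*) = \mu(A\cup B)+\mu(A\cap B) + e_{A\cup B}(v)+e_{A\cap B}(u)$,
and the extra terms on the two sides do not obviously compare. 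The clean and standard argument avoids pairs of points altogether: since $A\cap B \subseteq A\cup B$, the vectors $e_{A\cap B}$ and $e_{A\cup B}$ lie in a common maximal cone $\sigma$ of the braid fan (take any linear order placing $A\cap B$ first, then $(A\cup B)\setminus(A\cap B)$, then the rest). Because the normal fan of $P$ coarsens the braid fan, there is a single vertex $z$ of $P$ whose normal cone contains $\sigma$, so $z$ simultaneously maximizes every linear functional in $\sigma$, in particular both $e_{A\cap B}$ and $e_{A\cup B}$. Then
$\mu(A\cap B)+\mu(A\cup B) = e_{A\cap B}(z)+e_{A\cup B}(z) = (e_A+e_B)(z) = e_A(z)+e_B(z) \leq \mu(A)+\mu(B)$,
which is the desired inequality. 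Replacing your exchange sketch with this one-vertex argument closes the gap and makes the proof complete.
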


A \textbf{loop} of a matroid $M$ is an element $i \in [n]$ such that no basis of $M$ contains $i$ and a \textbf{coloop} is an element that is in every basis of $M$. We say $M$ is \textbf{loopless} if it contains no loops and likewise for \textbf{coloopless}. The following is immediate:

\begin{proposition}\label{prop: flipping loops is translation for base polytopes}
    Let $M$ be a matroid and $i$ be a loop of $M$. Let $M'$ be the matroid obtained by adding $i$ to every basis. We will refer to this operation as changing a loop to a coloop. Then, the polytope $\BP(M)$ is a translation of the polytope $\BP(M')$.

    In particular, the line bundles $\mathcal{L}_{\BP(M)}$ and $\mathcal{L}_{\BP(M')}$ are isomorphic.
\end{proposition}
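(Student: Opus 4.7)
The plan is to show directly that $\BP(M') = \BP(M) + e_i$; the line-bundle statement will then be an immediate consequence of the characterization of when two nef line bundles on $\Perm_n$ are isomorphic that was recalled in Section~\ref{Ssec: Convex Geometry and Toric Geometry}.

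First I would unpack the definitions. Since $i$ is a loop of $M$, no $B \in \mathcal{B}(M)$ contains $i$, so the $i$-th coordinate of every indicator vector $e_B$ vanishes. By construction the bases of $M'$ are exactly the sets $B \cup \{i\}$ with $B \in \mathcal{B}(M)$, and because $i \notin B$ this gives $e_{B \cup \{i\}} = e_B + e_i$. Taking convex hulls and using that the convex hull operation commutes with translation by a fixed vector yields
\[
\BP(M') \;=\; \operatorname{conv}\bigl(e_B + e_i : B \in \mathcal{B}(M)\bigr) \;=\; \BP(M) + e_i,
\]
which proves the first assertion.

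For the second assertion, I would invoke the statement that two nef line bundles $\mathcal{L}_P$ and $\mathcal{L}_Q$ on a toric variety are isomorphic precisely when $Q = P + m$ for some $m$ in the weight lattice $M$. In our setting $M = \ZZ^n/\ZZ \mathbf{1}$, and the class of $e_i$ is a lattice element there, so the translation $\BP(M') = \BP(M) + e_i$ lifts to the required isomorphism $\mathcal{L}_{\BP(M)} \cong \mathcal{L}_{\BP(M')}$.

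I do not foresee any real obstacle here: the argument is essentially a one-line unpacking of the definition of a loop combined with the toric-geometric dictionary already established in the paper. The only minor point worth checking in passing is that $M'$ is genuinely a matroid (it is, as a coloop extension of $M$), so that $\BP(M')$ is a bona fide base polytope and the formula makes sense.
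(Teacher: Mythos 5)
Your proof is correct and is exactly the one-line unpacking the paper has in mind—the paper just labels this statement "immediate" and gives no argument, so your computation $\BP(M') = \operatorname{conv}(e_B + e_i) = \BP(M) + e_i$ together with the translation-invariance of the isomorphism class of nef line bundles is precisely the intended reasoning. No gap.
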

There are many important operations on matroids, the main ones we will use are the following:

\begin{definition}
Let $M$ be a matroid on ground set $[n]$.
    
\begin{itemize}
 \item The \textbf{dual} of $M$ is the matroid $M^*$ which has bases $\mathcal{B}(M^*) = \{ [n] - B \; \lvert \; B \in \mathcal{B}(M)\}$.
    \item The \textbf{restriction} $M|S$ of a matroid $M$ to a subset $S \subseteq [n]$ is defined as the matroid on ground set $S$ defined by
 \[\mathcal{I}(M|S) = \{I \in \mathcal{I}(M) \; \lvert \; I \subseteq S\}.\]
 \item The \textbf{contraction} of $M$ by $T \subseteq [n]$ is the matroid on ground set $[n] \backslash T$ defined as the dual of restriction
  \[ M / T = (M^*|_{[n] \backslash T})^*.\]
  When $T$ is an independent set of $M$, then $\mathcal{I}(M/T)$ consists of all subsets $A \subseteq [n] \backslash T$ such that $A \cup T$ is an independent set of $M$.
    \item The \textbf{truncation} of a matroid $M$ of rank $r$ is the matroid of rank $r-1$ defined by
     \[\mathcal{I}(T(M)) = \{I \in \mathcal{I}(M)\;\lvert\; \rk(I) \leq r-1 \}. \]
    If $M$ is a matroid of rank $r$ and $k < r$, we will use $T_k(M)$ to denote the truncation of $M$ to rank $k$ defined by $T_k(M) = T^{r-k}(M)$.
    \item The \textbf{direct sum} of two matroids $M_1$ and $M_2$ on ground sets $S$ and $T$, respectively, is the matroid $M_1 \oplus M_2$ defined by
     \[\mathcal{B}(M_1 \oplus M_2) = \{ A \sqcup B \subseteq S \sqcup T \; \lvert \; A \in \mathcal{B}(M_1), B \in \mathcal{B}(M_2) \}. \]
     \item A \textbf{minor} of $M$ is a matroid obtained as a sequence of restrictions and contractions of $M$.
\end{itemize}
\end{definition}
 These operations manifest as operations on base polytopes. In particular, there is the following standard result:

\begin{proposition}\label{prop: BP(M) is closed under faces}
    Every face of $\BP(M)$ is the base polytope of a direct sum of minors of $M$.
\end{proposition}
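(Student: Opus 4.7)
The plan is to identify faces of $\BP(M)$ with the faces maximized by linear functionals and then use the greedy algorithm for matroids to describe these maximizers combinatorially. Fix a face $F$ of $\BP(M)$; by the correspondence between faces and cones of the normal fan, there is a vector $w \in \RR^n$ so that $F = \BP(M)_w$ is the set of points in $\BP(M)$ maximizing $\langle w, - \rangle$. Let the distinct values of the entries of $w$ be $a_1 > a_2 > \cdots > a_k$ with level sets $S_i = \{j \in [n] \mid w_j = a_i\}$, so that $([n], (S_1, S_2, \ldots, S_k))$ is an ordered set partition. Set $T_i = S_1 \cup \cdots \cup S_i$ with $T_0 = \emptyset$.

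The key step is to describe the bases of $M$ that maximize $\langle w, e_B \rangle$. By a standard argument using the greedy algorithm (or the matroid exchange axiom), a basis $B \in \mathcal{B}(M)$ maximizes $\langle w, e_B \rangle$ if and only if for every $i$, the intersection $B \cap T_i$ has maximal possible size among intersections of bases of $M$ with $T_i$, i.e.\ $|B \cap T_i| = \rk_M(T_i)$. Equivalently, writing $B_i = B \cap S_i$, this is the condition that $B_i$ is a basis of the minor $N_i := (M|T_i)/T_{i-1}$ on ground set $S_i$, for every $i$.

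Given this characterization, I will identify $F$ with the base polytope of the direct sum $N_1 \oplus N_2 \oplus \cdots \oplus N_k$ on ground set $[n] = S_1 \sqcup \cdots \sqcup S_k$. Indeed, every basis of this direct sum is a disjoint union $B_1 \sqcup \cdots \sqcup B_k$ with $B_i \in \mathcal{B}(N_i)$, which by the criterion above is exactly a $w$-maximizing basis of $M$; taking convex hulls gives $F = \BP(N_1 \oplus \cdots \oplus N_k)$. Since each $N_i$ is a minor of $M$, this expresses $F$ as the base polytope of a direct sum of minors, as desired.

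The main technical point to verify carefully is the greedy characterization of $w$-maximal bases, in particular that $|B \cap T_i| = \rk_M(T_i)$ for all $i$ follows from $w$-maximality (this uses the exchange axiom applied on each level $S_i$) and conversely that any such $B$ is actually a basis of $M$ and $w$-maximal. The rest is essentially bookkeeping with the ordered set partition coming from the level sets of $w$.
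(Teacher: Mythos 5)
Your proof is correct and gives the standard argument for this fact (the characterization of faces of matroid base polytopes going back to Gelfand--Goresky--MacPherson--Serganova); the paper cites the proposition as a "standard result" without providing a proof, so there is no competing argument to compare against. The greedy characterization you flag as the main thing to verify is indeed the crux: a basis $B$ is $w$-maximal if and only if $|B\cap T_i|=\rk_M(T_i)$ for all $i$, and once $B\cap T_{i-1}$ is known to be a basis of $M|T_{i-1}$, the set $B_i=(B\cap T_i)\setminus(B\cap T_{i-1})$ is automatically a basis of $(M|T_i)/T_{i-1}$, so the bookkeeping you describe closes the loop in both directions.
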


Our nef line bundles will be indexed by a special class of matroids that have been studied under the names of Schubert matroids, nested matroids \cite{Hampe2017}, shifted matroids \cite{Klivans2007}, and generalized Catalan matroids \cite{Ardila2003}. We will use the first name.

For any linear order $\leq_\ell$ on $[n]$, the \textbf{Gale order} on subsets of $[n]$ of cardinality $k$ is the order given by
 \[ S \leq_\ell T\]
for $S = \{i_1 \leq_\ell \cdots \leq_\ell i_k\}$ and $T = \{j_1 \leq_\ell \cdots \leq_\ell j_k\}$ if and only if $i_a \leq_\ell j_a$ for all $1 \leq a \leq k$.
\begin{definition}
    Let $w \in S_n$ be a permutation, $k = 0,1,\ldots,n$, and $\leq_{w}$ be the Gale order on subsets of $[n]$ of size $k$ induced by the linear order corresponding to $w$. Let $S \subseteq [n]$ with $|S| = k$. The \textbf{Schubert matroid} $\Omega_{S}^{w}$ is the matroid with bases
     \[\mathcal{B}(\Omega_{S}^{w}) = \left \{T \in \binom{[n]}{k} \; \big\lvert \; S \leq_w T \right \}. \]
\end{definition}

\begin{proposition}\label{prop: Schubert matroids are closed under operations}
    The class of Schubert matroids is closed under restrictions, contractions, direct sums, truncations, isomorphisms, and duality.
\end{proposition}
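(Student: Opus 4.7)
The plan is to verify each of the six closure properties by writing down, for each operation, an explicit pair $(S', w')$ so that the resulting matroid equals $\Omega_{S'}^{w'}$. Since a Schubert matroid is defined by the Gale inequalities $s_i \leq_w b_i$, once the correct $(S', w')$ is guessed, the verification is a direct check of those inequalities.

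Isomorphism is immediate: a bijection $\sigma$ of $[n]$ sends $\Omega_S^w$ to $\Omega_{\sigma(S)}^{\sigma w}$ by relabeling. For direct sums, given $\Omega_{S_1}^{w_1}$ on $[n_1]$ and $\Omega_{S_2}^{w_2}$ on $[n_2]$, I would take the linear order on $[n_1 + n_2]$ that places the $w_1$-ordered copy of $[n_1]$ entirely below the $w_2$-shifted copy of $[n_2]$, and the subset $S_1 \sqcup (S_2 + n_1)$; the Gale inequalities then split into two independent systems, so the bases are exactly disjoint unions of bases. For duality, the key identities are $A \leq_w^{\text{Gale}} B \iff B^c \leq_w^{\text{Gale}} A^c$ (for equal-cardinality subsets) and $A \leq_w^{\text{Gale}} B \iff B \leq_{w^{\text{rev}}}^{\text{Gale}} A$, which together give $(\Omega_S^w)^* = \Omega_{[n]\setminus S}^{w^{\text{rev}}}$.

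For truncation, writing $S = \{s_1 <_w \cdots <_w s_k\}$, the claim is $T(\Omega_S^w) = \Omega_{S \setminus \{s_k\}}^w$. One direction is immediate: removing any element from a basis $B = \{b_1 <_w \cdots <_w b_k\}$ yields a $(k{-}1)$-subset whose sorted elements still dominate $s_1, \ldots, s_{k-1}$ pointwise. For the converse, given $T = \{t_1 <_w \cdots <_w t_{k-1}\}$ satisfying $s_i \leq_w t_i$ for $i < k$, one extends by adjoining the $\leq_w$-largest element $x$ of $[n] \setminus T$; a short case analysis on the insertion position of $x$ confirms that the resulting $k$-set dominates $S$ in Gale order, using that $x$ dominates $s_k$ because it is the largest available element.

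For restriction and contraction, by duality it suffices to treat restriction, and by induction to treat a single-element deletion $\Omega_S^w \setminus \{i\}$; I would verify this is a Schubert matroid on $[n] \setminus \{i\}$ with the restricted linear order. If $i$ is a loop then the bases are unchanged. Otherwise, one locates the largest $p$ with $s_p \leq_w i$ and shows the deletion is defined by the modified subset obtained from $S$ by removing or shifting the $s_j$ for $j \geq p$ according to how many bases through $i$ are killed; the verification is again a direct Gale inequality check. The main obstacle is the bookkeeping in this deletion step, since removing a single element of the ground set can cascade through the Gale inequalities; however, in each case the correct defining subset can be read off from the positions of $i$ relative to $S$, and the argument reduces to a routine combinatorial identity on Gale-dominant subsets.
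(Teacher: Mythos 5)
The paper offers no argument for this proposition (it is quoted as a standard fact), so your write-up has to stand on its own. Most of it does: isomorphism is indeed immediate, your duality formula $(\Omega_S^w)^* = \Omega_{[n]\setminus S}^{w^{\mathrm{rev}}}$ follows from the two Gale-order identities you cite, and your truncation argument is correct — the greedy extension by the $\leq_w$-largest element $x \notin T$ works because every element of the ground set above $x$ already lies in $T$, which forces the top entries of the sorted union to be the top elements of $[n]$ and hence to dominate the corresponding $s_j$. The deletion/contraction step is the vaguest part, but the underlying fact is true and can be finished cleanly using the description of independent sets: $I=\{i_1 <_w \cdots <_w i_m\}$ is independent in $\Omega_S^w$ if and only if $i_a \geq_w s_a$ for all $a$, from which one reads off the defining pair for $\Omega_S^w$ restricted to $[n]\setminus\{i\}$ by shifting the violated constraints upward.

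The genuine gap is the direct-sum step, and it cannot be repaired, because the claim is false in that generality. Your construction does not work: the Gale inequalities do not split into two independent systems, since elements of the upper block can witness constraints coming from the lower block. Concretely, take $\Omega_{S_1}^{w_1} = U_{1,2}$ on $\{1,2\}$ and $\Omega_{S_2}^{w_2} = U_{1,2}$ on $\{3,4\}$; your recipe gives $\Omega_{\{1,3\}}$ on $[4]$ with the natural order, whose bases are $\{1,3\},\{1,4\},\{2,3\},\{2,4\},\{3,4\}$ — the extra basis $\{3,4\}$ satisfies $1\leq 3$, $3 \leq 4$ but is not a transversal of the two blocks. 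Worse, $U_{1,2}\oplus U_{1,2}$ is not a Schubert matroid for any choice of $(S,w)$: its base set has $4$ elements, while the bases of a Schubert matroid form a principal filter in a Gale order, and principal filters in $\binom{[4]}{2}$ have sizes $1,2,3,3,5,6$. (Equivalently, its cyclic flats $\{1,2\}$ and $\{3,4\}$ are incomparable, whereas Schubert, i.e.\ nested, matroids have a chain of cyclic flats.) What is true — and what the paper actually uses, e.g.\ in the proofs of Theorem \ref{thm: Schuberts are Exceptional} and Proposition \ref{prop: Independence Schuberts are Exceptional} — is closure under direct sum with a rank-zero matroid or a free matroid, i.e.\ under adding loops and coloops (place the loops at the bottom of the new order and the coloops at the top); your block construction proves exactly this restricted statement, but not closure under arbitrary direct sums.
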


Let $\Sch_n$ denote the set of Schubert matroids on ground set $[n]$ and $\overline{\Sch}_n$ denote the set of loopless Schubert matroids on ground set $[n]$. The \textbf{standard Schubert matroids} are the Schubert matroids $\Omega_{S}^{w}$ with $w = e$. In some sources, the name Schubert matroid refers exclusively to standard Schubert matroids.

\begin{remark}
    These polytopes are moment map images of Schubert cells in the Grassmannians, see \cite{Gelfand1987}. The $w$ comes from the choice of base flag one uses for the Schubert decomposition and $S$ comes from the choice of cell.
\end{remark}

\subsection{Base Polytopes of Schubert Matroids are Exceptional}\label{Sec: Schubert matroids are Exceptional}

In this section, we show that the set of base polytopes of Schubert matroids on $[n]$ is strongly exceptional. There are many ways to prove this result. We choose a proof that highlights the connection with the theory of optimization of submodular functions for potential future connections.

We will construct a sequence of deformation retracts associated to generalized permutahedra that are inspired by the simplex algorithm and the greedy algorithm. For a generalized permutahedron $P$ and any $(i,j) \in \binom{[n]}{2}$, the \textbf{exchange capacity} $c(x,i,j)$ for $x \in P$ is defined as
 \[c(x,i,j) = \max\{\alpha \in \RR^n \; \lvert \; \alpha \geq 0 \; \text{ and } x + \alpha(e_i - e_j) \in P \}.\]

We have the following lemma from the study of flow algorithms for polymatroids:

\begin{lemma}\cite{Fujishige2013}\label{lem: independence of exchange capacity}
Let $P$ be a generalized permutahedron. Let $a_1,a_2,b_1,b_2 \in [n]$ such that
\[ c(x,a_2,b_2) \not = 0 \quad \quad \text{and} \quad \quad c(x,a_1,b_2) = 0.\]
Let $y = x + \alpha (e_{a_2} - e_{b_2})$ for any $0 \leq \alpha \leq c(x,a_2,b_2)$. Then, $c(x,a_1,b_1) = 0$ if and only if $c(y,a_1,b_1) = 0$.
\end{lemma}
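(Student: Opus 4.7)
The plan is to translate both the hypothesis and the conclusion into statements about tight sets of the submodular function defining $P$, and then exploit the distributive-lattice structure of tight sets that is automatic from submodularity.

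First, I will reframe exchange capacity in terms of tight sets. Writing $P = P_\mu$, the direction $e_a - e_b$ preserves the coordinate sum, so the only inequalities $\sum_{k \in A} z_k \leq \mu(A)$ that can tighten along this direction are those with $a \in A$ and $b \notin A$. Hence
\[ c(x, a, b) = \min \bigl\{ \mu(A) - \sum_{i \in A} x_i : a \in A,\ b \notin A \bigr\}, \]
and in particular $c(x, a, b) = 0$ if and only if some set $A$ with $a \in A$, $b \notin A$ is \emph{tight} at $x$, meaning $\sum_{i \in A} x_i = \mu(A)$. By submodularity, the collection of tight sets at any point is closed under union and intersection.

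Next, I will analyze how tight sets evolve along the segment from $x$ to $y$. The hypothesis $c(x, a_1, b_2) = 0$ furnishes a set $A^*$, tight at $x$, with $a_1 \in A^*$ and $b_2 \notin A^*$, and the hypothesis $c(x, a_2, b_2) > 0$ forbids any tight set at $x$ with $a_2 \in$ and $b_2 \notin$, forcing also $a_2 \notin A^*$. A direct case analysis of $\sum_{i \in A}\bigl(x + \alpha(e_{a_2} - e_{b_2})\bigr)_i$ shows that the tight sets at $y$ consist of all tight sets at $x$ whose indicator agrees on $\{a_2, b_2\}$, together with, only at the boundary $\alpha = c(x, a_2, b_2)$, new tight sets $B$ with $a_2 \in B$ and $b_2 \notin B$. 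Since $A^*$ avoids both $a_2$ and $b_2$, it is tight at both $x$ and $y$.

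Both implications then reduce to a single move: intersecting with $A^*$. For the forward direction, given $A$ tight at $x$ with $a_1 \in A$, $b_1 \notin A$ but not tight at $y$ (so $b_2 \in A$, $a_2 \notin A$), the intersection $A \cap A^*$ is tight at $x$, contains $a_1$, excludes $b_1$, and avoids $\{a_2, b_2\}$, hence is tight at $y$ as well. For the reverse, given $B$ tight at $y$ with $a_1 \in B$, $b_1 \notin B$ but not tight at $x$ (so $B$ must be one of the new boundary tight sets with $a_2 \in B$, $b_2 \notin B$), the intersection $A^* \cap B$ is tight at $y$ and avoids $\{a_2, b_2\}$ because $A^*$ does, so its coordinate sum is unchanged from $x$ to $y$, making it tight at $x$ as well; it contains $a_1$ and excludes $b_1$. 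The main subtlety is precisely the boundary case $\alpha = c(x, a_2, b_2)$ of the reverse direction, where genuinely new tight sets appear at $y$; the role of the auxiliary set $A^*$ is exactly to prune such sets back inside the tight lattice at $x$.
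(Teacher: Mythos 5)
Your proof is correct. The paper does not give its own argument for this lemma: it is cited from Fujishige, so there is no internal proof to compare against. Your argument is the standard one from the theory of submodular polyhedra: express the exchange capacity as a minimum of slacks $\mu(A) - \sum_{i \in A} x_i$ over sets $A$ with $a \in A$, $b \notin A$; observe that $c(x,a,b) = 0$ if and only if some such $A$ is tight; use the distributive-lattice structure of tight sets that follows from submodularity; and then push both directions through intersection with the auxiliary tight set $A^*$ furnished by $c(x,a_1,b_2)=0$, which necessarily avoids $\{a_2,b_2\}$ by the hypothesis $c(x,a_2,b_2)>0$ and hence remains tight at $y$. One small streamlining worth noting: your ``direct case analysis'' of how tight sets evolve from $x$ to $y$ is accurate but not actually needed for the argument to close; in both directions it suffices to observe that $A^*$ (and any tight set intersected with it) avoids $\{a_2,b_2\}$ and so has invariant coordinate sum along the segment. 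Likewise, you need not separate out the subcase where the given tight set already survives the move---the intersection with $A^*$ handles all cases uniformly.
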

 
Let $\leq_{lex}$ denote the lexographical order on $\binom{[n]}{2} = \{(i,j) \; \lvert\; 1 \leq i < j \leq n\}$ induced by the standard order on $[n]$.  Define for $q \in \binom{[n]}{2}$
 \[P_{q} = \{x \in P \; \lvert \; c(x,i,j) = 0 \text{ for all $(i,j) \leq_{lex} q$} \}. \]
We will use the convention that $P_\emptyset = P$ and that $\emptyset \leq q$ for all $q \in \binom{[n]}{2}$. We say that $q-1$ is the element covered by $q$ or $\emptyset$ if $q = (1,2)$.

Let $F_q: P_{q-1} \times [0,1] \to P_{q-1}$ for $q \in \binom{[n]}{2}$ be the homotopy
 \[F_q(x,t) = x + t c(x,i,j) (e_i - e_j). \]

\begin{proposition}\label{prop:deformation retract part of greedy algorithm}
    The map $F_q$ is well-defined and is a strong deformation retract from $P_{q-1}$ to $P_q$.
\end{proposition}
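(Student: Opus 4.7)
The plan is to verify three things: that the map $F_q$ is well-defined (its image lies in $P_{q-1}$); that it is continuous; and that it satisfies the axioms of a strong deformation retract onto $P_q$. Continuity reduces to continuity of $x \mapsto c(x,i,j)$, which follows by writing the exchange capacity as a minimum of finitely many affine functions of $x$, namely $c(x,i,j) = \min\{\mu(A) - \sum_{k \in A} x_k : A \subsetneq [n],\ i \in A,\ j \notin A\}$. The retract identities $F_q(x,0) = x$ and $F_q(y,t) = y$ for $y \in P_q$ (using that $c(y,i,j) = 0$ when $y \in P_q$) are immediate from the defining formula, and $F_q(x,1) \in P_q$ follows from the maximality in the definition of $c(x,i,j)$ together with well-definedness.

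The essential content is well-definedness. Given $x \in P_{q-1}$ and $t \in [0,1]$, write $q = (i,j)$, set $\beta = t\,c(x,i,j)$, and let $y = x + \beta(e_i - e_j)$. Then $y \in P$ since $0 \leq \beta \leq c(x,i,j)$. It remains to show $c(y,i',j') = 0$ for every $(i',j') \leq_{lex} q-1$. Splitting on whether $i' < i$ or $i' = i$ (the latter forcing $j' < j$) gives two cases.

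In the first case, $(i',j) <_{lex} (i,j)$ because $i' < i$, so $c(x,i',j) = 0$ since $x \in P_{q-1}$. Lemma \ref{lem: independence of exchange capacity} applied with $(a_2,b_2) = (i,j)$ and $(a_1,b_1) = (i',j')$ then yields $c(y,i',j') = 0$. The second case is the main obstacle, since that lemma does not apply directly: it would require $c(x,i,j) = 0$, which is the very direction of motion. I plan to argue via a direct tight-facet argument. Because $c(x,i,j') = 0$, there exists $A \subsetneq [n]$ with $i \in A$, $j' \notin A$, and $\sum_{k \in A} x_k = \mu(A)$. Then $\sum_{k \in A} y_k = \mu(A) + \beta(\mathbf{1}[i \in A] - \mathbf{1}[j \in A])$, and since $y \in P$ we must have $j \in A$, for otherwise the $A$-inequality would be strictly violated. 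Consequently $\sum_{k \in A} y_k = \mu(A)$ remains tight at $y$, and because $i \in A$ and $j' \notin A$, this tight inequality blocks the $(i,j')$-exchange at $y$, giving $c(y,i,j') = 0$.

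The principal difficulty I anticipate is recognizing that the supplied exchange-capacity independence lemma does not cover the $i' = i$ sub-case, and supplying the submodular/tight-constraint argument above; everything else reduces to standard facts about linear programming over polyhedra.
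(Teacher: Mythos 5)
Your proposal is correct, and it is in fact \emph{more complete} than the paper's own argument. The paper's proof of well-definedness invokes Lemma \ref{lem: independence of exchange capacity} after observing that $c(x,a_2,b_2)\neq 0$ and $c(x,a_1,b_1)=0$, but it does not verify the lemma's other hypothesis $c(x,a_1,b_2)=0$. When $a_1<a_2$ (your first sub-case) this hypothesis holds automatically because $(a_1,b_2)<_{lex}(a_2,b_2)=q$ and $x\in P_{q-1}$, so the lemma applies as the paper intends. But when $a_1=a_2=i$ (your second sub-case), the required hypothesis reads $c(x,i,j)=0$, which flatly contradicts the standing assumption $c(x,i,j)\neq 0$ in this branch, so the lemma simply cannot be invoked --- you have identified a genuine gap in the paper's proof, not merely a stylistic difference.

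Your replacement argument for that sub-case is correct: since $c(x,i,j')=0$ there is a tight set $A$ with $i\in A$, $j'\notin A$, $\sum_{k\in A}x_k=\mu(A)$; if $j\notin A$ the move in direction $e_i-e_j$ would strictly violate the $A$-inequality, contradicting $y\in P$ (for $t>0$; the $t=0$ case is trivial), hence $j\in A$; the $A$-inequality then remains tight at $y$, and because $i\in A$, $j'\notin A$, it continues to block the $(i,j')$-exchange, giving $c(y,i,j')=0$. The continuity observation (exchange capacity as a minimum of affine functionals over subsets $A$ with $i\in A$, $j\notin A$) and the three deformation-retract identities are standard and agree with the paper. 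Overall you and the paper take the same route for the main case, but you correctly recognized that the cited lemma does not cover the shared-first-coordinate case and supplied the tight-set argument that the paper omits.
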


\begin{proof}
    First to see that $F_q$ is well-defined we need to check that $F_q(x,t)$ is indeed in $P_{q-1}$ for any $x \in P_q$ and $t \in [0,1]$. Let $(a_1,b_1) <_{lex} q = (a_2,b_2)$. We need to show that $c(F(x,t),a_1,b_1)$ is $0$ for all such $a_1,b_1$. Note that if $c(x,a_2,b_2) = 0$, then $F(x,t) = x = F(x,0)$ and so this follows from the statement that $x \in P_{q-1}$. Otherwise, we have that $c(x,a_2,b_2)$ and $c(x,a_1,b_1) = 0$. Now it follows from Lemma \ref{lem: independence of exchange capacity} that $F_q(x,t) \in P_{q-1}$.

    We can see from definitions that $F(x,0) = x$ for $x \in P_{q-1}$, $F(x,1) \in P_{q}$, and $F(y,t) = y$ for all $y \in P_{q}$. Hence, the map is a strong deformation retract.
\end{proof}

The reason generalized permutahedra are useful in the theory optimization is that they admit nice greedy, local optimization algorithms. For instance, we have the following result:

\begin{lemma}\cite{Fujishige2013}
    Let $P$ be a generalized permutahedron and $w \in W$. Then, the face of $P$ minimized by $w$ is a vertex and it is the unique point in $P$ such that $c(x,i,j) = 0$ for all $i,j$ such that $w(i) < w(j)$.
\end{lemma}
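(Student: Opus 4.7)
The plan is to prove the lemma in three parts, corresponding to (a) the face minimized is a vertex, (b) that vertex satisfies the vanishing-exchange-capacity condition, and (c) it is the only such point in $P$.

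First I would show the face minimized by $w$ is a vertex. Since $w \in S_n$ has $n$ distinct values, viewed as a direction in $N_\RR$ it lies in the interior of some maximal cone of the braid fan $\Sigma_{A_{n-1}}$. Because $P$ is a generalized permutahedron, its normal fan coarsens the braid fan, so $w$ sits in the interior of a unique maximal cone of the normal fan of $P$. Hence the face $P_w$ minimized by $w$ is $0$-dimensional; call this vertex $v$. The necessity of the stated condition is then immediate: for any $(i,j)$ with $w(i) < w(j)$, the direction $e_i - e_j$ strictly decreases $\langle w, \cdot \rangle$, so $c(v,i,j) > 0$ would let us move to $v + \varepsilon(e_i - e_j) \in P$ with strictly smaller $w$-value, contradicting minimality. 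Thus $c(v,i,j) = 0$ for every such pair.

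For the uniqueness direction, let $x \in P$ satisfy $c(x,i,j) = 0$ whenever $w(i) < w(j)$; I want $x = v$. Assuming otherwise, set $S = \{k : v_k > x_k\}$ and $T = \{k : v_k < x_k\}$; since $\sum_k v_k = \sum_k x_k$ and $x \neq v$, both $S$ and $T$ are non-empty. The key input will be the classical exchange property for points of a generalized permutahedron (standard in polymatroid theory; cf.\ \cite{Fujishige2013}): for any two distinct $y, z \in P$ and any $k$ with $y_k > z_k$, there exists $\ell$ with $y_\ell < z_\ell$ such that $c(y,\ell,k) > 0$. Applying this first to $(x,v)$ shows that for every $k \in T$ there is $\ell \in S$ with $c(x,\ell,k) > 0$, and the hypothesis on $x$ then forces $w(\ell) > w(k)$. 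Applying it to $(v,x)$ shows that for every $k \in S$ there is $\ell \in T$ with $c(v,\ell,k) > 0$, and the necessity proved above (applied to $v$) forces $w(\ell) > w(k)$.

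The finish is a ping-pong argument: starting from any $k_1 \in S$, alternating applications of the two statements above produce an infinite sequence $k_1, \ell_1, k_2, \ell_2, \ldots$ with $k_m \in S$, $\ell_m \in T$, and $w(k_1) < w(\ell_1) < w(k_2) < \cdots$ strictly increasing forever, which is absurd since $w$ takes only $n$ values. Hence $S = T = \varnothing$ and $x = v$. The main obstacle I anticipate is marshalling the exchange property for generalized permutahedra in the precise form needed, which is classical but not quite spelled out in the excerpt; as a backup I could iterate the deformation retracts $F_q$ from Proposition \ref{prop:deformation retract part of greedy algorithm} along a total order on $\binom{[n]}{2}$ compatible with $w$, which would directly cut $P$ down to the set of points satisfying all the vanishing conditions, and then combine with Part (a) to conclude this set equals $\{v\}$.
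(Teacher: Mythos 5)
Your argument is correct. Note that the paper does not prove this lemma at all: it is quoted directly from Fujishige's book on submodular functions, so there is no in-paper proof to compare against. Your write-up is a legitimate self-contained derivation. Part (a) is the standard normal-fan argument (a permutation vector has distinct coordinates, hence lies in an open chamber of the braid fan, hence in the interior of a maximal cone of the coarser normal fan of $P$, so the optimizing face is a vertex), and necessity of the vanishing condition at that vertex is immediate as you say. The uniqueness step is where the real content lies, and your key input --- for $y,z \in P$ and $y_k > z_k$ there is $\ell$ with $y_\ell < z_\ell$ and $c(y,\ell,k) > 0$ --- is exactly the classical exchangeability theorem for base polyhedra of submodular functions (this is the content of the Fujishige citation), and generalized permutahedra are precisely translates of such base polytopes, so invoking it is legitimate. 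Your ping-pong then works: alternating the two exchange statements produces indices with strictly increasing $w$-values, impossible for a finite ground set, so $S = T = \varnothing$. One caveat: the ``backup'' you sketch via the retracts $F_q$ would not by itself close the argument --- it shows the locus $\{x : c(x,i,j)=0 \text{ for } w(i)<w(j)\}$ is a nonempty deformation retract of $P$ (hence contractible), but contractibility does not force it to be a single point; indeed the paper's own Lemma on $P_{(n-1,n)}$ being a vertex is deduced \emph{from} the present lemma, so that route would be circular. Your primary argument, however, stands on its own.
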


\begin{lemma}
Let $w = (1,2,\ldots,n)$. Then, $P_{(n-1,n)}$ is the maximal vertex in direction $w$ of $P$.
\end{lemma}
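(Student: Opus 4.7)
The claim is a direct consequence of the preceding Fujishige-style lemma, and the plan is essentially to unwind definitions.

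First I would observe that $(n-1,n)$ is the maximum element of $\binom{[n]}{2}$ under the lexicographic order $\leq_{lex}$. Hence the condition ``$(i,j) \leq_{lex} (n-1,n)$'' is automatic for every pair $1 \leq i < j \leq n$, and unwinding the definition of $P_{(n-1,n)}$ gives
\[
    P_{(n-1,n)} \;=\; \{x \in P : c(x,i,j) = 0 \text{ for all } 1 \leq i < j \leq n\}.
\]

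Next I would identify $w = (1,2,\ldots,n)$ with the identity permutation on $[n]$, so that the inequality ``$w(i) < w(j)$'' is equivalent to ``$i < j$''. The preceding lemma then asserts two things simultaneously for this $w$: that the exchange-capacity conditions $c(x,i,j)=0$ for all $w(i)<w(j)$ cut out a single point of $P$, and that this point is a vertex of $P$ extremal in direction $w$. Combined with the first paragraph, this identifies $P_{(n-1,n)}$ as precisely that vertex.

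I do not expect any real obstacle here: the whole argument is a two-line invocation of the preceding lemma once $\leq_{lex}$ is unpacked. The only bookkeeping step is to confirm that the extremal vertex produced by the lemma agrees with the one the statement calls the ``maximal vertex in direction $w$''. Since each homotopy $F_q(x,t)=x+tc(x,i,j)(e_i - e_j)$ for $q=(i,j)$ with $i<j$ moves mass only from higher to lower indices, iterating these homotopies flows any $x\in P$ to a unique endpoint at which all the relevant exchange capacities vanish. That endpoint is both the vertex singled out by the preceding lemma and, by construction, the distinguished ``maximal in direction $w$'' vertex meant by the paper's convention. With that identification in place, the statement follows at once.
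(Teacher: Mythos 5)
Your proposal is correct and takes essentially the same route as the paper: the paper's proof likewise just notes that for $w$ the identity the condition $w(i) < w(j)$ is exactly $i < j$, so $P_{(n-1,n)}$ is the locus where all exchange capacities vanish, which the preceding Fujishige-style lemma identifies as the unique extremal vertex in direction $w$. Your closing paragraph about iterating the homotopies $F_q$ is superfluous (the identification already follows from the uniqueness statement in that lemma), but it introduces no error.
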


\begin{proof}
This follows immediately from the second part of the previous lemma by noting that $c(x,i,j) = 0$ whenever $i < j$.
\end{proof}

The \textbf{greedy deformation retract} $G_P: P \times [0,1] \to P$ is the composition of deformation retracts
 \[ P \xrightarrow[]{F_{(1,2)}} P_{(1,2)} \xrightarrow[]{F_{(1,3)}} \cdots \xrightarrow[]{F_{(n-1,n)}} P_{(n-1,n)} = \text{pt}.\]

\begin{figure}[H]
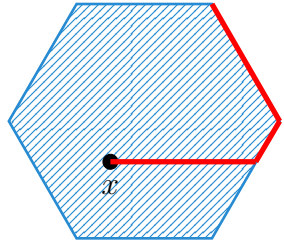

    \centering
    \begin{polyhedron}{}
        \polygon{points={(-3/2,-3/2*1.7320),(3/2,-3/2*1.7320),(3,0),(3/2,3/2*1.7320),(-3/2,3/2*1.7320),(-3,0)},status=open}
        \vertex{point ={-3/2*0.5,-3/2*0.6}, color=black,text=$x$,anchorb=below }
        \edge{points= {(-3/2*0.5,-3/2*0.6),(2.47,-3/2*0.6)},color = red,thickness = 2}
        \edge{points= {(2.47,-3/2*0.6),(3,0)},color = red,thickness = 2}
        \edge{points= {(3,0),(3/2,3/2*1.7320)},color = red,thickness = 2}
    \end{polyhedron}
    \caption{The colored line is the path the point $x$ follows through the greedy deformation retract.}
    \label{fig:greedy deformation retract}
\end{figure}

We will use the following result which states that it suffices to consider the inequalities in the hyperplane description of $\BP(\Omega_S)$ given by intervals $[\ell]$.

\begin{proposition}\label{prop: submodular function only needs intervals}
    Consider the standard Schubert matroid $\Omega_S = \Omega_S^e$ for $S \in \binom{[n]}{k}$ with submodular function $\mu$. Then,
     \[\BP(\Omega_S) = \{ x \in \RR^n \; \lvert \; \sum x_i = k, 0 \leq \sum_{i=1}^{\ell} x_i \leq \mu([\ell]) \text{ for all $\ell \in [n]$}\}.\]
\end{proposition}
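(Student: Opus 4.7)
The proof consists of two inclusions. The forward inclusion is immediate: since $\BP(\Omega_S)$ is the convex hull of the basis indicators $e_B$, and each such vector satisfies $0 \leq |B \cap [\ell]| \leq \max_{B' \in \mathcal{B}(\Omega_S)} |B' \cap [\ell]| = \mu([\ell])$ by definition of the rank function, the partial sums $\sum_{i=1}^{\ell} x_i$ of any point $x \in \BP(\Omega_S)$ satisfy the asserted two-sided bound; the sum equation $\sum x_i = k$ is part of the defining equations of the base polytope.

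For the reverse inclusion we must show that the rank inequalities $\sum_{i \in A} x_i \leq \mu(A)$ for all $A \subseteq [n]$ are consequences of the interval inequalities together with $\sum x_i = k$. The strategy is to exploit two features that are special to standard Schubert matroids: first, the explicit formula $\mu([\ell]) = |\{j : s_j \leq \ell\}|$, so that $\mu$ on initial intervals is a step function increasing by exactly one at each $s_j$; second, the ``greedy'' description of the rank function on a general set, $\mu(A) = \max\{j : \exists\, a_1 < \cdots < a_j \text{ in } A \text{ with } s_i \leq a_i\}$.

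The core step is an inductive reduction on the number of maximal runs in $A$. Writing $A = [a_1, b_1] \sqcup \cdots \sqcup [a_r, b_r]$, one has the telescoping identity
\[
\sum_{i \in A} x_i \;=\; \sum_{s=1}^{r} \left( \sum_{i=1}^{b_s} x_i - \sum_{i=1}^{a_s - 1} x_i \right),
\]
and a suitable positive combination of interval inequalities (with coefficients determined by where the $s_j$ fall relative to the runs $[a_s,b_s]$) gives an upper bound of the form $\sum_s \bigl(\mu([b_s]) - \mu([a_s - 1])\bigr)$ refined by the greedy formula. Submodularity of $\mu$ then allows one to contract this expression down to $\mu(A)$, using the characterization of Schubert matroids as those matroids for which the rank function is determined, via submodularity, by its values on initial intervals $[\ell]$.

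The main obstacle will be carrying out this last combinatorial identification cleanly: producing the explicit positive combination of interval inequalities (together with $\sum x_i = k$) whose right-hand side equals exactly $\mu(A)$ for an arbitrary $A$. This is where the specific greedy structure of $\Omega_S^e$, as opposed to a general submodular function, is essential, since for general polymatroids the rank function is not recovered from its values on initial intervals.
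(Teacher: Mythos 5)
Your reverse inclusion cannot be completed, because the implication you set out to prove is false: the interval inequalities together with $\sum_i x_i = k$ do \emph{not} imply the remaining rank inequalities. Concretely, take $n=3$ and $S=\{1,2\}$, so that $\Omega_S$ is the uniform matroid $U_{2,3}$ and $\mu([1])=1$, $\mu([2])=\mu([3])=2$. The point $x=(0,2,0)$ satisfies $\sum_i x_i=2$, $0\le x_1\le 1$, $0\le x_1+x_2\le 2$, and $0\le x_1+x_2+x_3\le 2$, yet $x\notin \BP(\Omega_S)$ because $x_2=2>1=\mu(\{2\})$. Hence no positive combination of prefix inequalities and the sum equation can yield the singleton inequality $x_2\le \mu(\{2\})$, and your telescoping/greedy reduction must break down. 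One can also see a concrete failing step: your bound of a run $\sum_{i=a_s}^{b_s}x_i$ by $\mu([b_s])-\mu([a_s-1])$ would require the lower bound $\sum_{i\le a_s-1}x_i\ge \mu([a_s-1])$, whereas the hypotheses only give $\sum_{i\le a_s-1}x_i\ge 0$. So the ``main obstacle'' you flag at the end is not bookkeeping; it is an impossibility for the statement in this form.

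What is true (and is all that is needed where the proposition is invoked, since there $x$ lies in another matroid base polytope and hence in $[0,1]^n$) is the version with coordinate bounds: $\BP(\Omega_S)=\{x\in[0,1]^n \;\lvert\; \sum_i x_i=k,\ \sum_{i=1}^{\ell}x_i\le\mu([\ell])\text{ for all }\ell\}$. With $0\le x_i\le 1$ available, your strategy works and in fact simplifies: writing $x(A)=\sum_{i\in A}x_i$, for every $A$ and every $\ell$ one has $x(A)\le x([\ell])+|A\setminus[\ell]|\le \mu([\ell])+|A\setminus[\ell]|$, and for the standard Schubert matroid the greedy description you quote gives exactly $\mu(A)=\min_{0\le \ell\le n}\bigl(\mu([\ell])+|A\setminus[\ell]|\bigr)$; together with $x\ge 0$ and $x([n])=k$ this places $x$ in the standard inequality description of $\BP(\Omega_S)$, with no need for the run decomposition or an explicit certificate combination. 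Note finally that the paper does not argue this directly at all: it deduces the proposition from the facet description of Schubert (nested) matroid polytopes in Bidkhori's work, so a self-contained argument along your lines is a genuinely different route, but it is only salvageable once the coordinate upper bounds are added to the right-hand side.
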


\begin{proof}
    This is a consequence of the stronger statement given in \cite{Bidkhori2012} which describes exactly which intervals one needs to consider.
\end{proof}

\begin{theorem}\label{thm: Schuberts are Exceptional}
    The set $\{\BP(\Omega)\}$ for $\Omega \in \overline{\Sch}_n$ is a strongly exceptional collection for $\Poly(\Sigma_{A_{n-1}})$.
\end{theorem}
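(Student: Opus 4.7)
The plan is to prove strong exceptionality by establishing, for every pair $\Omega_1, \Omega_2 \in \overline{\Sch}_n$ and every $m \in M$, that $\BP(\Omega_1) \setminus (\BP(\Omega_2) + m)$ is either empty or contractible; contractibility forces the reduced singular cohomology to vanish in every degree, which is exactly the strong exceptionality condition for polytopes.

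My approach is to show that a suitable greedy deformation retract restricts to the set difference. Using the $S_n$-action permuting coordinates on $\RR^n$ (which acts on the braid fan and preserves $\overline{\Sch}_n$), I would first normalize the pair so that $\Omega_1 = \Omega_S$ is a standard Schubert matroid. The greedy flow $G_{\BP(\Omega_S)}$ built in Proposition \ref{prop:deformation retract part of greedy algorithm} then contracts $\BP(\Omega_S)$ to the distinguished vertex $e_S$. Letting $P = \BP(\Omega_S)$ and $Q = \BP(\Omega_2) + m$, the central claim is that $G_P$ restricts to a deformation retract of $P \setminus Q$. By convexity of $Q$, this reduces to showing that for each primitive step $F_{(i,j)}$ with $i < j$ and each $x \in P \setminus Q$, at least one defining inequality of $Q$ that is violated at $x$ remains violated along the entire segment from $x$ to $x + c(x, i, j)(e_i - e_j)$.

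The key structural input is that each primitive greedy move monotonically non-decreases every initial partial sum $\sum_{k=1}^{\ell} x_k$. Consequently, any violation of the form $\sum_{k=1}^{\ell} x_k > \mu_2([\ell]) + \sum_{k=1}^{\ell} m_k$, where $\mu_2$ denotes the rank function of $\Omega_2$, is automatically preserved by the flow, as is every lower-bound violation on a terminal interval. Combined with Proposition \ref{prop: submodular function only needs intervals}, which describes $\BP(\Omega_2)$ (after suitably ordering the ground set) in terms of initial-interval partial-sum inequalities together with box constraints, this handles the bulk of the cases.

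The hard part will come in two directions. First, when $e_S$ already lies in $Q$ the identity-direction flow cannot restrict to $P \setminus Q$, as small uniform-matroid examples already show, so in that situation I would switch to a greedy flow in a different direction $w$ chosen both to terminate at a vertex outside $Q$ and to preserve some $Q$-violating inequality along the way; establishing that such a $w$ always exists requires exploiting the explicit Schubert structure and Gale-order combinatorics of both $\Omega_1$ and $\Omega_2$. Second, when the only violated $Q$-inequality at $x$ is a box constraint or an inequality along a non-initial interval forced by $\Omega_2$'s own Gale order, I plan to pair $x$ with a violation of an initial-interval inequality by using the looplessness of $\Omega_2$ to control its facet structure. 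The delicate case analysis needed to carry these pairings out uniformly over all $(\Omega_1, \Omega_2, m)$ is the technical heart of the proof.
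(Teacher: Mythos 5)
You have the right tool --- the greedy deformation retract and the monotonicity of initial partial sums --- but you normalize the wrong matroid, and this is what creates the ``hard parts'' you then leave unresolved. The paper applies the $S_n$ action to put the \emph{subtracted} matroid into standard form, not the one being retracted. Since Proposition \ref{prop:deformation retract part of greedy algorithm} works for any generalized permutahedron, there is nothing to gain by normalizing $P$; the payoff comes from normalizing $Q$'s matroid, because then Proposition \ref{prop: submodular function only needs intervals} says its only non-redundant inequalities (beyond $\sum x_i = k$ and $x_i \geq 0$) are initial-interval partial-sum upper bounds $\sum_{i\leq\ell} x_i \leq \mu([\ell])$, which the greedy flow monotonically increases. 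Once $P \in \BP(\Omega_2)$ is known to automatically satisfy the equality and non-negativity constraints of the standard $\BP(\Omega_1)$, every point of the set difference violates an initial-interval upper bound, and the flow preserves that violation. No case analysis, no alternative directions $w$, no box or non-initial-interval cases arise at all.

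You also omit the reduction to $m = 0$. The paper first shows that $(\BP(\Omega_1) + m) \cap [0,1]^n$ is a translate of the face of $\BP(\Omega_1)$ minimized by $m$, and that this translate is again the base polytope of a (loopless) Schubert matroid. This collapses all translations $m = e_S - e_T$ into the case $m = 0$ with a possibly different $\Omega_1$, and makes the ``both polytopes lie in $[0,1]^n$ with the same coordinate sum'' hypothesis available. Your outline tries to carry the $\sum_{k\leq\ell} m_k$ corrections and box constraints through the flow directly, which is exactly what forces you into the uncontrolled cases. In short: the approach is close, but as written there is a genuine gap --- the unresolved ``delicate case analysis'' is not a technicality but the missing content of the proof, and it vanishes entirely with the paper's normalization choice and the $m = 0$ reduction.
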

\begin{proof}
Let $\Omega_1$ and $\Omega_2$ be any matroids in $\overline{\Sch}_n$. We will show that $\BP(\Omega_2) \backslash \BP(\Omega_1) + m$ is contractible or empty for all $m \in M$.

First, if $m \not \in [-1,1]^n \cap \ZZ^n$ then the intersection of $\BP(\Omega_2)$ and $\BP(\Omega_1) + m$ is empty since every coordinate in $\BP(\Omega_2)$ and $\BP(\Omega_1)$ satisfies $0 \leq x_i \leq 1$. Then, the set difference will be homeomorphic to a ball and so contractible. Supposing otherwise, say that $m = e_S - e_T$ for two disjoint subsets $S, T \subseteq [n]$, where $e_A = \sum_{i \in A} e_i$. Then again by the conditions $0 \leq x_i \leq 1$, we have that $\BP(\Omega_2) \cap (\BP(\Omega_1)+m)$ is contained in $[0,1]^n \cap (\BP(\Omega_1)+m)$ which is
 \[\left \{x \in \BP(\Omega_1)\; \lvert \; x_i = 0 \text{ for $i \in S$ and $x_i = 1$ for $i \in T$}\right \} + m\]
If this set is non-empty, then we have by the greedy algorithm that
 \[\left \{x \in \BP(\Omega_1)\; \lvert \; x_i = 0 \text{ for $i \in S$ and $x_i = 1$ for $i \in T$}\right \} = \BP(\Omega_1)_m,\]
where $\BP(\Omega_1)_m$ is the face minimized by $m$. In this face, we have that every $i \in S$ is a loop and every $i \in T$ is a coloop. Therefore, the translation $\BP(\Omega_1)_m + m$ is the matroid polytope for the matroid $M|S$ where the elements in $S$ are changed to coloops and the elements in $T$ are changed to loops. Since faces and these translations of Schubert matroids are again Schubert matroids, we have
 \[\BP(\Omega_2) \backslash \left ( \BP(\Omega_1) + m \right) = \BP(\Omega_2) \backslash \BP(\Omega'), \]
 where $\Omega'$ is another Schubert matroid. Hence, it suffices to study the case where $m = 0$.
 
 If $\Omega_1$ and $\Omega_2$ have different ranks then $\BP(\Omega_2) \backslash \BP(\Omega_1) = \BP(\Omega_2)$ since they will be contained in different hyperplanes $\{x \in \RR^n \;\lvert \; \sum x_i = k\}$. Therefore, we can restrict to the case where $\Omega_1$ and $\Omega_2$ have the same rank $k$.

Consider the action of $S_n$ on $\RR^n$ given by permuting the coordinates. This action sends $\BP(M)$ to the base polytope of a matroid isomorphic to $M$ and every isomorphism of matroids is realized in this way. It is clear that this action does not change the homotopy type of a space. Since each Schubert matroid is isomorphic to a standard Schubert matroid, we can assume without loss of generality that $\Omega_1$ is a standard Schubert matroid $\Omega_S$ for some $S \in \binom{[n]}{k}$. By Proposition \ref{prop: submodular function only needs intervals},
    \[\BP(\Omega_1)  = \{ x \in \RR^n \; \lvert \; \sum x_i = k, 0 \leq \sum_{i=1}^{\ell} x_i \leq \mu([\ell]) \text{ for all $\ell \in [n]$}\}.\]
If $x \in \BP(\Omega_2) \backslash \BP(\Omega_1)$, then one of these inequalities must not hold for $x$. Since $x \in \BP(\Omega_2)$ satisfies both $\sum x_i = k$ and $x_i \geq 0$ for all $i \in [n]$, there must be some $\ell$ so that $\sum_{i=1}^{\ell} x_i > \mu([\ell])$. Now consider the greedy deformation retract $G(x,t)$ on $\BP(\Omega_2)$. By construction, $G(x,t)$ is obtained from $x$ by adding positive roots, i.e. vectors of the form $e_i - e_j$ for $i < j$. Therefore, it can only increase all the sums $\sum_{i=1}^{\ell} x_i$. So, if $x \in \BP(\Omega_2) \backslash \BP(\Omega_1)$, then $G(x,t) \in \BP(\Omega_2) \backslash \BP(\Omega_1)$ for all $t \in [0,1]$. This shows that that we can restrict the deformation retract to $\BP(\Omega_2) \backslash \BP(\Omega_1)$ which implies that this space is contractible when it is not empty.
\end{proof}

This theorem combined with Proposition \ref{prop: exceptionality gives basis} gives a new\footnote{The original proof studies subdivisions of matroids (which we will also do in the next section to prove fullness). However, if we only care about $\McM(\Sigma)$ then it suffices to study exceptionality.} proof of a result that is essentially due to Derksen and Fink.

\begin{corollary}\label{cor: theorem of Derksen and Fink Schuberts}
    The classes of $[\BP(\Omega)]$ for $\Omega \in \overline{\Sch}_n$ form a basis for $\McM(\Sigma_{A_{n-1}})$.
\end{corollary}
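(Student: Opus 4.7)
The plan is to invoke Proposition \ref{prop: exceptionality gives basis}, which asserts that an exceptional collection of polytopes in $\Poly(\Sigma)$ of size equal to the number of maximal cones of $\Sigma$ yields a basis of $K_0(X_\Sigma) \cong \McM(\Sigma)$; since Morelli's isomorphism sends $[\mathcal{L}_P]$ to $[\one_P]$, this basis descends to the polytopal basis $\{[\BP(\Omega)]\}$ of $\McM(\Sigma_{A_{n-1}})$ claimed in the corollary.

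First, Theorem \ref{thm: Schuberts are Exceptional} provides that $\{\BP(\Omega)\}_{\Omega \in \overline{\Sch}_n}$ is a strongly exceptional collection in $\Poly(\Sigma_{A_{n-1}})$. Ordering the matroids by non-decreasing $|\BP(\Omega) \cap \ZZ^n|$ produces an exceptional ordering, as this is a linear extension of $\leq_M$: a lattice translation that places one base polytope inside another can only inject bases into bases, so strict containment up to translation forces non-strict inequality of lattice-point counts. By Proposition \ref{prop: cone descrpition of permutahedral fan}, the number of maximal cones of $\Sigma_{A_{n-1}}$ equals the number of ordered set partitions of $[n]$ with $n$ singleton blocks, namely $n!$.

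The step I expect to be the main obstacle is the combinatorial identity $|\overline{\Sch}_n| = n!$. A direct approach is to construct an explicit bijection: every loopless Schubert matroid on $[n]$ has the form $w(\Omega_{w^{-1}(S)})$ with $\Omega_{w^{-1}(S)}$ a loopless standard Schubert matroid, and selecting canonical data (for example, the minimal-length $w$ representing each equivalence class of pairs $(w, S)$) identifies $\overline{\Sch}_n$ with $S_n$. Alternatively, strong exceptionality already forces $|\overline{\Sch}_n| \leq \operatorname{rk} K_0(\Perm_n) = n!$ for free, leaving only a matching lower bound $|\overline{\Sch}_n| \geq n!$ to be established by exhibiting $n!$ pairwise distinct loopless Schubert matroids. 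Once the count is in hand, Proposition \ref{prop: exceptionality gives basis} applies, and Morelli's isomorphism transports the resulting $K$-theoretic basis $\{[\mathcal{L}_{\BP(\Omega)}]\}$ to the polytopal basis $\{[\BP(\Omega)]\}$ of $\McM(\Sigma_{A_{n-1}})$.
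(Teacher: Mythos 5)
Your argument is essentially the paper's own: Theorem \ref{thm: Schuberts are Exceptional} supplies strong exceptionality, and Proposition \ref{prop: exceptionality gives basis} converts an exceptional collection of the right size into a basis of $K_0 \cong \McM$. You are right that the ordering by number of lattice points is a linear extension of $\leq_M$ (a base polytope of a matroid has exactly its basis indicator vectors as lattice points, so translating one base polytope into another injects bases into bases, and a tie in count forces equality of the polytopes as line bundles).

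The one place you are actually more careful than the paper is the cardinality check $|\overline{\Sch}_n| = n!$. The paper simply states the corollary as a consequence of Theorem \ref{thm: Schuberts are Exceptional} and Proposition \ref{prop: exceptionality gives basis} without explicitly verifying that the collection has exactly $n!$ members. You correctly observe that exceptionality alone only gives linear independence, hence the cheap upper bound $|\overline{\Sch}_n| \leq n!$, and that a matching lower bound is what makes Proposition \ref{prop: exceptionality gives basis} applicable. Your sketch of a bijection between loopless Schubert matroids and permutations via canonical $(w,S)$ representatives is plausible but left schematic; the cleanest way within the paper's own framework would be to invoke the fullness result (Proposition \ref{prop: fullness for base polytopes of Schubert matroids}), which yields a spanning set and hence $|\overline{\Sch}_n| \geq n!$, but this somewhat undercuts the footnote's claim that only exceptionality is needed. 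In any case, the enumeration $|\overline{\Sch}_n| = n!$ is standard in the Derksen--Fink circle of ideas, so filling this in is a genuine (if minor) improvement in rigor over what the paper writes, rather than a flaw in your argument.
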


Note that in $\McM(\Sigma_{\Perm_n})$ the class of a base polytope of Schubert matroid is the same as the class of the base polytope of the Schubert matroid obtained by flipping loops to coloops. This is why it suffices to restrict to loopless matroids to get a basis.

\subsection{Full Strongly Exceptional Collection for the Permutahedral Variety}\label{Sec: Schubert Matroids Generate}

We now show that $\overline{\Sch}_n$ is a full collection. This is a straightforward categorification of the work of Derksen and Fink. This is especially easy since they proved the following:

\begin{lemma}\label{lem: matroid truncated Brianchon-Gram complexes} For any matroid $M$ of rank $k$, the polytope $\Delta_{k,n}$ truncates $\operatorname{BG}(\BP(M))$ and so we have a truncated Brianchon-Gram complex $\operatorname{TBG}_{\Delta_{k,n}}^{\bullet}(\BP(M))$.
\end{lemma}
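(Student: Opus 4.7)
The plan is to verify the two conditions required for $\Delta_{k,n}$ to truncate $\operatorname{BG}(\BP(M))$: that $\BP(M) \subseteq \Delta_{k,n}$, and that $\Delta_{k,n} \cap (\sigma^{\vee} + v_{\BP(M),\sigma})$ lies in $\Def(\Sigma_{A_{n-1}})$ for every cone $\sigma \in \Sigma_{A_{n-1}}$.

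The containment is immediate: each vertex $e_B$ of $\BP(M)$ has exactly $k$ coordinates equal to $1$ and the remaining $n-k$ coordinates equal to $0$, so it is a vertex of the hypersimplex $\Delta_{k,n} = \operatorname{conv}(e_S \; \lvert \; S \in \binom{[n]}{k})$, and hence $\BP(M) \subseteq \Delta_{k,n}$ by convexity.

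For the second condition, the plan is to identify the intersection with a matroid base polytope. Fix $\sigma$ and choose $v = v_{\BP(M),\sigma}$ to be a lattice vertex of the face of $\BP(M)$ maximized by a linear functional in the interior of $\sigma$; such a lattice vertex exists because faces of matroid base polytopes are themselves matroid base polytopes (Proposition~\ref{prop: BP(M) is closed under faces}). The tangent polyhedron $\sigma^{\vee} + v$ can then be cut out by those facet-defining submodular inequalities $\sum_{i \in A} x_i \leq \operatorname{rk}_M(A)$ of $\BP(M)$ that are tight at $v$, together with $\sum_i x_i = k$. Intersecting with $\Delta_{k,n}$ appends only the box constraints $0 \leq x_i \leq 1$, which themselves come from a submodular function. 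The combined system is submodular with integer data, so the standard integrality of polymatroid linear programming shows that every vertex of the intersection is $\{0,1\}$-valued. By the GGMS theorem (Theorem~\ref{thm: ggms}), the intersection is therefore a matroid base polytope, hence a lattice generalized permutahedron, and so it belongs to $\Def(\Sigma_{A_{n-1}})$.

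The main obstacle is establishing the integrality and matroid-theoretic identification in the second step. Concretely, I expect the intersection to realize a (suitably reindexed) Schubert matroid base polytope, which would recover the combinatorial core of Derksen and Fink's expression of $\one_{\BP(M)}$ as an alternating sum of Schubert matroid indicators via the Brianchon-Gram formula. Verifying this rigorously amounts to determining which submodular inequalities are tight at $v$ (this depends on the flag of faces through $v$ determined by $\sigma$) and matching the resulting set system with the defining inequalities of a Schubert matroid --- a direct but somewhat delicate exercise in matroid optimization using the Gale-order characterization of Schubert matroids.
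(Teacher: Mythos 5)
There is no proof of this lemma in the paper to compare against: the statement is quoted from Derksen and Fink \cite{Derksen2012} ("they proved the following"), so yours is an attempt at an independent proof, and as written it has two genuine gaps. The first is the identification of the polyhedra that must be intersected with $\Delta_{k,n}$. For a cone $\sigma$ of the braid fan corresponding to a flag $\emptyset \subsetneq A_1 \subsetneq \cdots \subsetneq A_m \subsetneq [n]$, the cone $\sigma^{\vee} + v_{\BP(M),\sigma}$ appearing in $\operatorname{BG}(\BP(M))$ is cut out, inside the hyperplane $\sum_i x_i = k$, by only the chain of inequalities $\sum_{i \in A_j} x_i \leq \rk_M(A_j)$, $j = 1,\ldots,m$ (since $\sigma$ is spanned by $e_{A_1},\ldots,e_{A_m}$ and $v_{\BP(M),\sigma}$ maximizes each $e_{A_j}$ over $\BP(M)$). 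This is in general strictly larger than the cone cut out by the facet inequalities of $\BP(M)$ tight at a vertex $v$ of the face: those define the tangent cone of $\BP(M)$ at $v$, which agrees with $\sigma^{\vee}+v$ only when $\sigma$ is the entire normal cone of $v$. Already for $M = U_{1,3}$ and the chamber of the braid fan given by the flag $\{1\} \subset \{1,2\}$, the Brianchon--Gram cone is $\{x_1 \leq 1,\ x_1+x_2 \leq 1\}$ while the facets tight at $e_1$ give $\{x_1+x_2 \leq 1,\ x_1+x_3 \leq 1\}$. Since the lemma quantifies over all cones of the braid fan, your argument as written addresses a different family of polyhedra. (Ironically the correct chain description is friendlier to your plan: a chain of subsets plus the box plus one equality is an interval, hence totally unimodular, system, so integrality is immediate there, whereas the lattice-family system you wrote requires TDI results for submodular functions on ring families rather than "standard polymatroid LP".)

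The second and more serious gap is the concluding step "$\{0,1\}$-valued vertices, hence by Theorem \ref{thm: ggms} a matroid base polytope, hence in $\Def(\Sigma_{A_{n-1}})$". Theorem \ref{thm: ggms} characterizes matroid base polytopes among \emph{generalized permutahedra} with $0/1$ vertices; the generalized permutahedron hypothesis (edges parallel to $e_i - e_j$, equivalently normal fan coarsening the braid fan) is exactly the membership in $\Def(\Sigma_{A_{n-1}})$ you are trying to establish, so invoking it here is circular. Having $0/1$ vertices and constant coordinate sum does not suffice: $\operatorname{conv}\{e_{\{1,2\}}, e_{\{3,4\}}\} \subset \RR^4$ has both properties but is not a generalized permutahedron. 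What is actually needed is to show that the $0/1$ points of $\Delta_{k,n} \cap (\sigma^{\vee}+v_{\BP(M),\sigma})$ --- the indicators of $k$-subsets $B$ with $|B \cap A_j| \leq \rk_M(A_j)$ for all $j$ --- satisfy the basis exchange axiom, i.e.\ are the bases of a (Schubert) matroid, and then combine this with integrality to conclude the intersection is that matroid's base polytope. That identification, which you defer as a "delicate exercise", is the entire content of the lemma; it is precisely what Derksen and Fink prove, and the paper cites them for it rather than reproving it.
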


This implies that for any matroid polytope $\BP(M)$ where $M$ has rank $k$, there is a truncated Brianchon-Gram complex $\operatorname{TBG}_{\Delta_{k,n}}(\BP(M))$ whose terms consist of base polytopes of Schubert matroids.

The following is a standard result on submodular functions translated into polytopes:

\begin{lemma}\label{lem: subdividing generalized permutahedra into matroids}
    Let $P$ be a lattice generalized permutahedra and let $Q$ be any lattice translation of a face of the unit cube. Then, $P \cap Q$ is a translation of a matroid base polytope. This induces a subdivision of $P$ into matroid polytopes by considering all lattice translations of the unit cube.
\end{lemma}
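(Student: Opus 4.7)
The plan is to reduce to the Gelfand--Goresky--MacPherson--Serganova characterization (Theorem \ref{thm: ggms}) by a normalizing translation, then deduce the subdivision statement from the tiling of $\RR^n$ by unit cubes.

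First, write $Q = v + F$ where $v \in \ZZ^n$ and $F$ is a face of $[0,1]^n$; explicitly, for some disjoint $S_0, S_1 \subseteq [n]$,
\[
F = \{x \in \RR^n : x_i = 0 \text{ for } i \in S_0,\ x_i = 1 \text{ for } i \in S_1,\ 0 \leq x_i \leq 1 \text{ otherwise}\}.
\]
Translating both polytopes by $-v$ replaces $P \cap Q$ by $(P - v) \cap F$, which differs from $P \cap Q$ by a translation. Since $P - v$ is again a lattice generalized permutahedron, it suffices to show that $P \cap F$ is a matroid base polytope.

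To apply Theorem \ref{thm: ggms} I need $P \cap F$ to be a generalized permutahedron all of whose vertices lie in $\{0,1\}^n$. The key ingredient is the classical fact (going back to Edmonds' work on polymatroids) that the intersection of a lattice generalized permutahedron -- equivalently, the base polytope of an integer submodular function $\mu$ on $2^{[n]}$ -- with an integer box is again a lattice generalized permutahedron; concretely, one can write down the submodular function defining $P \cap F$ by modifying $\mu$ according to the loops $S_0$ and coloops $S_1$ forced by $F$. Granting this, the intersection is a lattice generalized permutahedron; its vertices are integer points of $F$, and every integer point of $F$ automatically lies in $\{0,1\}^n$ since each coordinate is either pinned to $0$, pinned to $1$, or restricted to $[0,1]$. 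Theorem \ref{thm: ggms} then gives that $P \cap F$ is a matroid base polytope.

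For the subdivision statement, the unit cubes $Q_c := c + [0,1]^n$ for $c \in \ZZ^n$ tile $\RR^n$: their union is all of $\RR^n$, and any two meet in a (possibly empty) common face, which is itself a lattice translate of a face of the unit cube. Hence $\{P \cap Q_c\}_{c \in \ZZ^n}$ covers $P$, each piece is (a translate of) a matroid polytope by the first part, and any pairwise intersection $P \cap Q_c \cap Q_{c'} = P \cap (Q_c \cap Q_{c'})$ is of the same form. Each such intersection is also a face of both $P \cap Q_c$ and $P \cap Q_{c'}$, because the inequalities cutting out $Q_c \cap Q_{c'}$ from $Q_c$ remain supporting inequalities of $P \cap Q_c$. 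This verifies the defining conditions of a polyhedral subdivision.

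The principal technical step is invoking the Edmonds box-intersection/integrality result to conclude that $P \cap F$ is a lattice generalized permutahedron; an alternative, fully self-contained route is to construct the submodular function for $P \cap F$ explicitly from that of $P$ by contracting the coloops in $S_1$ and deleting the loops in $S_0$. Everything else is formal bookkeeping on the cube tiling.
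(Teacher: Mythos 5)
The paper states this lemma without proof, dismissing it as ``a standard result on submodular functions translated into polytopes,'' so there is no paper proof to compare against directly; your proof fills in that omission and does so along exactly the lines the paper's remark gestures at. Your argument is correct. You reduce to the GGMS characterization (Theorem~\ref{thm: ggms}) by first observing that intersecting $P$ with a lattice box is again a lattice generalized permutahedron (the classical Edmonds--Fujishige truncation/restriction of submodular functions, equivalently the TDI property of these systems) and then noting that the resulting vertices lie in $\{0,1\}^n$. The subdivision claim is then formal: the unit cubes tile $\RR^n$, any two meet in a common face, and a supporting hyperplane of $Q_c$ cutting out $Q_c\cap Q_{c'}$ is also supporting for $P\cap Q_c$, so the intersections are common faces of the pieces. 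One mild caveat: the central technical ingredient (box intersection preserves lattice generalized permutahedra) is invoked essentially as a black box; the ``fully self-contained route'' you sketch at the end --- contracting $S_1$ and deleting $S_0$, then trivially extending --- is the cleanest way to make it concrete and is worth spelling out if one wanted a genuinely self-contained proof, but as a citation-level argument what you wrote is adequate and matches the literature the paper implicitly relies on.
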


\begin{proposition}\label{prop: fullness for base polytopes of Schubert matroids}
    The collection of polytopes $\{ \BP(\Omega)\}_{\Omega \in \overline{Sch}_n}$ is full for $\Def(\Sigma_{A_{n-1}})$.
\end{proposition}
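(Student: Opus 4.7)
The plan is to build any generalized permutahedron in $\Def(\Sigma_{A_{n-1}})$ from base polytopes of loopless Schubert matroids in two successive inclusion-exclusion steps, invoking the two structural lemmas already established: Lemma \ref{lem: subdividing generalized permutahedra into matroids} for the outer step and Lemma \ref{lem: matroid truncated Brianchon-Gram complexes} for the inner step.

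For the outer step, given a lattice generalized permutahedron $P$, I would form the weak subdivision $\{P_1,\ldots,P_\ell\}$ obtained by intersecting $P$ with every lattice translate of $[0,1]^n$ that meets $P$. By Lemma \ref{lem: subdividing generalized permutahedra into matroids}, each $P_i$ is a translate of a matroid base polytope, and by Proposition \ref{prop: BP(M) is closed under faces} every nonempty intersection $\bigcap_{i\notin S}P_i$ is a face of a matroid polytope and hence itself a translate of some matroid base polytope. The associated subdivision Koszul complex over the Boolean poset $2^{[\ell]}$ is therefore an exact inclusion-exclusion complex whose terms other than $P$ itself are all translates of $\BP(M)$ for various matroids $M$ on $[n]$. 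This places $P$ in $\langle \{\BP(M):M\text{ a matroid on }[n]\}\rangle_1$.

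For the inner step, given any matroid $M$ of rank $k$, Lemma \ref{lem: matroid truncated Brianchon-Gram complexes} produces the truncated Brianchon-Gram complex $\operatorname{TBG}_{\Delta_{k,n}}^{\bullet}(\BP(M))$ over the poset $\mathcal{F}(\Sigma_{A_{n-1}})^{op}$. Its $\sigma$-th term is $(\sigma^{\vee}+v_{\BP(M),\sigma})\cap \Delta_{k,n}$; precisely as Derksen-Fink \cite{Derksen2012} compute in their proof that Schubert matroid classes span $K_0$, each such intersection is a lattice translate of a Schubert matroid base polytope, and the top term $\Delta_{k,n}=\BP(U_{k,n})$ is itself a loopless Schubert matroid polytope. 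By Proposition \ref{prop: flipping loops is translation for base polytopes} any Schubert matroid carrying loops can be replaced, via lattice translation, by a loopless one; since $\langle \mathcal{P}\rangle_0$ is closed under lattice translation, every non-top term of $\operatorname{TBG}_{\Delta_{k,n}}^{\bullet}(\BP(M))$ lies in $\langle \{\BP(\Omega)\}_{\Omega\in\overline{\Sch}_n}\rangle_0$. This places $\BP(M)$ in $\langle \{\BP(\Omega)\}_{\Omega\in\overline{\Sch}_n}\rangle_1$, and combining with the outer step places $P$ in $\langle \{\BP(\Omega)\}_{\Omega\in\overline{\Sch}_n}\rangle_2$.

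The main thing to check beyond invoking these two lemmas is the CW-poset upper-ideal condition of the definition of $\langle\cdot\rangle$: for both the Boolean subdivision poset and the truncated Brianchon-Gram poset, the upper ideal $\{x:m\in F(x)\}$ is a principal upper interval of cardinality at least $2$ whenever it is nonempty, which is precisely what ensures each inclusion-exclusion complex is exact via Theorem \ref{mainthm: Exactness Criterion}. Since these exactness properties have already been verified when those complexes were introduced, no further obstruction remains, and the argument is essentially the categorification of Derksen-Fink's indicator-function identity promised in the introduction.
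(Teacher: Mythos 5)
Your proposal is correct and follows essentially the same route as the paper's proof: it uses the cube-translate subdivision (Lemma \ref{lem: subdividing generalized permutahedra into matroids} plus Theorem \ref{thm: ggms}) to reduce arbitrary lattice generalized permutahedra to matroid base polytopes, and the truncated Brianchon--Gram complex of Lemma \ref{lem: matroid truncated Brianchon-Gram complexes} to reduce matroid base polytopes to (translates of loopless) Schubert base polytopes, exactly as in the paper, merely presenting the two steps in the opposite order and spelling out the upper-ideal check that the paper leaves implicit.
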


\begin{proof}
    If $M$ is a matroid obtained by flipping a loop to a coloop of $M'$, then $\BP(M)$ and $\BP(M')$ are translations of each other. Since every Schubert matroid can be changed into a loopless Schubert matroid by flipping loops, we have that $\BP(\Omega) \in \langle \overline{Sch}_n \rangle_0$ for all $\Omega \in \Sch_n$. Further, every translation of one of these polytopes is in $\langle \overline{Sch}_n \rangle_0$.

 By Lemma \ref{lem: matroid truncated Brianchon-Gram complexes}, we have a truncated Brianchon-Gram complex for the base polytope $\BP(M)$ by intersecting with the hypersimplex $\Delta_{k,n}$ where $k$ is the rank of $M$. Every term in this complex except the first is indexed by a Schubert matroid. Since we have all translations of base polytopes in $\langle \overline{Sch}_n \rangle_0$, this implies that every translation of a base polytopes of any matroid is in $\langle \overline{Sch}_n \rangle_1$.

 By Lemma \ref{lem: subdividing generalized permutahedra into matroids}, we have a strict subdivision of every lattice generalized permutahedron into translations of generalized permutahedra contained in a cube. By Theorem \ref{thm: ggms}, every generalized permutahedron contained in a cube is the base polytope of a matroid. Therefore $\langle \overline{\Sch} \rangle_2 = \Def(\Sigma_{A_{n-1}})$.
    
\end{proof}

Combining this with the results of the previous section, we have shown that $\overline{\Sch_n}$ is a full strongly exceptional collection of polytopes for $\Sigma_{A_{n-1}}$. By the polytopal criterion, we obtain the main result of this section.

\begin{theorem}[Theorem \ref{mainthm: FSEC for Perm}]\label{thm: full strongly exceptional collection for Perm}
   The sequence of line bundles $(\mathcal{L}_{\BP(\Omega)})$ indexed by $\Omega \in \overline{\Sch}_n$ is a full strongly exceptional collection for $\D(\Perm_n)$ when ordered by non-decreasing number of lattice points.
\end{theorem}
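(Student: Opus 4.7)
The plan is to assemble the two results of the preceding subsections via the Polytopal Criterion (Theorem~\ref{thm: Polytopal Criterion}). Theorem~\ref{thm: Schuberts are Exceptional} establishes that $\{\BP(\Omega)\}_{\Omega \in \overline{\Sch}_n}$ is a strongly exceptional collection of polytopes in $\Poly(\Sigma_{A_{n-1}})$, and Proposition~\ref{prop: fullness for base polytopes of Schubert matroids} establishes that the collection is full. So the whole proof reduces to verifying that the prescribed ordering is compatible with the polytopal definition and then invoking Theorem~\ref{thm: Polytopal Criterion}.

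For the ordering, I would observe that if $\BP(\Omega_1) \leq_M \BP(\Omega_2)$, meaning $\BP(\Omega_1) + m \subseteq \BP(\Omega_2)$ for some $m \in M$, then
\[|\BP(\Omega_1) \cap \ZZ^n| \;=\; |(\BP(\Omega_1)+m) \cap \ZZ^n| \;\leq\; |\BP(\Omega_2) \cap \ZZ^n|,\]
since a lattice translation preserves lattice-point counts and an inclusion of lattice polytopes cannot decrease them. Hence any linear order refining ``non-decreasing number of lattice points'' is a linear extension of $\leq_M$, and together with the vanishing $\widetilde{H}^k(\BP(\Omega_j)\setminus \BP(\Omega_i) + m) = 0$ for all $i,j$ and $m\in M$ provided by strong exceptionality, this realizes the ordered collection as an exceptional sequence of polytopes in the sense of Section~\ref{Sec: Combinatorial Approach}. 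Theorem~\ref{thm: Polytopal Criterion} then lifts it to a full strongly exceptional collection of nef line bundles in $\operatorname{Perf}(\Perm_n)$, and hence in $\D(\Perm_n)$ since $\Perm_n$ is smooth and projective.

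The substantive obstacles have all been resolved in the preceding subsections, and there is no new one at this assembly step. The hardest piece was strong exceptionality, handled by the greedy deformation retract that exploits the Schubert inequalities $\sum_{i=1}^{\ell} x_i \leq \mu([\ell])$ from Proposition~\ref{prop: submodular function only needs intervals}, after first reducing arbitrary lattice translations to the $m=0$ case via the face characterization of matroid polytopes. Fullness then follows from categorifying the Derksen--Fink identity by a truncated Brianchon--Gram complex combined with the cube subdivision of generalized permutahedra into matroid polytopes. The present theorem is the intended payoff of assembling that machinery through the polytopal criterion.
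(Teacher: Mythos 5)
Your proof is correct and follows the same route as the paper: the paper's own proof of this theorem is just the one-sentence assembly of Theorem~\ref{thm: Schuberts are Exceptional}, Proposition~\ref{prop: fullness for base polytopes of Schubert matroids}, and the polytopal criterion (Theorem~\ref{thm: Polytopal Criterion}). Your extra remark verifying that ordering by non-decreasing lattice-point count refines $\leq_M$ is a reasonable check the paper leaves implicit (and in fact, for lattice polytopes the inclusion $P_1+m\subsetneq P_2$ forces a strictly larger lattice-point count, since some vertex of $P_2$ lies outside $P_1+m$, so the non-decreasing ordering really is a linear extension on translation classes).
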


\subsection{Invariances of the Collection}

We now turn to study the symmetries of this collection of nef line bundles. We begin with a well-known fact:

\begin{proposition}\label{prop:automorphisms of braid fan}
    The automorphism group of the braid fan $\Sigma_{A_{n-1}}$ is isomorphic to the automorphism group of $S_n$. Hence, the automorphism group is $S_2 \times S_n$. The non-identity element in $S_2$ acts by $x \mapsto -x$ and $S_n$ acts by the action of the Weyl group on the Coxeter arrangement.
\end{proposition}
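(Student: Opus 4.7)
The plan is to construct $S_2 \times S_n$ as a subgroup of fan automorphisms, then argue it exhausts $\mathrm{Aut}(\Sigma_{A_{n-1}})$ by passing to the combinatorial structure on rays.

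For the construction: the $S_n$-action on $\RR^n$ by permuting coordinates preserves $V$ and $N$ and permutes the hyperplanes of $\overline{\mathcal{Br}}_n$, so it acts by fan automorphisms. The antipodal map $\iota(x) = -x$ is a lattice involution of $N$, and by Proposition \ref{prop: cone descrpition of permutahedral fan} it sends $\sigma_{(A_1,\ldots,A_k)}$ to $\sigma_{(A_k,\ldots,A_1)}$, so it too preserves $\Sigma_{A_{n-1}}$. These actions commute because scalar multiplication commutes with permutation matrices, giving a homomorphism $\Phi\colon S_2 \times S_n \to \mathrm{Aut}(\Sigma_{A_{n-1}})$. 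For $n \geq 3$, $\Phi$ is injective: no non-identity permutation matrix equals $-I$ on $V$ (equivalently, $\iota$ sends each singleton ray $\rho_{\{i\}}$ to the co-singleton ray $\rho_{[n]\setminus\{i\}}$, which no element of $S_n$ does), and any $\sigma \in S_n$ fixing every $\rho_{\{i\}}$ fixes every singleton and is trivial.

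For surjectivity: any fan automorphism $\phi$ is a lattice automorphism of $N$ permuting cones, hence permuting rays, and is determined by its induced permutation of rays since the primitive ray generators span $N_\RR$. By Proposition \ref{prop: cone descrpition of permutahedral fan} the rays are labeled by proper non-empty subsets $S \subsetneq [n]$, and a family of rays $\rho_{S_1},\ldots,\rho_{S_k}$ spans a cone precisely when $S_1,\ldots,S_k$ form a chain in $2^{[n]}$. Thus $\phi$ descends to an automorphism of the order complex $\Delta$ of the truncated Boolean lattice $2^{[n]} \setminus \{\emptyset,[n]\}$. To finish, it suffices to show $\mathrm{Aut}(\Delta) = S_n \times S_2$, since complementation $S \mapsto [n] \setminus S$ on labels matches the action of $\iota$ on rays.

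The combinatorial heart of the argument is identifying $\mathrm{Aut}(\Delta)$. The link of a vertex $S \in \Delta$ is the simplicial join of the order complexes of $2^S \setminus \{\emptyset, S\}$ and $2^{[n]\setminus S}\setminus\{\emptyset,[n]\setminus S\}$, whose isomorphism type depends only on the unordered pair $\{|S|,\, n - |S|\}$. This rigidity forces any automorphism of $\Delta$ either to preserve the cardinality function on vertices (giving a poset automorphism of $2^{[n]}$, i.e.\ an element of $S_n$) or to consistently swap cardinality $k$ with $n - k$ (so that composing with complementation yields such a poset automorphism), hence $\mathrm{Aut}(\Delta) = S_n \times S_2$. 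The main obstacle will be making this link-rigidity argument fully rigorous; if the vertex-link computation becomes delicate, an alternative is to invoke the classical fact that the automorphism group of the Boolean lattice $2^{[n]}$ as a poset is $S_n$ and that complementation is its unique non-trivial anti-automorphism. The small cases $n \leq 2$ can be checked directly (for $n = 2$, $\iota$ coincides with the transposition, so $\mathrm{Aut}(\Sigma_{A_1}) = S_2$).
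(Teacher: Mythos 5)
The paper records this proposition as a ``well-known fact'' and supplies no proof, so there is no internal argument to compare you against; what follows is an assessment of your proposal on its own terms.

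Your framework is sound: exhibit $S_2\times S_n$ inside $\mathrm{Aut}(\Sigma_{A_{n-1}})$ via coordinate permutations and the antipodal map, then observe that a fan automorphism is a lattice automorphism determined by its permutation of rays and hence induces an automorphism of the order complex $\Delta$ of the truncated Boolean lattice, reducing surjectivity to the combinatorial statement $\mathrm{Aut}(\Delta)\cong S_n\times S_2$. The construction, the commutativity check, and the injectivity for $n\geq 3$ (with the correct caveat that $-1$ collapses into $S_2$ when $n=2$) are all fine, as is the observation that the proposition as literally stated fails for $n\leq 2$.

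The genuine gap is in the combinatorial lemma. Your link computation shows that any simplicial automorphism $\phi$ of $\Delta$ sends each vertex $S$ to a vertex whose cardinality lies in $\{|S|,\,n-|S|\}$, but it does not by itself rule out a ``mixed'' $\phi$ that sends some level-$k$ vertices to level $k$ and others to level $n-k$. To close this, you can argue as follows: restrict $\phi$ to the vertices of levels $1$ and $n-1$; their comparability graph is $K_{n,n}$ minus a perfect matching, which is connected and bipartite for $n\geq 3$, so its bipartition is unique up to a global swap and $\phi$ either preserves both extremal levels or swaps them globally. After composing with complementation if necessary, $\phi$ fixes the level-$1$ vertices setwise, giving $\sigma\in S_n$, and then for $2\leq k\leq n-1$ the level-$1$ neighbors of a vertex $T$ are exactly $\{\{j\}:j\in T\}$, which forces $\phi(S)=\sigma(S)$ for all $S$. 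That upgrades your sketch to a proof. Be aware also that your proposed fallback---citing that $\mathrm{Aut}(2^{[n]})=S_n$ and that complementation is the unique non-trivial anti-automorphism---is not quite enough by itself: a simplicial automorphism of $\Delta$ is a priori only a bijection preserving comparability (a ``comparability automorphism''), and you would still need the intermediate fact that for this poset every comparability automorphism is an order automorphism or anti-automorphism. That intermediate step is essentially equivalent to the bipartite-graph argument above, so it does not actually shorten the work. Finally, as a presentational note, the proposition's first sentence (``isomorphic to the automorphism group of $S_n$'') is imprecise as written---$\mathrm{Aut}(S_n)\cong S_n$ for $n\neq 2,6$---so you are right to target only the operative claim $\mathrm{Aut}(\Sigma_{A_{n-1}})\cong S_2\times S_n$ for $n\geq 3$.
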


\begin{proposition}\label{prop: invariant of full strongly exceptional collections for Perm}
    The full strongly exceptional collection $\{\mathcal{L}_{\BP(\Omega)}\}_{\Omega \in \overline{\Sch}_n}$ of $\D(\Perm_n)$ is invariant under the $S_2 \times S_n$ action.
\end{proposition}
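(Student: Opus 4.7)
The plan is to check directly that each generator of $S_2 \times S_n$ sends each polytope $\BP(\Omega)$ to a lattice translate of $\BP(\Omega')$ for some $\Omega' \in \overline{\Sch}_n$; since non-equivariant nef line bundles are classified by polytopes in $\Def(\Sigma_{A_{n-1}})$ modulo lattice translation, this suffices. The $S_n$-factor acts by permuting coordinates of $\RR^n$: the image of $\BP(\Omega)$ under $\sigma \in S_n$ is $\BP(\sigma\Omega)$, where $\sigma\Omega$ denotes the matroid obtained by relabeling the ground set by $\sigma$. Proposition \ref{prop: Schubert matroids are closed under operations} closes the class of Schubert matroids under isomorphism, and relabeling clearly preserves the loop set, so $\sigma\Omega \in \overline{\Sch}_n$ whenever $\Omega \in \overline{\Sch}_n$.

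The substantive case is the nontrivial element of $S_2$, which acts as $P \mapsto -P$. I would use the classical identity $\BP(\Omega^*) = e_{[n]} - \BP(\Omega)$ to rewrite $-\BP(\Omega) = \BP(\Omega^*) - e_{[n]}$, so up to lattice translation the image is $\BP(\Omega^*)$. By closure of Schubert matroids under duality (Proposition \ref{prop: Schubert matroids are closed under operations}), $\Omega^*$ is Schubert; however, since coloops of $\Omega$ become loops of $\Omega^*$, the dual need not be loopless, so $\BP(\Omega^*)$ may not itself lie in the collection.

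The main obstacle is thus to exhibit a loopless Schubert matroid $\Omega''$ with $\BP(\Omega'')$ a lattice translate of $\BP(\Omega^*)$. Proposition \ref{prop: flipping loops is translation for base polytopes} shows that flipping a loop to a coloop only translates the base polytope by a standard basis vector, so iterating over the loop set $L$ of $\Omega^*$ produces such an $\Omega''$. To verify $\Omega'' \in \overline{\Sch}_n$, I would decompose $\Omega^* = (\Omega^*|_{[n] \setminus L}) \oplus U_{0,L}$, so that the flipped matroid is $\Omega'' = (\Omega^*|_{[n] \setminus L}) \oplus U_{|L|,L}$. The restriction $\Omega^*|_{[n] \setminus L}$ is Schubert and loopless, the uniform matroid $U_{|L|,L}$ is Schubert and loopless (all coloops), and Schubert matroids are closed under direct sums by Proposition \ref{prop: Schubert matroids are closed under operations}; hence $\Omega''$ is a loopless Schubert matroid, and the collection is $S_2 \times S_n$-invariant.
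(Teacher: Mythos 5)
Your proof is correct and follows essentially the same route as the paper's: treat the $S_n$-factor via isomorphism-invariance of Schubert matroids, and the $S_2$-factor via the identity $-\BP(\Omega) = \BP(\Omega^*) - e_{[n]}$ together with closure of Schubert matroids under duality. The one place you go further is the final step: the paper simply asserts that after observing $\Omega^*$ is coloopless, $\mathcal{L}_{\BP(\Omega^{\vee})}$ lands in the collection because flipping loops to coloops is a translation, whereas you explicitly verify that the flipped matroid is a loopless Schubert matroid by writing $\Omega^* = (\Omega^*|_{[n]\setminus L}) \oplus U_{0,L}$ and using closure under restriction and direct sum from Proposition \ref{prop: Schubert matroids are closed under operations}. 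That extra verification is a welcome addition of rigor, but it is filling in the same argument rather than giving a genuinely different one.
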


\begin{proof}

    The induced action of $S_2 \times S_n$ on $M$ from the action on this variety is one where $S_n$ acts on $M$ by permuting coordinates and the non-identity element of $S_2$ acts on $M$ by the map $\operatorname{cr}:M \to M$
     \[\operatorname{cr}(x) = - x. \]
    To show that this collection of nef line bundles is invariant under this action, it suffices to show that the collection of indexing polytopes is closed under this action and translations since translation does not change the isomorphism type of the line bundle.

    It is immediate from the definitions that $\BP(M_1) = \sigma \cdot \BP(M_2)$ for some $\sigma \in S_n$ if and only if the matroids $M_1$ and $M_2$ are isomorphic. Since loopless Schubert matroids are closed under isomorphism, our collection is invariant under the action of $S_n$.
    
    For the action of the non-identity element $g$ of $S_2$, we have
     \[g \cdot \mathcal{L}_{\BP(\Omega)} = \mathcal{L}_{\operatorname{cr}(\BP(\Omega))} = \mathcal{L}_{\BP(\Omega^{\vee}) - \mathbf{1}} \cong \mathcal{L}_{\BP(\Omega^{\vee})}, \]
    where $\mathbf{1}$ is the all one vector. The dual of a loopless Schubert matroid is a coloopless Schubert matroid, but since changing loops to coloops amounts to translating the polytope, we have that $\mathcal{L}_{\BP(\Omega^{\vee})}$ is some bundle in our collection.
\end{proof}

The permutahedral variety is isomorphic to the Losev-Manin compactification of $M_{0,n}$ and so this gives the strengthening of the result of Castravet and Tevelev \cite{Castravet2020} as described in the introduction.

\subsection{Augmented Inclusions of Matroids and the Tilting Algebra}

We now discuss the tilting algebra corresponding to this fully strong exceptional sequence. It will be more convenient to index our fully strong exceptional sequence by the set of coloopless Schubert matroids. Since changing the coloops of a matroid to loops amounts to a translation of the base matroid polytope, the operation does not change the isomorphism class of the line bundle. Since we still need the class of the point, we consider the rank zero matroid which has no bases as a coloopless Schubert matroid. 

We can describe the tilting quiver purely in terms of matroids.

\begin{definition}
    Let $M_1$ and $M_2$ be coloopless matroids and $S$ be a subset of the loops of $M_1$. We say there is an \textbf{augmented inclusion} of $M_1$ into $M_2$ if for all  $B \in \mathcal{B}(M_1)$ we have that $B \cup S \in \mathcal{B}(M_2)$. We denote an augmented inclusion by
     \[ M_1 \xlongrightarrow{S} M_2.\]
    The \textbf{composition} of two augmented inclusions 
    \[M_1 \xlongrightarrow{S}  M_2 \xlongrightarrow{T}  M_3\]
    is the augmented inclusion
     \[M_1 \xlongrightarrow{S \cup T} M_3. \]
\end{definition}

The augmented inclusions of coloopless Schubert matroids form a category with identity maps given by $M \xlongrightarrow{\emptyset} M$. An augmented inclusion $M_1 \xlongrightarrow{S}M_2$ is \textbf{indecomposable} if it is not a composition of two other augmented inclusions. 

%Any indecomposable inclusion must have a label $S$ with $|S| = 1$ or $0$.

Consider the labeled quiver $Q_n$ where the vertices of $Q_n$ are the coloopless Schubert matroids on ground set $[n]$ and the arrows are the indecomposable augmented inclusions. The arrow corresponding to $M_1 \xlongrightarrow{S} M_2$ is labeled by $S$. For any path $p:v \to u$, let $\operatorname{lab}(p)$ be the union of the labels appearing in the arrows of the path $p$. Define the relations $I$ on the path algebra $\kk Q_n$ by
 \[I = \big \langle p - q \; \lvert \; p,q: v \to u \text{ paths with $\operatorname{lab}(p) = \operatorname{lab}(q)$} \big \rangle. \]

\begin{proposition}\label{prop: tilting quiver for Perm_n}
    The tilting algebra $\operatorname{End}(\mathcal{T})$ is isomorphic to the path algebra of $Q_n$. Thus,
         \[\D(\Perm_n) \cong \operatorname{D}^b(\operatorname{mod}(\kk Q_n/I)^{op}). \]
\end{proposition}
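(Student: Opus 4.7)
The plan is to apply Proposition \ref{thm: tilting quiver as quiver of sections calculation}, which identifies the tilting algebra of a full strongly exceptional collection of line bundles with the path algebra of its quiver of sections. By Theorem \ref{thm: full strongly exceptional collection for Perm} combined with Proposition \ref{prop: flipping loops is translation for base polytopes} (flipping a loop of a matroid to a coloop translates the base polytope by a lattice vector and therefore does not change the isomorphism class of the associated line bundle), our full strongly exceptional collection may equivalently be indexed by coloopless Schubert matroids on $[n]$. The remaining task is to identify the quiver of sections of this reindexed collection with $(Q_n, I)$.

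For the arrows, we use the standard description of $T$-invariant global sections between nef toric line bundles recalled in Section \ref{Ssec: Convex Geometry and Toric Geometry}:
\[ H^0\bigl(\Perm_n,\, \mathcal{L}_{\BP(\Omega_1)}^{-1} \otimes \mathcal{L}_{\BP(\Omega_2)}\bigr) \;\cong\; \kk[m \in M \;\lvert\; \BP(\Omega_1) + m \subseteq \BP(\Omega_2)]. \]
Since both base polytopes lie in $[0,1]^n$, examining the inclusion at vertices $e_B$ of $\BP(\Omega_1)$ and using that $\Omega_1$ is coloopless (so for each $i \in [n]$ there is a basis $B$ with $(e_B)_i = 0$) forces every coordinate of $m$ to lie in $\{0,1\}$ and to be supported on the loops of $\Omega_1$; that is, $m = e_S$ with $S \subseteq \operatorname{loops}(\Omega_1)$. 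Since $\BP(\Omega_2)$ is a $0/1$-polytope, the inclusion then reduces at vertices to $e_{B \sqcup S} \in \BP(\Omega_2)$, equivalently $B \cup S \in \mathcal{B}(\Omega_2)$ for every $B \in \mathcal{B}(\Omega_1)$, which is exactly the data of an augmented inclusion $\Omega_1 \xlongrightarrow{S} \Omega_2$.

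Composition of $T$-invariant sections corresponds to addition of characters, so a concatenation of augmented inclusions along a path with labels $S_1, \ldots, S_k$ produces the section attached to $e_{S_1} + \cdots + e_{S_k}$; the output again being a $\{0,1\}$-vector forces the $S_i$ to be pairwise disjoint and records their union, matching the composition law of augmented inclusions. A section is therefore indecomposable in the sense of the quiver of sections iff the corresponding augmented inclusion admits no non-trivial factorization through a coloopless Schubert matroid in the collection, and two parallel paths yield the same divisor iff their characters agree iff the unions of their labels coincide. These identifications respect identities and composition, so the quiver of sections is isomorphic to $(Q_n, I)$, and the proposition follows.

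The main obstacle is the verification, in the second paragraph, that every character $m$ realizing $\BP(\Omega_1) + m \subseteq \BP(\Omega_2)$ is of the form $e_S$ for $S \subseteq \operatorname{loops}(\Omega_1)$ satisfying the basis condition. This argument uses in an essential way the containment of base polytopes in the unit cube together with the coloopless hypothesis; once it is in place, both compositions and relations transfer directly from the character lattice to augmented inclusions of matroids, and no further derived-categorical input is required.
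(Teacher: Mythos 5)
Your argument is correct and follows essentially the same route as the paper: both reduce the statement to Proposition \ref{thm: tilting quiver as quiver of sections calculation} via the character description $\Hom(\mathcal{L}_P,\mathcal{L}_Q) \cong \kk[m \in M \;\lvert\; P+m \subseteq Q]$ and the dictionary between augmented inclusions $\Omega_1 \xlongrightarrow{S} \Omega_2$ and inclusions $\BP(\Omega_1)+e_S \subseteq \BP(\Omega_2)$. The only difference is that you spell out the cube-plus-coloopless verification that every such $m$ is of the form $e_S$ with $S$ a set of loops, a step the paper asserts without detail.
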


\begin{proof}
    This is only a matter of translating between different notations. In particular, the homomorphisms between nef line bundles $\mathcal{L}_P$ and $\mathcal{L}_Q$ are given by
     \[H^0(\mathcal{L}_Q \otimes \mathcal{L}_P^{-1}) \cong \Hom(\mathcal{L}_P, \mathcal{L}_Q) = \kk[ m \; \lvert \; m \in M, P+m \subseteq Q]. \]
    This isomorphism can be chosen so that the $m$ on the right-hand side corresponds to the section $s \in \mathcal{L}_Q \otimes \mathcal{L}_P^{-1}$ with $\operatorname{div}(s) = \operatorname{div}(\chi^m)$ where $\chi^m$ is the character corresponding to $m$.

    The augmented inclusion $M_1 \xlongrightarrow{S} M_2$ gives rise to an inclusion of base polytopes $\BP(M_1) + e_S \subseteq \BP(M_2)$ and every inclusion of polytopes gives an augmented inclusion. The label $S$ records the translation $e_S$. This inclusion corresponds to the section $s$ with $\operatorname{div}(s) = \operatorname{div}(\chi^m)$.
    
    Then, the definition of the quiver $Q_n$ is just the quiver of sections of these line bundles as described in Section \ref{subsec:generators of derived categories and tilting sheaves}.
\end{proof}

The symmetries of a full strongly exceptional collection descend to give a symmetry of the tilting quiver in the following sense:

\begin{definition}
    Let $Q$ be quiver $(V,E,s,t)$ where $V$ is the set of vertices, $E$ is the set of edges and $s,t: E \to V$ are the source and target functions of edges. A \textbf{group action} of a finite group $G$ on $Q$ is an action of $G$ on $V$ and $E$ such that
     \[s(g \cdot e) = g \cdot s(e) \]
    and
     \[t(g \cdot e) = g \cdot t(e)\]
     for all $e \in E$. The action is \textbf{admissible} if there are no arrows between two vertices in the same orbit under the action of $G$.
\end{definition}

The $S_2 \times S_n$ action on the collection gives an admissible $S_2 \times S_n$ action on the quiver $Q_n$ by the definition of the quiver of sections.

\begin{example}
The six polytopes that give the full strongly exceptional collection for $\Perm_3$ are

\begin{figure}[H]
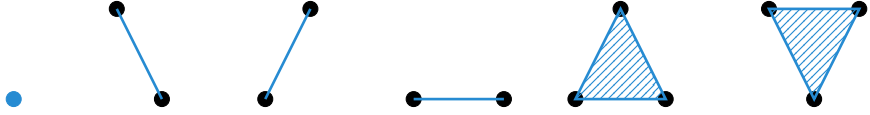

    \centering
    \begin{polyhedron}{}
        \vertex{point = {0,0}}
        \polygon{points = {(0,0)},status = open}
    \end{polyhedron}
    \quad
    \begin{polyhedron}{}
        \vertex{point = {2,0},color=black}
        \vertex{point = {1,2},color=black}
        \polygon{points = {(2,0),(1,2)},status = open}
    \end{polyhedron}
    \quad
    \begin{polyhedron}{}
        \vertex{point = {0,0},color=black}
        \vertex{point = {1,2},color=black}
        \polygon{points = {(0,0),(1,2)},status = open}
    \end{polyhedron}
    \quad
    \begin{polyhedron}{}
        \vertex{point = {0,0},color=black}
        \vertex{point = {2,0},color=black}
        \polygon{points = {(0,0),(2,0)},status = open}
    \end{polyhedron}
    \begin{polyhedron}{}
        \vertex{point = {0,0},color=black}
        \vertex{point = {2,0},color=black}
        \vertex{point = {1,2},color=black}
        \polygon{points = {(0,0),(2,0),(1,2)},status = open}
    \end{polyhedron}
    \quad
    \begin{polyhedron}{}
        \vertex{point = {0,2},color=black}
        \vertex{point = {1,0},color=black}
        \vertex{point = {2,2},color=black}
        \polygon{points = {(0,2),(1,0),(2,2)},status = open}
    \end{polyhedron}
    \caption{Base polytopes of loopless/coloopless Schubert matroids on $[3]$.}
    \label{fig: Base polytopes on 3}
\end{figure}

The tilting quiver is the quiver
\begin{figure}[H]
    \centering
    \begin{tikzpicture}
        \node[shape = circle,draw=black] (A) at (0,0) {$1$};
        \node[shape = circle,draw=black] (B) at (4,2) {$2$};
        \node[shape = circle,draw=black] (C) at (4,0) {$3$};
        \node[shape = circle,draw=black] (D) at (4,-2) {$4$};
        \node[shape = circle,draw=black] (E) at (8,1) {$5$};
        \node[shape = circle,draw=black] (F) at (8,-1) {$6$};
        \begin{scope}[>={Stealth[black]},
              every node/.style={fill=white,circle,align=center,text width=5mm,inner sep = 0},
              every edge/.style={draw = black, very thick}]
    \path[->] (A)  edge[bend left = 10]  node[pos=0.6]{1}(C);
    \path[->](A)  edge[bend right = 10] node[pos=0.6]{3} (C);
    \path[->] (A)  edge[bend left = 10]  node[pos=0.6]{1} (B);
    \path[->] (A)  edge[bend right=10]  node[pos=0.6]{2} (B);
    \path[->] (A)  edge[bend left=10]  node[pos=0.6]{2} (D);
    \path[->] (A)  edge[bend right=10]  node[pos=0.6]{3}(D);
    \path[->] (B)  edge  node[pos=0.3]{3}(E);
    \path[->] (B)  edge (F);
    \path[->] (C)  edge  node[pos=0.3]{2}(E);
    \path[->] (C)  edge (F);
    \path[->] (D)  edge  node[pos=0.3]{1}(E);
    \path[->] (D)  edge (F);
\end{scope}
    \end{tikzpicture}
    \caption{Tilting quiver for $\Perm_3$. The labels of the nodes are chosen in the order of the polytopes above.}
    \label{fig:Tilting Quiver for Perm3}
\end{figure}
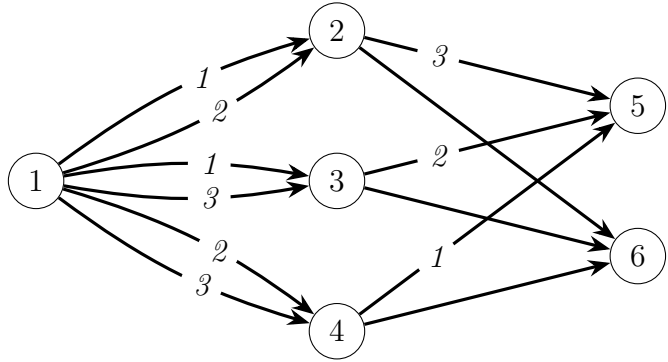

As for the $S_2 \times S_3$-action, the action of $S_2$ permutes the last column of two vertices and the action of $S_3$ permutes the middle column of three vertices.
\end{example}

\section{Stellahedral Variety and Polymatroids}\label{Sec: Stellahedral Variety and Polymatroids}

The Hodge theory of the permutahedral variety played a pivotal role in the resolution of the Rota-Welsh conjecture of matroids given by Adiprasito, Huh, and Katz \cite{Adiprasito2018}. In order to prove similar results for independent sets of matroids, Braden, Huh, Matherne, and Proudfoot introduced the notion of augmented Bergman fans. In this augmentation, the role of the permutahedral variety is played by a variety called the stellahedral variety \cite{Braden2022}. We will show that there is a full strongly exceptional collection for this variety indexed by polytopes that record the independent sets of Schubert matroids.

\begin{definition}
    The \textbf{stellahedron} is the polytope in $\RR^n$ defined as
     \[\Stell_n = \{ x \in \RR^{n} \; \lvert \; y-x = 0 \text{ for some $y \in \Perm_n$}\}. \]
The \textbf{stellahedral fan} $\Sigma_{\Stell}$ is the normal fan of the stellahedron.
\end{definition}

As with the permutahedral variety, the stellahedral variety appears in many other contexts. Notably, it has a moduli space interpretation as the Hassett compactification of $M_{0,n}$ corresponding to the weight $(1, \frac{1}{2},\frac{1}{2} + \varepsilon, \varepsilon,\ldots, \varepsilon)$ where $\varepsilon \leq \frac{1}{n}$, see \cite{RJR16}.

We will use the following description of the cones of the fan given in \cite{Braden2022}:
\begin{definition}
    A pair $(I, F_{\bullet})$ of a subset $I \subseteq E$ and a chain of subsets $F_{\bullet}: F_{1} \subsetneq \cdots \subsetneq F_k \subsetneq E$ is \textbf{compatible} if $I$ is a subset of every $F_i$. We denote this situation by $I \leq F_{\bullet}$.
\end{definition}

\begin{proposition}\cite{Braden2022}
    The cones of the stellahedral fan are in bijection with compatible pairs $I \leq F_{\bullet}$. The cone corresponding to $I \leq F_{\bullet}$ is given by
     \[\sigma_{I \leq F_{\bullet}} = \operatorname{cone}(e_i \; \lvert \; i \in I) + \operatorname{cone}(-e_{E\backslash F} \; \lvert \; F \in F_{\bullet}). \]
    In particular, the rays are given by the cones
     \[\sigma_i := \sigma_{ \{i\} \leq \emptyset} = \operatorname{cone}(e_i) \quad \quad \quad \text{and} \quad \quad \sigma_S := \sigma_{\emptyset \leq \{S\}} = \operatorname{cone}(-e_{E \backslash S}), \]
     for $i \in E$ and $S \subsetneq E$.
\end{proposition}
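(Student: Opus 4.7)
The plan is to identify each cone $\sigma_{I \leq F_\bullet}$ with the normal cone of a specific face of $\Stell_n$, and then check that the assignment $(I \leq F_\bullet) \mapsto \sigma_{I \leq F_\bullet}$ is a bijection with the cones of $\Sigma_{\Stell}$.

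First I would write down an irredundant facet description of $\Stell_n$. From the defining relation with $\Perm_n$, a standard calculation yields a description of the form
\[
\Stell_n = \bigl\{x \in \RR^n : x_i \leq b_i \text{ for each } i \in E\bigr\} \;\cap\; \bigl\{x : {\textstyle \sum_{i \in S}} x_i \geq c_S \text{ for each } S \subsetneq E\bigr\},
\]
so that every facet falls into one of two families: an upper-bound facet $\{x_i = b_i\}$ with outward normal $e_i$, or a lower-bound facet $\{\sum_{i \in S} x_i = c_S\}$ with outward normal $-e_S$. Reindexing the latter by $T = E \setminus S$ reproduces exactly the claimed rays $\sigma_i = \operatorname{cone}(e_i)$ and $\sigma_S = \operatorname{cone}(-e_{E\setminus S})$.

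Next I would compute the face maximized by a generic $w$ in the relative interior of the sum $\sigma := \operatorname{cone}(e_i : i \in I) + \operatorname{cone}(-e_{E \setminus F} : F \in F_\bullet)$. Writing $w = \sum_{i \in I} \alpha_i e_i - \sum_{F \in F_\bullet} \beta_F\, e_{E\setminus F}$ with all $\alpha_i, \beta_F > 0$, the maximizing face $(\Stell_n)_w$ is cut out by saturating the facets corresponding to the rays of $\sigma$: namely $x_i = b_i$ for $i \in I$ together with $x_j = 0$ for every $j \in E \setminus F$ and every $F \in F_\bullet$. Since $F_1$ is the smallest member of $F_\bullet$, the latter collection of constraints collapses to $x_j = 0$ for $j \in E \setminus F_1$. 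These conditions are simultaneously satisfiable on $\Stell_n$ precisely when $I \subseteq F_1$, which is exactly the compatibility condition; in that case $\sigma$ is the inner normal cone of the corresponding non-empty face.

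Finally, to see the correspondence is a bijection I would argue as follows. Injectivity is clear: distinct compatible pairs determine distinct supporting facet sets and therefore distinct faces. For surjectivity, any face of $\Stell_n$ is the intersection of the facets that contain it; separating those facets into the two types yields a subset $I$ of upper-bound indices and a family $\mathcal{T}$ of lower-bound subsets. The observation that $\sum_{j \in E\setminus F} x_j = 0$ and $\sum_{j \in E\setminus F'} x_j = 0$ together force $\sum_{j \in E \setminus (F \cap F')} x_j = 0$ (using $x \geq 0$) shows that the minimal such $\mathcal{T}$ is closed under intersection, so after trimming redundancies $\mathcal{T}$ becomes a chain $F_\bullet$. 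The ray descriptions in the proposition are then the two special cases $(I, F_\bullet) = (\{i\}, \emptyset)$ and $(\emptyset, \{S\})$. The only real obstacle is establishing the irredundant facet description of $\Stell_n$ at the start; this is routine but requires a careful check that the two families of inequalities cover all facets and contain no redundancy.
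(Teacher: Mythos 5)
The high-level plan — describe the facets of $\Stell_n$, identify each cone $\sigma_{I \leq F_\bullet}$ as the normal cone of a face, and check the resulting bijection — is the right kind of argument, but the middle step contains a genuine error that propagates through the rest of the proof. When you saturate the lower-bound facet whose outward normal is $-e_{E \setminus F}$, you obtain the \emph{single} linear condition $\sum_{j \in E\setminus F} x_j = c_F$ (for the appropriate facet constant $c_F$, which is not zero when $|E\setminus F| \geq 2$), not the system $x_j = 0$ for all $j \in E \setminus F$. The latter is a codimension-$|E\setminus F|$ subspace, not a facet. With this mistake in place, your claim that the constraints for $F_\bullet$ ``collapse to $x_j = 0$ for $j \in E\setminus F_1$'' would make the face depend only on $I$ and $F_1$; but then the pairs $(I, (F_1))$ and $(I, (F_1, F_2))$ would get assigned the same face, which contradicts the bijection you are trying to establish (those two cones have different dimensions). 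The correct face cut out by $\sigma_{I\leq F_\bullet}$ is the intersection of $\Stell_n$ with the hyperplanes $x_i = b_i$ for $i \in I$ and the distinct hyperplanes $\sum_{j \in E\setminus F} x_j = c_F$ for each $F$ in the chain, and its codimension is $|I| + |F_\bullet|$.

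Two further steps are not yet justified. For injectivity, ``distinct facet sets therefore distinct faces'' is not automatic: a face can lie in more facets than the ones you used to cut it out, so different compatible pairs could a priori cut out the same face. What is needed is the converse direction — that the facets containing the face cut out by $\sigma_{I \leq F_\bullet}$ are \emph{exactly} those indexed by $I$ and $F_\bullet$ — which is exactly the statement that $\sigma_{I\leq F_\bullet}$ is a full cone of the normal fan, not a proper subcone of one. For surjectivity, the step ``$\mathcal{T}$ is closed under intersection, so after trimming redundancies it becomes a chain'' is not a valid inference: a family closed under intersection, such as $\{\{1,2\},\{2,3\},\{2\}\}$, need not be a chain and does not become one by removing any members. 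The property that actually holds, and that needs to be proved, is that any two lower-bound facets of $\Stell_n$ whose intersection with $\Stell_n$ is nonempty are automatically nested — this follows, for instance, from the strict concavity of the facet constant $c_T$ as a function of $|T|$, which makes non-nested pairs of facet hyperplanes incompatible on the polytope. Once you have that, the family of lower-bound facets containing any given face is a chain outright, with no trimming, and the injectivity and surjectivity both follow cleanly.
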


\subsection{Polymatroids and Independence Polytopes of Matroids}\label{Sec: Polymatroids and Independence Polytopes}

The family of generalized permutahedra was first studied by Edmonds in optimization under the name polymatroid \cite{Edmonds2003}. The two definitions differ in a way that is usually insignificant but that will be important for us.

\begin{definition}
    A \textbf{polymatroid} is a function $\mu: 2^{[n]} \to \RR_{\geq 0}$ such that
    \begin{enumerate}
        \item $\mu(\emptyset) = 0$
        \item $\mu(A) \leq \mu(B)$ whenever $A \subseteq B$
        \item $\mu$ is submodular that is
         \[\mu(S) + \mu(T) \geq \mu(S \cap T) + \mu(S \cup T), \]
         for all $S,T \subseteq [n]$.
    \end{enumerate}
\end{definition}

The base polytopes of polymatroids are exactly the generalized permutahedra contained in the positive orthant. Further, every generalized permutahedron is the base polytope of a polymatroid up to translation.

For any polymatroid $\mu: 2^{[n]} \to \RR_{\geq 0}$, the \textbf{independence polytope} of $\mu$ is
 \[\IP(\mu) = \left \{ x \in \RR^n \; \lvert \; x_i \geq 0, \sum_{i \in A} x_i \leq \mu(A) \text{ for all $A \subseteq [n]$} \right \}.\]
Unlike base polytopes of submodular functions, we do not insist that $\sum_{i \in [n]}x_i = \mu([n])$ only that it is an inequality. The independence polytope of a polymatroid is in general not a generalized permutahedron.

Since the rank function of a matroid is a polymatroid, we will use the notation $\IP(M)$ to denote $\IP(\rk_M)$. For matroids, we have the additional description
 \[\IP(M) = \operatorname{conv}\left(e_I \; \lvert \; I \in \mathcal{I}(M) \right ). \]
From this, we can easily see the following relationship:

\begin{lemma}\label{lem: relation between IP and BP}
Let $M$ be any matroid on $[n]$, then
 \[ \IP(M) = \left ( \BP(M) + [-1,0]^n \right ) \cap [0,1]^n.\]
\end{lemma}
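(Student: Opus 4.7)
The plan is to prove both inclusions by directly checking the defining inequalities of $\IP(M)$, relying only on the two descriptions already recalled in the section: $\IP(M) = \operatorname{conv}(e_I \mid I \in \mathcal{I}(M))$, and equivalently $\IP(M) = \{x \in \RR^n_{\geq 0} \mid \sum_{i \in A} x_i \leq \rk_M(A) \text{ for all } A \subseteq [n]\}$. In particular, since $\rk_M(\{i\}) \leq 1$, one has $\IP(M) \subseteq [0,1]^n$ automatically.

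For the inclusion $\IP(M) \subseteq (\BP(M) + [-1,0]^n) \cap [0,1]^n$, I would argue vertex-by-vertex. Every independent set $I$ extends to a basis $B \in \mathcal{B}(M)$, so $e_I = e_B - e_{B\setminus I}$ with $e_B \in \BP(M)$ and $e_{B\setminus I} \in \{0,1\}^n \subseteq [0,1]^n$. Hence every vertex of $\IP(M)$ lies in $\BP(M) + [-1,0]^n$, and by convexity so does $\IP(M)$. Combined with $\IP(M) \subseteq [0,1]^n$, this gives the inclusion.

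For the reverse inclusion, I would take $x \in (\BP(M) + [-1,0]^n) \cap [0,1]^n$ and write $x = b + v$ with $b \in \BP(M)$ and $v \in [-1,0]^n$. Since $x \in [0,1]^n$, in particular $x_i \geq 0$. For any $A \subseteq [n]$, using that $v_i \leq 0$ and that $b \in \BP(M)$ satisfies the submodular inequality $\sum_{i \in A} b_i \leq \rk_M(A)$, I get
\[
\sum_{i \in A} x_i \;=\; \sum_{i \in A} b_i + \sum_{i \in A} v_i \;\leq\; \sum_{i \in A} b_i \;\leq\; \rk_M(A).
\]
Thus $x$ satisfies the defining inequalities of $\IP(M)$, completing the proof.

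There is no real obstacle here: the statement is essentially the observation that $\IP(M)$ is the down-closure of $\BP(M)$ in the non-negative orthant, truncated at $1$ in each coordinate (which is automatic for matroids). The only bookkeeping issue is remembering to intersect with $[0,1]^n$, which is forced by the fact that Minkowski summing with $[-1,0]^n$ produces points with possibly negative coordinates that are not in $\IP(M)$.
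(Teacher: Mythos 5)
Your proof is correct, and note that the paper does not actually supply a proof of this lemma — it is stated as an easy consequence of the two descriptions of $\IP(M)$ recalled just before it, which is exactly what you verify. Both inclusions are sound: for $\IP(M)\subseteq(\BP(M)+[-1,0]^n)\cap[0,1]^n$ you correctly use that every independent set extends to a basis to write $e_I=e_B-e_{B\setminus I}$, together with convexity of a Minkowski sum of convex sets; for the reverse you correctly deduce nonnegativity from the cube constraint and the rank inequalities $\sum_{i\in A}x_i\le\rk_M(A)$ from the fact that $b\in\BP(M)$ already satisfies them and the translation $v$ only decreases coordinates. Your closing remark that the upper bound $x_i\le 1$ is automatic (from $\rk_M(\{i\})\le 1$) and that the cube intersection is really only enforcing $x\ge 0$ is also accurate and a nice clarification.
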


We also have the characterization of which independence polytopes are contained in the cube $[0,1]^n$.

\begin{proposition}\cite{Edmonds2003}\label{prop: independence polytopes in the cube}
    The independence polytopes contained in a cube $[0,1]^n$ are exactly the independence polytopes of matroids.
\end{proposition}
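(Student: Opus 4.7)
The plan is to prove both containments in the claimed equality. The forward direction is immediate: for a matroid $M$, every vertex of $\IP(M)$ is of the form $e_I$ for some independent set $I \in \mathcal{I}(M)$, and each $e_I$ has coordinates in $\{0,1\}$, so $\IP(M) \subseteq [0,1]^n$.

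For the reverse direction, suppose $\mu$ is a polymatroid with $\IP(\mu) \subseteq [0,1]^n$. The first key observation is that for each $i \in [n]$, the quantity $\mu(\{i\})$ equals the maximum of the linear functional $x \mapsto x_i$ over $\IP(\mu)$ (the standard bound $x_i \leq \mu(\{i\})$ is tight, as witnessed by the greedy vertex for the ordering starting with $i$). Since $\IP(\mu) \subseteq [0,1]^n$, this gives $\mu(\{i\}) \leq 1$ for every $i$. Submodularity then yields, for every $S \subseteq [n]$ and $i \notin S$,
\[ \mu(S \cup \{i\}) - \mu(S) \;\leq\; \mu(\{i\}) - \mu(\emptyset) \;=\; \mu(\{i\}) \;\leq\; 1, \]
and monotonicity gives $\mu(S \cup \{i\}) - \mu(S) \geq 0$. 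So every incremental rank of $\mu$ lies in $[0,1]$.

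Next I would invoke the integrality implicit in our setting (since we are working with lattice polytopes indexing line bundles, $\mu$ is $\ZZ$-valued — one sees this, e.g., by noting that the greedy vertices of $\IP(\mu)$ must be lattice points of the form $0/1$ and so each increment is forced into $\{0,1\}$). Once every incremental rank $\mu(S \cup \{i\}) - \mu(S)$ lies in $\{0,1\}$, the function $\mu$ satisfies the unit-increase axiom characterizing matroid rank functions; thus $\mu = \rk_M$ for some matroid $M$ on $[n]$. Finally, $\IP(\mu) = \IP(M)$ by definition, completing the proof.

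The main obstacle is purely bookkeeping: being careful that the vertex-maximization identity $\max_{x \in \IP(\mu)} x_i = \mu(\{i\})$ really does hold in the generality we need, and that the integrality hypothesis is properly invoked (rather than assumed implicitly). Everything else reduces to standard polymatroid-theoretic facts and the well-known characterization of matroid rank functions among submodular functions with $\mu(\{i\}) \leq 1$ and $\mu(\emptyset) = 0$.
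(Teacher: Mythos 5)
The paper does not prove this proposition; it is cited directly from Edmonds \cite{Edmonds2003}, so there is no paper proof to compare against. Your argument is the standard one and is correct: the forward containment is trivial, and for the reverse you bound the one-element rank values by $1$, transfer this to all incremental ranks via submodularity and monotonicity, and then invoke the characterization of matroid rank functions among $\ZZ$-valued submodular functions with $\mu(\emptyset) = 0$, monotonicity, and unit increase.

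One remark on the step you flag as ``bookkeeping'': the integrality hypothesis is not a technicality but the crux that makes the statement true at all. For example, $\mu(A) = \tfrac{1}{2}\lvert A\rvert$ is a bona fide polymatroid with $\IP(\mu) \subseteq [0,1]^n$ that is not a matroid independence polytope, so without restricting to lattice polytopes (as the paper's $\Def(\Sigma)$ does implicitly) the reverse direction fails. Your parenthetical sketch of how to establish integrality is nevertheless sound: for any linear order on $[n]$, the greedy vertex of $\IP(\mu)$ is a genuine vertex (it optimizes a strictly decreasing positive functional respecting the order), so it is a lattice point of $\IP(\mu) \subseteq [0,1]^n$ and therefore a $0/1$-vector; and every increment $\mu(S \cup \{i\}) - \mu(S)$ occurs as a coordinate of some such greedy vertex, forcing it into $\{0,1\}$. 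This already gives unit increase directly, so strictly speaking the preliminary observation that $\mu(\{i\}) \le 1$ is subsumed by the greedy-vertex argument, but it does no harm. In short the proof is correct; I would only tighten the wording so that the lattice hypothesis is stated up front rather than ``invoked'' midstream, and so that the greedy-vertex justification for integrality reads as the actual argument rather than an aside.
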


The key relationship between the stellahedral variety and independence polytopes of polymatroids is the following result of Eur, Huh, and Larson.

\begin{proposition}\label{prop: def cone are independence polymatroids}\cite{EHL22}
    There is a bijection between classes of $T$-invariant nef divisors on $\Stell_n$ and polymatroids given by the map
     \[f \mapsto \sum_{\emptyset \subseteq S \subsetneq [n]} f([n]\backslash S) [D_S],  \]
     where $[D_S]$ is the class of the torus-invariant divisor corresponding to the ray $\sigma_{ \emptyset \leq {S}}$.
    Further, the polytope corresponding to the divisor of $f$ is the independence polytope of $f$. Therefore, the deformation cone of $\Sigma_{\Stell_n}$ is the set of translations of independence polytopes of polymatroids on $[n]$.
\end{proposition}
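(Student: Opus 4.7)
The plan is to apply the standard dictionary from Section 2 between nef $T$-invariant divisors on $X_\Sigma$ and lattice polytopes in $\Def(\Sigma)$, unpacked through the explicit description of the rays of the stellahedral fan. Given a polymatroid $\mu$, I would form the divisor
\[
D_\mu := \sum_{S \subsetneq [n]} \mu([n]\setminus S)\, [D_{\sigma_S}],
\]
with implicit zero coefficient on each $D_{\sigma_i}$, and then verify in turn that (i) the associated polytope is $\IP(\mu)$, (ii) $D_\mu$ is nef, and (iii) the assignment is bijective on nef classes.

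For (i), each facet of $P_{D_\mu}$ corresponds to a ray. The rays $\sigma_i$ with primitive generator $e_i$ and coefficient $0$ contribute the nonnegativity constraints $x_i \geq 0$, while the rays $\sigma_S$ with primitive generator $-e_{[n]\setminus S}$ and coefficient $\mu([n]\setminus S)$ contribute $\sum_{i \in T} x_i \leq \mu(T)$, where $T = [n]\setminus S$ ranges over nonempty subsets (and $T = \emptyset$ gives a trivial inequality since $\mu(\emptyset)=0$). Taken together these are exactly the defining inequalities of $\IP(\mu)$. For (iii), I would observe that linear equivalence by the characters $\chi^{e_i}$ lets us normalize any nef $T$-invariant divisor so that all coefficients on the rays $\sigma_i$ vanish, after which the remaining coefficients recover $\mu$ via $\mu(T) := a_{\sigma_{[n]\setminus T}}$ and $\mu(\emptyset) = 0$.

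For (ii), nefness is equivalent by the toric criterion of Section 2 to convexity of the piecewise linear function $\psi_\mu : N_\RR \to \RR$ with $\psi_\mu(e_i) = 0$ and $\psi_\mu(-e_{[n]\setminus S}) = \mu([n]\setminus S)$. Using the description of maximal cones of $\Sigma_{\Stell_n}$ by maximal compatible pairs $(I, F_\bullet)$, I would verify convexity across each wall. This is the main obstacle: two adjacent maximal compatible pairs can differ in combinatorially distinct ways (refining the maximal flag at different positions, moving an element between $I$ and an adjacent flag member, etc.), and one must show that in every case the wall-crossing inequality is equivalent to an instance of either monotonicity ($A \subseteq B \Rightarrow \mu(A) \leq \mu(B)$) or submodularity ($\mu(A) + \mu(B) \geq \mu(A\cap B) + \mu(A\cup B)$), with the converse also holding so that every nef class comes from a polymatroid. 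Once this enumeration is in hand, the final statement about the deformation cone follows formally from the bijection between nef line bundles and polytopes in $\Def(\Sigma)$ modulo translation, together with the fact (from Section 2) that $\Def(\Sigma)$ is the cone of lattice polytopes whose normal fan coarsens $\Sigma_{\Stell_n}$.
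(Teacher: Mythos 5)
The paper itself offers no proof of this proposition: it is quoted from \cite{EHL22}, so there is no internal argument to compare your attempt against, and your proposal has to be judged on its own. Your outline is the natural toric one, and parts (i) and (iii) are essentially fine: the rays $\sigma_i$ give the constraints $x_i \geq 0$ and the rays $\sigma_S$ give $\sum_{i \in T} x_i \leq \mu(T)$ for $T = [n]\setminus S$ (note $T=\emptyset$ never occurs, since $S \subsetneq [n]$, so no ``trivial inequality'' needs to be discussed), provided you fix the sign convention for $P_D$ once and for all — with the convention as literally printed in Section 2 of this paper ($\langle x,\rho\rangle \leq a_\rho$) your computation produces a reflected copy of $\IP(\mu)$, whereas the standard convention $\langle x, v_\rho\rangle \geq -a_\rho$ gives $\IP(\mu)$ on the nose. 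The normalization in (iii), killing the coefficients on the $n$ rays $\sigma_i$ by twisting with characters $\chi^{e_i}$, is also correct since the $e_i$ form a basis of $M$.

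The genuine gap is step (ii). The equivalence ``the piecewise-linear function determined by $\mu$ is convex on $\Sigma_{\Stell_n}$ if and only if $\mu(\emptyset)=0$, $\mu$ is monotone, and $\mu$ is submodular'' is not a routine verification appended to the argument — it \emph{is} the mathematical content of the proposition, and you explicitly defer it (``once this enumeration is in hand''). To close it you must classify the codimension-one walls between maximal cones $\sigma_{I\leq F_\bullet}$ (with $|I| + \#F_\bullet = n$): walls obtained by deleting a flag generator $-e_{E\setminus F}$, where the two ways of refilling the flag produce the submodularity inequality $\mu(A)+\mu(B)\geq \mu(A\cup B)+\mu(A\cap B)$ for the two candidate subsets, and walls where a generator $e_i$ is exchanged against a flag generator, which produce the monotonicity (and nonnegativity) inequalities; and you must check that every wall is of one of these types and that, conversely, every polymatroid inequality arises from some wall, so that local convexity across all walls (which suffices for global convexity on a complete fan) is equivalent to the polymatroid axioms. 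Until that case analysis — or some substitute, such as a reduction to an existing description of the deformation cone of the stellahedral or braid fan — is actually carried out, what you have is a correct plan rather than a proof.
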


\subsection{Independence Polytopes of Schubert Matroids are Exceptional}\label{Sec: Independence Polytopes of Schubert matroids are exceptional}

We now show that independence polytopes of Schubert matroids from a strongly exceptional collection.

We will reduce this to the permutahedral case via a strong deformation retract from $\IP(M)$ to $\BP(M)$. For any $P \in \Def(\Sigma_{Stell_n})$ and $x \in P$, let
  \[\sat(x,i) = \max\{ \alpha \in \RR \; \lvert \; \alpha \geq 0, x + \alpha e_i \in P \}\]
and
 \[P_i = \{ x \in \RR^n \; \lvert \; \sat(x,j) = 0 \text{ for all $j \leq i$} \}.\]

From the study of flows on polymatroids, we have the following independence result of $\sat$:

\begin{lemma}\cite{Fujishige2013}\label{lem: saturation independence result} Let $i \not = j \in [n]$. For any $0 \leq \alpha \leq \sat(x,i)$, let $y = x + \alpha e_i$. Then,
 \[\sat(x,j) = \sat(y,j). \]
\end{lemma}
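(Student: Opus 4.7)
The plan is to work from the facet description $\IP(\mu) = \{z \in \RR^n : z_k \geq 0,\ \sum_{k \in A} z_k \leq \mu(A)\text{ for all }A \subseteq [n]\}$ and first derive the explicit formula $\sat(x,i) = \min_{A \ni i} \sigma_A(x)$, where $\sigma_A(z) := \mu(A) - \sum_{k \in A} z_k$. This follows because moving in direction $e_i$ only interacts with the packing inequalities indexed by subsets that contain $i$: the constraints $z_k \geq 0$ for $k \neq i$ are unaffected, and the constraint $z_i \geq 0$ is loosened along the motion. Analogous formulas hold for $\sat(x,j)$ and $\sat(y,j)$.

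Comparing before and after the move gives $\sigma_A(y) = \sigma_A(x)$ when $i \notin A$ and $\sigma_A(y) = \sigma_A(x) - \alpha$ when $i \in A$. The easy direction $\sat(y,j) \leq \sat(x,j)$ is then immediate by taking any minimizer of $\sat(x, j)$ achieved on a set $A \ni j$ with $i \notin A$. The reverse inequality requires showing that the minimum defining $\sat(y,j)$ is realized on such a set. I would argue this by a submodular uncrossing step: pick any minimizer $A^{*} \ni i, j$ of $\sigma_A(y)$ and choose $B \ni i$ achieving $\sat(x,i) = \sigma_B(x) \geq \alpha$. Submodularity of $\mu$ translates directly to submodularity of the slack, $\sigma_{A^{*}}(x) + \sigma_B(x) \geq \sigma_{A^{*} \cap B}(x) + \sigma_{A^{*} \cup B}(x)$, and since $A^{*} \cap B \ni i$, we have $\sigma_{A^{*} \cap B}(x) \geq \sat(x,i) = \sigma_B(x)$. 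This forces $\sigma_{A^{*} \cup B}(x) \leq \sigma_{A^{*}}(x)$, so that a controlled iteration of this uncrossing with tight sets through $i$ produces a minimizer witnessing $\sat(x,j)$ on a set avoiding $i$. Combining both inequalities yields the claimed equality.

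The main obstacle will be the uncrossing bookkeeping — ensuring each replacement step truly expels $i$ while retaining $j$ and without enlarging the slack — together with a careful check at the boundary $\alpha = \sat(x,i)$, where one needs to rule out that a minimizer for $\sat(y, j)$ exists only on sets through $i$ whose slack at $x$ is strictly greater than $\sat(x, j)$. This is the standard delicate step in polymatroid exchange arguments and is the reason the statement is imported from the Fujishige reference rather than proved from scratch.
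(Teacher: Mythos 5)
The paper does not actually prove this lemma; it imports it from Fujishige with no argument. So there is no ``paper proof'' to compare against, but the more serious issue is that your attempt to prove the equality $\sat(x,j)=\sat(y,j)$ cannot succeed, because the lemma as written in the paper is \emph{false} in that generality. Take $P=\IP(U_{1,2})=\operatorname{conv}\{(0,0),(1,0),(0,1)\}$, $x=(0,0)$, $i=1$, $j=2$. Then $\sat(x,1)=\sat(x,2)=1$, and for any $0<\alpha\leq 1$ with $y=(\alpha,0)$ we get $\sat(y,2)=1-\alpha<\sat(x,2)$. In the notation of your proof, the unique minimizer of $\sigma_A(x)$ over $A\ni 2$ that also contains $i=1$ is $A=\{1,2\}$, and its slack drops by $\alpha$, which is exactly the failure mode you flagged as ``the delicate step'' but which in fact occurs generically rather than only at the boundary.

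Your facet formula $\sat(x,i)=\min_{A\ni i}\sigma_A(x)$ and the easy direction $\sat(y,j)\leq\sat(x,j)$ are both correct. The breakdown is in the uncrossing argument for the reverse inequality. With $A^*\ni i,j$ a minimizer at $y$ and $B\ni i$ achieving $\sat(x,i)$, submodularity gives $\sigma_{A^*\cup B}(x)\leq\sigma_{A^*}(x)$, but $A^*\cup B$ still contains $i$ (both $A^*$ and $B$ do), and $A^*\cap B$ also still contains $i$. No iteration of this step can ever produce a minimizer avoiding $i$, because uncrossing a pair of $i$-containing sets always yields $i$-containing sets. There is no bookkeeping that repairs this: the inequality $\sat(y,j)\geq\sat(x,j)$ is simply false.

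What the paper actually needs (to show the deformation retract $F_i:P_{i-1}\times[0,1]\to P_{i-1}$ in Section 6.4 is well defined) is only the one-directional, degenerate case: if $\sat(x,j)=0$ and $y=x+\alpha e_i$ with $0\leq\alpha\leq\sat(x,i)$, then $\sat(y,j)=0$. That statement is true and easy with your setup: $\sat(x,j)=0$ means $\sigma_A(x)=0$ for some $A\ni j$; then $\sigma_A(y)\leq\sigma_A(x)=0$, and $\sigma_A(y)\geq 0$ because $y\in P$, so $\sigma_A(y)=0$ and $\sat(y,j)=0$. Compare also the type-$A$ analogue, Lemma \ref{lem: independence of exchange capacity}, which carries an explicit additional vanishing hypothesis ($c(x,a_1,b_2)=0$) and is phrased as an ``iff'' of vanishing rather than as an equality of capacities. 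The statement of Lemma \ref{lem: saturation independence result} appears to have dropped the corresponding hypothesis, and your proof attempt inherits that gap.
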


We use the convention that $P_0 = P$. Consider the homotopies $F_i: P_{i-1} \times [0,1] \to P_{i-1}$ defined by
 \[F_i(x,t) = x + t \sat(x,i)e_i.\]
Each of these is well-defined by the previous lemma and is a deformation retract from $P_{i-1}$ to $P_i$.
The \textbf{independence-base} deformation retract is the composition
 \[ \operatorname{IB} = F_{n} \circ \cdots \circ F_1.\]
It is clear that this is a deformation retract from $P$ to $P_n$. 

\begin{lemma}\label{lem: fully saturated result is base}
    For any submodular function $\mu$, we have that 
         \[ B(\mu) = \{ x \in P(\mu)\; \lvert \; \sat(x,i) = 0 \text{ for all $i \in [n]$}\}.\]
\end{lemma}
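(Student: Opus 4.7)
The plan is to prove the equality by establishing the two inclusions separately. The forward inclusion $B(\mu) \subseteq \{x \in P(\mu) : \sat(x,i) = 0 \text{ for all } i\}$ is immediate from the definition of $B(\mu)$ as the face of $P(\mu)$ cut out by $\sum_{j \in [n]} x_j = \mu([n])$: any positive increase of a single coordinate of $x \in B(\mu)$ would push this top sum above $\mu([n])$ and hence leave $P(\mu)$, so $\sat(x,i) = 0$ for every $i \in [n]$.

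For the reverse inclusion, suppose $x \in P(\mu)$ satisfies $\sat(x,i) = 0$ for all $i \in [n]$. I will call a set $S \subseteq [n]$ \emph{tight} at $x$ if $\sum_{j \in S} x_j = \mu(S)$. Since $\sat(x,i) = 0$, for every $\varepsilon > 0$ the point $x + \varepsilon e_i$ must violate some defining inequality of $P(\mu)$; the nonnegativity constraints are clearly unaffected, so the violated inequality has the form $\sum_{j \in A} x_j \leq \mu(A)$ with $i \in A$. Because there are only finitely many such sets, by pigeonhole some $A_i \ni i$ is violated for a sequence of $\varepsilon$'s tending to $0$, and continuity then gives $\sum_{j \in A_i} x_j = \mu(A_i)$, exhibiting a tight set $A_i$ containing each $i$.

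The key step is the standard observation that tight sets are closed under union. If $S$ and $T$ are tight, submodularity of $\mu$ and $x \in P(\mu)$ give
\[ \mu(S \cup T) + \mu(S \cap T) \leq \mu(S) + \mu(T) = \sum_{j \in S} x_j + \sum_{j \in T} x_j = \sum_{j \in S \cup T} x_j + \sum_{j \in S \cap T} x_j \leq \mu(S \cup T) + \mu(S \cap T), \]
forcing equality throughout. In particular $S \cup T$ is tight, so $\bigcup_{i \in [n]} A_i = [n]$ is tight, and hence $x \in B(\mu)$.

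This argument is essentially standard from polymatroid theory; the only real subtlety is the union-closure of tight sets, which is a short consequence of submodularity, while producing a tight set through each coordinate requires a one-step limit/pigeonhole argument but is otherwise routine.
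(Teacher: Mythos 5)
Your proof is correct. The paper states this lemma without proof (it is treated as a standard polymatroid fact, in the spirit of the neighboring citations to Fujishige), so there is no argument in the paper to compare against; your argument is precisely the textbook one. Both directions check out: the forward inclusion uses that $\sum_j x_j = \mu([n])$ is a defining equation of $B(\mu)$, and the reverse inclusion constructs, for each $i$, a tight set $A_i \ni i$ from $\sat(x,i)=0$ (your finiteness/continuity reasoning is sound: the violated constraint must be of the form $\sum_{j\in A} x_j \le \mu(A)$ with $i\in A$ since increasing $x_i$ cannot violate nonnegativity, and finitely many candidate $A$'s force one to be tight at $x$) and then uses the standard submodularity identity to show tight sets are closed under unions, so $[n]=\bigcup_i A_i$ is tight. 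The only implicit assumption worth flagging is that $\mu$ should be a polymatroid (monotone and nonnegative) rather than an arbitrary submodular function, so that $B(\mu)$ actually lies inside the nonnegative orthant and hence inside $P(\mu)=\IP(\mu)$; this matches the surrounding context in the paper even though the lemma's wording says only ``submodular.''
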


Hence, for any matroid $M$, we have that $\IP(M)_n$ is exactly the base polytope $\BP(M)$. This means that $\operatorname{IB}$ is a deformation retract from $\IP(M)$ to $\BP(M)$.

\begin{proposition}\label{prop: Independence Schuberts are Exceptional}

The set $\{\IP(\Omega)\}$ for $\Omega \in \Sch_n$ is a strongly exceptional collection for $\Def(\Sigma_{\Stell_n})$.
\end{proposition}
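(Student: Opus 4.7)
The plan is to mirror the proof of Theorem \ref{thm: Schuberts are Exceptional}, using the independence--base deformation retract $\operatorname{IB}$ from Section~\ref{Sec: Polymatroids and Independence Polytopes} to reduce the exceptionality of the independence polytopes $\IP(\Omega)$ to the already-established exceptionality of base polytopes of Schubert matroids. First, I would invoke the $S_n$-action on $\Sigma_{\Stell_n}$ by coordinate permutations (which preserves the class of Schubert matroids and the homotopy type of the relevant spaces) to assume $\Omega_1 = \Omega_S^e$ is a standard Schubert matroid. Since $\IP(\Omega) \subseteq [0,1]^n$ for every matroid $\Omega$ by Proposition~\ref{prop: independence polytopes in the cube}, the intersection $\IP(\Omega_2) \cap (\IP(\Omega_1) + m)$ is empty (so the set difference is trivially contractible) unless $m = e_A - e_B$ for some disjoint $A, B \subseteq [n]$. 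A direct calculation using the cube bounds on both polytopes forces every point of a non-empty intersection to satisfy $x_i = 1$ for $i \in A$ and $x_j = 0$ for $j \in B$, so the intersection lies in a common coordinate face of the cube.

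The main tool is a modified retract $\operatorname{IB}_A$ that applies the maps $F_i$ only in directions $e_i$ for $i \notin A$, specializing to the full $\operatorname{IB}$ when $A = \emptyset$. Well-definedness follows from Lemma~\ref{lem: saturation independence result} just as for $\operatorname{IB}$. The key claim is that $\operatorname{IB}_A$ restricts to a deformation retract of $\IP(\Omega_2) \setminus (\IP(\Omega_1) + m)$: if $x$ lies outside $\IP(\Omega_1) + m$ because $x_i < 1$ for some $i \in A$, then this persists because coordinates in $A$ are never moved; if instead the failure is a violated rank inequality $\sum_{i \in R}(x_i - m_i) > \rk_{\Omega_1}(R)$, then the violation persists because $\operatorname{IB}_A$ only increases coordinates.

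When $m = 0$, the full $\operatorname{IB}$ deformation retracts $\IP(\Omega_2) \setminus \IP(\Omega_1)$ onto $\BP(\Omega_2) \setminus \IP(\Omega_1)$. Intersecting $\IP(\Omega_1)$ with the hyperplane $\{\sum x_i = \rk(\Omega_2)\}$ containing $\BP(\Omega_2)$ yields either $\BP(T_{\rk(\Omega_2)}(\Omega_1))$ or the empty set (when $\rk(\Omega_2) > \rk(\Omega_1)$), reducing the problem to $\BP(\Omega_2) \setminus \BP(T_{\rk(\Omega_2)}(\Omega_1))$. Since Schubert matroids are closed under truncation by Proposition~\ref{prop: Schubert matroids are closed under operations}, Theorem~\ref{thm: Schuberts are Exceptional} finishes this case. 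For $m \neq 0$, after applying $\operatorname{IB}_A$ I would restrict attention to the coordinate face $F_{A,B}$ cut out by $x_i = 1$ for $i \in A$ and $x_j = 0$ for $j \in B$. On $F_{A,B}$, one identifies $\IP(\Omega_2)$ (up to translation by $e_A$) with the independence polytope of $(\Omega_2/A)|_{[n]\setminus(A\cup B)}$ and $\IP(\Omega_1) + m$ with the independence polytope of $(\Omega_1/B)|_{[n]\setminus(A\cup B)}$; both are independence polytopes of Schubert matroids on a smaller ground set, and I would conclude with a further truncation/retract step reducing to a base-polytope difference governed by Theorem~\ref{thm: Schuberts are Exceptional}.

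The main obstacle I expect is the case $m \neq 0$: unlike the case $m = 0$, the image of $\operatorname{IB}_A$ is not itself a base polytope, so the face-restriction above must be combined with an additional retract or truncation to recognize the remaining set difference as a contractible difference of base polytopes of Schubert matroids. This requires careful bookkeeping of how the minor operations (contraction by $A$, deletion of $B$) interact with the truncation step, while staying within the class of Schubert matroids via Proposition~\ref{prop: Schubert matroids are closed under operations}.
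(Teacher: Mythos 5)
Your handling of the $m=0$ case is essentially the paper's argument and is correct: apply the independence--base retract $\operatorname{IB}$ on $\IP(\Omega_2)$, observe it preserves the complement of $\IP(\Omega_1)$ because independence polytopes are closed downward in each coordinate, land in $\BP(\Omega_2)\setminus\IP(\Omega_1)$, intersect $\IP(\Omega_1)$ with the hyperplane $\sum x_i=\rk(\Omega_2)$ to get $\BP(T_{\rk(\Omega_2)}(\Omega_1))$ (or $\emptyset$), and invoke Theorem~\ref{thm: Schuberts are Exceptional}. (The preliminary $S_n$-reduction to standard $\Omega_1$ is harmless but unnecessary here; unlike Theorem~\ref{thm: Schuberts are Exceptional}, this proof never invokes Proposition~\ref{prop: submodular function only needs intervals}.)

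The $m\neq 0$ case is where you go off course, and you sense this yourself. The modified retract $\operatorname{IB}_A$ does restrict to the set difference, as you argue, but its image is $\{x\in\IP(\Omega_2):\sat(x,i)=0\text{ for }i\notin A\}$, which is \emph{not} contained in the coordinate face $F_{A,B}$. So the step ``restrict attention to $F_{A,B}$'' does not follow from the retract, and you have no mechanism to carry out the face identification you describe. Stacking another retract onto a face would require a separate argument that it too preserves the set difference, and you would then have to track that the end result is a difference of base polytopes of Schubert matroids on the smaller ground set, with no single clean reduction in sight.

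The missing idea is much simpler and removes the need for $\operatorname{IB}_A$ and the face bookkeeping entirely. Since $\IP(\Omega_2)\subseteq[0,1]^n$, we have
\[
\IP(\Omega_2)\setminus\bigl(\IP(\Omega_1)+m\bigr)=\IP(\Omega_2)\setminus\Bigl(\bigl(\IP(\Omega_1)+m\bigr)\cap[0,1]^n\Bigr),
\]
and, writing $m=e_S-e_T$ with $S,T$ disjoint, $\bigl(\IP(\Omega_1)+m\bigr)\cap[0,1]^n$ equals $\operatorname{conv}\{e_I+m: I\in\mathcal{I}(\Omega_1),\ S\cap I=\emptyset,\ T\subseteq I\}$, which when nonempty is exactly the independence polytope of a Schubert matroid obtained from $\Omega_1$ by restriction to $[n]\setminus S$, contraction by $T$, trivial extension back to $[n]$, and flipping loops/coloops. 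By Proposition~\ref{prop: Schubert matroids are closed under operations} all of these operations preserve the Schubert property, so the $m\neq0$ case reduces outright to $m=0$, and then your (correct) $m=0$ argument finishes the proof. In short: the right move is not a new retract but the observation that translating-and-cubing an $\IP$ of a Schubert matroid produces another $\IP$ of a Schubert matroid.
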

\begin{proof}
Let $\Omega_1$ and $\Omega_2$ be any two Schubert matroids on $[n]$. We will show that $\IP(\Omega_2) \backslash \IP(\Omega_1) + m$ is contractible for all $m \in M$.

First, since both $\IP(\Omega_1)$ and $\IP(\Omega_2)$ are contained in $[0,1]^n$, it suffices to consider the case where $m \in [-1,1]^n \cap \ZZ^n$. Otherwise, we would have $\IP(\Omega_2) \cap \IP(\Omega_1) + m = \emptyset$ so the set difference would be the entire polytope $\IP(\Omega_2)$ which is topologically a ball.

Say that $m = e_S - e_T$ where $S$ and $T$ are disjoint subsets of $[n]$. Then,
 \[\left (\IP(\Omega_1) + m\right ) \cap [0,1]^n = \operatorname{conv}\{e_I \; \lvert \;  I \in \mathcal{I}(M), S \cap I = \emptyset, T \subseteq I\} + m. \]

When the polytope on the right is non-empty, it is the independence polytope of the matroid obtained by the following sequence of operations:
\begin{enumerate}
    \item Restrict $\Omega_1$ to the complement of $S$.
    \item Contract by $T$.
    \item Trivially extend the matroid so that its ground set is again $[n]$.
    \item Flip loops to coloops until the elements $i \in S$ are coloops and the $i \in T$ are loops.
\end{enumerate}
By Proposition \ref{prop: Schubert matroids are closed under operations}, the class of Schubert matroids is closed under all these operations and therefore it suffices to restrict to the case where $m =0$.

 For this, consider the restriction to $\IP(\Omega_2) \backslash \IP(\Omega_1)$ of the independence-base deformation retract $F$ for $\IP(\Omega_2)$. For any independence polytope, $P$ if $x \in P$, then $x - \lambda e_i \in P$ for $0 \leq \lambda \leq x_i$. In the contrapositive, this means that if $x \not \in \IP(\Omega_1)$, then $F(x,t) \not \in \IP(\Omega_1)$ for any $t \in [0,1]$. Therefore the independence-base deformation retraction restricts to a strong deformation retract
 \[ \IP(\Omega_2)\backslash \IP(\Omega_1) \to \BP(\Omega_2) \backslash \IP(\Omega_1).\]
Let $r = \rk(\Omega_2)$. If $\rk(\Omega_1) < r$, then $\BP(\Omega_2) \cap \IP(\Omega_1) = \emptyset$, since $\BP(\Omega_2)$ is contained in the hyperplane where $\sum_{i} x_i = r$ and $\IP(\Omega_1)$ is contained in the half-space where $\sum_{i} x_i \leq \rk(\Omega_1)$. Hence this set difference is contractible. Otherwise,
 \[\BP(\Omega_2) \cap \IP(\Omega_1) = \BP(\Omega_2) \cap \BP(T_r(\Omega_1)),\]
where $T_r(\Omega_1)$ is the truncation of $\Omega_1$ to rank $r$. By Proposition \ref{prop: Schubert matroids are closed under operations}, we see that truncations of Schubert matroids are again Schubert matroids. Therefore, we have a strong deformation retract from
 \[ \IP(\Omega_2) \backslash \IP(\Omega_1) \to \BP(\Omega_2) \backslash \BP(\Omega'),\]
where $\Omega'$ is another Schubert matroid. In Theorem \ref{thm: Schuberts are Exceptional}, we showed that this set difference is contractible or empty for any two Schubert matroids.
\end{proof}

As a consequence of this result together with Proposition \ref{prop: exceptionality gives basis}, we have a new proof of the following result of Eur, Huh, and Larson:

\begin{theorem}\label{thm: basis for  K(Stell)}\cite{EHL22}
    The nef line bundles associated to the independence polytope of (all) Schubert matroids on $[n]$ form a basis for $K_0(\Stell_n)$. Under the Chern character, this also gives a basis of $H^*(\Stell_n)$.
\end{theorem}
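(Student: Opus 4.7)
The plan mirrors the strategy behind Corollary \ref{cor: theorem of Derksen and Fink Schuberts}. Proposition \ref{prop: Independence Schuberts are Exceptional} already provides a strongly exceptional collection of polytopes $\{\IP(\Omega)\}_{\Omega \in \Sch_n}$, which we reorder by non-decreasing number of lattice points to obtain an exceptional collection of polytopes. By Theorem \ref{mainthm: dg homs calculation} (applied exactly as in the proof of Proposition \ref{prop: exceptionality gives basis}), this yields an exceptional collection of nef line bundles in $\D(\Stell_n)$, inducing a semi-orthogonal decomposition in which the classes $[\mathcal{L}_{\IP(\Omega)}]$ are linearly independent in $K_0(\Stell_n)$. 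Proposition \ref{prop: exceptionality gives basis} then reduces the basis statement to verifying the combinatorial equality $|\Sch_n| = |\Sigma_{\Stell_n}(n)|$, where $\Sigma_{\Stell_n}(n)$ is the set of maximal cones of the stellahedral fan. Note that unlike the permutahedral case, we cannot restrict to loopless Schubert matroids here, since changing a loop to a coloop in $\IP$ is not a translation.

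There are two natural ways to handle the counting. The first is direct: exhibit a bijection between Schubert matroids on $[n]$ and compatible pairs $(I \leq F_\bullet)$ with $|I| + \operatorname{length}(F_\bullet) = n$, reading off $I$ as the canonical loop set of $\Omega$ and $F_\bullet$ from the nested chain structure characterizing Schubert matroids (see \cite{Hampe2017}), and inverting by reconstructing $\Omega$ from its loops and its chain of flats. The second, more in the spirit of this paper, is to bypass counting by establishing fullness directly in analogy with Proposition \ref{prop: fullness for base polytopes of Schubert matroids}. Two ingredients suffice: first, intersecting an arbitrary polymatroid independence polytope with all lattice translates of the unit cube $[0,1]^n$ yields a weak subdivision whose pieces are translates of matroid independence polytopes, by Proposition \ref{prop: independence polytopes in the cube}; second, the Eur-Huh-Larson analog of Lemma \ref{lem: matroid truncated Brianchon-Gram complexes}—truncating the Brianchon-Gram complex of $\IP(M)$ by $[0,1]^n$—expresses each $\IP(M)$ through an exact inclusion-exclusion complex whose remaining terms are $\IP(\Omega)$ for Schubert $\Omega$. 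Fullness of an exceptional collection then forces the size of the collection to equal the rank of $K_0(\Stell_n)$.

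The main obstacle in either approach is the combinatorial input: the explicit bijection in the direct approach, or the truncated Brianchon-Gram complex for independence polytopes (together with a verification that Schubert matroids are closed under the operations it produces) in the fullness approach. For the cohomology statement, I would close by invoking the Chern character isomorphism $\operatorname{ch}: K_0(X)_{\QQ} \xrightarrow{\sim} H^*(X,\QQ)$ valid for smooth projective toric varieties. Since $\Stell_n$ is smooth projective toric, both $K_0(\Stell_n)$ and $H^*(\Stell_n)$ are free abelian of the same rank with no torsion and no odd cohomology, so $\operatorname{ch}$ restricted to the integral lattices remains a bijection, sending any $\ZZ$-basis of $K_0(\Stell_n)$ to a $\ZZ$-basis of $H^*(\Stell_n)$.
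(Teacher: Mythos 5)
Your approach to the $K_0$-statement is the paper's approach: Proposition \ref{prop: Independence Schuberts are Exceptional} plus Proposition \ref{prop: exceptionality gives basis}, with the cardinality hypothesis $|\Sch_n| = |\Sigma_{\Stell_n}(n)|$ discharged either by the explicit bijection with compatible pairs or by the fullness proven in the following subsection (Proposition \ref{prop: Indepedence Polytopes Generate}). The paper itself is terser than you are---it simply cites the two propositions and leaves the count implicit---so the extra care you take with the counting step, and your observation that one cannot reduce to loopless Schubert matroids (since loop-to-coloop flips do not act on $\IP(M)$ by translation, unlike $\BP(M)$), is a genuine improvement in exposition. Either of your two routes to the cardinality is sound.

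However, your final paragraph on the cohomology statement contains an error. The Chern character $\operatorname{ch}\colon K_0(X)_\QQ \to H^*(X,\QQ)$ is an isomorphism over $\QQ$, but it does \emph{not} in general restrict to an isomorphism of the integral lattices, even when both are free of the same rank with no odd cohomology. Two $\ZZ$-lattices of the same rank inside the same $\QQ$-vector space need not coincide, and indeed already for $\mathbb{CP}^2$ one has $\operatorname{ch}([\mathcal{O}(1)]) = 1 + H + H^2/2 \notin H^*(\mathbb{CP}^2,\ZZ)$. So the sentence ``$\operatorname{ch}$ restricted to the integral lattices remains a bijection'' does not follow from rank and torsion-freeness, and is false in general. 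What the paper's (and \cite{EHL22}'s) statement actually yields is that the images $\operatorname{ch}([\mathcal{L}_{\IP(\Omega)}])$ form a $\QQ$-basis of $H^*(\Stell_n,\QQ)$; an integral basis statement, if one wants it, requires a different isomorphism (such as the exceptional isomorphism used in \cite{EHL22}) rather than $\operatorname{ch}$ itself.
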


\subsection{Full Strongly Exceptional Sequence for the Stellahedral Variety}\label{Sec: independence schubert matroids generate}

We now show that the independence polytopes of all Schubert matroids generate $\Def(\Sigma_n)$.

We begin by noting that the cube intersection property for base polytopes also holds for independence polytopes:

\begin{proposition}\label{prop: polymatroid cube intersection}
    Let $P \in \Def(\Sigma_{\Stell_n})$. The intersection of $P$ with a lattice translate of $[0,1]^n$ is again in $\Def(\Sigma_{\Stell_n})$. Therefore, every $P$ has strict subdivisions by translations of polytopes in $\Def(\Sigma_{\Stell_n})$ contained in a cube.
\end{proposition}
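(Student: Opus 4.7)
The plan is to establish the first assertion by using the polymatroid characterization of $\Def(\Sigma_{\Stell_n})$ from Proposition \ref{prop: def cone are independence polymatroids}, and then derive the strict subdivision from the standard tiling of $\RR^n$ by lattice translates of the unit cube.

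First, I would write $P = \IP(\mu) + v$ for a polymatroid $\mu$ on $[n]$ and some $v \in \ZZ^n$. Since $\Def(\Sigma_{\Stell_n})$ is stable under lattice translation, the problem reduces to showing that $\IP(\mu) \cap ([0,1]^n + u)$ is a lattice translate of a polymatroid independence polytope for every $u \in \ZZ^n$ making this intersection non-empty. Setting $\ell_i := \max(0, u_i)$ and $r_i := u_i + 1$ and using that $\IP(\mu) \subseteq \RR_{\geq 0}^n$, the intersection simplifies to $\IP(\mu) \cap \prod_i [\ell_i, r_i]$. Translating by $-\ell$ and invoking the standard polymatroid box-capping construction, this equals $\IP(\mu')$ for the polymatroid
\[
\mu'(A) := \min_{B \subseteq A}\Bigl(\mu(B) + \sum_{i \in A \setminus B}(r_i - \ell_i)\Bigr) - \sum_{i \in A}\ell_i,
\]
whose non-negativity uses the non-emptiness hypothesis and whose submodularity and monotonicity are inherited from those of $\mu$. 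The equality $\IP(\mu') = \bigl(\IP(\mu) \cap \prod_i[\ell_i,r_i]\bigr) - \ell$ then follows from LP duality for polymatroid constraint matrices (total dual integrality).

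For the strict subdivision, the lattice translates $\{[0,1]^n + w\}_{w \in \ZZ^n}$ tile $\RR^n$, with any two distinct cubes meeting in a common face of both. Consequently, the pieces $P_w := P \cap ([0,1]^n + w)$ cover $P$, and for any distinct $w_1, w_2$ we have $P_{w_1} \cap P_{w_2} = P \cap F$ where $F$ is the (possibly empty) common face of the two cubes, which is a face of both $P_{w_1}$ and $P_{w_2}$. By the first part, each $P_w$ lies in $\Def(\Sigma_{\Stell_n})$ and is contained in a lattice translate of $[0,1]^n$, yielding the desired strict subdivision.

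The main obstacle is verifying that the displayed $\mu'$ correctly describes the rank function of the translated intersection and that it satisfies the polymatroid axioms. Submodularity is the delicate point and is proved by a standard uncrossing argument: given optimal $B_1, B_2$ in the defining formulas for $\mu'(A_1)$ and $\mu'(A_2)$, one shows that $B_1 \cap B_2$ and $B_1 \cup B_2$ give at least as much for $\mu'(A_1 \cap A_2)$ and $\mu'(A_1 \cup A_2)$, using submodularity of $\mu$ together with the modularity of the correction terms.
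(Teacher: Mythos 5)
Your overall strategy is sound and matches the spirit of the paper, which records this statement without proof as the independence-polytope analogue of Lemma \ref{lem: subdividing generalized permutahedra into matroids}: reduce via Proposition \ref{prop: def cone are independence polymatroids} to intersecting $\IP(\mu)$ with a box, and obtain the strict subdivision from the face-to-face tiling of $\RR^n$ by unit cubes (your second paragraph on the subdivision is fine). However, the displayed formula for $\mu'$ is not the rank function of the translated intersection, so the key step fails as written. Take $n=1$, $\mu(\{1\})=3$, so $\IP(\mu)=[0,3]$, and $u=1$, hence $\ell_1=1$, $r_1=2$: the translated intersection is $[1,2]-1=[0,1]$, whose rank on $\{1\}$ is $1$, while your formula gives $\min\bigl(\mu(\emptyset)+(r_1-\ell_1),\,\mu(\{1\})\bigr)-\ell_1=\min(1,3)-1=0$. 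With $u=2$ the same computation gives $\mu'(\{1\})=-1$ even though the intersection $[2,3]$ is non-empty, so neither non-negativity nor monotonicity of your $\mu'$ is ``inherited'' as claimed; subtracting the modular term $\sum_{i\in A}\ell_i$ after capping is exactly where these properties break.

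The error is one of order of operations: $\min_{B\subseteq A}\bigl(\mu(B)+\sum_{i\in A\setminus B}(r_i-\ell_i)\bigr)$ is the rank function of $\IP(\mu)\cap[0,r-\ell]$, which does not reflect imposing the lower bounds $x_i\geq \ell_i$ and translating. The repair is to translate (contract) first and cap second: set $g(A)=\mu(A)-\sum_{i\in A}\ell_i$, note that $\{y\geq 0 : y(A)\leq g(A)\ \forall A\}$ equals the independence polytope of the monotonization $\tilde g(A)=\min_{C\supseteq A}g(C)$ (the superset-minimum is needed precisely because $g$ need not be monotone or non-negative; non-emptiness of the intersection forces $\ell(C)\leq\mu(C)$ for all $C$, hence $\tilde g\geq 0$ and $\tilde g(\emptyset)=0$, and submodularity follows by uncrossing over supersets), and then apply the standard box-capping to $\tilde g$ with the vector $r-\ell$. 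This yields
\[
\mu'(A)=\min_{B\subseteq A}\Bigl(\min_{C\supseteq B}\bigl(\mu(C)-\textstyle\sum_{i\in C}\ell_i\bigr)+\sum_{i\in A\setminus B}(r_i-\ell_i)\Bigr),
\]
which is an integer-valued polymatroid with $\IP(\mu')=\bigl(\IP(\mu)\cap\prod_i[\ell_i,r_i]\bigr)-\ell$, and with this replacement (or any argument handling the lower bounds by contraction/monotonization rather than by subtracting $\ell(A)$ after capping) your proof goes through.
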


\begin{proposition}\label{prop: Indepedence Polytopes Generate}
    The set of indepedence polytopes $\IP(\Omega)$ for $\Omega \in \Sch_n$ is a full collection of polytopes in $\Def(\Stell_n)$.
\end{proposition}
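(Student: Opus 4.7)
The plan is to mirror the two-step argument of Proposition \ref{prop: fullness for base polytopes of Schubert matroids}: first reduce an arbitrary $P \in \Def(\Sigma_{\Stell_n})$ to the polytopes $\IP(M)$ for arbitrary matroids $M$, and then reduce those to $\IP(\Omega)$ for Schubert matroids $\Omega$. Since translates of polytopes in the collection belong to $\langle \{\IP(\Omega)\}_{\Omega\in\Sch_n}\rangle_0$ by definition, the two reductions will be realized by exact inclusion-exclusion complexes that place each target polytope in $\langle\cdot\rangle_1$ or $\langle\cdot\rangle_2$.

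For the first reduction, I would apply Proposition \ref{prop: polymatroid cube intersection} directly. Intersecting $P$ with all lattice translates of $[0,1]^n$ produces a strict subdivision of $P$ whose pieces lie in $\Def(\Sigma_{\Stell_n})$ and are each contained in a unit cube; by Proposition \ref{prop: independence polytopes in the cube} they are translates of $\IP(M)$ for various matroids $M$. The associated subdivision Koszul complex is an inclusion-exclusion complex whose upper ideals are intervals of cardinality at least $2$ (each lattice point lies in at least two of the closed cubes meeting it), so it is exact by the exactness criterion (Theorem \ref{prop: Exact inclusion-exclusion complexes}). Consequently, once all $\IP(M)$ are in the generated set, so is $P$.

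For the second reduction, I would categorify the Derksen-Fink style identity of Eur-Huh-Larson expressing $\mathbf{1}_{\IP(M)}$ as an alternating sum of $\mathbf{1}_{\IP(\Omega)}$'s via a truncated Brianchon-Gram complex of $\IP(M)$. The stellahedral analogue of the truncating hypersimplex of Lemma \ref{lem: matroid truncated Brianchon-Gram complexes} should be $\IP(U_{r,n})=\{x\in\RR^n : x_i\ge 0,\ \sum x_i \le r\}$, where $r=\rk(M)$. One needs to verify that (a) $\IP(U_{r,n})$ truncates $\operatorname{BG}(\IP(M))$ in the sense of Section \ref{Sec: Subdivision and Brianchon-Gram Complexes}, and (b) for each cone $\sigma$ of $\Sigma_{\Stell_n}$, the intersection $\IP(U_{r,n})\cap(\sigma^{\vee}+v_{\IP(M),\sigma})$ is the independence polytope of a Schubert matroid on $[n]$ (possibly after translation, which keeps it in $\langle\cdot\rangle_0$). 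The resulting truncated Brianchon-Gram complex, exact by Corollary \ref{cor: truncations of exact complexes are exact}, puts $\IP(M)$ into $\langle\{\IP(\Omega)\}_{\Omega\in\Sch_n}\rangle_1$.

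The main obstacle is step (b) of the second reduction: identifying each tangent-cone truncation as an $\IP$ of a Schubert matroid. For the permutahedral case this is a compact combinatorial fact about tangent cones of $\BP(M)$ being base polytopes of direct sums of minors (Proposition \ref{prop: BP(M) is closed under faces}) intersected with $\Delta_{k,n}$. For $\IP(M)$ the vertices are indexed by independent sets rather than bases, and the tangent cone at $e_I$ encodes the local structure of both deletions (directions $-e_i$ for $i\in I$) and extensions (directions encoded by the contraction $M/I$). I expect this analysis, already carried out essentially in \cite{EHL22}, to show that the truncation by $\IP(U_{r,n})$ is always a Schubert-matroid independence polytope up to translation, at which point combining the two reductions with Theorem \ref{thm: Polytopal Criterion} finishes the proof.
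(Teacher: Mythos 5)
Your first reduction (cube subdivision via Proposition \ref{prop: polymatroid cube intersection}, reducing fullness to generating $\IP(M)$ for arbitrary matroids $M$) is exactly the paper's first step. The second reduction, however, has a genuine gap at precisely the point you flag as "the main obstacle." Your plan requires two new combinatorial facts that you do not prove: (a) that $\IP(U_{r,n})$ truncates $\operatorname{BG}(\IP(M))$, i.e.\ that every intersection $\IP(U_{r,n}) \cap (\sigma^{\vee} + v_{\IP(M),\sigma})$ over cones $\sigma$ of $\Sigma_{\Stell_n}$ lies in $\Def(\Sigma_{\Stell_n})$, and (b) that each such intersection is, up to translation, the independence polytope of a Schubert matroid. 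Unlike the type $A$ case, where the corresponding statement about $\Delta_{k,n} \cap (\sigma^{\vee}+v_{\BP(M),\sigma})$ is available (Lemma \ref{lem: matroid truncated Brianchon-Gram complexes} together with Derksen--Fink), there is no ready-made stellahedral analogue in the paper, and "I expect this analysis, already carried out essentially in \cite{EHL22}, to show\ldots" is an assertion, not an argument: the tangent cone of $\IP(M)$ at $e_I$ mixes deletion directions $-e_i$ for $i \in I$, extension directions $e_j$ for $j$ with $I \cup j$ independent, and exchange directions, and identifying its truncation as a Schubert independence polytope is exactly the nontrivial content that would need a proof. As written, the central step of your proof is missing.

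The paper avoids this entirely by a short formal transport argument, which is the fix you should adopt: start from the already-established truncated Brianchon--Gram complex $\operatorname{TBG}_{\Delta_{k,n}}^{\bullet}(\BP(M))$ from Section \ref{Sec: Schubert Matroids Generate}, whose non-initial terms are base polytopes of Schubert matroids; Minkowski-add $[-1,0]^n$ to every term (exactness is preserved by Proposition \ref{prop: tensor product inclusion-exclusion}, since $[0,1]^n = \IP(U_{n,n}) \in \Def(\Sigma_{\Stell_n})$); then truncate by $[0,1]^n$ (allowed by Proposition \ref{prop: polymatroid cube intersection}, exactness preserved by Corollary \ref{cor: truncations of exact complexes are exact}). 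By Lemma \ref{lem: relation between IP and BP}, $\bigl(\BP(N) + [-1,0]^n\bigr) \cap [0,1]^n = \IP(N)$ for every matroid $N$, so the resulting exact complex has $\IP(M)$ in its initial slot and independence polytopes of Schubert matroids everywhere else, placing $\IP(M)$ in $\langle \{\IP(\Omega)\}_{\Omega \in \Sch_n}\rangle$ with no new tangent-cone combinatorics required. If you prefer to keep your direct route, you must actually prove (a) and (b) above rather than defer them to \cite{EHL22}.
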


\begin{proof}
    By Proposition \ref{prop: polymatroid cube intersection}, it suffices to show that $\langle \IP(\Omega) \rangle$ contains all translates of independence polytopes that are contained in a cube. Since all independence polytopes contained in the cube are independence polytopes of matroids, it suffices to generate all independence polytopes of matroids.

    Let $M$ be a matroid and $\IP(M)$ be its independence polytope. Using the results of Section \ref{Sec: Schubert Matroids Generate}, there is a truncated Brianchon-Gram complex $\operatorname{TBG}_{\Delta_{k,n}}^{\bullet}$ of $\BP(M)$ corresponding to an order-preserving map $\phi$, where $\phi(\hat{0}) = \BP(M)$ and every other $\phi(x)$ is the base polytope of a Schubert matroid. 

    The polytope $[0,1]^n$ is in $\Def(\Sigma_{\Stell_n})$ since it is the independence polytope of the uniform matroid $U_{n,n}$, which is the matroid where every subset of $[n]$ is an independent set. This means that the translation $[-1,0]^n$ is also in $\Def(\Sigma_{\Stell_n})$. Build the inclusion-exclusion complex where
     \[ \phi'(x) = \phi(x) + [-1,0]^n.\]
     By Proposition \ref{prop: tensor product inclusion-exclusion}, this is an exact inclusion-exclusion complex for $\Sigma_{\Stell_n}$. By Proposition \ref{prop: polymatroid cube intersection}, the cube $[0,1]^n$ intersects $\phi'$ properly. So, we can truncate $\phi'$ by $[0,1]^n$ and so by Corollary \ref{cor: truncations of exact complexes are exact}, we obtain an exact inclusion-exclusion complex corresponding to the poset morphism
      \[ \psi(x) = \left(\phi(x) + [-1,0]^n \right ) \cap [0,1]^n.\]

    By Lemma \ref{lem: relation between IP and BP}, we have that for any matroid $M$,
     \[ \IP(M) = \left (\BP(M) + [-1,0]^n \right ) \cap [0,1]^n. \]
     Therefore, $\psi$ is an exact inclusion-exclusion complex where $\psi(\hat{0}) = \IP(M)$ and every other $\psi(x)$ is the independence polytope of a Schubert matroid. This implies that $\IP(M)$ is in $\langle \IP(\Omega) \rangle_{\Omega \in \Sch_n}$.
\end{proof}

Combining the results of the last two sections, we have that the independence polytopes of Schubert matroids on $[n]$ form a full strongly exceptional collection of polytopes for $\Def(\Sigma_{\Stell_n})$. By the polytopal criterion, we obtain the main result of our section. 

\begin{theorem}[Theorem \ref{mainthm: FSEC for Stell}]\label{thm: full strongly exceptional collection for Stell}
    The sequence of nef line bundles $( \mathcal{L}_{\IP(\Omega)})$ for $\Omega \in \Sch_n$ is a full strongly exceptional collection for $\Stell_n$ when ordered by non-decreasing number of lattice points.
\end{theorem}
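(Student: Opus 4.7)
The plan is essentially to assemble the pieces already proven in this section with the polytopal criterion of Theorem \ref{thm: Polytopal Criterion}. Strong exceptionality of the collection of polytopes $\{\IP(\Omega)\}_{\Omega \in \Sch_n}$ in $\Poly(\Sigma_{\Stell_n})$ is Proposition \ref{prop: Independence Schuberts are Exceptional}, and fullness is Proposition \ref{prop: Indepedence Polytopes Generate}. What remains is to verify the combinatorial bookkeeping: namely, that ordering by non-decreasing number of lattice points really does produce an exceptional (and not merely strongly exceptional) collection, and then to invoke Theorem \ref{thm: Polytopal Criterion} to pass from polytopes to nef line bundles.

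First I would recall the definition of an exceptional collection of polytopes: it requires a linear order that extends the partial order $\leq_M$, where $P \leq_M Q$ means some lattice translate $P + m$ is contained in $Q$. The key observation is that $P + m \subseteq Q$ implies $|P \cap \ZZ^n| = |(P+m) \cap \ZZ^n| \leq |Q \cap \ZZ^n|$, with equality only if $P + m = Q$. Hence any linear order refining non-decreasing lattice-point count is automatically a linear extension of $\leq_M$. Since a strongly exceptional collection of polytopes is exceptional with respect to any such linear extension, the ordering by lattice points in the statement is valid.

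Next I would invoke Theorem \ref{thm: Polytopal Criterion}, which requires $\Sigma_{\Stell_n}$ to be smooth and projective. Both facts are standard: $\Stell_n$ is a smooth projective toric variety (it is a blow-up / Hassett compactification of $\Perm_n$, or equivalently the normal fan of the stellahedron, which is a simplicial unimodular fan). Applying the criterion, the exceptional (resp.\ full) collection of polytopes $(\IP(\Omega_1),\ldots,\IP(\Omega_\ell))$ lifts to an exceptional (resp.\ full) collection of nef line bundles $(\mathcal{L}_{\IP(\Omega_1)},\ldots,\mathcal{L}_{\IP(\Omega_\ell)})$ in $\D(\Stell_n)$, which is exactly the claim.

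There is essentially no obstacle here since the work was concentrated in Proposition \ref{prop: Independence Schuberts are Exceptional} (where the independence-base deformation retract was used to reduce the contractibility of set differences $\IP(\Omega_2)\backslash(\IP(\Omega_1)+m)$ to the permutahedral case already handled in Theorem \ref{thm: Schuberts are Exceptional}) and in Proposition \ref{prop: Indepedence Polytopes Generate} (where the cube-intersection trick and the identity $\IP(M) = (\BP(M) + [-1,0]^n) \cap [0,1]^n$ from Lemma \ref{lem: relation between IP and BP} let us transport the truncated Brianchon--Gram complex for $\BP(M)$ into an inclusion-exclusion complex for $\IP(M)$). The only subtle point worth spelling out is the ordering argument above; everything else is a direct citation.
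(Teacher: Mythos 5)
Your proof is correct and follows essentially the same route as the paper: cite Proposition \ref{prop: Independence Schuberts are Exceptional} for strong exceptionality, Proposition \ref{prop: Indepedence Polytopes Generate} for fullness, and conclude via the polytopal criterion of Theorem \ref{thm: Polytopal Criterion}. The extra detail you supply --- that the lattice-point count strictly increases under proper inclusion of lattice polytopes up to translation, so sorting by it refines $\leq_M$ --- is correct and is exactly the justification the paper gestures at earlier when it asserts that any strongly exceptional collection of polytopes can be ordered by non-decreasing number of lattice points.
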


\subsection{Invariance of the Collection}
Let $S_n$ act on $N_{\RR} \cong \RR^n$ by
 \[w(e_i) = e_{w(i)}, \]
for $w \in S_n$ and $e_i$ the standard basis of $N$. Define an $S_n$ action on subsets of $I \subseteq [n]$ and chains of subsets $F_{\bullet}$ by
 \[ \phi(I) = \{ \phi(i) \; \lvert \; i \in I\}\]
and 
 \[ \phi(F_{\bullet}): \phi(F_1) \subsetneq \phi(F_2) \subsetneq \cdots \subsetneq \phi(F_k),\]
 where $\phi(i)$ is the usual action of $S_n$ on $[n]$.

\begin{proposition}\label{prop: cone description of stellahedral fan}
    The $S_n$ action on $N$ gives an action on $\Sigma_{\Stell}$.
\end{proposition}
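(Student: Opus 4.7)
The proof will be a direct verification using the explicit cone description of $\Sigma_{\Stell}$ from Proposition~\ref{prop: cone description of stellahedral fan}, combined with the fact that $w \in S_n$ acts on $N$ by permuting the standard basis vectors. The plan is to show that $w$ sends each cone $\sigma_{I \leq F_\bullet}$ to another cone of the form $\sigma_{I' \leq F'_\bullet}$ indexed by another compatible pair, and that this assignment is a bijection on compatible pairs.

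First, I would record how $w$ acts on the rays. Since $w(e_i) = e_{w(i)}$, we have $w(-e_{E \setminus F}) = -e_{w(E \setminus F)} = -e_{E \setminus w(F)}$, where the last equality uses that $w$ is a bijection of $E = [n]$. Consequently,
\[ w \cdot \sigma_{I \leq F_\bullet} = \operatorname{cone}(e_{w(i)} : i \in I) + \operatorname{cone}(-e_{E \setminus w(F)} : F \in F_\bullet) = \sigma_{w(I) \leq w(F_\bullet)},\]
where $w(F_\bullet)$ denotes the chain $w(F_1) \subsetneq \cdots \subsetneq w(F_k)$. The chain inclusions are strict because $w$ is injective, and $w(F_k) \subsetneq E$ since $w(E) = E$ and $F_k \subsetneq E$.

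Next, I would verify that $(w(I), w(F_\bullet))$ is a compatible pair in the sense of the definition. Since $I \subseteq F_i$ for all $i$ and $w$ preserves inclusions, $w(I) \subseteq w(F_i)$, so compatibility is preserved. Thus the image $w \cdot \sigma_{I \leq F_\bullet}$ is again a cone of $\Sigma_{\Stell}$.

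Finally, applying the same argument to $w^{-1}$ shows that the assignment $\sigma_{I \leq F_\bullet} \mapsto \sigma_{w(I) \leq w(F_\bullet)}$ is a bijection on the set of cones, so $w$ permutes the cones of $\Sigma_{\Stell}$. Combined with the obvious fact that $w$ acts linearly on $N_{\RR}$ and sends faces to faces, this gives a well-defined action on the fan. There is no real obstacle here: the proof is a matter of unwinding definitions, with the only subtle point being to notice that the set complement $E \setminus F$ is preserved under $w$ because $w$ is a bijection of $E$.
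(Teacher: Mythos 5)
Your proof is correct and follows essentially the same route as the paper: both arguments use the explicit cone description $\sigma_{I \leq F_\bullet}$, observe that $w(E\setminus F) = E \setminus w(F)$ since $w$ is a bijection, and conclude that $w\cdot\sigma_{I\leq F_\bullet} = \sigma_{w(I)\leq w(F_\bullet)}$ with compatibility preserved. Your extra remark about applying $w^{-1}$ to get bijectivity is a harmless elaboration of what the paper leaves implicit.
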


\begin{proof}
    By definition, every map of the action is a lattice isomorphism of $N$. All we need to prove is that this function maps cones to cones. For any $\phi \in S_n$, it is immediate that if $I \leq F_{\bullet}$ is compatible, then so is $\phi(I) \leq \phi(F_{\bullet})$ where $\phi(F_{\bullet})$ is the chain $\phi(F_1) \subsetneq \phi(F_2) \subsetneq \cdots \subsetneq \phi(F_k)$. Further, we can verify that
     \begin{align*}
    \phi(\sigma_{I \leq F_{\bullet}}) &= \phi\left( \operatorname{cone}(e_i \; \lvert \; i \in I) + \operatorname{cone}(-e_{E\backslash F} \; \lvert \; F \in F_{\bullet}) \right) \\
    &=\operatorname{cone}(e_i \; \lvert \; i \in \phi(I)) + \operatorname{cone}(-e_{E\backslash F} \; \lvert \; F \in \phi(F_{\bullet})) \\
    &=\sigma_{\phi(I) \leq \phi(F_{\bullet})}.
    \end{align*}
\end{proof}

\begin{proposition}\label{prop: symmetry of collection for Stell}
    The full strongly exceptional collection of nef line bundles $\mathcal{L}_{\IP(\Sigma)}$ of $\Stell_n$ for $\Omega \in \Sch_n$ is $S_n$-invariant.
\end{proposition}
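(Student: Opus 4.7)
The plan is to mimic the strategy of Proposition \ref{prop: invariant of full strongly exceptional collections for Perm}, but the argument is actually simpler here because there is no $S_2$-factor to worry about and the collection is indexed by \emph{all} Schubert matroids. First, I would verify how the $S_n$-action on the fan translates into an action on indexing polytopes. By Proposition \ref{prop: cone description of stellahedral fan}, the action $w(e_i) = e_{w(i)}$ on $N_{\RR}$ preserves $\Sigma_{\Stell}$, and the dual action on $M_{\RR}$ also simply permutes coordinates. Consequently, for any $P \in \Def(\Sigma_{\Stell_n})$ we have $w \cdot \mathcal{L}_P \cong \mathcal{L}_{w(P)}$, where $w(P)$ denotes the image of $P$ under coordinate permutation.

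Next, I would compute $w(\IP(\Omega))$ for a Schubert matroid $\Omega$. Since $\IP(\Omega) = \operatorname{conv}(e_I \mid I \in \mathcal{I}(\Omega))$ and $w(e_I) = e_{w(I)}$, we obtain
\[ w(\IP(\Omega)) = \operatorname{conv}\bigl(e_{w(I)} \mid I \in \mathcal{I}(\Omega)\bigr) = \IP(w \cdot \Omega), \]
where $w \cdot \Omega$ is the matroid on $[n]$ with independent sets $\{w(I) \mid I \in \mathcal{I}(\Omega)\}$, i.e.\ the relabeling of $\Omega$ by $w$. In particular, $w \cdot \Omega$ is isomorphic to $\Omega$ as an abstract matroid.

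To conclude, I would invoke Proposition \ref{prop: Schubert matroids are closed under operations}, which asserts that the class of Schubert matroids on $[n]$ is closed under isomorphism. Therefore $w \cdot \Omega$ is again a Schubert matroid, so $w \cdot \mathcal{L}_{\IP(\Omega)} \cong \mathcal{L}_{\IP(w \cdot \Omega)}$ lies in our collection. This establishes $S_n$-invariance.

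There is no substantive obstacle in this argument; the only point worth flagging is that, in contrast to the permutahedral case, one does not need to pass through matroid duality or manipulate loops/coloops. This works precisely because Theorem \ref{thm: full strongly exceptional collection for Stell} uses all Schubert matroids (not only the loopless ones), and because the ambient $S_n$-action on $\Sigma_{\Stell}$ extends to all of $N_{\RR}$ rather than being quotiented by a diagonal translation as in $\Perm_n$.
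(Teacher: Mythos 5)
Your proof is correct and follows essentially the same approach as the paper: the $S_n$-action permutes coordinates, which sends $\IP(\Omega)$ to $\IP(w\cdot\Omega)$, and Schubert matroids are closed under isomorphism/relabeling (Proposition \ref{prop: Schubert matroids are closed under operations}). Your added remark about why no loop/coloop or duality manipulation is needed here is accurate and a nice observation, but the core argument is identical to the paper's.
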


\begin{proof}
    The induced action of $S_n$ on $M \cong \ZZ^n$ is given again by the permutation action on the coordinates. To show that our collection is invariant, it suffices to show that the polytopes indexing our collection are invariant under the $S_n$ action up to translation.

    For two matroids $M_1$ and $M_2$ and any $g \in S_n$, we have that $g(\IP(M_1)) = \IP(M_2)$ if and only if $M_1$ and $M_2$ are isomorphic matroids. Since Schubert matroids are closed under isomorphism, the result follows.
\end{proof}

\begin{remark} In terms of the Hasset compactification description, this $S_n$-action corresponds to the $S_n$-action on the marked points.
\end{remark}

\subsection{Augmented Weak Maps and the Tilting Algebra}

The description of the tilting quiver follows from the same arguments as for the permutahedral variety. Now, we form a quiver with vertices given by all Schubert matroids (not just the coloopless ones).

\begin{definition}
    Let $M_1$ and $M_2$ be two matroids and $S$ be a subset of the loops of $M_1$. We say that we have an \textbf{augmented weak map} of $M_1$ into $M_2$ if for all $I \in \mathcal{I}(M_1)$, we have that $I \cup S \in \mathcal{I}(M_2)$. We denote the augmented weak map by
     \[ M_1 \xlongrightarrow{S} M_2.\]
    The \textbf{composition} of two augmented weak maps
     \[M_1 \xlongrightarrow{S} M_2 \xdashrightarrow{T} M_3 \]
    is the augmented weak map
     \[ M_1 \xlongrightarrow{S \cup T} M_3.\]
\end{definition}

An augmented weak map is \textbf{indecomposable} if it is not the composition of two other augmented weak maps. 

%Such a map has label $S$ with $|S| = 0$ or $1$.

The \textbf{augmented weak map quiver} $Q_n$ is the labeled quiver whose vertices are all Schubert matroids on ground set $[n]$ and the arrows are indecomposable augmented weak maps. The arrow $\Omega_1 \xlongrightarrow{S} \Omega_2$ is labeled by $S$. For any path $p: v \to u$, let $\operatorname{lab}(p)$ denote the union of the labels of the arrows in the path $p$.

Let $I$ denote the ideal
 \[ I = \big \langle p - q \; \lvert \; p,q:v \to u \text{ for paths $p,q$ with $\operatorname{lab}(p) = \operatorname{lab}(q)$} \big \rangle.\]

\begin{proposition}\label{prop: tilting quiver for Stell}
    The tilting quiver corresponding to the full strongly exceptional collection $(\mathcal{L}_{\IP(\Omega)})_{\Omega \in \Sch_n}$ is $(Q_n,I)$.
\end{proposition}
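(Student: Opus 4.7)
The plan is to mirror the proof of Proposition \ref{prop: tilting quiver for Perm_n} exactly, reducing the statement to a translation between the combinatorics of polytope inclusions and the combinatorics of augmented weak maps. By Theorem \ref{thm: tilting quiver as quiver of sections calculation}, the tilting algebra is the path algebra of the quiver of sections of the collection, so it suffices to identify that quiver with $(Q_n, I)$. The toric-geometric description gives
\[
    \Hom(\mathcal{L}_{\IP(\Omega_1)}, \mathcal{L}_{\IP(\Omega_2)}) \cong \kk\bigl[m \in M \;\big\lvert\; \IP(\Omega_1) + m \subseteq \IP(\Omega_2)\bigr],
\]
with $m$ corresponding to a section $s$ such that $\operatorname{div}(s) = \operatorname{div}(\chi^m)$.

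The first substantive step is to match these translation vectors with augmented weak maps. Since $\IP(\Omega_i) \subseteq [0,1]^n$ for every matroid and $0 \in \IP(\Omega_1)$ (as $\emptyset$ is always independent), any admissible $m$ lies in $[0,1]^n \cap \ZZ^n$, so $m = e_S$ for a subset $S \subseteq [n]$. Evaluating the inclusion $\IP(\Omega_1) + e_S \subseteq \IP(\Omega_2)$ on vertices $e_I$ of $\IP(\Omega_1)$ and using that $\IP(\Omega_2) \cap \{0,1\}^n = \{e_J : J \in \mathcal{I}(\Omega_2)\}$, one sees this is equivalent to requiring $S \cap I = \emptyset$ and $S \cup I \in \mathcal{I}(\Omega_2)$ for all $I \in \mathcal{I}(\Omega_1)$. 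The first condition, applied to all $I$, says precisely that $S$ consists of loops of $\Omega_1$, so this is exactly an augmented weak map $\Omega_1 \xlongrightarrow{S} \Omega_2$.

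Next I would check that the quiver of sections coincides with $Q_n$ by showing that decompositions of sections correspond to compositions of augmented weak maps. Given a composition $\Omega_1 \xlongrightarrow{S_1} \Omega_3 \xlongrightarrow{S_2} \Omega_2$, the labels are automatically disjoint: $S_1$ is a subset of loops of $\Omega_1$ with $S_1 \in \mathcal{I}(\Omega_3)$ (take $I = \emptyset$ in the definition), so $S_1$ is not contained in the loops of $\Omega_3$, while $S_2$ is. Hence $e_{S_1} + e_{S_2} = e_{S_1 \cup S_2}$, which matches the composition rule of augmented weak maps and translates into $\operatorname{div}(\chi^{e_{S_1}}) + \operatorname{div}(\chi^{e_{S_2}}) = \operatorname{div}(\chi^{e_{S_1 \cup S_2}})$. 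Conversely, any decomposition $e_S = e_{S_1} + e_{S_2}$ forces $S_1, S_2$ disjoint with $S_1 \cup S_2 = S$, so the two notions of indecomposability coincide, and the labeled vertices and arrows of the quiver of sections agree on the nose with those of $Q_n$.

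Finally the relation ideal matches: for any path $p$ in $Q_n$, the accumulated labels are disjoint by the previous paragraph, so $\operatorname{div}(p) = \operatorname{div}(\chi^{e_{\operatorname{lab}(p)}})$, which gives $\operatorname{div}(p) = \operatorname{div}(q)$ if and only if $\operatorname{lab}(p) = \operatorname{lab}(q)$. Thus the relations generated in the quiver of sections agree with $I$, completing the identification. The only nontrivial verification is the disjointness of labels along a composition, which is the true content beyond the permutahedral case; everything else is a bookkeeping translation already carried out for $\Perm_n$.
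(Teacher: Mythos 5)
Your proof is correct and takes essentially the same approach as the paper, which simply invokes the argument from Proposition \ref{prop: tilting quiver for Perm_n} together with the observation that augmented weak maps $M_1 \xlongrightarrow{S} M_2$ correspond to inclusions $\IP(M_1) + e_S \subseteq \IP(M_2)$; you spell out the details (especially why the admissible $m$ are exactly the $e_S$ with $S$ a set of loops, and why labels along compositions are disjoint) that the paper leaves implicit. One small imprecision worth fixing: you write that ``$S_1$ is not contained in the loops of $\Omega_3$,'' but what the argument actually needs (and what $S_1 \in \mathcal{I}(\Omega_3)$ gives) is the stronger fact that no element of $S_1$ is a loop of $\Omega_3$; since every element of $S_2$ is a loop of $\Omega_3$, $S_1$ and $S_2$ are disjoint. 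As stated, ``not contained in the loops'' would permit a nonempty overlap, so replace that phrase with the elementwise statement.
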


\begin{proof}
    This follows from the same argument as Proposition \ref{prop: tilting quiver for Perm_n} by noting that there is an augmented weak map $M_1\xlongrightarrow{S} M_2$ if and only if $\IP(M_1) + e_S$ is contained in $\IP(M_2)$.
\end{proof}

    As before, the $S_n$-symmetry of the full strongly exceptional collection gives an admissible $S_n$ action on the quiver by $g \cdot \IP(\Omega) = \IP(g \cdot \Omega)$.

\begin{example}
    For $\Stell_2$, the polytopes in our full strongly exceptional collection are 
    \begin{figure}[H]
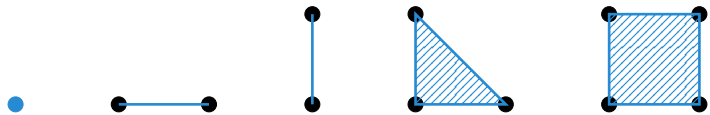

        \centering
        \begin{polyhedron}{}
            \vertex{point = {0,0}}
            \polygon{points = {(0,0)},status=open}
        \end{polyhedron}
        \quad
        \begin{polyhedron}{}
            \vertex{point = {0,0},color=black}
            \vertex{point = {2,0},color=black}
            \polygon{points = {(0,0),(2,0)},status=open}
        \end{polyhedron}
        \quad
        \begin{polyhedron}{}
            \vertex{point = {0,0},color=black}
            \vertex{point = {0,2},color=black}
            \polygon{points = {(0,0),(0,2)},status=open}
        \end{polyhedron}
        \quad
        \begin{polyhedron}{}
            \vertex{point = {0,0},color=black}
            \vertex{point = {2,0},color=black}
            \vertex{point = {0,2},color=black}
            \polygon{points = {(0,0),(2,0),(0,2)},status=open}
        \end{polyhedron}
        \quad
        \begin{polyhedron}{}
            \vertex{point = {0,0},color=black}
            \vertex{point = {2,0},color=black}
            \vertex{point = {0,2},color=black}
            \vertex{point = {2,2},color=black}
            \polygon{points = {(0,0),(2,0),(2,2),(0,2)},status=open}
        \end{polyhedron}
        \caption{The polytopes in the full strongly exceptional collection for $\Stell_2$.}
        \label{fig:collection for stell2}
    \end{figure}
    The corresponding tilting quiver is
    \begin{figure}[H]
    \centering
    \begin{tikzpicture}
        \node[shape = circle,draw=black] (A) at (0,0) {$1$};
        \node[shape = circle,draw=black] (B) at (3,1.4) {$2$};
        \node[shape = circle,draw=black] (C) at (3,-1.4) {$3$};
        \node[shape = circle,draw=black] (D) at (4.5,0) {$4$};
        \node[shape = circle,draw=black] (E) at (7,-0) {$5$};
        \begin{scope}[>={Stealth[black]},
              every node/.style={fill=white,circle,align=center,text width=5mm,inner sep = 0},
              every edge/.style={draw = black, very thick}]
    \path[->] (A)  edge[bend left = 20] node[pos=0.6]{1} (B);
    \path[->](A)  edge[bend right = 20] (B);
    \path[->] (A)  edge[bend left = 20]   (C);
    \path[->] (A)  edge[bend right=20]  node[pos=0.6]{2} (C);
    \path[->] (B) edge  (D);
    \path[->] (C) edge (D);
    \path[->] (B) edge[bend left=20] node {2} (E);
    \path[->] (C) edge[bend right=20] node {1} (E);
    \path[->] (D) edge (E);
\end{scope}
    \end{tikzpicture}
    \caption{Tilting Quiver for $\Stell_2$. The labels of the nodes are chosen in the order of the polytopes above.}
    \label{fig:Tilting Quiver for Stell2}
\end{figure}
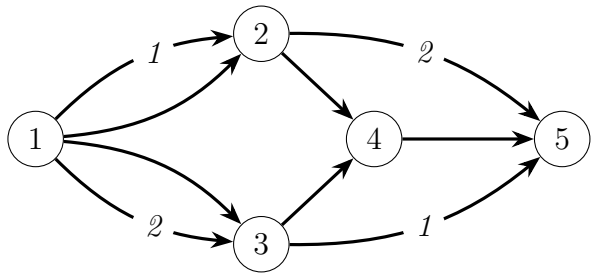

    The $S_2$ symmetry of the quiver is given by reflection along the horizontal line containing three points. In other words, it swaps node $2$ with node $3$ and leaves the rest fixed.
\end{example}

\section{Type $B_n$ Permutahedral Variety}\label{Sec: B_n Permutahedral Variety and Delta Matroids}
We move to our final calculation which is the type $B_n$-analogue of the permutahedral variety $\Perm_n^B$. 

\subsection{Root Systems and Associated Toric Varieties}
We set some notation in the theory of root systems. For all relevant definitions, see \cite{Humphrey12}.

\begin{definition}
Let
\begin{itemize}
    \item $\Phi$ be a finite crystallographic root system.
    %\item $\Phi^+$ be a choice of positive system.
    %\item $\Delta$ a choice of simple system.
    \item $W$ be the corresponding Weyl group.
    \item $M$ denote the root lattice.
    \item $N$ denote the dual lattice to $N$ which is the coweight lattice of $\Phi$.
    \item $\omega_k$ denote the $k$th fundamental weight.
    \item $\mathcal{A}_{\Phi}$ be the Coxeter arrangement of $\Phi$.
    \item $\Sigma_{\Phi}$ be the fan generated by the maximal chambers of the Coxeter arrangement.
\end{itemize}
\end{definition}

We will primarily focus on the type $B_n$ case. Let $[\pm n]$ be the set $\{-1,-2,\ldots,-n\} \cup \{1,2,\ldots,n\}$. For this root system, we can choose the data:

\begin{itemize}
    \item The root system is 
     \[\Phi = \{ e_i - e_j \; \lvert \; i \not = j \in [n]\} \cup \{e_i + e_j \; \lvert \; i \not = j \in [n] \} \cup \{e_i \; \lvert \; i \in [n]\}\]
    \item The Weyl group is the group of signed permutations $W = S_n^{B}$ given by
     \[\{f: [\pm n] \to [ \pm n] \; \lvert \; \text{$f$ is a bijection with $f(-i) = -f(i)$}\}.\]
    \item The root lattice $M$ is just the integer lattice $\ZZ^n$.
    \item The fundamental weights are
     \[ \omega_k = e_1 + e_2 + \cdots + e_k,\]
     for $k \leq n-1$ and
      \[\omega_n = \frac{1}{2}(e_1 + e_2 + \cdots + e_n) \]

    \item The Coxeter arrangement $\mathcal{A}_{B_n}$ is the hyperplane arrangement given by the hyperplanes
     \[ \mathcal{H}_{ij} = \{ x \in \RR^n \; \lvert \; x_i = x_j\},\]
     for $i \not =j$ and
      \[ \mathcal{H}^{-}_{ij} = \{ x \in \RR^n \; \lvert \; x_i = -x_j\},\]
      also for $i \not = j$ and
       \[ \mathcal{H}_i = \{x \in \RR^n \; \lvert \; x_i = 0\},\]
       for all $i \in [n]$.
    \item We say that a subset $S \subseteq [\pm n]$ is \textbf{admissible} if $|S \cap \{i, -i\}| \leq 1$ for all $i \in [n]$. The rays of $\Sigma_{B_n}$ are indexed by admissible subsets and the primitive lattice point $v_S$ corresponding to the admissible subset $S$ is
     \[ v_S = \sum_{i \in S \cap [n]} e_i - \sum_{-i \in S \cap [-n]} e_i\]
    for all subsets $S$ with $|S| \leq n-1$ and
     \[ v_S = \frac{1}{2} \left (\sum_{i \in S \cap [n]} e_i - \sum_{-i \in S \cap [-n]} e_i \right ),\]
    when $|S| = n$.
\end{itemize}

In \cite{Batyrev2011}, Batryev and Blume defined a class of smooth, projective toric varieties associated to root systems that generalize the Losev-Manin compactification of $M_{0,n}$.
\begin{definition}
    For any crystallographic root system $\Phi$, the \textbf{type $\Phi$ permutahedral variety} $\Perm_{\Phi}$ is the toric variety associated to the fan $\Sigma_{\Phi}$ in the dual lattice $N = M^*$. 
\end{definition}
\begin{remark}
    The type $A_{n-1}$-permutahedral variety is the permutahedral variety $\Perm_{n}$
\end{remark}

We will use the notation $\Perm_n^B$ to denote the type $B_n$ permutahedral variety.

The (real) deformation cone of the fan $\Def(\Sigma_{\Phi})$ was studied in []. We will slightly adjust this definition by restricting to lattice polytopes.

\begin{definition}\cite{Ardila2020}
    A $\Phi$\textbf{-generalized permutahedron} is a lattice polytope in the root lattice of $\Phi$ whose normal fan coarsens $\Sigma_{\Phi}$. This is equivalently a lattice polytope whose edge directions are parallel to the roots of $\Phi$.
\end{definition}

Every variety $\Perm_{\Phi}$ comes equipped with an action of the Weyl group $W$ induced by the action on the Coxeter arrangement. This also gives an action of $W$ on the root lattice by the usual reflection action.

\begin{remark}
    In Batryev and Blume's construction, the polytopes that index the nef line bundles are polytopes in the root lattice. This is the convention we will take. In other sources, the type $\Phi$-generalized permutahedra are polytopes in the weight or coweight lattice. 
\end{remark}

We now give a description of the cones of $\Sigma_{B_n}$ in terms of signed set partitions as studied by Reiner \cite{Reiner1997}. A \textbf{signed set partition} of $[\pm n]$ is a set partition of a subset of $[\pm n]$ of the form $\{S_0, S_1, \overline{S}_1, \ldots, S_k,\overline{S}_k\}$ such that:
\begin{enumerate}
    \item If $i \in S_0$, then so is $-i$. We refer to this as the zero block of the partition. We allow $S_0$ to be empty.
    \item If $i \in S_{\ell}$, then $-i$ is not in $S_{\ell}$. Likewise for $S_{\ell}^{-}$. We also insist that each $S_{\ell}$ is non-empty.
    \item We have that $\{ -i \; \lvert \; i \in S_{\ell}\} = S_{\ell}^-$.
\end{enumerate}
An \textbf{ordered signed set partition} is a signed set partition alongside an ordering of the subsets such that $S_i$ precedes $S_j$ if and only if $S_j^{-}$ precedes $S_i^{-}$ and $S_{0}$ is in-between $S_i$ and $S_i^{-1}$ for all $i$.

\begin{proposition}\label{prop: cone description of ordered set partitions}
    The cones of $\Sigma_{B_n}$ are in bijection with ordered sign set partitions of $[n]$ where the cone $\sigma_A$ of the ordered signed set partition $A$ is given by 
    
    \begin{enumerate}
        \item $x_i = x_j$ if $i$ and $j$ are in the same part for any $i,j \in [\pm n]$.
        \item $x_i = 0$ if $i \in S_0$.
        \item $x_i > x_j$ if $i$ appears in a block that preceeds the block containing $j$ for $i,j \in [\pm n]$.
    \end{enumerate}
    where we use the convention that $x_{-j} = - x_j$ when $j \in [n]$.
\end{proposition}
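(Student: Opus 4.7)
The plan is to verify the bijection directly by constructing mutually inverse maps. The fan $\Sigma_{B_n}$ is, by definition, the fan whose maximal cones are the closures of the chambers of $\mathcal{A}_{B_n}$ together with all their faces, so each cone $\sigma$ of $\Sigma_{B_n}$ is determined by the set of hyperplanes $H \in \mathcal{A}_{B_n}$ containing $\sigma$ together with the sign pattern of the remaining hyperplanes on $\sigma$. Concretely, the data of a cone can be read off from any point in its relative interior.

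First, I would associate to a cone $\sigma$ an ordered signed set partition as follows. Pick any $x$ in the relative interior of $\sigma$ and extend it to a function on $[\pm n]$ via the convention $x_{-i} = -x_i$. Take the blocks of the partition to be the nonempty fibers of this function, with $S_0$ the fiber over $0$, the blocks $S_1,\ldots,S_k$ the fibers over positive values ordered by decreasing value, and $\overline{S}_1,\ldots,\overline{S}_k$ the fibers over negative values ordered accordingly. The identity $x_{-i} = -x_i$ immediately gives the three defining properties of a signed set partition: $S_0$ is closed under negation (since $x_i = 0$ iff $x_{-i} = 0$); no nonzero $S_\ell$ contains both $i$ and $-i$ (else $x_i = -x_i = 0$); and the image of $S_\ell$ under negation is precisely the fiber over the opposite value, which is one of the $\overline{S}_\ell$. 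The ordering inherited from coordinate values — positive blocks first, then $S_0$, then negative blocks — satisfies the symmetry required of an ordered signed set partition. This assignment is independent of the choice of $x$ in the relative interior because every such point has the same incidence pattern with the hyperplanes of $\mathcal{A}_{B_n}$.

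Conversely, given an ordered signed set partition $A$, I would show that the set $\sigma_A$ defined by conditions (1)–(3) is a cone of $\Sigma_{B_n}$. Condition (1) imposes the hyperplane equations $\mathcal{H}_{ij}$ and $\mathcal{H}_{ij}^-$ corresponding to pairs in the same block (with signs recorded via the $x_{-j} = -x_j$ convention); condition (2) imposes the hyperplane equations $\mathcal{H}_i$ for $i \in S_0 \cap [n]$; and condition (3) imposes the strict inequalities that pick out a single chamber of the induced arrangement on the linear subspace cut out by (1) and (2). The resulting set is therefore a relatively open face of a chamber of $\mathcal{A}_{B_n}$, and its closure is a cone of $\Sigma_{B_n}$. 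The two constructions are mutually inverse by inspection.

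The argument is a direct extension of the type $A$ statement (Proposition \ref{prop: cone descrpition of permutahedral fan}), and the only real subtlety is bookkeeping: one must keep track of the antisymmetric identification $x_{-i} = -x_i$ throughout, which is exactly what forces the zero block to be closed under negation, prevents $i$ and $-i$ from occurring in a common nonzero block, and pairs each $S_\ell$ with $\overline{S}_\ell$ via negation. There is no deep content beyond translating the Coxeter combinatorics of type $B_n$ into the language of ordered signed set partitions; the main obstacle is purely notational care with the zero block and sign-reversal symmetry.
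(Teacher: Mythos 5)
The paper states this proposition without proof (it follows the definitions of signed and ordered signed set partitions attributed to Reiner, and is presumably regarded as a standard translation of type-$B_n$ Coxeter combinatorics), so there is no paper argument to compare against. Your direct verification is correct and would serve as a complete proof. Reading a point $x$ in the relative interior of a cone via the antisymmetric extension $x_{-i}=-x_i$, taking fibers as blocks, and ordering them by decreasing value does produce an ordered signed set partition, and the required symmetries of the ordering follow exactly from $x_{-i}=-x_i$ as you say. Two minor points worth a sentence each if this were written up: first, your labeling convention (positive-valued fibers as the $S_\ell$, negative-valued as the $\overline S_\ell$) silently fixes a canonical choice of which member of each negation-paired couple is called $S_\ell$; the paper's definition permits either labeling, so one should note that every valid ordering of a signed set partition is, after such relabeling, of the form ``positives in decreasing order, then $S_0$, then negatives in the reversed order.'' Second, the claim that the two assignments are ``mutually inverse by inspection'' would read better with the one-line observation that conditions (1)--(3) characterize exactly the points of $\RR^n$ whose sign vector against the hyperplanes of $\mathcal{A}_{B_n}$ agrees with that of the chosen interior point, so the locus defined by (1)--(3) is precisely the relative interior of the original cone.
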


\subsection{Coxeter Matroids and Delta Matroids}\label{Sec: Coxeter Generalized Permutahedra}

Our construction of the full strongly exceptional collection for type $B_n$ will be done in analogy with the type $A_n$ case. Our main objects will come from the theory of Coxeter matroids and delta matroids.

\begin{definition}
    Let $W$ be the Weyl group of $\Phi$ and let $I$ be a subset of the simple roots with corresponding parabolic subgroup $W_I$. A \textbf{Coxeter matroid} of type $(\Phi,I)$ is a subset $\mathcal{M} \subseteq W/W_I$ such that for any $w \in W$, there is a unique $A \in \mathcal{M}$ such that for all $B \in \mathcal{M}$ 
     \[ w^{-1}B \leq w^{-1} A\]
where $\leq$ is the Bruhat order on the quotient $W/W_I$.
    
\end{definition}

There have been various different constructions of a Coxeter matroid polytope. In the main modern definition, one usually takes the Coxeter matroid polytope to be the convex hull of the orbit of the sum of fundamental weights $\sum_{i \in I}\omega_i$ under $\mathcal{M}$ \cite{Ardila2020}. Unfortunately, this gives a polytope in the weight lattice (of the root system) while we are looking for polytopes in the root lattice. Instead, we will study a special class of Coxeter matroids coming from the theory of delta matroids and use their associated polytopes.

\begin{definition}
    A \textbf{delta matroid} on ground set $E$ is a collection of subsets $\mathcal{F}(\mathcal{M})$ of $E$, called feasible sets, such that for any two $A_1,A_2 \in \mathcal{F}(\mathcal{M})$ and any $e \in A_1 \Delta A_2$ there exists an $f \in A_1 \Delta A_2$ such that $A_1 \Delta \{e,f\}$ is in $\mathcal{F}(\mathcal{M})$. Here $A_1 \Delta A_2$ represents the symmetric difference of the two sets. The \textbf{feasible polytope} of $\mathcal{M}$ is the polytope
     \[ \FP(\mathcal{M}) = \operatorname{conv}\left (e_S \; \lvert \; S \subseteq \mathcal{F}(\mathcal{M}) \right ).\]
\end{definition}

\begin{proposition}\cite{Borovik2003}\label{prop: delta Matroids in bijectoin with Coxeter matroids wn}
    The set of delta matroids on $[n]$ is in bijection with the set of Coxeter matroids of type $(B_n, \omega_n)$. The bijection is the map induced by the bijection
     \[ 2^{[n]} \to W/W_{I} \]
     given by $S \mapsto [\pi]$ where $\pi$ is the signed permutation
      \[ \pi(i) = \begin{cases}
          -i & \text{if $i \in S$} \\
          i & \text{if $i \not \in S$.}
      \end{cases}.\]
    Further $\FP(\mathcal{M})$ is a type $B_n$ Coxeter matroid polytope and hence a $B_n$-generalized permutahedron.
\end{proposition}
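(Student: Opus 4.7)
The plan is to establish the two assertions of the proposition separately: first the set-level bijection between delta matroids on $[n]$ and type $(B_n, \omega_n)$ Coxeter matroids, and then the identification of $\FP(\mathcal{M})$ as a $B_n$-generalized permutahedron.

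First I would identify the parabolic subgroup $W_I$ explicitly. Since $\omega_n = \tfrac{1}{2}(e_1 + \cdots + e_n)$ has all coordinates equal to $\tfrac{1}{2}$, its stabilizer in $S_n^B$ under the natural reflection action is exactly the subgroup of signed permutations that perform no sign changes, i.e.\ the embedded copy of $S_n$. Consequently $|W/W_I| = 2^n n!/n! = 2^n$, matching $|2^{[n]}|$. I would then show that the signed permutations $\{\pi_S\}_{S \subseteq [n]}$ form a full set of coset representatives: if $\pi_S$ and $\pi_T$ lie in the same coset, they differ by an element of $S_n$, but elements of $S_n$ preserve sign patterns, forcing $S = T$. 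This gives the set-theoretic bijection and pins down its normalization.

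Next I would transport the Bruhat order on $W/W_I$ to $2^{[n]}$ via this bijection and verify that the uniqueness of the $\leq^{w}$-maximum, required by the definition of a Coxeter matroid, translates into the symmetric exchange axiom for delta matroids. The heart of this is to match the twisted Bruhat orders $\leq^{w}$ with the orderings on subsets induced by the linear functionals $w \cdot \omega_n$. This is the classical Gelfand--Serganova correspondence, carried out in detail for $(B_n, \omega_n)$ in Theorem~4.2.3 of \cite{Borovik2003}, so rather than reprove it I would cite that source and verify that our normalization agrees with theirs. The main obstacle here is the bookkeeping of signed Bruhat orders; everything else in this step is formal.

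For the second assertion, I would exhibit $\FP(\mathcal{M})$ as an affine image of the standard Coxeter matroid polytope. A direct computation yields
\[
\pi_S \cdot \omega_n \;=\; \tfrac{1}{2}\bigl(e_{[n]\setminus S} - e_S\bigr) \;=\; \tfrac{1}{2}e_{[n]} - e_S,
\]
so the affine involution $x \mapsto \tfrac{1}{2}e_{[n]} - x$ sends the Coxeter matroid polytope $\operatorname{conv}\bigl(\pi_S \cdot \omega_n : S \in \mathcal{F}(\mathcal{M})\bigr)$ onto $\FP(\mathcal{M})$. Since Coxeter matroid polytopes of type $(B_n,\omega_n)$ have all edges parallel to roots of $B_n$ by the Gelfand--Serganova theorem (see \cite{Borovik2003, Ardila2020}), and affine involutions preserve edge directions up to sign, $\FP(\mathcal{M})$ has edges parallel to $B_n$-roots. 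Lattice-ness is immediate since the vertices $e_S$ lie in $\ZZ^n$. Hence $\FP(\mathcal{M})$ is a lattice polytope whose normal fan coarsens $\Sigma_{B_n}$, i.e.\ a $B_n$-generalized permutahedron, completing the proof.
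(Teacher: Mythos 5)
The paper cites this result to \cite{Borovik2003} and gives no proof of its own, so your proposal is going beyond what the text supplies rather than competing with an existing argument. That said, the reconstruction is essentially correct and well organized. The identification $W_I \cong S_n$ (stabilizer of $\omega_n$, which has equal coordinates) and the count $|W/W_I| = 2^n$ with coset representatives $\pi_S$ are both right. The key combinatorial step --- matching the Coxeter matroid maximality axiom in the twisted Bruhat orders on $W/W_I$ with the symmetric exchange axiom for delta matroids --- is exactly what one must cite to \cite{Borovik2003} (the Gelfand--Serganova theorem specialized to the minuscule $B_n$ quotient), and you correctly defer there rather than reinventing the bookkeeping.

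Your affine computation is correct and worth spelling out: $\pi_S \cdot \omega_n = \tfrac{1}{2}\bigl(e_{[n]\setminus S} - e_S\bigr) = \tfrac{1}{2}e_{[n]} - e_S$, so the affine involution $x \mapsto \tfrac{1}{2}e_{[n]} - x$ carries $\operatorname{conv}(\pi_S\cdot\omega_n : S\in\mathcal{F}(\mathcal M))$ onto $\FP(\mathcal M)$. This is $(-1)\cdot$ composed with a translation, and since $-1 \in S_n^B$ (it is the longest element) the fan $\Sigma_{B_n}$ is invariant under negation; hence the normal fan of $\FP(\mathcal M)$ coarsens $\Sigma_{B_n}$, and vertices $e_S \in \ZZ^n$ put it in the root lattice. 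One small wording caveat: what you show literally is that $\FP(\mathcal M)$ is a translated image of the Coxeter matroid polytope, not that it \emph{is} the convex hull of a $\omega_n$-orbit; the paper's phrase ``type $B_n$ Coxeter matroid polytope'' is being used loosely to mean ``polytope equivalent to a Coxeter matroid polytope up to the affine change above, i.e.\ a lattice $B_n$-generalized permutahedron in the cube,'' in line with Theorem~\ref{thm: type Bn ggms}. With that reading your argument is complete.
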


As with type $A_n$ matroids, we have the following classification of which $B_n$-generalized permutahedra come from delta matroids.
\begin{theorem}\cite{Borovik2003}\label{thm: type Bn ggms}\label{thm: Bn coxeter matroids in cube}
    A polytope in $\RR^n$ is the base polytope of a delta matroid if and only if it is a lattice $B_n$-generalized permutahedra and every vertex is a vertex of the cube $[0,1]^n$.
\end{theorem}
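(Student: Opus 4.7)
The strategy is to mirror the proof of the type $A_{n-1}$ characterization in Theorem \ref{thm: ggms} (the classical Gelfand-Goresky-MacPherson-Serganova theorem), using the symmetric exchange axiom of delta matroids in place of the matroid basis exchange axiom. Both implications reduce to the translation between symmetric exchanges of feasible sets and root-parallel edges of the associated polytope.

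For the forward direction, let $\mathcal{M}$ be a delta matroid on $[n]$. The vertices of $\FP(\mathcal{M})$ lie in $\{0,1\}^n$ by construction. To see that $\FP(\mathcal{M})$ is a $B_n$-generalized permutahedron, one shows directly that every edge is parallel to a root of $B_n$: if $e_{A_1}$ and $e_{A_2}$ are joined by an edge, then $A_1$ and $A_2$ must satisfy $|A_1 \Delta A_2| \leq 2$, since otherwise the symmetric exchange axiom would produce an intermediate feasible set $A_1 \Delta \{e,f\}$ whose image lies strictly inside the putative edge. The three cases $|A_1 \Delta A_2| \in \{1,2\}$ give edge directions $\pm e_i$, $\pm(e_i - e_j)$, and $\pm(e_i + e_j)$ respectively, all of which are roots of $B_n$. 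Alternatively, one may invoke the bijection in Proposition \ref{prop: delta Matroids in bijectoin with Coxeter matroids wn} and conclude the edge property via the Gelfand-Serganova theorem for Coxeter matroid polytopes.

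For the reverse direction, suppose $P$ is a lattice $B_n$-generalized permutahedron with $\mathrm{vert}(P) \subseteq \{0,1\}^n$, and set $\mathcal{F} = \{S \subseteq [n] : e_S \in \mathrm{vert}(P)\}$. To verify the symmetric exchange axiom, fix $A_1, A_2 \in \mathcal{F}$ and $e \in A_1 \Delta A_2$. Choose a generic linear functional $\phi$ that is maximized on $P$ at $e_{A_1}$ among all vertices $e_B$ with $B \cap \{e\} = A_1 \cap \{e\}$, and for which $\phi(e_{A_2})$ is strictly less than $\phi(e_{A_1})$. By convexity, the edge of $P$ from $e_{A_1}$ in the direction of steepest decrease of $\phi$ leads to a vertex $e_B$; because every edge of $P$ is parallel to a $B_n$-root and all vertices lie in $\{0,1\}^n$, one has $B = A_1 \Delta \{e\}$ or $B = A_1 \Delta \{e, f\}$ for some $f$. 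The choice of $\phi$ forces $f \in A_1 \Delta A_2$, which is precisely the conclusion of the symmetric exchange axiom.

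The main obstacle is in the reverse direction: ensuring that the exchanged element $f$ lies within $A_1 \Delta A_2$ rather than an arbitrary element of $[n]$. The cleanest way to sidestep the technicalities is to apply the Gelfand-Serganova theorem via Proposition \ref{prop: delta Matroids in bijectoin with Coxeter matroids wn}, which converts the statement into the characterization of Coxeter matroid polytopes for the minuscule parabolic $(B_n, \omega_n)$; the direct combinatorial argument sketched above can also be completed by induction on $|A_1 \Delta A_2|$, building a path of edges from $e_{A_1}$ to $e_{A_2}$ in $P$ with every step consistent with a valid symmetric exchange.
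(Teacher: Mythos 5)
The paper does not prove this statement; it is cited as a known result from Borovik, Gelfand, and White's monograph on Coxeter matroids (\cite{Borovik2003}), so there is no internal proof to compare against, and your sketch must stand on its own.

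Your forward direction has a concrete gap: the exchanged feasible set $A_3 = A_1 \Delta \{e,f\}$ does not give a point ``strictly inside the putative edge.'' When $|A_1 \Delta A_2| > 2$, the point $e_{A_3}$ differs from $e_{A_1}$ in at most two coordinates while $e_{A_2}$ differs in more, so $e_{A_3}$ is not on the segment $[e_{A_1}, e_{A_2}]$ at all. What you actually need is that $e_{A_3}$ lies in the same \emph{face} as the edge (i.e., the linear functional defining the edge is also maximized at $e_{A_3}$), which requires showing that the feasible sets achieving the maximum of a linear functional themselves form a delta matroid — a ``face delta matroid'' lemma, analogous to the face-matroid lemma hidden in the usual GGMS proof. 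With that lemma, the edge face would contain a third vertex, contradicting that it is an edge. The reverse direction has a parallel unfilled step: the sentence ``the choice of $\phi$ forces $f \in A_1 \Delta A_2$'' asserts the conclusion rather than deriving it, and this is exactly where the GGMS-style exchange argument is most delicate. Your proposed escape route — invoking Proposition~\ref{prop: delta Matroids in bijectoin with Coxeter matroids wn} together with the Gelfand--Serganova theorem for the minuscule quotient $(B_n,\omega_n)$ — is the correct and standard way to complete the proof, and it is essentially what the cited source does; but the direct combinatorial argument as written is not yet complete without the face delta matroid lemma and a careful justification of the $f \in A_1 \Delta A_2$ claim.
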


A \textbf{loop} of a delta matroid is an element $i \in [n]$ that is in no feasible set and a \textbf{coloop} is an element that is in every feasible set. As with matroids, the feasible polytope of a delta matroid with a loop and the feasible polytope of the delta matroid obtained by changing the loop to a coloop are translations of each other.

The \textbf{type $B_n$ Gale order} on $2^{[n]}$ is the poset given by $A \preceq B$ for subsets $A,B \subseteq [n]$ where $A = \{a_1 < \cdots < a_j\}$ and $B = \{b_1 < \cdots < b_k\}$ if and only if $j \leq k$ and $a_{j-i} \leq b_{k - i}$ for all $i \in \{0,\ldots,j-1\}$. 

\begin{remark}
     By Exercise 8.5 of \cite{Bjo1984}, this order is isomorphic to the induced Bruhat order on the quotient maximal parabolic quotient of $W$ given by the weight $w_{n}$ of type $B_n$. 
\end{remark}

\begin{definition}
    The \textbf{standard Schubert delta matroid} $\Delta_S$ is the delta matroid given by
     \[\mathcal{F}(\Delta_S) = \{ T \subseteq [n] \; \lvert \; \text{$T \preceq S$} \}. \]

    The \textbf{Schubert Delta matroid} $\Delta_S^w$ for $w \in S_n^B$ is the delta matroid defined by the polytope $w \cdot \BP(\Delta_S) + e_T$ where $T = \{ w(i) \in [n] \; \lvert \; w(i) < 0\}$. This translation ensures that this polytope is contained in the cube $[0,1]^n$.
\end{definition}

Let $\operatorname{DSch}_n$ denote the set of delta Schubert matroids on $[n]$ and $\overline{\operatorname{DSch}}_n$ denote those which are loopless.

As with matroids, we have various operations on delta matroids. We only use the following:

\begin{definition}
Let $\mathcal{M}$ be a delta matroid on $[n]$.
\begin{itemize}
    \item The \textbf{restriction} of $\mathcal{M}$ to $S \subseteq [n]$ is the delta matroid on $S$ given by 
     \[\mathcal{F}(\mathcal{M}|S) = \{ B \in \mathcal{F}(\mathcal{M}) \; \lvert \; B \subseteq S\}.\]
     \item If $\mathcal{M}$ is a delta matroid on $S$ and $S \subseteq T$, we say that the \textbf{trivial extension} of $\mathcal{M}$ to $T$ is the delta matroid with the same feasible sets but where we changed the ground set from $S$ to $T$. This amounts to viewing all $i \in T \backslash S$ as loops.
     \item If $i \in [n]$ is a loop of $\mathcal{M}$, we say that the matroid obtained by \textbf{flipping the loop $i$ to a coloop} is the matroid $\mathcal{M}'$
      \[\mathcal{F}(\mathcal{M}') = \{ S \cup \{i\} \; \lvert \; S \in \mathcal{F}(\mathcal{M})\}. \]
\end{itemize}
\end{definition}

\begin{proposition}\label{prop: operations preserve delta matroids}
    The class of Schubert delta matroids is closed under restriction, trivial extensions, and flipping loops.
\end{proposition}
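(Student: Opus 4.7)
The plan is to reduce each operation to a manipulation of the data $(S, w)$ defining a Schubert delta matroid $\Delta_S^w$. Recall that $\FP(\Delta_S^w) = w \cdot \FP(\Delta_S) + e_T$ with $T$ determined by $w$ so that the polytope lies in $[0,1]^n$. Because every Schubert delta matroid is built from a standard one $\Delta_S$ by the $S_n^B$-action composed with a cube-translation, natural polytope symmetries can be translated into adjustments of $w$.

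The first two operations are direct. For a trivial extension from $[m]$ to $[n]$ with $m < n$, the polytope becomes $\FP(\mathcal{M}) \times \{0\}^{n-m}$, and extending $w \in S_m^B$ to $\tilde w \in S_n^B$ by fixing the coordinates $m+1, \ldots, n$ realizes this as $\Delta_S^{\tilde w}$ on $[n]$. For flipping a loop $i$ of $\Delta_S^w$, the polytope $\FP(\Delta_S^w) \subseteq \{x_i = 0\}$ becomes $\FP(\Delta_S^w) + e_i \subseteq \{x_i = 1\}$. This is achieved by replacing $w$ with the composition of $w$ with the sign-change generator of $S_n^B$ at coordinate $i$: a short two-case check (depending on whether $i$ lies in the original auxiliary translation $T$) shows that the change in $T$ induced by this adjustment exactly compensates for the sign flip on the other vertices, leaving coordinates $j \neq i$ unchanged while sending coord $i$ from $0$ to $1$. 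Hence the flipped matroid is again a Schubert delta matroid $\Delta_S^{w'}$.

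The main work is the restriction case. By composing with the previous operations, it suffices to show that the restriction $\Delta_S | S'$ of a standard Schubert delta matroid to $S' \subseteq [n]$ is itself Schubert. The feasible sets are $\{T \subseteq S' : T \preceq S\}$, where $\preceq$ is the type $B_n$ Gale order. The central combinatorial lemma I would prove is that this set has a $\preceq$-maximum element $S^* \in 2^{S'}$ so that
\[
\{T \subseteq S' : T \preceq S\} = \{T \subseteq S' : T \preceq S^*\}.
\]
I would construct $S^*$ greedily: writing $S = \{s_1 < \cdots < s_k\}$, scan $j$ from $k$ down to $1$ and, when possible, select the largest yet-unchosen element of $S'$ bounded above by $s_j$. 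Since the restriction of the type $B_n$ Gale order to $2^{S'}$ is again a type $B$ Gale order on the relabeled ground set, the lemma yields $\Delta_S|S' = \Delta_{S^*}$, which is Schubert.

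The main obstacle is verifying this lemma. The Gale order compares sets of possibly differing cardinalities by aligning their sorted elements from the top, so showing that every $T \subseteq S'$ with $T \preceq S$ also satisfies $T \preceq S^*$ requires careful index tracking in the dominance inequalities, particularly when partial failures of the greedy procedure produce $|S^*| < |S|$. It is a finite, elementary check but forms the technical heart of the proof.
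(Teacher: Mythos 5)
The paper states this proposition without giving a proof, so your attempt can only be judged on its own merits. On those merits, the trivial-extension and loop-flip cases are fine: trivial extension of a standard $\Delta_S$ from $[m]$ to $[n]$ changes nothing (no feasible set can contain an element larger than $\max S$), and extending $w$ by the identity handles the general case; for flipping a loop $i$, your two-case check (according to whether coordinate $i$ already lies in the normalizing translation set $T$) does verify that $\FP(\Delta_S^w)+e_i$ is the polytope of $\Delta_S^{w'}$ with $w'$ the composition of $w$ with the sign change at $i$.

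The gap is in the restriction case, specifically in the sentence ``by composing with the previous operations, it suffices to show that the restriction of a \emph{standard} Schubert delta matroid is Schubert.'' A general Schubert delta matroid is $\Delta_S^w$ with polytope $w\cdot\FP(\Delta_S)+e_T$, and restricting to $S'$ means intersecting with the cube face $\{x_i=0 : i\notin S'\}$. Pulling that face back through $w$, every deleted coordinate that is sign-flipped by $w$ (i.e.\ every $i\in T\setminus S'$) becomes the condition $x_j=1$ on the standard polytope; on feasible sets this is the requirement that $j$ \emph{belong} to the feasible set, a contraction-type condition, not an avoidance condition. So the restriction of $\Delta_S^w$ corresponds on the standard side to $\{F\in\mathcal{F}(\Delta_S)\;:\;F\cap A=\emptyset,\ B\subseteq F\}$ for suitable disjoint $A,B$, and when $B\neq\emptyset$ this is neither a restriction of a standard Schubert delta matroid nor reachable from one by trivial extension or loop flips (flips are only defined at loops, and the elements of $B$ are not loops of $\Delta_S$). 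Your greedy max-element lemma, which is stated only for the untwisted Gale order, covers only $B=\emptyset$. To repair the argument you must either prove the max-element lemma for every $w$-twisted Gale order (so that $\{T\subseteq S' : T\preceq_w S\}$ is a principal lower ideal of a twisted Gale order on $S'$), or prove in addition that $\{F\setminus B : F\in\mathcal{F}(\Delta_S),\ B\subseteq F,\ F\cap A=\emptyset\}$ is again Schubert, i.e.\ closure under contraction as well as deletion; and in either case the greedy lemma itself, which you acknowledge as the technical heart, still needs to be carried out. (A minor further point: if some element outside $S'$ is a coloop, the restricted family is empty and is not a delta matroid at all, so the statement implicitly carries a nonemptiness hypothesis.)
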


\subsection{Feasible Polytopes of Delta Schubert Matroids are Exceptional}\label{sec: delta matroids are exceptional} In this section we show the set feasible polytopes of loopless Schubert delta matroids is strongly exceptional.

Recall that for a polytope $P$ and a vertex $v$, the tangent cone $C_v$ of $v$ is the cone
 \[C_v = \{x \in \RR^n \; \lvert \; v- \lambda x \in P \text{ for some $\lambda \in \RR^n_{\geq 0}$}\}. \]
It is related to the normal cone $\sigma_v$ of the vertex $v$ by $-\sigma^{\vee} = C_v$.

\begin{lemma}\label{lem: cone difference contraction}
    Let $P \in \Def(\Sigma)$ and $v$ be a vertex of $P$. Let $C$ be a cone which contains the tangent cone of $v$. Then, for any $m \in M$, the set difference $P \backslash C +m$ is contractible or empty.
\end{lemma}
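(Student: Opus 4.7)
The plan is to show that $P \backslash (C+m)$ is either empty or star-shaped with respect to the vertex $v$; in the latter case star-shapedness gives contractibility via the straight-line homotopy. I would split into two cases according to whether $v$ itself lies in $C + m$.

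If $v \in C + m$, I claim $P \subseteq C + m$, so the difference is empty. Write $v = c + m$ with $c \in C$. The containment $C \supseteq C_v$ translates into $P \subseteq v + C$, so any $p \in P$ can be written as $p = v + c'$ for some $c' \in C$. Thus $p = (c+c') + m \in C + m$, since $c + c' \in C$ (convex cones are closed under addition).

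In the opposite case, $v \notin C + m$, let $x \in P \backslash (C+m)$; I want to show that the segment from $x$ to $v$ lies in $P \backslash (C + m)$. It lies in $P$ by convexity of $P$. To check that it misses $C + m$, translate by $-m$ and set $y := x - m$, $u := v - m$, so that $y, u \notin C$. Since $P - v \subseteq C_v \subseteq C$, we also have $x - v = y - u \in C$. This is the crux: two arbitrary points outside a convex cone can certainly have convex combinations inside, so pure convexity of $C+m$ is not enough, but the extra relation $y - u \in C$ synchronizes things. Concretely, since $C$ is a closed convex cone and $y \notin C$, the separation theorem produces a linear functional $\ell$ with $\ell \geq 0$ on $C$ and $\ell(y) < 0$. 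Then $y - u \in C$ gives $\ell(u) \leq \ell(y) < 0$, so $\ell((1-t)y + tu) = (1-t)\ell(y) + t\ell(u) < 0$ for every $t \in [0,1]$. Hence the entire segment lies off $C$, i.e., the segment from $x$ to $v$ lies off $C + m$.

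The main obstacle is precisely this convex-geometric step in the second case: the tangent-cone hypothesis, encoded as $x - v \in C$, is exactly what forces the separating functional to be negative on both $y$ and $u$. Everything else is formal, and the only implicit assumption---that $C$ is closed---is automatic for the polyhedral cones appearing in the toric setting.
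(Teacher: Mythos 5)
Your argument is correct and its skeleton---retract $P$ onto $v$ along straight lines and check that the retraction preserves $P \setminus (C+m)$---is the same as the paper's. Two smaller differences are worth noting. First, you actually supply a proof that $v \in C+m$ forces $P \subseteq C+m$ (so the set difference is empty); the paper merely asserts that $P \setminus (C+m)$ is nonempty ``exactly when $v \notin C+m$'' without justifying this direction. Second, for the case $v \notin C+m$ you deploy a separating linear functional, whereas the paper argues more directly via cone additivity: if $(1-t)x + tv - m \in C$ and $x - v \in C$, then $x - m = \bigl((1-t)x + tv - m\bigr) + t(x - v) \in C$, contradicting $x \notin C+m$; both arguments are short and equally valid. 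One convention point deserves flagging: you take $P - v \subseteq C_v \subseteq C$, i.e.\ the ordinary tangent cone with $P \subseteq v + C$, but the paper's Section~6 defines $C_v$ as the cone generated by $v - P$, the opposite sign. Your reading is in fact the one that makes the lemma true and that matches how it is invoked (for $\FP(\Delta_2) \setminus (-\sigma_w^\vee + e_S)$, where $-\sigma_w^\vee$ contains the genuine tangent cone, not its negative). The paper's definition of $C_v$, and its proof of this lemma (which writes ``$v - x \in C$'' and the formula $x - t(v - x)$ instead of $x + t(v-x)$), carry sign typos; you have silently repaired them rather than introduced an error.
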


\begin{proof}
    Consider the straight-line deformation retract $F: P \times [0,1] \to \{v\}$ given by
     \[ F(x,t) = v - t(v-x).\]
    This is well-defined since if $x \in P$ then so is the line segment between $x$ and $v$ since $P$ is convex. Suppose that $P \backslash C + m$ is non-empty which happens exactly when $v  \not \in C + m$.

    For any point $x \in P$, we have that $v-x \in C$ since $C$ contains the tangent cone at $v$. Therefore, if $x - t(v-x) \in C+m$ for any $t \in [0,1]$, we must have that $x$ is also in $C+m$ since $C + m$ is closed under addition. In the contrapositive, this means that if $x \in P \backslash C+m$, then $x - t(v-x) \in P \backslash C+m$ for all $t \in [0,1]$.

    This implies that the straight-line deformation retract $F$ restricts to a strong deformation retract which shows that $P \backslash C+m$ is contractible.
\end{proof}

This lemma is useful since the feasible polytopes of delta Schubert matroids polytopes can be understood as intersections of cones with the cube.

\begin{lemma}\label{lem: delta schubert cone intersection}\cite{EHL22}
    Let $S$ be an admissible subset of $[\pm n]$ of cardinality $n$ and $w \in S_n^B$. Then, we have the intersection
     \[ \Delta_S^w = -\sigma_w^{\vee}+ e_{S \cap [n]} \cap [0,1]^n\]
\end{lemma}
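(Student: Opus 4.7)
The plan is to reduce the general statement to the case $w = e$ by exploiting $S_n^B$-equivariance, then verify the identity combinatorially by matching vertex sets.

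For the reduction, note that by the definition of $\Delta_S^w$, the feasible polytope satisfies $\FP(\Delta_S^w) = w \cdot \FP(\Delta_{S_0}) + e_T$ for the standard Schubert delta matroid $\Delta_{S_0}$ corresponding to $S_0 = S \cap [n]$, where the translation $e_T$ is precisely the shift needed to return the image to $[0,1]^n$ after the signed permutation. The Weyl group element $w$ acts on $N_\RR$ sending $\sigma_e$ to $\sigma_w$ and hence $-\sigma_e^\vee$ to $-\sigma_w^\vee$, while the translation $e_T$ is the affine shift that brings the reflected cube back to $[0,1]^n$. Both sides of the claimed equality transform compatibly under this combined operation, so it suffices to prove the $w = e$ case: $\FP(\Delta_{S_0}) = (-\sigma_e^\vee + e_{S_0}) \cap [0,1]^n$.

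For the $w=e$ case, first identify $\sigma_e^\vee$: since the fundamental Weyl chamber $\sigma_e$ of type $B_n$ is cut out by the simple-root inequalities $x_1 \geq x_2 \geq \cdots \geq x_n \geq 0$, its dual cone is generated by the simple roots $\{e_i - e_{i+1}\}_{i=1}^{n-1} \cup \{e_n\}$. The intersection $(-\sigma_e^\vee + e_{S_0}) \cap [0,1]^n$ is then an intersection of two polyhedra whose edge directions all lie along type $B_n$ roots, hence a lattice $B_n$-generalized permutahedron contained in the cube $[0,1]^n$. By Theorem \ref{thm: Bn coxeter matroids in cube}, this intersection is the feasible polytope of some delta matroid $\mathcal{N}$, and it remains to identify $\mathcal{N}$ with $\Delta_{S_0}$. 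A lattice point $e_T$ with $T \subseteq [n]$ lies in $-\sigma_e^\vee + e_{S_0}$ if and only if $e_{S_0} - e_T$ is a non-negative integer combination of the simple roots $\{e_i - e_{i+1}\}_{i=1}^{n-1} \cup \{e_n\}$. Expanding this cone-membership condition translates directly into the inequalities defining the type $B_n$ Gale order $T \preceq S_0$, where the short root generator $e_n$ absorbs the slack needed to allow $|T| < |S_0|$, so the vertices of the intersection are exactly $\{e_T : T \preceq S_0\}$.

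The main technical obstacle is this final combinatorial verification. While conceptually transparent, carefully translating the cone-membership condition $e_{S_0} - e_T \in \sigma_e^\vee$ into the dominance inequalities of the type $B_n$ Gale order (comparing elements from the largest and permitting $|T| < |S_0|$) requires precise bookkeeping. The distinguishing role of the short simple root $e_n$ is essential here and explains why the type $B_n$ analysis differs substantively from the type $A$ analogue in Section \ref{Sec: Schubert matroids are Exceptional}, where only the long roots $e_i - e_{i+1}$ appear and the cardinality of a basis is fixed by the rank.
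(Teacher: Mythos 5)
First, a structural point: the paper does not prove this lemma at all --- it is imported wholesale from \cite{EHL22} --- so there is no internal proof to compare against; your attempt must stand on its own, and as written it has two genuine gaps. The first is the step asserting that $(-\sigma_e^\vee + e_{S_0}) \cap [0,1]^n$ is a lattice $B_n$-generalized permutahedron ``because it is an intersection of two polyhedra whose edge directions lie along roots.'' Deformations of a Coxeter fan are not closed under intersection: an edge of $P \cap Q$ arises from crossing facets of $P$ and of $Q$ and need not be parallel to an edge of either (already in type $A$, crossing a facet with normal $e_{\{1,2\}}$ against one with normal $e_{\{1,3\}}$ can produce an edge in the non-root direction $e_1 + e_4 - e_2 - e_3$), and integrality of the vertices of the intersection is likewise not automatic. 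The fact that this particular translated cone meets the cube in a lattice $B_n$-generalized permutahedron with $0/1$ vertices is exactly the nontrivial compatibility between the chamber $\sigma_w$ and the cube vertex $e_{S\cap[n]}$ that the cited result supplies; your argument assumes it, and it is also needed at the end, since knowing which lattice points $e_T$ lie in the intersection does not give the polytope equality unless all vertices of the intersection are such points.

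The second gap is that the combinatorial identification is off by an orientation, so the base case you verify is not the one demanded by the paper's conventions. With $\sigma_e = \{x_1 \ge \cdots \ge x_n \ge 0\}$ one has $\sigma_e^\vee = \operatorname{cone}(e_1 - e_2, \ldots, e_{n-1} - e_n, e_n) = \{v \; \lvert \; v_1 + \cdots + v_k \ge 0 \text{ for all } k\}$, so $e_T \in -\sigma_e^\vee + e_{S_0}$ if and only if $|T \cap [k]| \le |S_0 \cap [k]|$ for every \emph{initial} segment $[k]$, whereas the paper's type $B_n$ Gale order $T \preceq S_0$ is dominance on \emph{final} segments ($a_{j-i} \le b_{k-i}$, equivalently $|T \cap \{m,\ldots,n\}| \le |S_0 \cap \{m,\ldots,n\}|$ for all $m$). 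These differ: for $n=2$, $S_0 = \{2\}$, $T = \{1\}$ we have $T \preceq S_0$, so $e_1$ is a vertex of $\FP(\Delta_{\{2\}})$, yet $e_2 - e_1 \notin \operatorname{cone}(e_1 - e_2, e_2)$, so $e_1$ is not in your right-hand side. The standard Schubert delta matroid in the paper's conventions is carved out by the reversed chamber $\{x_n \ge \cdots \ge x_1 \ge 0\}$, not by $\sigma_e$, and this mismatch propagates into your reduction step: the assertion that ``both sides transform compatibly'' under $w$ glosses over precisely the bookkeeping between the lemma's parametrization by the pair $(S,w)$ and the paper's definition of $\Delta_S^w$ as $w \cdot \FP(\Delta_S) + e_T$, which is where the indexing has to be reconciled. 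The overall strategy (equivariant reduction plus vertex matching plus the cube-intersection structure theorem) is reasonable, but as written the two facts you would need --- that the intersection is a lattice $B_n$-generalized permutahedron, and that its $0/1$ points are the Gale-order lower ideal for the correct chamber --- are respectively unproved and incorrectly stated.
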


\begin{proposition}\label{prop: Delta matroids are exceptional}
    The set of feasible polytopes of loopless Schubert delta matroids on $[n]$ forms a strongly exceptional collection.
\end{proposition}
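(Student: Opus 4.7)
The plan is to adapt the argument used for Proposition \ref{prop: Independence Schuberts are Exceptional} to the delta matroid setting. Let $\Delta_1, \Delta_2 \in \overline{\operatorname{DSch}}_n$; I must show that $\FP(\Delta_2) \backslash (\FP(\Delta_1) + m)$ is contractible or empty for every $m \in M = \ZZ^n$. By Theorem \ref{thm: Bn coxeter matroids in cube} both feasible polytopes lie in $[0,1]^n$, so $\FP(\Delta_1) + m$ meets $\FP(\Delta_2)$ only when $m \in \{-1,0,1\}^n$; for all other $m$ the set difference is the entire polytope $\FP(\Delta_2)$, which is contractible. Writing $m = e_T - e_S$ for disjoint $S,T \subseteq [n]$, I then reduce to the case $m=0$.

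For the reduction, the vertices of $\FP(\Delta_1) + m$ lying in $[0,1]^n$ correspond to feasible sets $F \in \mathcal{F}(\Delta_1)$ with $S \subseteq F$ and $F \cap T = \emptyset$, and translation by $m$ sends each such $e_F$ to $e_{(F\cup T)\backslash S}$. The resulting convex hull should be identifiable with $\FP(\Delta_1')$ for a delta matroid $\Delta_1'$ built from $\Delta_1$ by a sequence of restrictions, trivial extensions, and loop-to-coloop flips, and hence still a Schubert delta matroid by Proposition \ref{prop: operations preserve delta matroids}. This identification replaces $\FP(\Delta_1) + m$ by $\FP(\Delta_1')$ inside $\FP(\Delta_2)$, allowing us to assume $m = 0$.

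Having reduced to $m = 0$, I invoke Lemma \ref{lem: delta schubert cone intersection} to write $\FP(\Delta_1) = (-\sigma_w^{\vee} + e_{S_0 \cap [n]}) \cap [0,1]^n$ for an admissible $S_0 \subseteq [\pm n]$ of cardinality $n$ and an element $w \in S_n^B$. Since $\FP(\Delta_2) \subseteq [0,1]^n$, the cube intersection can be discarded, yielding
\[
\FP(\Delta_2) \backslash \FP(\Delta_1) \;=\; \FP(\Delta_2) \backslash \bigl(-\sigma_w^{\vee} + e_{S_0 \cap [n]}\bigr).
\]
Now let $v$ be the vertex of $\FP(\Delta_2)$ maximizing any linear functional in the relative interior of the cone $\sigma_w$. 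Then the normal cone to $\FP(\Delta_2)$ at $v$ contains $\sigma_w$, so its dual is contained in $\sigma_w^{\vee}$; equivalently the tangent cone $C_v = -\sigma_v^{\vee}$ is contained in $C := -\sigma_w^{\vee}$. Applying Lemma \ref{lem: cone difference contraction} with this $C$ and translation $e_{S_0 \cap [n]}$ gives the desired contractibility.

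The main obstacle will be the reduction step: verifying that the face of $\FP(\Delta_1)$ cut out by $x_i = 1$ for $i \in S$ and $x_i = 0$ for $i \in T$, once translated by $m$, is again the feasible polytope of a Schubert delta matroid. The operations in Proposition \ref{prop: operations preserve delta matroids} do not explicitly include a contraction-type operation removing coordinates at which coloops have been enforced, so one must either enlarge the list of closure operations or argue directly, via the Bruhat-order description of Schubert delta matroids, that the resulting family of feasible sets lies in the Schubert class. Once this combinatorial bookkeeping is in place, the homotopy-theoretic core of the argument is immediate from the two lemmas cited above.
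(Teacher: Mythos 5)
Your proposal follows the paper's proof line for line: reduce to $m \in \{-1,0,1\}^n$, identify $(\FP(\Delta_1)+m)\cap[0,1]^n$ as the feasible polytope of another Schubert delta matroid so that $m=0$ suffices, rewrite $\FP(\Delta_1)$ via Lemma \ref{lem: delta schubert cone intersection}, drop the cube factor because $\FP(\Delta_2)\subseteq[0,1]^n$, and apply Lemma \ref{lem: cone difference contraction} to the translated cone. The reservation you flag about the reduction step is fair, but it applies equally to the paper: the paper asserts that the translated face is obtained by restriction to $[n]\setminus(S\cup T)$, trivial extension, and flipping, yet the face consists of feasible sets \emph{containing} $T$ and avoiding $S$, which is not literally a restriction; both accounts would benefit from invoking the cone-intersection description (Lemma \ref{lem: delta schubert cone intersection}) or a closure-under-faces statement to make this airtight. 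One small caution on the final step: using the paper's definition $C_v=\{x : v-\lambda x\in P\}$ one actually computes $C_v=\sigma_v^{\vee}$ (not $-\sigma_v^{\vee}$ as the paper asserts), so the vertex you want is the one \emph{minimized} in the direction of $\sigma_w$ (equivalently, the one whose normal cone contains $-\sigma_w$), which gives $C_v=\sigma_v^{\vee}\subseteq(-\sigma_w)^{\vee}=-\sigma_w^{\vee}=C$; this sign slip is also present in the paper's own wording, so it does not separate your argument from theirs.
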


\begin{proof}
    Let $\Delta_1$ and $\Delta_2$ be any two loopless Schubert delta matroids. We show that $\FP(\Delta_2) \backslash \FP(\Delta_1) +m$ is contractible or empty for all $m \in M$.

     Since $\Delta_2$ is a Schubert delta matroid, we have that $\FP(\Delta_2)$ is contained in the cube $[0,1]^n$ by Theorem \ref{thm: type Bn ggms}. Thus,
      \[ \FP(\Delta_2) \backslash (\FP(\Delta_1) + m) = \FP(\Delta_2) \backslash \left( (\FP(\Delta_1) +m) \cap [0,1]^n \right).\]
      We have that $(\FP(\Delta_1) +m)$ is also contained in the cube $[0,1]^n$. So, in order for $(\FP(\Delta_1) +m) \cap [0,1]^n$ to be non-empty, we must have $m = e_S - e_T$ for disjoint subsets $S$ and $T$ of $[n]$. With this $m$, we can describe the intersection as
       \[(\FP(\Delta_1) + m)\cap[0,1]^n = \{x \in \FP(\Delta_1) \;\lvert \; x_i = 0 \text{ for $i \in S$ and $x_i = 1$ for $i \in T$}\}. \]

     Since every feasible polytope in a cube is the feasible polytope of a delta matroid, it suffices to specify which vectors $e_I$ are in the polytope. By this, the set on the right-hand side is the feasible polytope of the delta matroid $\FP(\Delta)$ where $\Delta$ is the delta matroid obtained by the following steps: first, restrict $\Delta_1$ to $[n] \backslash (S \cup T)$; second take the trivial extension of this restriction by $S \cup T$; and finally, flip the elements in $T$ to coloops. By Proposition \ref{prop: operations preserve delta matroids}, $\Delta$ is again a Schubert delta matroid. Hence, it suffices to consider the case where $m=0$.

     Now consider $\FP(\Delta_2) \backslash \FP(\Delta_1)$. Say that $\Delta_1 = \Delta_S^w$, We then have
      \[\FP(\Delta_2) \backslash \FP(\Delta_1) = \FP(\Delta_2) \backslash \left ( -\sigma_w^{\vee} + e_S \cap [0,1]^n \right ). \]
      Since $\FP(\Delta_2)$ is also contained in $[0,1]^n$ and $e_S$ is a vertex of $[0,1]^n$, we get
       \[ \FP(\Delta_2) \backslash \FP(\Delta_1) = \FP(\Delta_2) \backslash \left ( -\sigma_w^{\vee} + e_S \right ).\]
     Since the normal fan of $\FP(\Delta_2)$ coarsens the type $B_n$ fan $\Sigma_{B_n}$, we see that $\sigma_w^\vee$ contains a tangent cone of $\FP(\Delta_2)$. Therefore, Lemma \ref{lem: cone difference contraction} applies and shows that this set difference is either empty or contractible.
\end{proof}

This result and Proposition \ref{prop: exceptionality gives basis} gives a new proof of the following result of Eur, Fink, Larson, and Spink:

\begin{theorem}\cite{Eur2022}
    The classes $[\FP(\Delta)]$ for $\Delta \in \overline{\operatorname{DSch}}_n$ form a basis for $\McM(\Sigma_{B_n})$ and so the corresponding line bundles give a basis for $K_0(\Perm_n^B) \cong \McM(\Sigma_{B_n})$. Under the Chern character, this also gives a basis of $H^*(\Perm_n^B)$.
\end{theorem}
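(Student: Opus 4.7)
The proof will follow the template already used in the paper for Corollary \ref{cor: theorem of Derksen and Fink Schuberts} and Theorem \ref{thm: basis for  K(Stell)}. The plan is to combine the strong exceptionality proved in Proposition \ref{prop: Delta matroids are exceptional} with the cardinality count provided by Proposition \ref{prop: exceptionality gives basis}. Concretely, Proposition \ref{prop: Delta matroids are exceptional} gives us an exceptional collection of polytopes $\{\FP(\Delta)\}_{\Delta \in \overline{\operatorname{DSch}}_n}$ in $\Poly(\Sigma_{B_n})$, which by Theorem \ref{thm: Polytopal Criterion} translates into an exceptional collection of nef line bundles $\{\mathcal{L}_{\FP(\Delta)}\}$ in $\D(\Perm_n^B)$.

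The first key step is to verify that the number of loopless Schubert delta matroids on $[n]$ equals $2^n \cdot n!$, which is the number of maximal cones of $\Sigma_{B_n}$ (equivalently, the order of the Weyl group $S_n^B$, or by the toric Euler characteristic formula, the rank of $K_0(\Perm_n^B)$). By definition, Schubert delta matroids are indexed by pairs $(S,w)$ with $S \subseteq [n]$ and $w \in S_n^B$, but with considerable overcounting. I would argue by parameterizing the loopless Schubert delta matroids via the maximal cones of $\Sigma_{B_n}$: using Lemma \ref{lem: delta schubert cone intersection}, the feasible polytope $\FP(\Delta_S^w)$ is the intersection $(-\sigma_w^\vee + e_{S\cap[n]}) \cap [0,1]^n$, and the requirement of looplessness pins down a canonical translate $e_{S\cap[n]}$ for each tangent cone, giving a bijection between loopless Schubert delta matroids and maximal cones of $\Sigma_{B_n}$. (This is the delta-matroid analogue of the fact that loopless Schubert matroids biject with permutations, which in turn indexes the maximal cones of $\Sigma_{A_{n-1}}$.)

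With the cardinality matching, Proposition \ref{prop: exceptionality gives basis} applies directly: the classes $[\mathcal{L}_{\FP(\Delta)}]$ are linearly independent in $K_0(\Perm_n^B)$, and since their number equals the rank of the free abelian group $K_0(\Perm_n^B)$, they form a basis. Via Morelli's theorem (Theorem \ref{thm: Morelli's Theorem}), the corresponding classes $[\FP(\Delta)]$ form a basis of the McMullen polytope ring $\McM(\Sigma_{B_n})$.

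For the cohomology statement, I would invoke the standard fact that for smooth projective toric varieties the Chern character $\operatorname{ch}: K_0(X)_{\QQ} \to H^*(X,\QQ)$ is a ring isomorphism, and that for the permutahedral varieties $H^*$ is torsion-free and concentrated in even degrees (so the integral lift works). Since the Chern character takes a basis to a basis after tensoring with $\QQ$, and both sides are free $\ZZ$-modules of the same rank with compatible filtrations, we conclude that $\{\operatorname{ch}(\mathcal{L}_{\FP(\Delta)})\}$ forms a basis of $H^*(\Perm_n^B)$. The main obstacle is the bijective count between loopless Schubert delta matroids and maximal cones; the exceptionality and Chern character steps are essentially bookkeeping given the machinery already developed.
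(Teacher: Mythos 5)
Your proposal is correct and takes essentially the same route as the paper: the paper's entire proof is the one-line observation that Proposition~\ref{prop: Delta matroids are exceptional} (strong exceptionality of the feasible polytopes) together with Proposition~\ref{prop: exceptionality gives basis} (an exceptional collection of polytopes whose cardinality matches the number of maximal cones yields a $K$-theory basis) gives a new proof of the cited result of Eur--Fink--Larson--Spink. You additionally spell out the cardinality check that Proposition~\ref{prop: exceptionality gives basis} requires and the Chern-character bookkeeping, both of which the paper leaves implicit (the cardinality count and the cohomology statement are part of the cited \cite{Eur2022}); your sketch of the bijection between loopless Schubert delta matroids and maximal cones of $\Sigma_{B_n}$ via Lemma~\ref{lem: delta schubert cone intersection} is plausible but not fully verified, though this does not change the fact that the overall strategy coincides with the paper's.
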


\subsection{Fully Strongly Exceptional Sequence for $\Perm_n^B$}\label{Sec: FSES of Delta Schubert Matroids}

We now show that the nef line bundles of $X_{B_n}$ corresponding to loopless\footnote{As with the type $A_n$ case, the loopless condition ensures that we only choose each line bundle once since changing loops to coloops gives isomorphic line bundles.} Schubert delta matroids form a full strongly exceptional collection. The arguments here are a type $B_n$ version of the arguments of Sections \ref{Sec: Schubert matroids are Exceptional} and \ref{Sec: Schubert Matroids Generate}.

We have the following Coxeter extensions of the results.

\begin{proposition}\label{prop: Schuberts delta closed under intersection cube}\cite{EHL22}
    The intersection of a $B_n$-generalized permutahedron $P$ with the unit cube $[0,1]^n$ is a translation of the feasible polytope of a delta matroid. Further, this gives a strict subdivision of $P$ into these feasible polytopes.
\end{proposition}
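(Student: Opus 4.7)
The plan is to reduce the first sentence to Theorem~\ref{thm: Bn coxeter matroids in cube}, which characterizes feasible polytopes of delta matroids as lattice $B_n$-generalized permutahedra whose vertices lie in $\{0,1\}^n$. After translating by $-m$, it suffices to treat $Q := P \cap [0,1]^n$. I will verify three properties: (a) the normal fan of $Q$ coarsens $\Sigma_{B_n}$; (b) $Q$ is a lattice polytope; (c) every vertex of $Q$ is a vertex of $[0,1]^n$. For (a), I would note that the cube $[0,1]^n$ is itself a $B_n$-generalized permutahedron --- its normal fan has rays $\pm e_i$, which appear among the rays of $\Sigma_{B_n}$, and its orthant maximal cones are unions of signed-permutation chambers of $\Sigma_{B_n}$. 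Since the normal fan of a polytope intersection is the common refinement of the individual normal fans, and the common refinement of two fans coarsening $\Sigma_{B_n}$ still coarsens $\Sigma_{B_n}$, (a) follows. Item (c) is then immediate from (b), because the lattice points of $[0,1]^n$ are exactly $\{0,1\}^n$, so a lattice polytope contained in the cube has all vertices in $\{0,1\}^n$, at which point Theorem~\ref{thm: Bn coxeter matroids in cube} applies.

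The main obstacle will be (b), establishing integrality of $Q$. The vertices of $Q$ correspond to maximal cones of its normal fan, so it suffices to produce, for each signed permutation $w \in S_n^B$, a lattice point realizing the maximum of $\ell_w(x) = \sum_i w_i x_i$ on $Q$. My plan would be to compute this vertex via a greedy optimization procedure analogous to the one in Section~\ref{Sec: Schubert matroids are Exceptional}: start from a lattice vertex of $P$ maximizing $\ell_w$, which exists because $P$ is a lattice $B_n$-generalized permutahedron, then process the coordinates in an order dictated by the signs and magnitudes of $w$, at each stage correcting the current point by an integer multiple of a $B_n$-root so as to clamp one coordinate into $[0,1]$ while remaining inside $P$. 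Alternatively, one can describe $P$ through a bisubmodular function on the admissible subsets of $[\pm n]$ with integer values, and invoke the $B_n$-analogue of the polymatroid integrality result used for Proposition~\ref{prop: polymatroid cube intersection}, namely that box-constrained integer bisubmodular polyhedra are lattice polytopes. The delicate point will be handling the short roots $\pm e_i$, which have no counterpart in type $A$ and correspond to rays of $\Sigma_{B_n}$ indexed by singleton admissible subsets; these determine the integrality of the clamped intermediate points.

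The subdivision statement then follows from the standard fact that the lattice translates $\{[0,1]^n + m\}_{m \in \ZZ^n}$ tile $\RR^n$, with any two distinct translates meeting along a common face contained in some integer hyperplane $\{x_i = c\}$. Hence $P = \bigcup_m P \cap ([0,1]^n + m)$, and the intersection of two adjacent pieces equals $P \cap F$ for a common face $F$ of the two cubes, which is a face of each piece because $F$ lies in a supporting hyperplane of each cube. By the first part, each piece is a translate of a delta matroid feasible polytope, which yields the desired strict subdivision of $P$ into $B_n$-generalized permutahedra.
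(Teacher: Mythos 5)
The paper does not prove this proposition; it is cited directly from \cite{EHL22}, so I will assess your argument on its own. Your overall route is the natural one: reduce via Theorem~\ref{thm: Bn coxeter matroids in cube} to showing that $Q := P \cap [0,1]^n$ is a lattice $B_n$-generalized permutahedron contained in $[0,1]^n$, and the reduction of (c) to (b) and the closing subdivision paragraph are fine.

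The argument for (a), however, rests on a false premise. You write that ``the normal fan of a polytope intersection is the common refinement of the individual normal fans.'' That identity holds for Minkowski sums, not intersections; you have likely swapped the two. Even the weaker claim that you actually need --- that the normal fan of $P \cap Q$ coarsens the common refinement of $\Sigma_P$ and $\Sigma_Q$ --- is false. Take $P = \{(x,y,z):\ x,y,z \geq 0,\ x + y + z \leq 2\}$ and $Q = [0,1]^3$, both $B_3$-generalized permutahedra. The functionals $\alpha = (3,1,1)$ and $\alpha' = (3,1,\frac{1}{2})$ lie in the interior of the same cone of the common refinement, namely $\{\alpha_1 \geq \alpha_2 \geq 0,\ \alpha_1 \geq \alpha_3 \geq 0\}$ (on its interior $P$ selects the vertex $(2,0,0)$ and $Q$ selects $(1,1,1)$). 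But on $P \cap Q = \{0 \leq x,y,z \leq 1,\ x+y+z \leq 2\}$, $\alpha$ picks out the edge from $(1,1,0)$ to $(1,0,1)$ while $\alpha'$ picks out the single vertex $(1,1,0)$, so the normal fan of $P \cap Q$ is strictly finer than the common refinement on this region. (Here $\alpha$ happens to lie on the $B_3$ wall $\{\alpha_2 = \alpha_3\}$, so this does not contradict the proposition itself, but it does show your argument for (a) does not go through.) The deeper point is that the class of polytopes whose normal fan coarsens a fixed fan is \emph{not} closed under intersection, so some input beyond ``both fans coarsen $\Sigma_{B_n}$'' is required.

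The route you mention as an ``alternative'' for (b) is in fact what is needed for both (a) and (b) simultaneously: describe $P$ by an integer-valued bisubmodular function on admissible subsets of $[\pm n]$, show that boxing by $[0,1]^n$ corresponds to a truncation operation preserving bisubmodularity and integrality (the type-$B$ analogue of the Dilworth/Edmonds truncation for polymatroids), and then read off both (a) and (b) from the signed greedy algorithm applied to the truncated function. Your greedy-clamping sketch for (b) gestures at this but does not establish it; as written, neither (a) nor (b) is proved.
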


\begin{proposition}\label{prop: Schubert delta matroids are full}
    The set $\overline{\operatorname{DSch}_n}$ of base polytopes of coloopless Schubert delta matroids on ground set $[n]$ polytopally generate $\Def(\Sigma_{B_n})$.
\end{proposition}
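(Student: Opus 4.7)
The plan is to mirror Propositions \ref{prop: fullness for base polytopes of Schubert matroids} and \ref{prop: Indepedence Polytopes Generate}, replacing hypersimplices and ordinary matroids with the unit cube $[0,1]^n$ and delta matroids. I would break the argument into two reductions and then flag the one nontrivial verification needed to make it go through.

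For the first reduction, let $P$ be an arbitrary polytope in $\Def(\Sigma_{B_n})$. Proposition \ref{prop: Schuberts delta closed under intersection cube} provides a strict subdivision of $P$ whose pieces are translates of feasible polytopes of delta matroids, obtained by intersecting $P$ with all lattice translates of $[0,1]^n$, and whose pairwise intersections are again of this form (these intersections sit inside a common translate of the cube, so the same proposition applies). This yields a subdivision Koszul complex on these pieces that reduces the problem to showing $\FP(\mathcal{M}) \in \langle \overline{\operatorname{DSch}_n} \rangle$ for every delta matroid $\mathcal{M}$ on $[n]$, up to translation.

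For the second reduction, I first observe that flipping a loop of a Schubert delta matroid to a coloop translates its feasible polytope by a standard basis vector and preserves the Schubert property by Proposition \ref{prop: operations preserve delta matroids}. Consequently, every Schubert delta matroid polytope, and every lattice translate thereof, already belongs to $\langle \overline{\operatorname{DSch}_n} \rangle_0$. For an arbitrary delta matroid $\mathcal{M}$, I would then build the truncated Brianchon-Gram complex $\operatorname{TBG}^{\bullet}_{[0,1]^n}(\FP(\mathcal{M}))$. The cube lies in $\Def(\Sigma_{B_n})$ as the feasible polytope of the delta matroid $2^{[n]}$, it contains $\FP(\mathcal{M})$ by Theorem \ref{thm: type Bn ggms}, and Lemma \ref{lem: delta schubert cone intersection} identifies the intersection of each maximal tangent cone of $\FP(\mathcal{M})$ with $[0,1]^n$ as a Schubert delta matroid feasible polytope. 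Every term of the complex other than $\FP(\mathcal{M})$ itself then lies in $\langle \overline{\operatorname{DSch}_n} \rangle_0$, placing $\FP(\mathcal{M}) \in \langle \overline{\operatorname{DSch}_n} \rangle_1$ and, combined with the first reduction, proving the proposition.

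The main obstacle is verifying that $[0,1]^n$ genuinely truncates $\operatorname{BG}^{\bullet}(\FP(\mathcal{M}))$: the intersection $[0,1]^n \cap (\sigma^{\vee} + v_{\FP(\mathcal{M}), \sigma})$ must lie in $\Def(\Sigma_{B_n})$ for \emph{every} cone $\sigma \in \Sigma_{B_n}$, not merely the maximal ones. For maximal $\sigma$ this is exactly Lemma \ref{lem: delta schubert cone intersection}, but for lower-dimensional $\sigma$ the shifted polar $\sigma^{\vee} + v_{\FP(\mathcal{M}),\sigma}$ is the tangent cone along a positive-dimensional face $F$ of $\FP(\mathcal{M})$. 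I plan to handle this by expressing that tangent cone as the intersection over the vertices of $F$ of their vertex-tangent cones, each of which, when intersected with the cube, is a feasible polytope of a Schubert delta matroid by the maximal case, and then invoking the cube-intersection stability underlying Proposition \ref{prop: Schuberts delta closed under intersection cube} to conclude that the resulting intersection remains inside $\Def(\Sigma_{B_n})$.
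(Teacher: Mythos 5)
Your outline matches the paper's: translates of Schubert delta matroid polytopes land in $\langle \overline{\operatorname{DSch}_n}\rangle_0$ by loop/coloop flips, a truncated Brianchon--Gram complex $\operatorname{TBG}^{\bullet}_{[0,1]^n}(\FP(\mathcal{M}))$ handles polytopes contained in the cube, and the cube-tiling subdivision from Proposition \ref{prop: Schuberts delta closed under intersection cube} reduces an arbitrary $P \in \Def(\Sigma_{B_n})$ to that case. So the strategy is the paper's.

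Your worry about the truncation condition is the right thing to worry about, but the resolution you propose establishes a weaker statement than the fullness argument needs. Writing the tangent cone along a positive-dimensional face as an intersection of vertex-tangent cones and intersecting with $[0,1]^n$ shows the term $[0,1]^n \cap (\sigma^{\vee} + v_{\FP(\mathcal{M}),\sigma})$ is a lattice $B_n$-generalized permutahedron inside the cube, hence (by Theorem \ref{thm: type Bn ggms}) the feasible polytope of \emph{some} delta matroid; that does suffice for $[0,1]^n$ to truncate the complex in the sense of its definition. It does not, however, show that these terms are translates of \emph{Schubert} delta matroid polytopes --- membership in $\langle \overline{\operatorname{DSch}_n}\rangle_0$, not merely in $\Def(\Sigma_{B_n})$ --- which is exactly what condition (2) in the definition of $\langle \cdot \rangle_k$ demands before you may conclude $\FP(\mathcal{M}) \in \langle \overline{\operatorname{DSch}_n}\rangle_1$. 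Lemma \ref{lem: delta schubert cone intersection} only covers the maximal cones, and nothing in your argument promotes ``delta matroid'' to ``Schubert delta matroid'' for the remaining terms. The missing ingredient, which you should invoke rather than try to derive from cube-stability, is the type-$B_n$ analogue of the Derksen--Fink subdivision behind Lemma \ref{lem: matroid truncated Brianchon-Gram complexes}: the intersection of the cube with the tangent cone along \emph{any} face of a delta matroid polytope is a Schubert delta matroid polytope. (The paper is itself terse on this point, citing Theorem \ref{thm: type Bn ggms} for a Schubert conclusion that theorem does not give; the statement you want comes from \cite{Eur2022}.)
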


\begin{proof}
    If a Schubert delta matroid $\Delta$ contains a loop $i$, then changing the loop to a coloop gives another Schubert delta matroid. From the polytope perspective, this amounts to translating the base polytope of $\Delta$ by $e_i$. Hence, every translation of a Schubert delta matroid is contained in $\langle \overline{\operatorname{DSch}_n} \rangle_0$.

    By Lemma \ref{lem: delta schubert cone intersection}, we see that for any type $B_n$ generalized permutahedron $P$ contained in the cube $[0,1]^n$, we obtain a truncated Brianchon-Gram complex for $P$ where every term besides $P$ consists of intersections of $[0,1]^n$ with cones of $\Sigma_n$. These intersections are exactly base polytopes of delta Schubert matroids by Proposition \ref{thm: type Bn ggms}. Therefore, this exact sequence shows that all $P$ contained in $[0,1]^n$ are in $\langle \overline{\operatorname{DSch}_n} \rangle_1$.

    By Proposition \ref{prop: Schuberts delta closed under intersection cube}, the intersection of any lattice $B_n$-generalized permutahedron with a translation of a cube is again in $\Def(\Sigma_{B_n})$. By tiling $M_{\RR}$ by the lattice translations of $[0,1]^n$, we have a strict subdivision of $P \in \Def(\Sigma_{B_n})$ into translations of polytopes contained in $[0,1]^n$. Using the exact inclusion-exclusion complex given by this subdivision, we see that $P \in \langle \overline{\operatorname{DSch}_n} \rangle_2$ for all $P \in \Def(\Sigma)$.
\end{proof}

Combining this with our previous section, we see that the set of feasible polytopes of loopless Schubert delta matroids on $[n]$ forms a full strongly exceptional collection of polytopes for $\Poly(\Sigma_{B_n})$. By the polytopal criterion, we have
\begin{theorem}[Theorem \ref{mainthm: FSEC for PermB}]\label{thm: full strongly exceptional collection of type Bn}
    The sequence of nef line bundles $( \mathcal{L}_{\FP(\Delta)})$ for $\Delta \in \overline{\operatorname{DSch}_n}$ form a full strongly exceptional collection for $\Perm_n^B$ when ordered by non-decreasing lattice points.
\end{theorem}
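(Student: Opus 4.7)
The plan is to reduce immediately to the polytopal criterion (Theorem \ref{thm: Polytopal Criterion}), since essentially all the ingredients have been assembled in the two preceding subsections. Concretely, we need to verify that the ordered collection of polytopes $(\FP(\Delta))_{\Delta \in \overline{\operatorname{DSch}}_n}$, sorted by non-decreasing number of lattice points, is a full strongly exceptional collection of polytopes in $\Poly(\Sigma_{B_n})$, and then invoke Theorem \ref{thm: Polytopal Criterion} to transfer this to the derived category. The fan $\Sigma_{B_n}$ is smooth and projective, so the hypothesis of the criterion is satisfied.

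For strong exceptionality, we would simply cite Proposition \ref{prop: Delta matroids are exceptional}, which established that $\widetilde{H}^k(\FP(\Delta_2)\setminus (\FP(\Delta_1)+m))$ vanishes for every $m\in M$ and every pair of loopless Schubert delta matroids, by building straight-line deformation retracts onto a vertex whose tangent cone contains the translated $\FP(\Delta_1)$. One then observes that ordering by non-decreasing lattice-point count gives a linear extension of the partial order $\leq_M$: if $\FP(\Delta_1)+m\subseteq \FP(\Delta_2)$, then the integer points of $\FP(\Delta_1)+m$ form a subset of those of $\FP(\Delta_2)$, so the ordering is compatible.

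For fullness, we would cite Proposition \ref{prop: Schubert delta matroids are full}: starting from all lattice translates of feasible polytopes of loopless Schubert delta matroids (which handles loop-flipping), one uses the truncated Brianchon-Gram complex of Lemma \ref{lem: delta schubert cone intersection} to generate every $B_n$-generalized permutahedron contained in $[0,1]^n$, and finally uses Proposition \ref{prop: Schuberts delta closed under intersection cube} to subdivide any element of $\Def(\Sigma_{B_n})$ into lattice translates of such cube-contained polytopes via a subdivision Koszul complex. Thus $\langle \overline{\operatorname{DSch}}_n\rangle_2 = \Def(\Sigma_{B_n})$.

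There is no real obstacle at the level of this theorem: it is purely a combination step. The genuinely hard work was done earlier in the two propositions; in particular, identifying a vertex of $\FP(\Delta_2)$ whose tangent cone contains the translated cone description of $\FP(\Delta_1)$ (so that Lemma \ref{lem: cone difference contraction} applies) is the key geometric insight of the exceptionality argument, and the combination of the truncated Brianchon-Gram complex with cube-intersection subdivisions is the key insight for fullness. The present theorem just packages these with Theorem \ref{thm: Polytopal Criterion}.
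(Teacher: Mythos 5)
Your proposal matches the paper's proof: the paper also assembles the theorem by citing Proposition \ref{prop: Delta matroids are exceptional} for strong exceptionality, Proposition \ref{prop: Schubert delta matroids are full} for fullness, and then invoking the polytopal criterion (Theorem \ref{thm: Polytopal Criterion}). Your added remark that sorting by non-decreasing lattice-point count linearly extends $\leq_M$ is a correct and helpful observation that the paper leaves implicit.
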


\subsection{Invariance of the Collection}

The invariance of our collection follows immediately from root theory. In particular, the Weyl group $W$ acts on the root system and the corresponding Coxeter complex.

\begin{proposition}
    The full strongly exceptional collection $(\mathcal{L}_{\FP(\Delta)})$ for $\Delta \in \overline{\operatorname{DSch}}_n$ is invariant under the action of $S_n^B$.
\end{proposition}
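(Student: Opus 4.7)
The plan is to show that for each $w \in S_n^B$ and each $\Delta \in \overline{\operatorname{DSch}}_n$, the pushed-forward line bundle $w \cdot \mathcal{L}_{\FP(\Delta)}$ is isomorphic to some $\mathcal{L}_{\FP(\Delta')}$ with $\Delta' \in \overline{\operatorname{DSch}}_n$. Since two nef line bundles are isomorphic precisely when their polytopes differ by a lattice vector in $M = \ZZ^n$, this reduces to showing that $w \cdot \FP(\Delta)$ is a lattice translate of $\FP(\Delta')$ for some loopless Schubert delta matroid $\Delta'$.

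The key ingredient is the very definition of Schubert delta matroids. Writing $\Delta = \Delta_S^u$ for some admissible $S$ and $u \in S_n^B$, we have $\FP(\Delta_S^u) = u \cdot \FP(\Delta_S) + e_{T(u)}$ where $T(u) = \{u(i) \in [n] \mid u(i) < 0\}$. For any $w \in S_n^B$, a direct computation gives
\[ w \cdot \FP(\Delta_S^u) = (wu) \cdot \FP(\Delta_S) + w \cdot e_{T(u)} = \FP(\Delta_S^{wu}) + \bigl(w \cdot e_{T(u)} - e_{T(wu)}\bigr). \]
Since both $w \cdot e_{T(u)}$ and $e_{T(wu)}$ lie in the root lattice $\ZZ^n = M$, their difference is in $M$, so $w \cdot \FP(\Delta_S^u)$ is a lattice translate of $\FP(\Delta_S^{wu})$. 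Hence $w \cdot \mathcal{L}_{\FP(\Delta)} \cong \mathcal{L}_{\FP(\Delta_S^{wu})}$.

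The Schubert delta matroid $\Delta_S^{wu}$ may have loops, so to land in the indexing set $\overline{\operatorname{DSch}}_n$ I appeal to the delta-matroid analog of Proposition \ref{prop: flipping loops is translation for base polytopes}: flipping a loop to a coloop translates $\FP$ by a standard basis vector, giving an isomorphic line bundle. By Proposition \ref{prop: operations preserve delta matroids}, the resulting matroid is again a Schubert delta matroid. Iteratively flipping every loop produces a loopless Schubert delta matroid $\Delta' \in \overline{\operatorname{DSch}}_n$ with $\mathcal{L}_{\FP(\Delta_S^{wu})} \cong \mathcal{L}_{\FP(\Delta')}$, completing the argument.

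The main obstacle is simply bookkeeping: one must verify that $w \cdot e_{T(u)}$ lies in $M = \ZZ^n$ (straightforward since $S_n^B$ acts by signed permutation of coordinates), and that the target matroid is indeed $\Delta_S^{wu}$ under the conventions of the preceding section. The result is essentially tautological once unpacked: the class of Schubert delta matroids was constructed from the orbit of standard Schubert delta matroids under $S_n^B$, so invariance of the line bundle collection (modulo $M$-translation) is built into the definition.
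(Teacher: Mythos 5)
Your proof is correct and takes essentially the same approach as the paper: both arguments observe that the class of Schubert delta matroids was defined as the $S_n^B$-orbit (with a translation adjustment) of the standard Schubert delta matroids, so invariance up to lattice translation is automatic from the definition. You simply spell out the computation $w \cdot \FP(\Delta_S^u) = \FP(\Delta_S^{wu}) + (w\cdot e_{T(u)} - e_{T(wu)})$ and explicitly handle the final bookkeeping step of flipping loops to coloops to land back in $\overline{\operatorname{DSch}}_n$, which the paper compresses into the phrase ``up to translation.''
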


\begin{proof}
    It again suffices to check that the class of polytopes indexing our line bundles is invariant up to translation under the action of $S_n^B$ on $M$. This is the reflection action and we have defined our class of polytopes as the polytopes in the $S_n^B$-orbits of feasible polytopes of standard Schubert delta matroids, so the result follows.
\end{proof}

\subsection{Augmented Inclusion Maps of Delta Matroids and the Tilting Algebra}

As in the type $A_n$ case, the tilting quiver of our collection can be described by augmented inclusion. We will again focus on coloopless Schubert delta matroids which up to translation of feasible polytopes is the same set as loopless Schubert delta matroids.

\begin{definition}
    Let $\Delta_1$ and $\Delta_2$ be two delta matroids on ground set $[n]$ and $S$ be a subset of loops of $\Delta_1$. We say that we have an \textbf{$S$-augmented inclusion} of $\Delta_1$ into $\Delta_2$ if for every $B \in \mathcal{B}(\Delta_1)$ we have $B \cup S \in \mathcal{B}(\Delta_2)$. We denote the augmented inclusion by
     \[\Delta_1 \xlongrightarrow{S} \Delta_2. \]
     The \textbf{composition} of two augmented inclusions 
     \[ \Delta_1 \xlongrightarrow{S} \Delta_2 \xlongrightarrow{T} \Delta_3\]
    is the augmented inclusion
     \[ \Delta_1 \xlongrightarrow{S \cup T} \Delta_3.\]
\end{definition}

We say that an augmented inclusion of delta matroids is \textbf{indecomposable} if it is not the composition of two other augmented inclusions.

Consider the labeled quiver $Q_n$ where the vertices of $Q_n$ are the coloopless Schubert delta matroids on ground set $[n]$ and the arrows are the indecomposable augmented inclusions. The arrow corresponding to $\Delta_1 \xlongrightarrow{S} \Delta_2$ is labeled by $S$. For any path $p:v \to u$, let $\operatorname{lab}(p)$ be the union of the labels appearing in the arrows of the path $p$. Define the relations $I$ on the path algebra $\kk Q_n$ by
 \[I = \big \langle p - q \; \lvert \; p,q: v \to u \text{ paths with $\operatorname{lab}(p) = \operatorname{lab}(q)$} \big \rangle. \]

\begin{proposition}\label{prop: tilting quiver for Perm_nB}
    The tilting quiver correpsonding the full strongly exceptional collection $(\mathcal{L}_{\FP(\Omega)})$ as $\Omega$ ranges over coloopess Schubert delta matroids is $(Q_n, I)$.
\end{proposition}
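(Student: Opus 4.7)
I would follow the template of the analogous propositions for $\Perm_n$ and $\Stell_n$, whose proofs identify the combinatorial morphisms with the indecomposable $T$-invariant sections between the corresponding line bundles and then apply the identification of the tilting algebra with the quiver of sections. The only new content is the polytope-side dictionary for delta matroids.

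As a preliminary, I would pass from the loopless Schubert delta matroids indexing the exceptional collection to the coloopless ones. Since flipping a loop to a coloop translates $\FP(\Delta)$ by a standard basis vector, the associated nef line bundle is unchanged up to isomorphism and the tilting algebra is unaffected.

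The central step is to establish the bijection
\[
\bigl\{ \Delta_1 \xlongrightarrow{S} \Delta_2 \bigr\} \;\longleftrightarrow\; \bigl\{ m \in M \;:\; \FP(\Delta_1) + m \subseteq \FP(\Delta_2) \bigr\},
\]
where both sides are considered for coloopless Schubert delta matroids. Both feasible polytopes lie in $[0,1]^n$ by the characterization of delta matroid polytopes inside the cube, so any valid $m$ satisfies $\FP(\Delta_1) + m \subseteq [0,1]^n$. Coloopless-ness of $\Delta_1$ supplies, for each $i \in [n]$, a vertex of $\FP(\Delta_1)$ with $x_i = 0$, forcing $m_i \geq 0$; and for each $i$ that is not a loop of $\Delta_1$, a vertex with $x_i = 1$, forcing $m_i \leq 0$. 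Thus $m = e_S$ with $S$ a subset of the loops of $\Delta_1$. For such an $m$, the inclusion $\FP(\Delta_1) + e_S \subseteq \FP(\Delta_2)$ reduces, by testing on vertices (noting $S \cap B = \emptyset$ for $B \in \mathcal{F}(\Delta_1)$) and applying the cube characterization again, to $B \cup S \in \mathcal{F}(\Delta_2)$ for every $B \in \mathcal{F}(\Delta_1)$, which is exactly the augmented inclusion condition.

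With the bijection in hand, the standard toric dictionary sends an augmented inclusion $\Delta_1 \xlongrightarrow{S} \Delta_2$ to the $T$-invariant section of $\mathcal{L}_{\FP(\Delta_2)} \otimes \mathcal{L}_{\FP(\Delta_1)}^{-1}$ with divisor $\operatorname{div}(\chi^{e_S})$; composition (which unions labels) corresponds to product of sections (which adds divisors); and categorical indecomposability matches sectional indecomposability, so that the path algebra relations (equal label-union) match the quiver-of-sections relations (equal divisor sum). Applying the general tilting algebra identification then yields $\operatorname{End}(\mathcal{T}) \cong \kk Q_n / I$.

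The step I expect to require the most care is the matching of categorical and sectional indecomposability, and the corresponding matching of relations. Concretely, one must verify that every factorization $e_S = e_{S'} + e_{S''}$ of a section through some intermediate line bundle admits a witness by an intermediate coloopless Schubert delta matroid; this follows from the closure of Schubert delta matroids under restriction, trivial extensions, and loop flipping, together with the translation trick used in the preliminary reindexing. Beyond this bookkeeping, the proposition is a formal consequence of the cube characterization of delta matroid polytopes and the general quiver-of-sections description of the tilting algebra, exactly paralleling the proofs for $\Perm_n$ and $\Stell_n$.
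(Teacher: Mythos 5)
Your proof is correct and follows the same route as the paper: the paper's own proof of this proposition is a one-paragraph appeal to the $\Perm_n$ argument (Proposition~\ref{prop: tilting quiver for Perm_n}), noting that the definition of Schubert delta matroids is chosen so their feasible polytopes all sit in $[0,1]^n$, hence valid translations $m$ are precisely $e_S$ for $S$ a subset of loops of $\Delta_1$, and the quiver of sections from Proposition~\ref{thm: tilting quiver as quiver of sections calculation} gives the rest. You give a more careful account of the cube argument (using colooplessness to force $m_i\ge 0$ and non-loop coordinates to force $m_i\le 0$), and you explicitly flag the matching of categorical and sectional indecomposability as the step requiring the most care, a point the paper passes over silently; that is a reasonable observation and your sketch of why the class of coloopless Schubert delta matroids supplies the needed intermediate witnesses is consistent with the paper's machinery.
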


\begin{proof}
    This follows from the same arguments as Proposition \ref{prop: tilting quiver for Perm_n}. We chose our definition of Schubert delta matroids so that their feasible polytopes are all contained in $[0,1]^n$. So the polytope $\Delta_1 + e_S$ is contained in $\Delta_2$ if and only if $i$ is a loop of $\Delta_1$ for all $i \in S$ and after flipping the loop to a coloop, every feasible set of $\Delta_1$ is a feasible set of $\Delta_2$.
\end{proof}

\begin{example}
The following polytopes index the full strongly exceptional collection of line bundles on $X_{B_2}$
\begin{figure}[H]
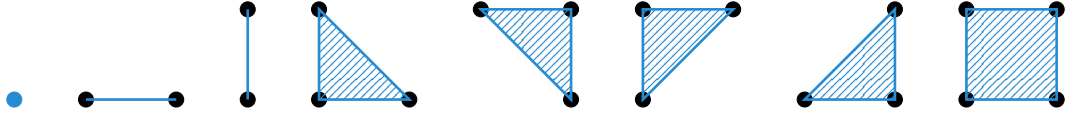

    \centering
        \begin{polyhedron}{}
        \vertex{point ={0,0}}
    \end{polyhedron}
            \begin{polyhedron}{}
        \vertex{point ={0,0},color=black}
        \vertex{point ={2,0},color=black}
        \polygon{points = {(0,0),(2,0)},status = open}
    \end{polyhedron}
        \begin{polyhedron}{}
        \vertex{point ={0,0},color=black}
        \vertex{point ={0,2},color=black}
        \polygon{points = {(0,0),(0,2)},status = open}
    \end{polyhedron}
    \begin{polyhedron}{}
        \vertex{point ={0,0},color=black}
        \vertex{point ={2,0},color=black}
        \vertex{point ={0,2},color=black}
        \polygon{points = {(0,0),(2,0),(0,2)},status = open}
    \end{polyhedron}
        \begin{polyhedron}{}
        \vertex{point ={2,2},color=black}
        \vertex{point ={0,2},color=black}
        \vertex{point ={2,0},color=black}
        \polygon{points = {(2,2),(2,0),(0,2)},status = open}
    \end{polyhedron}
        \begin{polyhedron}{}
        \vertex{point ={0,0},color=black}
        \vertex{point ={2,2},color=black}
        \vertex{point ={0,2},color=black}
        \polygon{points = {(0,0),(2,2),(0,2)},status = open}
    \end{polyhedron}
        \begin{polyhedron}{}
        \vertex{point ={0,0},color=black}
        \vertex{point ={2,0},color=black}
        \vertex{point ={2,2},color=black}
        \polygon{points = {(0,0),(2,0),(2,2)},status = open}
    \end{polyhedron}
        \begin{polyhedron}{}
        \vertex{point ={0,0},color=black}
        \vertex{point ={2,0},color=black}
        \vertex{point ={0,2},color=black}
        \vertex{point ={2,2},color=black}
        \polygon{points = {(0,0),(2,0),(2,2),(0,2)},status = open}
    \end{polyhedron}
    \caption{Polytopes indexing the collection for $X_{B_2}$.}
    \label{fig:polytopes indexing collection for B2}
\end{figure}
    The corresponding quiver is below
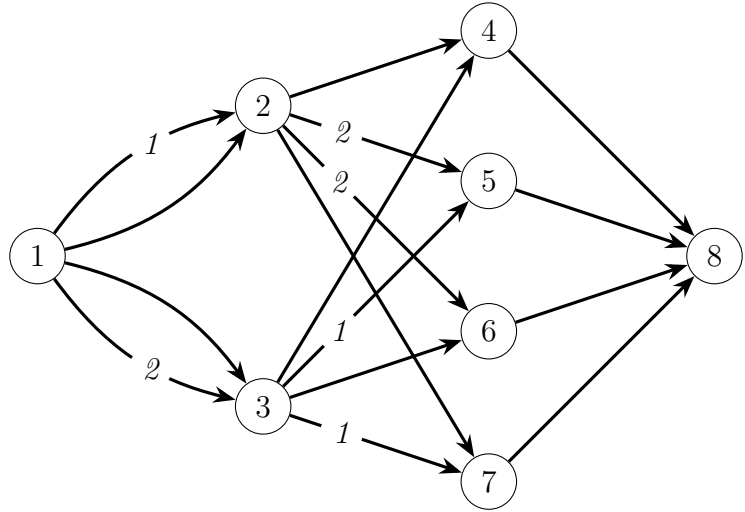
\begin{figure}[H]
    \centering
    \begin{tikzpicture}
    \begin{scope}[>={Stealth[black]},
              every node/.style={draw = black,shape = circle,align=center},
              every edge/.style={draw = black, very thick}]
        \node (A) at (0,0) {$1$};
        \node (B) at (3,2) {$2$};
        \node (C) at (3,-2) {$3$};
        \node (D) at (6,3) {$4$};
        \node (E) at (6,1) {$5$};
        \node (F) at (6,-1) {$6$};
        \node (G) at (6,-3) {$7$};
        \node (H) at (9,0) {$8$};
        \end{scope}

        \begin{scope}[>={Stealth[black]},
              every node/.style={fill=white,circle,align=center,text width=5mm,inner sep = 0},
              every edge/.style={draw = black, very thick}]
    \path[->] (A)  edge[bend left = 20] node[pos=0.6]{1} (B);
    \path[->](A)  edge[bend right = 20] (B);
    \path[->] (A)  edge[bend left = 20]   (C);
    \path[->] (A)  edge[bend right=20]  node[pos=0.6]{2} (C);
    \path[->] (B) edge (D);
    \path[->] (B) edge node[pos=0.3] {2}(E);
    \path[->] (B) edge node[pos=0.3] {2}(F); 
    \path[->] (B) edge (G);
    \path[->] (C) edge (D);
    \path[->] (C) edge node[pos=0.3] {1}(E);
    \path[->] (C) edge (F); 
    \path[->] (C) edge node[pos=0.3] {1}(G);
    \path[->] (D) edge (H);
    \path[->] (E) edge (H);
    \path[->] (F) edge (H);
    \path[->] (G) edge (H);
    % \path[->] (B) edge  (D);
    % \path[->] (C) edge (D);
    % \path[->] (B) edge[bend left=20] node {2} (E);
    % \path[->] (C) edge[bend right=20] node {1} (E);
    % \path[->] (D) edge (E);
\end{scope}
    \end{tikzpicture}
    \caption{Tilting Quiver for $\Perm_2^B$. The labels of the nodes are chosen in the order of the polytopes above.}
    \label{fig:Tilting Quiver for Perm B2}
\end{figure}

The group $S_2^B$ is generated by the cycle $(12)$ and the permutation $1 \mapsto -1$ and $2 \mapsto 2$. The first permutation swaps nodes $2$ and $3$ as well as nodes $6$ and $7$. The second permutation swaps nodes $4$ and $7$ as well as nodes $5$ and $6$.
\end{example}

\subsection{What about the other types?}\label{Sec: What about other types?}

Naturally, one would hope that these results extend to the toric varieties of other root systems. However, some difficulties arise with types other than $A$ and $B$. Problems present themselves even in the type $C_n$ case. 

To see this, let $\Perm_n^C$ be the toric variety corresponding to the type $C_n$ root system. The fan of type $C_{n}$ is the same as the fan of type $B_{n}$, but they have different underlying lattice. In particular, the weight lattice $M$ of $\Perm_n^C$ is the root lattice of type $C_n$ which, after choosing a standard description of the root system, consists of lattice points in $\ZZ^n$ whose coordinate sums are even.

Since the theory of Coxeter matroids is the same for type $B_n$ and type $C_n$ root systems, one could hope that the collection of Coxeter matroid polytopes corresponding to Schubert delta matroids continues to be a full strongly exceptional collection. The hope is ill-founded for two reasons.

First, the nef line bundles corresponding to these Coxeter matroids are not all acyclic. Namely, the Schubert matroids in type $C_n$ include the class of the point $\{\operatorname{pt}\}$ and the polytope below

\begin{figure}[H]
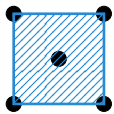

    \centering
    \begin{polyhedron}{}
        \vertex{point = {0,0},color=black}
        \vertex{point = {2,0},color=black}
        \vertex{point = {0,2},color=black}
        \vertex{point = {2,2},color=black}
        \vertex{point = {1,1},color=black}
        \polygon{points = {(0,0),(2,0),(2,2),(0,2)},status = open}
    \end{polyhedron}
    \caption{Type $B_n$ Schubert Coxeter matroid polytope corresponding to the uniform delta matroid.}
    \label{fig:Bn Schubert Coxeter}
\end{figure}

In order for a collection of line bundles that contains the structure sheaf (the line bundle associated to the class of a point) to be strongly exceptional, every sheaf must be acyclic. By standard toric geometry arguments, we know that $H^{n-1}(X,\mathcal{L}_P^{-1})$ has dimension equal to the number of interior lattice points of $P$. The polytope above has an interior lattice point and so has nontrivial $H^1(\mathcal{L}_{P}^{-1})$. Therefore, this collection is not exceptional with respect to any order.

Second, even if it were exceptional, our argument that the type $B_n$ Coxeter matroid polytopes corresponding to Schubert delta matroids is full no longer works. In particular, we no longer have a truncated Brianchon-Gram complex to reduce the arbitrary Coxeter matroid polytope case to the Schubert case. See Remark 3.15 in \cite{Eur2021} for more discussion.

As discussed in \cite{Batyrev2011}, the type $C_n$ permutahedral variety is a quotient of the type $B_n$ permutahedral variety. It might be possible to give a description of the derived category of $\Perm_n^C$ through this quotient but we do not pursue that in this paper.

\section{Compatability of our Collections with the Cuspidal Decomposition}

We now move to our final section where we show that our full strongly exceptional collections and their induced semi-orthogonal decompositions of the derived category are compatible with a recursively defined semi-orthogonal decomposition of varieties studied by Castatev and Tevelev.

In the process of studying the moduli space $\overline{M}_{0,n}$, they defined the following category and the related semi-orthogonal decomposition:

\begin{definition}
    Given a collection of smooth maps $\pi_i: X \to Y_i$ for $i \in I$, we say that a sheaf $\mathcal{E}$ of $X$ is \textbf{cuspidal} if $R\pi_{i*}(\mathcal{E}) = 0$ for all $i \in I$. The \textbf{cusipdal derived category} $\operatorname{D}^b_{cusp}(X)$ is the subcategory of $\D(X)$ consisting of all cuspidal coherent sheaves.
\end{definition}

\begin{theorem}\cite{Castravet2020}\label{thm: cuspidal decomposition}
    Let $X$ be a contravariant functor from the category of subsets of $[n]$ with morphisms given by inclusions to smooth projective varieties. For $S \subseteq T$, let $\pi_{S/T}$ be the map assigned to $X_T \to X_S$. We call these the \textbf{forgetful maps}. Suppose the following three conditions hold:

    \begin{enumerate}
        \item We have
         \[ R\pi_{i*}(\mathcal{O}_{X_S}) = \mathcal{O}_{X_{S \backslash i}},\]
         for all $i \in S$.
         \item For all $i,k \in S$, with $i \not = k$, the morphisms
          \[\pi_i:X_{S\backslash k} \to X_{S \backslash \{i,k\}} \quad \quad \text{and} \quad \quad \pi_{k}: X_{S \backslash i} \to X_{S \backslash \{i,k\}} \]
          are Tor-independent.
          \item If we let
           \[Y := X_{S \backslash i} \times_{X_{S \backslash \{i,k\}}} X_{S \backslash k}\]
           and $\alpha_{i,k}: X_S \to Y$ be the map induced by $\pi_i$ and $\pi_k$, then
            \[R\alpha_{i,k*} \mathcal{O}_{X_S} = \mathcal{O}_Y. \]
    \end{enumerate}
    Under these assumptions, we have a semi-orthogonal decomposition
     \[\operatorname{D}^b(X_S) = \left \langle \D_{cusp}(X_S), \left \{ \pi_{K}^* \D_{cusp}(X_{S \backslash K}) \right \}_{K \subset S}, \pi_S^*\D_{cusp}(X_{\emptyset}) \right \rangle, \]
     as $K$ runs over all proper subsets of $S$ in order of increasing cardinality. We refer to this as the \textbf{cuspidal decomposition} of the category.
\end{theorem}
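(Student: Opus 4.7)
The plan is to induct on $|S|$, with the empty-set case being vacuous. For the inductive step, one first needs to check that each candidate subcategory $\pi_K^*\D_{cusp}(X_{S\backslash K})$ is actually a full triangulated subcategory of $\D(X_S)$. Using assumption (1) iteratively, $R\pi_{K*}\mathcal{O}_{X_S}=\mathcal{O}_{X_{S\backslash K}}$, so the projection formula gives $R\pi_{K*}\pi_K^* \mathcal{F} \cong \mathcal{F}$ for every $\mathcal{F}\in \D(X_{S\backslash K})$; hence $\pi_K^*$ is fully faithful and the subcategory in question is equivalent to $\D_{cusp}(X_{S\backslash K})$.

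Next I would establish the semi-orthogonality relations. For $K\subsetneq K'\subseteq S$ and sheaves $\mathcal{E}\in \D_{cusp}(X_{S\backslash K'})$, $\mathcal{F}\in \D_{cusp}(X_{S\backslash K})$, one computes $\hom(\pi_{K'}^*\mathcal{E},\pi_K^*\mathcal{F}) \cong \hom(\mathcal{E}, R\pi_{K'*}\pi_K^*\mathcal{F})$ and uses the Tor-independence hypothesis (2) together with (3) to apply base change along the fiber square built from the forgetful maps. This rewrites $R\pi_{K'*}\pi_K^*\mathcal{F}$ as $\pi_{K\backslash K'}^* R\pi_{(K'\backslash K)*}\mathcal{F}$ (after appropriate composition), and cuspidality of $\mathcal{F}$ forces the relevant pushforward to vanish. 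The same kind of base-change computation shows orthogonality between the various $\pi_K^*\D_{cusp}(X_{S\backslash K})$ and the bottom piece $\D_{cusp}(X_S)$ (which is defined by the vanishing condition, so this side is essentially tautological).

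For the generation statement, I would proceed inductively by stripping off one forgetful map at a time. Fix $i\in S$ and consider the adjunction $(\pi_i^*,R\pi_{i*})$; for every $\mathcal{E}\in \D(X_S)$, the counit $\pi_i^* R\pi_{i*}\mathcal{E}\to \mathcal{E}$ fits into a triangle whose cone $\mathcal{E}'$ satisfies $R\pi_{i*}\mathcal{E}' = 0$. Applying the inductive hypothesis to $R\pi_{i*}\mathcal{E}\in \D(X_{S\backslash i})$ expands $\pi_i^* R\pi_{i*}\mathcal{E}$ into pieces in $\pi_K^*\D_{cusp}(X_{S\backslash K})$ for $K \ni i$. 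Iterating this argument over all $i\in S$ in succession, one sees that any object which is killed by every $R\pi_{i*}$ lies in $\D_{cusp}(X_S)$ by definition, while the rest is generated by the pullback subcategories.

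The main obstacle will be the base-change step: one needs the Tor-independence of assumption (2) to commute pullback and pushforward across the fiber square, and one needs assumption (3) to control the structure sheaf of that fiber product so that iterated applications compose coherently. Carefully tracking the order in which the subcategories appear (indexed by the poset of subsets $K\subseteq S$ under inclusion, refined to a linear order by cardinality) is the bookkeeping required to turn the pairwise orthogonalities into a genuine semi-orthogonal decomposition.
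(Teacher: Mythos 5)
This theorem is cited in the paper from \cite{Castravet2020} and is not proved there, so there is no internal proof to compare against; I will assess your sketch on its own terms. Your overall strategy (fully faithfulness of $\pi_K^*$ from hypothesis (1) and the projection formula, semi-orthogonality via base change using (2) and (3), and generation via iterated adjunction triangles) is the natural approach, and it is essentially how the original argument goes.

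That said, there are two places where the sketch is imprecise enough to matter. First, the semi-orthogonality you must prove is for \emph{all} pairs with $|K'|\geq|K|$, $K'\neq K$ — not only nested pairs $K\subsetneq K'$. You state the restriction $K\subsetneq K'$, yet the base-change identity you then write down, $R\pi_{K'*}\pi_K^*\mathcal{F}\cong\pi_{K\backslash K'}^*R\pi_{(K'\backslash K)*}\mathcal{F}$, only has content when $K\backslash K'\neq\emptyset$; for nested $K\subsetneq K'$ it degenerates and no base change is needed (one just factors $\pi_{K'}$ through $\pi_K$ and uses full faithfulness). The general incomparable case — say $K=\{1\}$, $K'=\{2,3\}$ — is exactly the one that genuinely needs hypotheses (2) and (3), so the setup should be phrased for arbitrary $K\neq K'$ with $K'\backslash K\neq\emptyset$; cuspidality of $\mathcal{F}$ then kills $R\pi_{(K'\backslash K)*}\mathcal{F}$.

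Second, and more substantively, the generation argument as stated has a gap at the word ``iterating.'' After the first counit triangle $\pi_1^*R\pi_{1*}\mathcal{E}\to\mathcal{E}\to\mathcal{E}^{(1)}$ you have $R\pi_{1*}\mathcal{E}^{(1)}=0$. When you next split off the $\pi_2$-part of $\mathcal{E}^{(1)}$, the cone $\mathcal{E}^{(1,2)}$ is killed by $R\pi_{2*}$, but it is \emph{not} automatic that $R\pi_{1*}\mathcal{E}^{(1,2)}=0$: one must check that $R\pi_{1*}\pi_2^*R\pi_{2*}\mathcal{E}^{(1)}=0$, which requires precisely the base-change identity $R\pi_{1*}\pi_2^*\cong\tilde\pi_2^*R\tilde\pi_{1*}$ deduced from (2) and (3), followed by the observation $R\tilde\pi_{1*}R\pi_{2*}\mathcal{E}^{(1)}=R\pi_{\{1,2\}*}\mathcal{E}^{(1)}=R\tilde\pi_{2*}(R\pi_{1*}\mathcal{E}^{(1)})=0$. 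You acknowledge that ``the main obstacle will be the base-change step,'' but this is the precise lemma that makes the induction close; without it the claim that ``any object killed by every $R\pi_{i*}$ lies in $\D_{cusp}$'' is not enough, because one must first show that the residual object after all the stripping \emph{is} killed by every $R\pi_{i*}$ simultaneously. Spelling this lemma out, and phrasing the semi-orthogonality for arbitrary $K,K'$ rather than nested ones, would make the sketch a complete proof.
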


\subsection{Forgetful Species of Toric Varieties and their Cuspidal Decomposition}

We begin by introducing a special class of contravariant functors for which we always have a cuspidal decomposition. This class is based on Joyal's theory of species \cite{Joyal1981} and Aguiar and Mahajan's theory of Hopf monoids \cite{Aguiar2010} as applied to generalized permutahedra by Ardila and Aguiar \cite{Aguiar2017}.

       For any finite set $S$, let $\RR^S$ denote the vector space of real-valued functions on $S$ and let $e_i$ be the standard basis where $i \in S$. Let $M_S$ be the integer lattice in $\RR^S$ and $N_S$ be the dual lattice.
       
\begin{definition}
        Let $\mathbf{F}$ be a contravariant functor $\mathbf{F}$ from the category of finite sets with inclusions to the category of toric varieties with toric morphisms such that the weight lattice of $\mathbf{F}[S]$ is $M_S$ and the map between $\mathbf{F}[T]$ and $\mathbf{F}[S]$ for $S \subseteq T$ is induced by the inclusion $M_S \hookrightarrow M_T$.  Let $\pi_{S,T}$ denote both the dual map $N_T \mapsto N_S$ and the induced map of toric varieties. We denote the fan of $\mathbf{F}[S]$ by $\Sigma_S$.

        We say that $\mathbf{F}$ is a \textbf{forgetful species of toric varieties} if for every $S \subsetneq T$, we have that
         \[\Sigma_S = \{ \pi_{S,T}(\sigma) \; \lvert \; \sigma \in \Sigma_T\}.\]
\end{definition}

The forgetful maps of a forgetful species satisfy the following nice property:

\begin{definition}[\cite{Abramovich2000,Molcho2021}]
    Let $f: X_{\Sigma_1} \to X_{\Sigma_2}$ be a toric morphism of toric varieties. We say that $f$ is \textbf{weakly semistable} if
    \begin{enumerate}
        \item Every cone of $\Sigma_1$ surjects onto a cone of $\Sigma_2$ under the induced map $N_1 \to N_2$.
        \item Whenever $f(\sigma_1) = \sigma_2$, then we have an equality of monoids $f(\sigma_1 \cap N_1) = \sigma_2 \cap N_2$.
    \end{enumerate}
    We say that $f$ is \textbf{semistable} if the varieties $X_{\Sigma_1}$ and $X_{\Sigma_2}$ are smooth.
\end{definition}

\begin{lemma}
    Let $\mathbf{F}$ be a forgetful species of toric varieties. The forgetful maps $\pi_{S,T}: \mathbf{F}[T] \to \mathbf{F}[S]$ are weakly semistable.
\end{lemma}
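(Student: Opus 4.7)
The plan is to verify the two defining conditions of weak semistability separately, using the structure of the forgetful species to reduce the problem to a statement about lattices.

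First, condition (1) is essentially a restatement of the definition: for any cone $\sigma \in \Sigma_T$, the forgetful species hypothesis gives $\pi_{S,T}(\sigma) \in \Sigma_S$, and the set-theoretic image is by construction a surjection of $\sigma$ onto $\pi_{S,T}(\sigma)$. So I would record this in a single sentence.

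The heart of the argument is condition (2). I would begin by observing that, because $S \subseteq T$, we can write $T = S \sqcup U$, giving the canonical splitting $M_T = M_S \oplus M_U$ which dualizes to $N_T = N_S \oplus N_U$; under this identification $\pi_{S,T}$ is simply the projection onto the first factor (in particular, it admits the canonical section $x \mapsto (x,0)$). The inclusion $\pi_{S,T}(\sigma \cap N_T) \subseteq \pi_{S,T}(\sigma) \cap N_S$ is then immediate: projecting a lattice point of $\sigma$ produces an element of $N_S$ lying in the image cone.

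For the reverse inclusion $\pi_{S,T}(\sigma) \cap N_S \subseteq \pi_{S,T}(\sigma \cap N_T)$, I would argue as follows. Let $v_1,\ldots,v_k$ be the primitive ray generators of $\sigma$ in $N_T$. Each image $\pi_{S,T}(v_i)$ is either zero or a positive integer multiple of some primitive ray generator of a ray of $\Sigma_S$ (this uses smoothness of $\Sigma_S$ and the defining property that $\pi_{S,T}(\sigma)$ literally equals a cone of the fan $\Sigma_S$, so its extreme rays must be among the $\pi_{S,T}(v_i)$'s). Given $x \in \pi_{S,T}(\sigma) \cap N_S$, express $x$ as a non-negative integer combination of the primitive generators of $\pi_{S,T}(\sigma)$ using smoothness of the image cone, and lift each primitive generator back through the corresponding $v_i$ to build $y \in \sigma \cap N_T$ with $\pi_{S,T}(y) = x$.

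The main obstacle is the claim that primitive ray generators of $\sigma$ project to primitive ray generators of $\pi_{S,T}(\sigma)$ (rather than to non-primitive multiples thereof); without this, one can cook up examples (e.g.\ $\sigma = \operatorname{cone}((2,1))$ projecting to $\RR_{\geq 0}$) where the monoid equality fails despite the set-image being a cone of the target fan. I would establish this by combining smoothness of both $\Sigma_T$ and $\Sigma_S$ with the forgetful species axiom: a smooth cone is determined up to isomorphism by its ray generators forming part of a $\ZZ$-basis, and in all the concrete forgetful species arising in this paper (those coming from $\Perm_n$, $\Stell_n$, $\Perm_n^B$ via the explicit descriptions of their rays and cones), this primitivity is visible directly from the combinatorial labeling. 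If more generality is needed, the statement should be taken within the smooth setting, which is the only case used in subsequent sections.
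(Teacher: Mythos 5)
The high-level plan — verify the two conditions of weak semistability separately, note that condition (1) is immediate from the forgetful-species axiom, and reduce condition (2) to the primitivity of the projected primitive ray generators — is the right reduction, and your lifting argument (write $x$ as a $\ZZ_{\geq 0}$-combination of the primitive generators of $\pi_{S,T}(\sigma)$, lift each generator back to the corresponding $v_i$, then sum) is correct once primitivity is in hand. Since the paper does not supply a proof of this lemma, there is nothing to compare against; the real question is whether your argument closes the gap you yourself identify.

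It does not quite. The step "I would establish this by combining smoothness of both $\Sigma_T$ and $\Sigma_S$ with the forgetful species axiom'' is where the proof is thin: smoothness of both fans together with the cone-maps-onto-cones property does \emph{not} force primitivity. One can take the complete smooth fan in $\ZZ^2$ with rays $(1,0),(2,1),(1,1),(0,1),(-1,0),(0,-1)$; its image under the coordinate projection $\ZZ^2 \to \ZZ$ is the complete fan of $\mathbb{P}^1$ cone-by-cone, yet the primitive generator $(2,1)$ projects to $2$, and the monoid equality $\pi(\operatorname{cone}((2,1)) \cap \ZZ^2) = \pi(\operatorname{cone}((2,1))) \cap \ZZ$ fails ($2\ZZ_{\geq 0} \ne \ZZ_{\geq 0}$). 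So the "smoothness $+$ axiom $\Rightarrow$ primitivity" line should be struck; what actually carries the argument is exactly the fallback you mention in the next clause, namely a direct inspection of the ray generators of the specific fans $\Sigma_{A_{n-1}}$, $\Sigma_{\Stell_n}$, $\Sigma_{B_n}$. In those cases the primitive ray generators have coordinates confined to $\{-1,0,1\}$ in the relevant lattice (and the analogous statement in the coweight lattice for the type $B_n$ half-weight rays), so deleting coordinates either kills them or keeps them primitive, and that is what closes condition (2). So: drop the appeal to smoothness-as-mechanism, keep the case-check, and also record (as you already gesture at) that your use of smoothness of $\pi_{S,T}(\sigma)$ to integrally decompose $x$ restricts the argument to the smooth setting — which is the only setting used downstream.
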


The main consequences of this property that we will need are the following two results:

\begin{theorem}\cite{Molcho2021}\label{thm: semistable implies flat}
    If $f: X_{\Sigma_1} \to X_{\Sigma_2}$ is weakly semistable, then $f$ is flat and has reduced fibers.
\end{theorem}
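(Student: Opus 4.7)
The plan is to reduce to the affine situation and exhibit an explicit module-theoretic decomposition. Fix a cone $\sigma_1 \in \Sigma_1$ with image $\sigma_2 = \phi(\sigma_1) \in \Sigma_2$, where $\phi: N_1 \to N_2$ induces $f$. Since flatness and reducedness of fibers are local properties, it suffices to analyze each affine toric map $X_{\sigma_1} \to X_{\sigma_2}$, which corresponds to the monoid ring map $R_2 = \kk[\sigma_2^{\vee} \cap M_2] \to R_1 = \kk[\sigma_1^{\vee} \cap M_1]$ given by $\chi^m \mapsto \chi^{\phi^{\vee}(m)}$.

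The first step is to use condition (1) to obtain a geometric splitting. Let $K = \ker(\phi_{\RR}) \subseteq N_{1,\RR}$ and set $\tau = \sigma_1 \cap K$, the ``vertical'' part of $\sigma_1$. Because $\phi$ surjects $\sigma_1$ onto $\sigma_2$, the induced map $\sigma_1 / \tau \to \sigma_2$ is an isomorphism of cones, and choosing a rational section yields a real-linear decomposition $\sigma_1 \cong \sigma_2 \oplus \tau$. This decomposition will be used to reorganize lattice points of $\sigma_1^{\vee} \cap M_1$ into ``horizontal'' and ``vertical'' pieces.

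The second step invokes condition (2). Using the monoid equality $\phi(\sigma_1 \cap N_1) = \sigma_2 \cap N_2$, one lifts lattice generators of $\sigma_2 \cap N_2$ to $\sigma_1 \cap N_1$; dualizing this produces a set $B \subseteq \sigma_1^{\vee} \cap M_1$ of transverse lattice monomials so that every element of $\sigma_1^{\vee} \cap M_1$ has a unique expression as $\phi^{\vee}(m) + b$ with $m \in \sigma_2^{\vee} \cap M_2$ and $b \in B$. The monomials $\{\chi^b : b \in B\}$ then form a free $R_2$-basis of $R_1$, proving flatness.

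For the reducedness of fibers, I would compute a fiber directly: the fiber over the torus-fixed point of $X_{\sigma_2}$ corresponds to the maximal monomial ideal $\mathfrak{m} \subseteq R_2$, and the quotient $R_1/\mathfrak{m}R_1$ can be identified, via the basis above, with the monoid algebra $\kk[\tau^{\vee} \cap L]$ for an induced lattice $L$ on the linear span of $\tau$. Condition (2) is exactly what ensures that $L$ is saturated in this linear span, so this ring is the coordinate ring of a normal affine toric variety of the cone $\tau$, and is therefore reduced. Fibers over other torus orbits follow by the same analysis applied to faces of $\sigma_1$ lying over faces of $\sigma_2$. The main obstacle is the bookkeeping at the lattice level: the real-linear splitting $\sigma_1 \cong \sigma_2 \oplus \tau$ is straightforward, but showing that condition (2) implies the saturatedness of the induced lattice on $\tau$ (so that the fiber ring really is a toric coordinate ring of a normal variety rather than merely a quotient with possible nilpotents) is the delicate part of the argument.
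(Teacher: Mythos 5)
The paper does not prove this statement; it cites it (it is Abramovich--Karu/Molcho, proved there by ``miracle flatness'': the source is Cohen--Macaulay, condition (1) makes the fibers equidimensional, and the base is assumed regular, which in the cited sources is part of the definition of weak semistability; reducedness of fibers is a separate lattice computation using condition (2)). Measured against that, your argument has a false key step. The splitting in your first step does not exist: surjectivity of $\sigma_1\to\sigma_2$ does not give $\sigma_1\cong\sigma_2\oplus\tau$ with $\tau=\sigma_1\cap\ker\phi_{\RR}$. Take $N_1=\ZZ^2$, $\sigma_1=\operatorname{cone}(e_1,e_2)$, $\phi(a,b)=a+b$, $\sigma_2=\RR_{\geq 0}$, i.e.\ the weakly semistable map $\mathbb{A}^2\to\mathbb{A}^1$, $t\mapsto xy$. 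Here $\tau=\{0\}$, so $\sigma_1/\tau\cong\sigma_1$ is two-dimensional while $\sigma_2$ is a ray, and there is no isomorphism of cones. The same example breaks your fiber computation: the fiber over the torus-fixed point is $\operatorname{Spec}\,\kk[x,y]/(xy)$, which is reduced but reducible and one-dimensional, whereas your identification would give $\operatorname{Spec}\,\kk[\tau^{\vee}\cap L]=\operatorname{Spec}\,\kk$. In general the fibers of a weakly semistable map are unions of toric varieties, not a single normal toric variety, so ``saturation of the induced lattice on $\tau$'' cannot be the mechanism behind reducedness.

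The second step is also not a proof: the claim that condition (2), after ``lifting generators and dualizing,'' produces a set $B\subseteq\sigma_1^{\vee}\cap M_1$ with \emph{unique} decompositions $\phi^{\vee}(m)+b$ is precisely the assertion that $R_1$ is a free $R_2$-module, which is stronger than the flatness you are trying to prove, and no argument is given for existence or uniqueness of such decompositions. The natural candidate (elements of $\sigma_1^{\vee}\cap M_1$ from which no nonzero $\phi^{\vee}(m)$ can be subtracted) need not work without a real argument; the honest version of this route is to grade $R_1$ by $M_1/\phi^{\vee}(M_2)$, observe that each graded piece is the set of lattice points of a polyhedron with recession cone $\sigma_2^{\vee}$, and then prove, using conditions (1) and (2) \emph{and} smoothness of the base, that each such piece is a single translate of $\sigma_2^{\vee}\cap M_2$ (for a general piece of this shape, e.g.\ the one corresponding to the monomial ideal $(x,y)\subseteq\kk[x,y]$, flatness fails). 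That missing argument is the entire content of the theorem, so as written the proposal does not establish either flatness or reducedness; if you want a self-contained proof, the shortest correct path is the one in the cited sources.
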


\begin{theorem}\cite{Molcho2021}\label{def: toric fiber product}
    If $f_1: X_{\Sigma_1} \to X_{\Sigma_3}$ is a semistable map, then for any toric morphism $f_2: X_{\Sigma_2} \to X_{\Sigma_3}$, we have that the fiber product $X_{\Sigma_1}\times_{X_{\Sigma_3}}X_{\Sigma_2}$ is a toric variety and hence is the fiber product in the category of toric varieties with toric morphisms.
\end{theorem}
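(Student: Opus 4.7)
The plan is to construct the candidate fiber product explicitly as a toric variety from a fan on an appropriate lattice, and then verify that it satisfies the universal property of the scheme-theoretic fiber product. This reduces the theorem to a combinatorial statement about cones and a monoid-algebra calculation.

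First I would form the lattice $N := N_1 \times_{N_3} N_2$, with rational span $N_{\RR} = (N_1)_{\RR} \times_{(N_3)_{\RR}} (N_2)_{\RR}$, and dually $M := M_1 \oplus_{M_3} M_2$. For each triple $(\sigma_1, \sigma_2, \sigma_3)$ with $\sigma_i \in \Sigma_i$ and $f_1(\sigma_1) = f_2(\sigma_2) = \sigma_3$, I would define
\[
\sigma_1 \times_{\sigma_3} \sigma_2 := (\sigma_1 \times \sigma_2) \cap N_{\RR} \subseteq (N_1)_{\RR} \times (N_2)_{\RR}.
\]
The candidate fan $\Sigma$ on $N$ consists of these cones as $(\sigma_1,\sigma_2,\sigma_3)$ varies. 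The first step is to check that $\Sigma$ really is a fan: faces of such cones are again of this form (because faces of $\sigma_i$ map to faces of $\sigma_3$), and intersections decompose correctly because intersections of cones in $\Sigma_1$ and $\Sigma_2$ lie in $\Sigma_1$ and $\Sigma_2$. The surjectivity part of weak semistability (condition (1)) is what ensures that the indexing triples exhaust the right pairs $(\sigma_1,\sigma_2)$ and that $\sigma_3$ is determined by $\sigma_1$.

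Next I would verify the universal property affine-locally. On the chart $U_{\sigma_1 \times_{\sigma_3} \sigma_2}$, the required statement is that the natural map
\[
\kk[\sigma_1^{\vee} \cap M_1] \otimes_{\kk[\sigma_3^{\vee} \cap M_3]} \kk[\sigma_2^{\vee} \cap M_2] \longrightarrow \kk[(\sigma_1 \times_{\sigma_3} \sigma_2)^{\vee} \cap M]
\]
is an isomorphism. This is where the second condition of weak semistability, $f_1(\sigma_1 \cap N_1) = \sigma_3 \cap N_3$, is essential: dually it says that the map of monoids $\sigma_3^{\vee} \cap M_3 \to \sigma_1^{\vee} \cap M_1$ is a saturated inclusion onto a face quotient, so that the pushout of monoids computes correctly and no extra torsion or quotients appear when passing to monoid algebras. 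Smoothness of $X_{\Sigma_1}$ and $X_{\Sigma_3}$ (part of semistability) guarantees the cones involved are simplicial-unimodular, which lets us reduce the monoid pushout to an honest lattice pushout calculation. These affine pieces then glue along common faces in the same way the fans glue.

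The main obstacle will be the monoid-algebra identification above: in general, tensor products of affine semigroup algebras fail to be semigroup algebras of pushouts because of saturation and torsion issues. The technical heart of the proof is thus showing that semistability rules out precisely these pathologies, which amounts to checking that for each relevant $\sigma_1$ mapping to $\sigma_3$, the inclusion of saturated monoids $\sigma_3^{\vee}\cap M_3 \hookrightarrow \sigma_1^{\vee}\cap M_1$ exhibits the latter as a free module over the former (so that tensor products commute with taking $\mathrm{Spec}$ of monoid algebras). Once this is established, flatness of $f_1$ from Theorem \ref{thm: semistable implies flat} ensures the toric fiber product agrees with the scheme-theoretic one, and the universal property follows formally from the universal property of $N_1 \times_{N_3} N_2$ among lattices together with the usual correspondence between toric morphisms and lattice maps compatible with fans.
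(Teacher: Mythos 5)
The paper does not actually prove this statement: it is quoted from \cite{Molcho2021}, so the only proof to compare against is the one in the cited literature, which indeed follows the architecture you propose — form the fan of fiber cones on $N_1\times_{N_3}N_2$ and verify affine-locally that the tensor product of semigroup algebras is the semigroup algebra of the fiber cone. So your general route is the standard and correct one.

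However, the step you single out as the technical heart is not the right mechanism, and as written it leaves a genuine gap. Freeness of $\kk[\sigma_1^{\vee}\cap M_1]$ over $\kk[\sigma_3^{\vee}\cap M_3]$ does not exclude the pathologies you must rule out: for $f_1\colon\mathbb{A}^1\to\mathbb{A}^1$, $z\mapsto z^2$ (a ray mapping onto a ray, lattice map multiplication by $2$), the ring $\kk[x]$ is free over $\kk[x^2]$ with basis $\{1,x\}$, yet the fiber product with $f_2=f_1$ is $\operatorname{Spec}\kk[x,y]/(x^2-y^2)$, which is reducible and hence not toric. What fails in this example is precisely what you assert without justification at the outset, namely that the character group of the fiber product is $M_1\oplus_{M_3}M_2$: that pushout has $\ZZ/2$ torsion, reflecting the disconnectedness of $T_1\times_{T_3}T_2$, and this torsion/connectedness issue is exactly the failure mode that weak semistability has to eliminate. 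So the real work is to show that condition (2) — surjectivity of $\sigma_1\cap N_1\to\sigma_3\cap N_3$ for every cone — forces the pushout monoids $(\sigma_1^{\vee}\cap M_1)\oplus_{(\sigma_3^{\vee}\cap M_3)}(\sigma_2^{\vee}\cap M_2)$ to be torsion-free, integral and saturated, equivalently that the tensor product of the three semigroup algebras is a normal domain whose monoid of characters is $(\sigma_1\times_{\sigma_3}\sigma_2)^{\vee}\cap M$; this is where the cited argument (in log-geometric terms: weakly semistable means log smooth, integral and saturated) does its work. Two smaller imprecisions: the phrase ``so that tensor products commute with taking $\mathrm{Spec}$ of monoid algebras'' is not the issue, since $\operatorname{Spec}$ always converts a tensor product into a fiber product of affine schemes — the issue is identifying that tensor product with the expected semigroup algebra; and smoothness of $\Sigma_1,\Sigma_3$ does not make $\sigma_2$ or the fiber cones unimodular, so the claimed reduction to ``an honest lattice pushout calculation'' also needs an argument.
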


\begin{proposition}
    Let $\mathbf{F}$ be a forgetful species of smooth projective toric varieties. Then, the conditions of Theorem \ref{thm: cuspidal decomposition} hold. Therefore, every $\mathbf{F}[S]$ has a cuspidal decomposition.
\end{proposition}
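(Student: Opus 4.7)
The plan is to verify each of the three hypotheses of Theorem \ref{thm: cuspidal decomposition} separately, with the central tool being the weak semistability of the forgetful maps $\pi_{S\setminus i, S}: \mathbf{F}[S] \to \mathbf{F}[S\setminus i]$ established in the preceding lemma. Throughout, we will also use the defining property of a forgetful species, namely that $\Sigma_{S\setminus i} = \{\pi_{S\setminus i,S}(\sigma) \mid \sigma \in \Sigma_S\}$.

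For condition (1), I will argue as follows. Weak semistability together with Theorem \ref{thm: semistable implies flat} gives that each $\pi_{S\setminus i, S}$ is flat with reduced fibers. The species surjectivity on cones implies that the fiber of $\pi_{S\setminus i,S}$ over every torus-invariant point is again a complete toric variety, and since we are in the smooth projective setting, every fiber is a smooth complete toric variety. For such varieties, a standard computation with the \v{C}ech complex on the torus-invariant affine cover gives $H^0(\mathcal{O}) = \kk$ and $H^i(\mathcal{O}) = 0$ for $i > 0$. Cohomology and base change applied to the flat proper morphism $\pi_{S\setminus i, S}$ then yields $R\pi_{S\setminus i, S *} \mathcal{O}_{\mathbf{F}[S]} = \mathcal{O}_{\mathbf{F}[S\setminus i]}$.

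Condition (2) will then follow immediately: Tor-independence of two morphisms is automatic whenever one of them is flat, and both $\pi_i$ and $\pi_k$ are flat by Step 1. For condition (3), Theorem \ref{def: toric fiber product} ensures that $Y = \mathbf{F}[S\setminus i] \times_{\mathbf{F}[S\setminus\{i,k\}]} \mathbf{F}[S\setminus k]$ is itself a toric variety with weight lattice $M_{S\setminus i} + M_{S\setminus k} = M_S$ inside $M_S$. The map $\alpha_{i,k}$ is then a toric morphism corresponding to a refinement of fans, and the plan is to show it is again weakly semistable with connected proper toric fibers, after which the same cohomology-and-base-change argument as in Step 1 gives $R\alpha_{i,k*}\mathcal{O}_{\mathbf{F}[S]} = \mathcal{O}_Y$.

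The main obstacle will be Step 3: identifying the fan $\Sigma_Y$ concretely as the fiber product of fans and then verifying weak semistability of $\alpha_{i,k}$. Concretely, a cone of $\Sigma_Y$ is a pair $(\sigma', \sigma'') \in \Sigma_{S\setminus i} \times \Sigma_{S\setminus k}$ with $\pi_{S\setminus\{i,k\}, S\setminus i}(\sigma') = \pi_{S\setminus\{i,k\}, S\setminus k}(\sigma'')$, sitting inside $(N_S)_{\RR}$ via the identification above, and $\alpha_{i,k}$ sends $\tau \in \Sigma_S$ to $(\pi_i(\tau), \pi_k(\tau))$. The required cone-surjectivity and lattice-saturation conditions for weak semistability of $\alpha_{i,k}$ are the joint analogue of the single forgetful-map axiom and should follow from it by a direct cone-by-cone check; this technical verification is the only nontrivial step in the argument, after which the proposition is established and the cuspidal semi-orthogonal decomposition exists for every $\mathbf{F}[S]$.
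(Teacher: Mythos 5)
Your overall plan (flatness for (2), a toric-geometric vanishing argument for (1) and (3), with Theorem \ref{def: toric fiber product} identifying $Y$) is in the same spirit as the paper, but the execution of (1) and (3) deviates from the paper's argument and has a genuine gap.

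For condition (1), you claim that the species axiom forces the fiber over every torus-invariant point to be a complete toric variety and then assert that in the smooth projective setting \emph{every} fiber is a smooth complete toric variety. The first clause is reasonable, but the second does not follow: fibers of a toric morphism over points in the dense torus orbit (or over any non-torus-fixed point) are in general not toric varieties, even for a smooth, projective, weakly semistable map; they need not carry a torus action at all. Since your cohomology-and-base-change argument hinges on knowing $H^i(\text{fiber},\mathcal{O}) = 0$ for $i>0$ on \emph{all} fibers, the argument as stated is incomplete. You could repair it (e.g., via flat degeneration to the torus-fixed fibers together with semicontinuity and constancy of the Euler characteristic), but that is extra work compared to what the paper does, which is simply to invoke the standard toric result that for a proper toric morphism $f$, one has $R^pf_*\mathcal{O}=0$ for $p\geq 1$ (see \cite{Cox2011}), combined with the observation that $R^0$ is forced by properness onto a normal target with connected fibers. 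This avoids fibers entirely.

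For condition (3), you propose to verify that $\alpha_{i,k}$ is itself weakly semistable by an explicit fan computation on the fiber-product fan, and you explicitly flag that this ``cone-by-cone check'' is left undone. That verification is in fact unnecessary. Once Theorem \ref{def: toric fiber product} identifies $Y$ as a toric variety and a toric morphism's target, all you need is that $\alpha_{i,k}$ is proper, which follows by cancellation/base change from the properness of the $\pi_i$. Then the same toric vanishing $R^p\alpha_{i,k*}\mathcal{O}=0$ for $p\geq 1$ applies directly. So your approach, besides being incomplete at the step you yourself call nontrivial, would require a strictly stronger structural fact (weak semistability of $\alpha_{i,k}$) than what is actually needed.
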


\begin{proof}
    The conditions follow from the flatness and properness of our forgetful maps.
    \begin{enumerate}
        \item For this, we only have to verify that $\operatorname{R}^p\pi_{i*}(\mathcal{O}_{\mathbf{F}[S]})$ vanishes for $p \geq 1$. It is proven in \cite{Cox2011} that if $f: X \to Y$ is a proper toric morphism then the higher direct images of the structure sheaves vanish. A map between projective varieties is projective and hence proper which implies the condition.
        \item Two maps are Tor-independent when one of them is flat. So it suffices to show that the maps $\pi_{i}$ are flat. This follows from Theorem \ref{thm: semistable implies flat} since our forgetful maps are semistable.
        \item By Theorem \ref{def: toric fiber product}, we know that $Y$ is a toric variety since our forgetful maps are semistable. Since each $\pi_i$ is proper, we know that $\alpha_{i,k}$ is also proper by base change and now the argument for part (1) works again.
    \end{enumerate}
\end{proof}

\begin{definition}
 Let $\mathcal{P}_E$ be a full strongly exceptional collection of lattice polytopes for the fan of $\mathbf{F}[E]$. We say that the collections $\{\mathcal{P}_E\}$ are \textbf{compatible with the forgetful maps} if
\begin{itemize}
     \item For any $S \subseteq E$, the inclusion map $i: M_S \to M_E$ gives a bijection between $\mathcal{P}_S$ and the polytopes $P \in \mathcal{P}_E$ such that some translation of $P$ is contained in $M_S \subseteq M_E$.
 \end{itemize}
\end{definition}

We say that a polytope $P \subseteq \RR^T$ is \textbf{generic} if $P+m \not \subseteq \RR^S$ for any $m \in M$ and any $S \subset T$. For instance, if $P$ is full-dimensional then it is generic.

\begin{theorem}\label{thm: full strongly exceptional collection for forgetful species}
    Let $\mathbf{F}$ be a forgetful species of smooth varieties and $P_S$ be full strongly exceptional collections of polytopes for the fan of the variety $\mathbf{F}[S]$ such that $\{P_S\}$ is compatible with the forgetful maps. Let $\mathcal{P}_T^{gen}$ be the generic polytopes in $\mathcal{P}_T$.
    
    Then the line bundles $\mathcal{L}_{P_i}^{-1}$ for $P_i \in \mathcal{P}_T^{gen}$ form a full strongly exceptional collection for $\D_{cusp}(\mathbf{F}[T])$ when they are ordered by non-increasing number of lattice points.
\end{theorem}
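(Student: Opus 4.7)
The plan is to proceed by induction on $|T|$, using the cuspidal semi-orthogonal decomposition of Theorem~\ref{thm: cuspidal decomposition}, whose hypotheses are satisfied by any forgetful species of smooth projective toric varieties by the preceding proposition. The base case $|T|=0$ is trivial, since $\mathbf{F}[\emptyset]$ is a point. For each $P \in \mathcal{P}_T$, write $\operatorname{supp}(P) \subseteq T$ for the minimal subset such that some lattice translate of $P$ lies in $\RR^{\operatorname{supp}(P)}$, so that $\mathcal{P}_T^{gen} = \{P : \operatorname{supp}(P) = T\}$. The compatibility hypothesis, together with M\"obius inversion over the Boolean lattice of $T$, produces a bijection between $\{P \in \mathcal{P}_T : \operatorname{supp}(P) = S\}$ and $\mathcal{P}_S^{gen}$ for every $S \subseteq T$.

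The critical toric-geometric step is the identification $\mathcal{L}_P^{-1} \cong \pi_{S,T}^*\mathcal{L}_{P'}^{-1}$ whenever $\operatorname{supp}(P) = S \subsetneq T$ and $P' \in \mathcal{P}_S^{gen}$ is the partner of $P$ under the bijection above. Since $\pi_{S,T}$ is the toric morphism induced by the lattice inclusion $M_S \hookrightarrow M_T$ (dually the surjection $N_T \twoheadrightarrow N_S$), pulling back the piecewise linear support function of $P'$ on $\Sigma_S$ along $N_T \twoheadrightarrow N_S$ yields a piecewise linear function on $\Sigma_T$ whose associated polytope is $P'$ viewed inside $M_T$; up to a lattice translation this polytope agrees with $P$, and translation does not alter the isomorphism class of the line bundle.

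By the inductive hypothesis, $\{\mathcal{L}_{P'}^{-1}\}_{P' \in \mathcal{P}_S^{gen}}$ is a full strongly exceptional collection for $\D_{cusp}(\mathbf{F}[S])$ for every $S \subsetneq T$. Pulling back along $\pi_{S,T}^*$, whose full faithfulness on $\D_{cusp}(\mathbf{F}[S])$ follows from the projection formula together with the identity $R\pi_{S,T*}\mathcal{O} = \mathcal{O}$ already used to verify the cuspidal SOD hypotheses, yields a full strongly exceptional collection for $\pi_{S,T}^*\D_{cusp}(\mathbf{F}[S])$. Assembling over $S \subsetneq T$, the non-generic line bundles $\{\mathcal{L}_P^{-1} : \operatorname{supp}(P) \neq T\}$ generate precisely the direct sum of non-cuspidal pieces of the cuspidal SOD of $\D(\mathbf{F}[T])$.

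Because the ambient collection $\{\mathcal{L}_P^{-1}\}_{P \in \mathcal{P}_T}$ generates all of $\D(\mathbf{F}[T])$, and its non-generic portion has just been shown to generate the complement of $\D_{cusp}(\mathbf{F}[T])$ inside the cuspidal SOD, the remaining subcollection indexed by $\mathcal{P}_T^{gen}$ must generate $\D_{cusp}(\mathbf{F}[T])$; strong exceptionality passes to the subcollection, and the ordering by non-increasing lattice points is inherited from the ambient ordering. The main obstacle is making the geometric pullback identity of the second paragraph fully watertight, and the parallel direct check that each $\mathcal{L}_P^{-1}$ with $P \in \mathcal{P}_T^{gen}$ actually lies in $\D_{cusp}(\mathbf{F}[T])$; the latter reduces, via the equivariant coherent-constructible correspondence, to showing weight-by-weight that the stalks of $R\pi_{i*}\mathcal{L}_P^{-1}$ vanish for every $i \in T$, which is precisely the condition that no translate of $P$ lies inside $\RR^{T \setminus i}$.
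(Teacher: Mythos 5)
Your proposal takes a genuinely different route from the paper's: you run an induction on $|T|$ through the cuspidal semi-orthogonal decomposition, whereas the paper argues directly and only records the SOD refinement as a corollary afterward. The structural idea is reasonable, but it has a real gap at the critical step, and the gap sits exactly where you flag it as ``the main obstacle.''

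The step you cannot skip is proving that $\mathcal{L}_P^{-1} \in \D_{cusp}(\mathbf{F}[T])$ when $P$ is generic. Your final ``complementarity'' argument --- the ambient collection generates $\D(\mathbf{F}[T])$, the non-generic part generates the non-cuspidal pieces, hence the generic part must generate $\D_{cusp}$ --- does not follow without this. In an SOD $\D = \langle A, B\rangle$, if a generating set $G$ of $\D$ has a subset $G' \subseteq B$ generating $B$, the remaining objects of $G$ need not lie in $A$; what generates $A$ are their $A$-components under the SOD projection, which need not be line bundles. So cuspidality of the generic line bundles must be proved first, after which fullness is a short orthogonality argument that does not even require your induction or the explicit SOD machinery.

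Your suggested resolution --- reduce cuspidality via the equivariant coherent-constructible correspondence to ``stalks of $R\pi_{i*}\mathcal{L}_P^{-1}$ vanish,'' and claim this is equivalent to no translate of $P$ lying in $\RR^{T\setminus i}$ --- is asserted, not proven, and the asserted equivalence is not obviously correct. The CCC does not instantly convert $R\pi_{i*}$ of a coherent complex into a pointwise stalk condition; one would have to identify and analyze the corresponding constructible functor, which you do not do. (The easy direction is the converse: if $P+m \subseteq \RR^{T\setminus i}$ then $\mathcal{L}_P^{-1}$ is pulled back from $\mathbf{F}[T\setminus i]$, and the projection formula with $R\pi_{i*}\mathcal{O}=\mathcal{O}$ shows $R\pi_{i*}\mathcal{L}_P^{-1}\neq 0$; the direction you need is the nontrivial one.) The paper closes the gap by a short adjunction computation: test $R\pi_{S,T*}\mathcal{L}_P^{-1}$ against the generators $\{\mathcal{L}_Q^{-1}\}_{Q\in\mathcal{P}_S}$, use push-pull and the pullback identification (which you also invoke) to rewrite $\operatorname{RHom}^p_{\mathbf{F}[S]}(\mathcal{L}_Q^{-1}, R\pi_{S,T*}\mathcal{L}_P^{-1}) = \operatorname{RHom}^p_{\mathbf{F}[T]}(\mathcal{L}_P,\mathcal{L}_{Q'})$, kill $p\geq 1$ by strong exceptionality of $\mathcal{P}_T$, and kill $p=0$ because genericity of $P$ means no translate fits inside $Q'\subseteq\RR^S$. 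That is the same polytopal condition you named, but entering through a $\operatorname{Hom}$ computation rather than a stalk computation. You should either carry out that adjunction argument or substantiate your CCC reduction; as written, the proof is incomplete.
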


\begin{proof}
    First, we show that these line bundles are in the cuspidal category. Consider a map $\pi_{S,T}: \mathbf{F}[T] \to \mathbf{F}[S]$ for $S \subseteq T$. Let $P \in \mathcal{P}_T^{gen}$. 
    To show that $R\pi_{S,T*}(\mathcal{L}_P^{-1}) = 0$, it suffices to show that $\operatorname{RHom}^p_{\mathbf{F}[S]}(\mathcal{L}_Q^{-1}, R\pi_{S,T*}(\mathcal{L}_P^{-1})) = 0$ for all $p \geq 0$ and for all $Q \in \mathcal{P}_S$. This is true since the collection $\mathcal{L}_{Q}^{-1}$ is a full strongly exceptional collection for $\mathbf{F}[S]$.
    
    Consider $Q \in \mathcal{P}_S$. Let $Q'$ denote the polytope in $\mathcal{P}_T$ that indexes the line bundle $\pi_{S,T}^*(\mathcal{L}_Q)$. This is just the same polytope as $Q \subseteq M_S$ viewed as a polytope in $M_T$. Then, using the push-pull adjunction, exactness of pullbacks for locally free sheaves, and the fact that duality commutes with pullbacks for locally free sheaves, we have
    \begin{align*}
        \operatorname{RHom}^p_{\mathbf{F}[S]}( \mathcal{L}_Q^{-1}, R\pi_{S,T*}(\mathcal{L}_P^{-1})) 
            &= \operatorname{RHom}^p_{\mathbf{F}[T]}( \pi_{S,T}^*(\mathcal{L}_Q)^{-1}, \mathcal{L}_P^{-1})\\
            &= \operatorname{RHom}^p_{\mathbf{F}[T]}(\mathcal{L}_{Q'}^{-1}, \mathcal{L}_{P}^{-1}) \\
            &= \operatorname{RHom}^p_{\mathbf{F}[T]}(\mathcal{L}_P, \mathcal{L}_{Q'}).
    \end{align*}
    Since $P$ and $Q'$ are both in the full strongly exceptional collection $\mathcal{P}_T$, we know that this expression vanishes for $p \geq 1$. All that remains is to compute this for $p = 0$. For this, we have the usual toric-geometric interpretation of Hom-spaces given by
     \[\operatorname{Hom}_{\mathbf{F}[T]}(\mathcal{L}_P, \mathcal{L}_{Q'}) \cong \ZZ[m \in M \; \lvert \; P + m \subseteq Q]. \]
    By construction, $Q'$ is contained in $\RR^S$ and since $P$ is generic there is no $m$ such that $P+m \subseteq Q' \subseteq \RR^S$. This shows that $\operatorname{RHom}^p_{\mathbf{F}[S]}(\mathcal{L}_{Q'}^{-1}, R\pi_{S,T*}(\mathcal{L}_P^{-1})) = 0$ for all $p \geq 0 $ and for all $Q \in \mathcal{P_S}$. Thus $R\pi_{S,T*}(\mathcal{L}_P^{-1}) = 0$. This argument holds for all $S \subset T$ and so $\mathcal{L}_P^{-1}$ is in $\D_{cusp}(\mathbf{F}[T])$.

    Next, we show that $\{\mathcal{L}_{P}^{-1}\}_{P \in \mathcal{P}_T^{gen}}$ generates the cupsidal category. To see this, we show that if $\mathcal{E} \in \D_{cusp}(\mathbf{F}[T])$ and $\mathcal{E} \in \langle \mathcal{L}_{P}^{-1} \rangle_{P \in \mathcal{P}_T^{gen}}^{\perp}$, then it must be $0$. Let $Q'$ be a non-generic polytope in $\mathcal{P}_T$. Suppose $S$ is a subset such that $Q' \subseteq \RR^S$. Then by the compatibility condition, we have that there is some $Q \in \mathcal{P}_S$ such that $\mathcal{L}_{Q'} = \pi_{S,T}^*(\mathcal{L}_Q)$. Since $\mathcal{E}$ is in $\D_{cusp}(\mathbf{F}[T])$, this implies
     \[\operatorname{RHom}_{\mathbf{F}[S]}^p(\mathcal{L}_{Q'}^{-1}, R\pi_{S,T*}(\mathcal{E})) = \operatorname{RHom}_{\mathbf{F}[T]}^{p}(\mathcal{L}_{Q}^{-1}, \mathcal{E}) = 0, \]
     following the same manipulations as earlier in the proof. Therefore $\mathcal{E}$ is orthogonal to all $\mathcal{L}_{P}^{-1}$ with $P \in \mathcal{P}_T$ which means $\mathcal{E} = 0$.

     Finally, there are no extensions between the line bundles $\mathcal{L}_{Q}^{-1}$ for $Q \in \mathcal{P}_{T}^{gen}$ since they were part of a full strongly exceptional collection for $\D(\mathbf{F}[T])$.
\end{proof}
   This implies the semi-orthogonal decomposition for $\D(\mathbf{F}[T])$ induced by the full strongly exceptional collection $\{\mathcal{L}_P^{-1}\}_{P \in \mathcal{P}_T}$ refines the cuspidal decomposition of $\mathbf{F}[T]$.

\subsection{The Permutahedral Varieties}

We now construct forgetful species of smooth varieties out of the permutahedral varieties and show that the full strongly exceptional collections we constructed for them are compatible with the forgetful maps.

For any finite set $E$, let $\Perm_E$ be the variety defined on the fan $\Sigma_E \subseteq \{x \in \RR^E\; \lvert \; \sum x_i = 0\}$ whose top-dimensional cones are the chambers of the hyperplane arrangement given by the hyperplanes
 \[ A_{i,j} = \{ x \in \RR^E \; \lvert \; x_i = x_j\},\]
 for $i \not = j \in E$ all intersected with $\{x \in \RR^E\; \lvert \; \sum x_i = 0\}$. Let $\mathbf{Perm}[E] = \Perm_E$. It is clear that $\Perm_E$ is isomorphic to $\Perm_n$ where $n = |E|$. We can similarly extend the definitions of $\Stell_n$ and $\Perm_n^{B}$ to get varieties $\mathbf{Stell}[E]$ and $\mathbf{Perm}^B[E]$.

For the following proof, we will use the cone descriptions of the fans of our varieties from Propositions \ref{prop: cone descrpition of permutahedral fan}, \ref{prop: cone description of stellahedral fan}, and \ref{prop: cone description of ordered set partitions}. Those statements were only made when $E = [n]$ but they naturally extend for any finite set $E$.

\begin{proposition}\label{prop: forgetful maps for permutahedral varieties}
    For the functors $\mathbf{Perm},\mathbf{Stell}$, and $\mathbf{Perm}^B$, the maps $\pi_S: N_E\to N_S$ for $S \subseteq E$ given by projecting onto the coordinates labeled by $S$ make these into forgetful species.
\end{proposition}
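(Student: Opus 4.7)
The plan is to verify, for each of the three functors, the two defining conditions of a forgetful species: (i) that each $\pi_{S,T}\colon N_T\to N_S$ is a lattice map inducing a toric morphism between the relevant varieties, and (ii) that the image of every cone of $\Sigma_T$ under $\pi_{S,T}$ is a cone of $\Sigma_S$, and every cone of $\Sigma_S$ arises this way. Condition (i) is essentially automatic: in each case the coordinate projection $\RR^T \to \RR^S$ is dual to the inclusion of weight lattices $M_S\hookrightarrow M_T$ (which is just $[e_i]\mapsto [e_i]$ for $i\in S$, and is injective for $\mathbf{Perm}$ since the only element of $\ZZ^S$ that is a multiple of $\mathbf{1}_T$ is $0$), so $\pi_{S,T}$ is a well-defined lattice map. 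The content of the proposition is condition (ii), which I would handle separately for each functor using the combinatorial descriptions of cones given in Propositions~\ref{prop: cone descrpition of permutahedral fan}, \ref{prop: cone description of stellahedral fan}, and \ref{prop: cone description of ordered set partitions}.

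For $\mathbf{Perm}$, I would argue as follows. Let $\sigma_A \in \Sigma_T$ correspond to an ordered set partition $A=(A_1,\ldots,A_k)$ of $T$, and let $A|_S$ be the ordered set partition of $S$ obtained by intersecting each $A_i$ with $S$ and deleting empty blocks. A point $x\in \sigma_A$ satisfies $x_i=x_j$ when $i,j$ lie in the same block and $x_i<x_j$ when the block of $i$ precedes that of $j$; restricting these relations to coordinates in $S$ gives exactly the defining relations of $\sigma_{A|_S}$. Hence $\pi_{S,T}(\sigma_A)=\sigma_{A|_S}$, which is a cone of $\Sigma_S$. Surjectivity on cones follows because any ordered set partition of $S$ can be extended (e.g. by appending a single block containing $T\setminus S$ at the end) to an ordered set partition of $T$ whose restriction recovers it.

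For $\mathbf{Stell}$, I would do the analogous calculation with compatible pairs. Given $(I,F_\bullet)$ compatible for $T$, note that the generating rays $e_i$ for $i\in I$ project to $e_i$ if $i\in S$ and to $0$ otherwise, while $-e_{T\setminus F}$ projects to $-e_{S\setminus F}=-e_{S\setminus (F\cap S)}$, which is a ray of $\Sigma_S$ whenever $F\cap S\subsetneq S$ and is $0$ otherwise. Collecting these images and deleting repetitions in the chain $F_\bullet\cap S$ yields the cone $\sigma_{I\cap S \leq (F_\bullet\cap S)^{\text{red}}}$ of $\Sigma_S$, where the superscript denotes deletion of repeats and of any term equal to $S$. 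Surjectivity is again straightforward: extend any compatible pair on $S$ by adjoining $T\setminus S$ appropriately. The type $B_n$ case proceeds identically, with ordered signed set partitions playing the role of ordered set partitions: a signed partition of $T$ restricts to a signed partition of $S$ by intersecting each (signed) block with $S$ (using the sign-preserving identification $-i\leftrightarrow i$), dropping empty blocks, and merging the resulting zero block with any block $A_\ell$ for which both $A_\ell\cap S$ and $A_\ell^-\cap S$ become empty.

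The main obstacle, and where I would spend the most care, is the bookkeeping for the signed case and the $\mathbf{Stell}$ case: one must check that deduplication of chains and merging of zero blocks really do produce a valid compatible pair or ordered signed set partition, and that the projected cone has the same dimension as predicted (equivalently, that no unexpected linear relations appear among the surviving generators). Once these combinatorial verifications are in place, the equality $\Sigma_S=\{\pi_{S,T}(\sigma)\mid \sigma\in\Sigma_T\}$ follows immediately, establishing the proposition.
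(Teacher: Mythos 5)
Your proposal is correct and follows essentially the same route as the paper: use the combinatorial cone descriptions (ordered set partitions, compatible pairs, ordered signed set partitions) to show that each cone of $\Sigma_T$ projects onto the cone indexed by the restricted combinatorial data in $\Sigma_S$, and get surjectivity by extending data on $S$ to data on $T$. The extra bookkeeping you flag (deduplication of chains, dropping blocks that become empty) is handled only implicitly in the paper's proof, so your write-up is, if anything, slightly more careful on those points.
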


\begin{proof}
    We do this case by case. All we need to show is that if we have a cone $\sigma \in \Sigma_{E}$, then the cone obtained by projecting onto the coordinates indexed by $S$ is a cone in $\Sigma_S$ and that all cones of $\Sigma_S$ arise in this way. This is not too hard since we have exact descriptions of all the cones in these cases.

    \begin{enumerate}
        \item We begin with $\mathbf{Perm}[E]$. Let $\sigma_A$ be the cone indexed by the ordered set partition $(A_1,\ldots, A_k)$. This is the cone where $x_i = x_j$ for two coordinates in the same part and $x_i < x_j$ when $i$ appears in a part that comes before the part containing $j$. When we remove the coordinates indexed by $i \in S$, then we will get the cone indexed by the set partition $A|S$ which is obtained by removing all elements in $E \backslash S$ and then removing all empty parts of the ordered set partition.

        It is clear that every ordered set partition of $S$ is obtained in this way from an ordered set partition of $E$.
        \item Now consider $\mathbf{Stell}[E]$. Let $I \leq F_{\bullet}$ be a compatible pair and let $\sigma_{I \leq F_{\bullet}}$ be the cone indexed by it. That is,
         \[\sigma_{I \leq F_{\bullet}} = \operatorname{cone}(e_i \; \lvert \; i \in I) + \operatorname{cone}(-e_{E\backslash F} \; \lvert \; F \in F_{\bullet}). \]
        If we project onto $S$, we get the cone
         \[\operatorname{cone}(e_i \; \lvert \; i \in I \cap S) + \operatorname{cone}(-e_{S\backslash F} \; \lvert \; F \in F_{\bullet}).  \]
        This is the cone of $\Sigma_S$ indexed by the pair $S \cap I \leq F_{\bullet}'$ where $F_{k}' = F_k \cap S$. If $I \leq F_{\bullet}$ is compatible then so is $I \cap S \leq F_{\bullet}'$. 
        
        Further, every compatible pair of subsets and flags of subsets of $S$ is a compatible pair for $E$ by just considering every subset of $S$ as a subset of $E$. So, every cone arises is this way.

        \item Finally, consider $\mathbf{Perm}^B[E]$. Let $A$ be an ordered signed set partition and let $\sigma_A$ be the corresponding cone of $\Sigma_E$ as in Proposition \ref{prop: cone description of ordered set partitions}. Then, by a similar argument to the one in the permutahedral case, the projection onto the coordinates indexed by $S$ gives the cone indexed by the ordered signed set partition $A|S$ where we delete all indices not in $S$ and delete all non-empty subsets except the zero block. Further, any ordered signed set partition of $E$ arises in this way from a signed set partition of $S$.
    \end{enumerate}
    This shows that all of these functors are forgetful species with these maps. Notice that in the process, we showed that these maps are compatible with the fan and hence give toric morphisms.
\end{proof}

 Let $\overline{\Sch}_E, \Sch_E$ and $\overline{\operatorname{DSch}}_E$ denote the sets of loopless Schubert matroids on ground set $E$, all Schubert matroids on ground set $E$, and loopless Schubert delta matroids on ground set $E$, respectively. It is clear that our calculation of full strongly exceptional collections never depended on the ground set. So, we have full strongly exceptional collection of nef line bundles for $\D(\mathbf{Perm}[E]), \D(\mathbf{Stell}[E]),$ and $\D(\mathbf{Perm}^B[E])$ labeled by the polytopes in $\overline{\Sch}_E, \Sch_E$ and $\overline{\operatorname{DSch}}_E$, respectively.

\begin{proposition}
    The collections $\{\BP(\Omega)\}_{\overline{\Sch}_E}, \{\IP(\Omega)\}_{\Sch_E}$ and $\{\FP(\Omega)\}_{\overline{\operatorname{DSch}}_E}$ are compatible with the forgetful maps of the forgetful species $\mathbf{Perm}[E]$, $\mathbf{Stell}[E]$, and $\mathbf{Perm}^B[E]$, respectively.
\end{proposition}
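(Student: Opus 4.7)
The plan is to verify compatibility case-by-case, using a uniform geometric reduction. All three polytope families lie in the unit cube $[0,1]^E$ (by Theorems \ref{thm: ggms} and \ref{thm: type Bn ggms} and Proposition \ref{prop: independence polytopes in the cube}), so if $P$ is in one of the collections and $P + m \subseteq \mathbb{R}^S$ for some $m \in M_E$, then for each $i \notin S$ we have $x_i \equiv -m_i$ on $P$ with $-m_i \in \mathbb{Z} \cap [0,1] = \{0,1\}$. Thus $m_i \in \{0,-1\}$ and $x_i$ is identically $0$ or $1$ on $P$. The proof then reduces to identifying what ``$x_i \equiv 0$'' and ``$x_i \equiv 1$'' mean matroid-theoretically and verifying closure of the relevant Schubert class under the corresponding operations of removing and adjoining trivial coordinates.

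For the permutahedral case, the vertices of $\BP(\Omega)$ are $\{0,1\}$-indicator vectors, so $x_i \equiv 0$ (resp.\ $\equiv 1$) on $\BP(\Omega)$ iff $i$ is a loop (resp.\ coloop) of $\Omega$. Looplessness of $\Omega$ forces every $i \notin S$ to be a coloop, $m = -e_{E \setminus S}$, and $\Omega$ decomposes as $\Omega' \oplus U_{|E \setminus S|, |E \setminus S|}$ for a loopless matroid $\Omega'$ on $S$; closure of Schubert matroids under restriction and direct sum (Proposition \ref{prop: Schubert matroids are closed under operations}) makes $\Omega' \leftrightarrow \Omega$ a bijection between $\overline{\Sch}_S$ and the loopless Schubert matroids on $E$ whose elements outside $S$ are all coloops, under which $\BP(\Omega) - e_{E\setminus S}$ is the image of $\BP(\Omega')$. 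The stellahedral case is simpler: every $\IP(\Omega)$ contains the origin (since $\emptyset$ is independent), so a constant $x_i$ must equal $0$, forcing $m = 0$ and every $i \notin S$ to be a loop; then $\Omega$ is the trivial extension of $\Omega|_S$, and Proposition \ref{prop: Schubert matroids are closed under operations} again provides the bijection.

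The type $B_n$ case parallels the permutahedral one: since $\FP(\Delta)$ has $\{0,1\}$-vertices, looplessness forces every $i \notin S$ to be a coloop of $\Delta$, $m = -e_{E \setminus S}$, and we need a bijection $\Delta' \leftrightarrow \Delta$ where $\Delta$ is obtained from $\Delta'$ by adjoining $|E \setminus S|$ coloops. The forward direction is a composition of operations guaranteed by Proposition \ref{prop: operations preserve delta matroids}: trivially extend $\Delta'$ by $|E \setminus S|$ loops to a delta matroid on $E$, then flip each newly-adjoined loop to a coloop. The main obstacle will be the reverse direction, namely that removing the coloops in $E \setminus S$ from a loopless Schubert delta matroid $\Delta$ on $E$ recovers a loopless Schubert delta matroid on $S$. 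Since ``removing coloops'' is not literally among the operations listed in Proposition \ref{prop: operations preserve delta matroids}, this step will require a direct check using the defining presentation $\Delta_S^w = w \cdot \Delta_S + e_T$: one traces how the coloops of $\Delta_S^w$ arise from the interaction of $w$ and $T$ with the maximal elements of $\Delta_S$, and verifies that the residual structure on $S$ is again of the form $w' \cdot \Delta_{S'} + e_{T'}$ for suitable Schubert data $(w', S', T')$ on $S$.
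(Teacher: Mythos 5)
Your approach is the same as the paper's: all three polytope families sit inside $[0,1]^E$, so a translate $P + m \subseteq M_S$ forces each coordinate in $E \setminus S$ to be constantly $0$ or $1$ (i.e.\ a loop or coloop), and one then exhibits the required bijection with $\mathcal{P}_S$ via the closure operations on Schubert (delta) matroids. The paper only writes out the $\mathbf{Perm}[E]$ case and declares the other two ``essentially identical,'' so your case-by-case treatment is strictly more careful, and your observation that $0 \in \IP(\Omega)$ always, forcing $m|_{E \setminus S} = 0$ and loops rather than coloops in the stellahedral case, is a clean shortcut.

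The concern you flag for type $B$ is real and the paper's ``essentially identical'' glosses over it. Proposition~\ref{prop: operations preserve delta matroids} closes $\operatorname{DSch}$ under restriction, trivial extension, and flipping \emph{loops to coloops}, but the reverse direction of your bijection needs the opposite flip (coloop to loop, so that restriction to $S$ can then apply), which is not literally listed. Tracing through $\Delta_A^w = w \cdot \Delta_A + e_T$ as you suggest will work, but it is heavier than necessary: the collection $\overline{\operatorname{DSch}}_n$ is by construction a union of $S_n^B$-orbits of standard Schubert delta matroid polytopes (this is exactly what the paper's $S_n^B$-invariance proposition for $\Perm_n^B$ records), and the sign change $s_i \in S_n^B$ followed by translation back into $[0,1]^n$ realizes precisely the flip of a coloop $i$ to a loop. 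With that one line, restriction to $S$ closes the reverse direction just as in type $A$. Your proposal as written leaves this step a sketch and is therefore incomplete, though the strategy is correct.
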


\begin{proof}
    We only do this for $\mathbf{Perm}[E]$ since all the cases are essentially identical. We need to show that base polytopes of loopless Schubert matroids on ground set $E$ where some translation is contained in $M_S \subseteq M_E$ are in bijection with the polytopes obtained by embedding base polytopes of loopless Schubert matroids on ground set $S$ under the map $M_S \hookrightarrow M_E$.

    This follows immediately from the fact that embedding the base polytope of a loopless Schubert matroid on ground set $S$ under the map $M_S \hookrightarrow M_E$ is exactly the operation of trivially extending from $S$ to $E$ and flipping loops to coloops which corresponds to translations. We also have the observation that if the base polytope of a matroid on ground set $[E]$ is contained in a translation of this hyperplane, then each $i \in S$ must be a loop or coloop. Therefore the polytopes in $\mathcal{P}_S$ are in bijection with the base polytopes of loopless Schubert matroids on $E$ such that each $i \in E \backslash S$ is a coloop, which are exactly the polytopes in $\mathcal{P}_E$ which have a translation contained in $M_S$
\end{proof}

By noting that the generic polytopes in each of the families above are exactly given by loopless and coloopless Schubert matroids and Schubert delta matroids, we obtain a calculation for the cuspidal part of all of our varieties.

\begin{theorem}[Theorem \ref{mainthm: cuspidal part fsec}]\label{thm: fsec for cuspidal categories}
    Let $E$ be a finite set.

    \begin{enumerate}
        \item The line bundles $(\mathcal{L}_{\BP(\Omega)}^{-1})$ indexed by loopless and coloopless Schubert matroids give a full strongly exceptional collection for $\D_{cusp}(\mathbf{Perm}[E])$ when ordered by non-increasing number of lattice points.
        \item The line bundles $(\mathcal{L}_{\IP(\Omega)}^{-1})$ indexed by loopless and coloopless Schubert matroids give a full strongly exceptional collection for $\D_{cusp}(\mathbf{Stell}[E])$ when ordered by non-increasing number of lattice points.
        \item The line bundles $(\mathcal{L}_{\FP(\Delta)}^{-1})$ indexed by loopless and coloopless Schubert delta matroids give a full strongly exceptional collection for $\D_{cusp}(\mathbf{Perm}^B[E])$ when ordered by non-increasing number of lattice points.
    \end{enumerate}
\end{theorem}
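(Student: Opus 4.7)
The plan is to apply Theorem \ref{thm: full strongly exceptional collection for forgetful species} to each of the three forgetful species $\mathbf{Perm}$, $\mathbf{Stell}$, and $\mathbf{Perm}^B$ set up in Proposition \ref{prop: forgetful maps for permutahedral varieties}. Two of the three ingredients of that theorem are already in place: the functors are forgetful species of smooth projective toric varieties, and the full strongly exceptional collections $\{\BP(\Omega)\}_{\Omega \in \overline{\Sch}_E}$, $\{\IP(\Omega)\}_{\Omega \in \Sch_E}$, and $\{\FP(\Delta)\}_{\Delta \in \overline{\operatorname{DSch}}_E}$ were shown to be compatible with the forgetful maps in the proposition immediately preceding this theorem. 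What remains is to identify the generic polytopes in each family with those indexed by loopless and coloopless (delta) matroids; Theorem \ref{thm: full strongly exceptional collection for forgetful species} then yields each of the three statements, with the ordering reversed to non-increasing lattice points since we are passing from $\mathcal{L}_P$ to $\mathcal{L}_P^{-1}$.

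The central step is the generic-polytope characterization. For a base polytope $\BP(\Omega)$, the key observation is that a coordinate function $x_j$ is constant on $\BP(\Omega)$ exactly when $j$ is a loop of $\Omega$ (forcing $x_j \equiv 0$) or a coloop of $\Omega$ (forcing $x_j \equiv 1$). Hence a translate $\BP(\Omega) + m$ can sit inside $\RR^S$ for some $m \in M$ and some $S \subsetneq E$ if and only if every element of $E \setminus S$ is a loop or coloop of $\Omega$, which identifies the generic base polytopes with those of loopless and coloopless Schubert matroids. The same coordinate-constancy analysis, combined with the cube containment $\FP(\Delta) \subseteq [0,1]^E$ from Theorem \ref{thm: type Bn ggms}, gives the analogous statement for feasible polytopes of delta matroids; an analogous argument handles the independence polytopes after accounting for the fact that the only constant coordinate value that can appear on $\IP(\Omega)$ directly is $0$ (from a loop), with the coloop case handled by an appropriate translation in $M$.

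With the generic polytopes identified in all three families, Theorem \ref{thm: full strongly exceptional collection for forgetful species} immediately produces full strongly exceptional collections for each of the three cuspidal derived categories. The main subtle point is the independence-polytope characterization, which requires careful bookkeeping of how translations in $M$ interact with matroidal loops and coloops; the fact that the class of Schubert matroids is closed under the relevant restriction, contraction, and loop-coloop operations (Proposition \ref{prop: Schubert matroids are closed under operations}) is what keeps the argument inside the intended indexing set and makes the generic sub-collection itself match an intrinsic combinatorial description.
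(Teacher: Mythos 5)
The high-level strategy is the same as the paper's: establish the forgetful species structure, check compatibility, and apply Theorem~\ref{thm: full strongly exceptional collection for forgetful species} to conclude that the generic polytopes in each family index the cuspidal collection. Your characterization of genericity for base polytopes and feasible polytopes is correct: a coordinate $x_j$ is constant on $\BP(\Omega)$ (resp.\ $\FP(\Delta)$) precisely when $j$ is a loop (value $0$) or a coloop (value $1$), and in either case a lattice translate drops into the coordinate subspace $\RR^{E\setminus j}$, so generic polytopes correspond to loopless-and-coloopless (delta) Schubert matroids.

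The step that fails is the parenthetical ``with the coloop case handled by an appropriate translation in $M$'' for independence polytopes. If $j$ is a coloop of $\Omega$ then $\IP(\Omega) = \IP(\Omega|_{E\setminus j}) \times [0,1]_j$, so the coordinate $x_j$ already takes both values $0$ and $1$ on $\IP(\Omega)$. No lattice translate of $\IP(\Omega)$ can then have constant $j$-th coordinate, so no translate lies in $\RR^S$ for any $S$ omitting $j$; a coloop does \emph{not} obstruct genericity of an independence polytope the way it does for $\BP$ or $\FP$. The generic polytopes in $\{\IP(\Omega)\}_{\Omega\in\Sch_E}$ are therefore exactly those with $\Omega$ loopless, and Theorem~\ref{thm: full strongly exceptional collection for forgetful species} produces a collection of that size, not the smaller loopless-and-coloopless one. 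This is not just a gap in the write-up but a genuine discrepancy with the indexing asserted in part (2): for $E = \{1\}$ there is no loopless-and-coloopless Schubert matroid, yet $\D_{cusp}(\mathbf{Stell}[\{1\}]) = \D_{cusp}(\mathbb{P}^1)$ is nontrivial, generated by $\mathcal{O}(-1) = \mathcal{L}_{\IP(U_{1,1})}^{-1}$ with $U_{1,1}$ loopless but having a coloop; likewise $\IP(U_{2,2}) = [0,1]^2$ is full-dimensional hence generic, so $\mathcal{L}_{[0,1]^2}^{-1}$ lies in $\D_{cusp}(\mathbf{Stell}[\{1,2\}])$ by the proof of Theorem~\ref{thm: full strongly exceptional collection for forgetful species}, though $U_{2,2}$ is all coloops. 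You should re-derive the indexing set for part (2) as the loopless Schubert matroids; parts (1) and (3) are fine as you argue.
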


\subsection{Relation to Castavet and Tevelev's Collection} Castravet and Tevelev constructed a full exceptional collection of sheaves for $\D(\Perm_n)$ by constructing one for $\D_{cusp}(\Perm[S])$, for all finite sets $S$, and then using the cuspidal decomposition from Theorem \ref{thm: cuspidal decomposition}. They described their sheaves in terms of external tensor products of line bundles coming from the wonderful compactification\footnote{This is the wonderful compactification of groups which is different than the wonderful compactification of hyperplane arrangements.} of $\operatorname{PGL}_n$. There is a different way to describe their collection that relates it to our collection.

    The permutahedral variety $\Perm_n$ embeds into the flag variety $G/B$ of type $A_n$ as the torus orbit closure of a generic point. Let $i: \Perm_n \hookrightarrow G/B$ be the inclusion map. By the Borel-Weil theorem, we know that the nef line bundles of $G/B$ are parameterized by the dominant weights. 

    For any general dominant weight $\lambda$, let $\mathcal{L}_{\lambda}$ be the line bundle associated to $\lambda$. Let $G_i = i^*(\mathcal{L}_{\omega_i})$ for $i =1, \ldots, n-1$ where $\omega_i = e_1 + e_2 + \cdots + e_i$ is the $i$th fundamental weight. In this language, their result is:

    \begin{theorem}\cite{Castravet2020}
    The derived category $\D_{cusp}(\Perm_n)$ has a full strongly exceptional collection of sheaves of cardinality equal to the number of derangements. It contains the $n-1$ line bundles $G_i$ for $i=1,\ldots,n-1$.
    \end{theorem}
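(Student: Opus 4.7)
The strategy is to combine Theorem \ref{mainthm: cuspidal part fsec}, which produces a full strongly exceptional collection of nef line bundles for $\D_{cusp}(\Perm_n)$, with two further ingredients: a counting argument identifying the cardinality of the collection with the number of derangements, and a moment-polytope identification of the Castravet-Tevelev generators $G_i$ with specific line bundles in the collection. After passing to the equivalent full strongly exceptional collection $(\mathcal{L}_{\BP(\Omega)})$ obtained by dualizing $(\mathcal{L}_{\BP(\Omega)}^{-1})$ and reversing the order, it suffices to show (i) the number of loopless and coloopless Schubert matroids on $[n]$ equals the number of derangements of $[n]$, and (ii) each $G_i$ coincides with $\mathcal{L}_{\BP(U_{i,n})}$ for the uniform matroid $U_{i,n}$ of rank $i$.

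For (i), I would extract a recursion from two computations of $\operatorname{rank} K_0(\Perm_n)$. Since $\Perm_n$ is a smooth projective toric variety, its Grothendieck group has rank equal to its Euler characteristic, which is the number of maximal cones of the braid fan, namely $n!$. On the other hand, the cuspidal decomposition of Theorem \ref{thm: cuspidal decomposition}, which applies here by Proposition \ref{prop: forgetful maps for permutahedral varieties}, yields at the level of $K$-theory the identity
\[
n! \;=\; \sum_{K \subseteq [n]} r_{n - |K|} \;=\; \sum_{k=0}^n \binom{n}{k}\, r_{n-k},
\]
where $r_m := \operatorname{rank} K_0(\D_{cusp}(\Perm_m))$. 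The classical inclusion-exclusion identity $n! = \sum_{k=0}^n \binom{n}{k}\, D_{n-k}$, which counts permutations by their fixed-point set, then forces $r_m = D_m$ by induction (or Möbius inversion on the boolean lattice). Because Theorem \ref{mainthm: cuspidal part fsec}(1) exhibits an exceptional, hence linearly independent, family of classes in $K_0(\D_{cusp}(\Perm_n))$ indexed by loopless and coloopless Schubert matroids on $[n]$, and this family generates, its cardinality is exactly $r_n = D_n$.

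For (ii), I would compute the moment polytope of $G_i = i^*\mathcal{L}_{\omega_i}$. Under the embedding $i\colon \Perm_n \hookrightarrow G/B$ as the closure of a generic torus orbit, the pullback of the Borel-Weil line bundle $\mathcal{L}_{\omega_i}$ is a nef $T$-equivariant line bundle whose moment polytope equals the $S_n$-orbit convex hull of $\omega_i = e_1 + \cdots + e_i$, that is, the hypersimplex $\Delta_{i,n} = \operatorname{conv}(e_S : |S| = i)$. By Theorem \ref{thm: ggms} this coincides with $\BP(U_{i,n})$, and for $1 \leq i \leq n-1$ the uniform matroid $U_{i,n}$ is loopless, coloopless, and a standard Schubert matroid (it is $\Omega_{\{1,\ldots,i\}}^e$, the unique Gale-minimum), hence $\mathcal{L}_{\BP(U_{i,n})}$ appears in the dualized collection.

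The main obstacle is the convention-tracking in step (ii): one must verify that the moment polytope of $i^*\mathcal{L}_{\omega_i}$ really is $\Delta_{i,n}$ rather than its reflection or a lattice translate, which requires pinning down both the choice of generic orbit identifying $\Perm_n$ with the orbit closure and the equivariant structure of the Borel-Weil line bundle. Once this bookkeeping is fixed, the remainder is a direct polytopal identification together with the elementary fact that $U_{i,n}$ is a loopless, coloopless Schubert matroid.
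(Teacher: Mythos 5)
The statement you are proving is quoted from \cite{Castravet2020}: the paper itself offers no proof, it imports the result (Castravet and Tevelev's argument runs through the Losev--Manin moduli interpretation and sheaves built from the wonderful compactification of $\operatorname{PGL}_n$), so your proposal is an attempt at an independent re-derivation from the paper's machinery. The cardinality half of your argument is essentially sound and in fact mirrors what the paper does explicitly in the type $B_n$ case: the cuspidal decomposition gives $n!=\sum_{K\subseteq[n]} r_{n-|K|}$ with $r_m=\operatorname{rank}K_0(\D_{cusp}(\Perm_m))$, the derangement numbers satisfy the same recurrence, and fullness plus exceptionality of the collection in Theorem \ref{mainthm: cuspidal part fsec} identifies its cardinality with $r_n$. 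That part I would accept, modulo noting that the ranks of the pieces $\pi_K^*\D_{cusp}$ in the decomposition are what they should be, which is built into Theorem \ref{thm: cuspidal decomposition}.

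The genuine gap is in the membership of the $G_i$. Theorem \ref{mainthm: cuspidal part fsec} exhibits the collection $(\mathcal{L}_{\BP(\Omega)}^{-1})$, and it is these \emph{inverse} line bundles that lie in $\D_{cusp}(\Perm_n)$. Your move of ``dualizing and reversing the order'' to pass to $(\mathcal{L}_{\BP(\Omega)})$ does not yield a collection in the cuspidal category: $\D_{cusp}$ is not preserved under dualization, and indeed no nef line bundle with a nonzero global section can be cuspidal, since $\Gamma(\Perm_{[n]\setminus i},\pi_{i*}\mathcal{L})=\Gamma(\Perm_n,\mathcal{L})\neq 0$ forces $\mathcal{H}^0(R\pi_{i*}\mathcal{L})\neq 0$. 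In particular $\mathcal{L}_{\BP(U_{i,n})}=\mathcal{L}_{\Delta_{i,n}}$, which has $\binom{n}{i}$ sections, is not cuspidal, so the conclusion ``hence $\mathcal{L}_{\BP(U_{i,n})}$ appears in the dualized collection'' cannot establish that $G_i$ lies in a full strongly exceptional collection for $\D_{cusp}(\Perm_n)$. For the second sentence of the theorem to be consistent with cuspidality, the $G_i$ must, under the correct Borel--Weil/equivariant convention, be identified with the duals $\mathcal{L}_{\Delta_{i,n}}^{-1}$ (which do literally appear in the collection of Theorem \ref{mainthm: cuspidal part fsec}, since $U_{i,n}$ is a loopless and coloopless Schubert matroid); pinning down that duality is precisely the convention issue you defer as ``bookkeeping,'' but it is the crux of this half of the statement, not a side check. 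Finally, note that even with that fixed, your argument proves that \emph{some} full strongly exceptional collection of cardinality $D_n$ contains the $G_i$, whereas the cited theorem is about Castravet--Tevelev's specific collection; as an independent proof of the statement as worded that is acceptable, but it should be said explicitly rather than presented as a reconstruction of their result.
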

    
    The nef line bundle  $i^*(\mathcal{L}_{\lambda})$ on $\Perm_n$ is the nef line bundle $\mathcal{L}_P$ given by the polytope
     \[ P = \operatorname{conv}( \sigma \cdot \lambda \; \lvert \; \sigma \in S_n).\]
    These are the weight polytopes studied for instance by Postnikov in \cite{Postnikov2009}. When applied to the fundamental weights, we see that these polytopes are exactly the base polytopes of the uniform matroid of rank $1, \ldots, n-1$. These are coloopless and loopless Schubert matroids. Therefore, our collection of line bundles includes the line bundles $G_i$ from the collection of Castravet and Tevelev. It would be interesting to find a relationship between the other sheaves in their collection and matroids.

    Another interesting aspect of this comparison is that Castravet and Tevelev's showed that their collection has cardinality equal to the number of derangements of $n$. These are the permutations of $[n]$ which have no fixed points. This gives a curious equicardinality.

    \begin{proposition}
        The number of loopless and coloopless Schubert matroids on $[n]$ is equal to the number of derangements on $[n]$.
    \end{proposition}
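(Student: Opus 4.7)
The plan is to deduce the equality of cardinalities directly from the fact that a full exceptional collection in a triangulated category has size equal to the rank of its Grothendieck group. Since Theorem \ref{mainthm: cuspidal part fsec}(1) gives a full strongly exceptional collection of $\D_{cusp}(\Perm_n)$ indexed by loopless and coloopless Schubert matroids on $[n]$, the induced semi-orthogonal decomposition $\D_{cusp}(\Perm_n) = \langle \mathcal{E}_1, \ldots, \mathcal{E}_\ell \rangle$ with each $\mathcal{E}_i$ generated by a line bundle implies $K_0(\D_{cusp}(\Perm_n)) \cong \bigoplus_i K_0(\mathcal{E}_i) \cong \ZZ^\ell$. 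Hence $\ell$, the number of loopless and coloopless Schubert matroids on $[n]$, equals $\operatorname{rk} K_0(\D_{cusp}(\Perm_n))$.

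It then remains to identify this rank with the number of derangements of $[n]$. I would invoke the preceding Proposition which computes $\operatorname{rk} K_0(\D_{cusp}(\Perm_n))$ as the number of derangements; equating the two expressions for the rank yields the equicardinality. As a sanity check on that Proposition, one can note that the cuspidal decomposition of Theorem \ref{thm: cuspidal decomposition} applied to $\Perm_n$ (or rather to $\mathbf{Perm}[S]$ for all $S \subseteq [n]$, using that the forgetful maps make this a forgetful species) gives at the level of K-theory the identity
\[
n! = \operatorname{rk} K_0(\Perm_n) = \sum_{S \subseteq [n]} \operatorname{rk} K_0(\D_{cusp}(\mathbf{Perm}[S])) = \sum_{k=0}^n \binom{n}{k} d_k,
\]
where $d_k = \operatorname{rk} K_0(\D_{cusp}(\mathbf{Perm}[k]))$ depends only on $|S|=k$, and where the left-hand side follows from the standard fact that $\operatorname{rk} K_0(X_\Sigma)$ equals the number of maximal cones of $\Sigma$, namely $|S_n| = n!$ in our case. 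This is precisely the defining recurrence for the derangement numbers, so $d_n$ is the number of derangements.

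Combining the two steps, both $\ell$ and the number of derangements equal $\operatorname{rk} K_0(\D_{cusp}(\Perm_n))$, completing the proof. The main content is really packaged in the two ingredients used — the construction and exceptionality of our collection (the bulk of Section \ref{Sec: Matroids and the Permutahedral Variety}) and the cuspidal decomposition together with the K-theoretic rank computation for toric varieties — so no further obstacle should arise here; the corollary is a pure bookkeeping consequence of comparing two full exceptional collections of the same category.
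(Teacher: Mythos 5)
Your proof is correct and follows the same basic logic as the paper's (implicit) argument: the collection indexed by loopless-and-coloopless Schubert matroids is a full strongly exceptional collection of $\D_{cusp}(\Perm_n)$, so its cardinality must equal $\operatorname{rk} K_0(\D_{cusp}(\Perm_n))$, and the latter is the number of derangements. Where you and the paper differ slightly is in how that rank is pinned down: the paper simply cites Castravet and Tevelev's computation that their full exceptional collection of $\D_{cusp}(\Perm_n)$ has cardinality $|\operatorname{Der}(n)|$, whereas your ``sanity check'' is in fact a complete and self-contained derivation via the K-theoretic recurrence $n! = \sum_{k=0}^{n}\binom{n}{k} d_k$ coming from the cuspidal semi-orthogonal decomposition and the fact that $\operatorname{rk} K_0(\Perm_n)$ equals the number of maximal cones of the braid fan. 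This mirrors exactly the argument the paper gives for the type $B_n$ analogue (the signed derangement count), so you have, in effect, supplied the uniform proof that the paper only writes out explicitly in the $B_n$ case; the payoff is that the type $A$ statement no longer leans on the external Castravet--Tevelev reference.
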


We have a type $B_n$-version of this calculation. A \textbf{signed derangement} is a signed permutation $\pi: [\pm n] \to [\pm n]$ such that $\pi(i) \not = i$ for all $i \in [\pm n]$.

\begin{proposition}
    The number of loopless and coloopless Schubert delta matroids on $[n]$ is equal to the number of signed derangements of $[\pm n]$. Hence, the rank of $K(\D_{cusp}(\Perm_n^{B}))$ is the number of signed derangements of $[\pm n]$.
\end{proposition}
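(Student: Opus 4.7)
The plan is to follow the strategy Castravet and Tevelev used for $\Perm_n$ by extracting a recurrence from the cuspidal decomposition and matching it against a combinatorial recurrence for signed derangements. Set
\[ d_n := \operatorname{rk} K(\D_{cusp}(\Perm_n^{B})), \qquad D_n := \#\{\text{signed derangements of } [\pm n]\}. \]
Since $\Perm_n^{B}$ is a smooth projective toric variety, its $K$-theory rank equals its topological Euler characteristic, which equals the number of maximal cones of $\Sigma_{B_n}$. Those maximal cones are indexed by the Weyl group $S_n^B$, giving $\operatorname{rk} K(\Perm_n^B) = 2^n n!$.

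Next, I invoke Proposition \ref{prop: forgetful maps for permutahedral varieties}, which endows $\mathbf{Perm}^B$ with the structure of a forgetful species of smooth projective toric varieties, so that Theorem \ref{thm: cuspidal decomposition} yields the semi-orthogonal decomposition
\[\D(\Perm_n^B) = \bigl\langle \D_{cusp}(\Perm_n^B),\ \{\pi_S^* \D_{cusp}(\mathbf{Perm}^B[S])\}_{S \subsetneq [n]} \bigr\rangle.\]
Each $\pi_S^*$ is fully faithful on the cuspidal subcategory (this being part of a semi-orthogonal decomposition), and by symmetry $\operatorname{rk} K(\D_{cusp}(\mathbf{Perm}^B[S]))$ depends only on $|S|$. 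Passing to $K$-theory gives
\[ 2^n n! \;=\; \sum_{k=0}^n \binom{n}{k}\, d_k, \qquad d_0 = 1. \]

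On the combinatorial side, I would establish the parallel identity $2^n n! = \sum_{k=0}^n \binom{n}{k} D_k$ by partitioning each signed permutation $\pi \in S_n^B$ according to its set of positive fixed points $F := \{i \in [n] : \pi(i) = i\}$. The condition $\pi(-i) = -\pi(i)$ automatically forces $\pi(-i) = -i$ for $i \in F$, and the restriction of $\pi$ to $[\pm([n]\setminus F)]$ is a signed derangement; summing $\binom{n}{|F|} D_{n-|F|}$ over $F$ gives the identity. Since both $(d_n)$ and $(D_n)$ satisfy the same recurrence with $d_0 = D_0 = 1$, induction on $n$ (equivalently, the binomial transform is invertible) forces $d_n = D_n$ for all $n$. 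Combined with Theorem \ref{mainthm: cuspidal part fsec}(3), which identifies $d_n$ with the number of loopless and coloopless Schubert delta matroids on $[n]$, this gives both assertions of the proposition.

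No step is really an obstacle — the hypotheses of Theorem \ref{thm: cuspidal decomposition} were verified for $\mathbf{Perm}^B$ in the preceding subsection, the $K$-theoretic additivity is standard, and the signed-derangement recurrence is an elementary fixed-point count. The only subtlety worth stating carefully is the independence of $\operatorname{rk} K(\D_{cusp}(\mathbf{Perm}^B[S]))$ on $S$ (only on $|S|$), which follows because any bijection $S \to T$ induces an equivalence of the corresponding toric varieties intertwining the forgetful maps and hence an equivalence of cuspidal subcategories.
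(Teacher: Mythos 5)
Your argument is correct and is essentially the paper's own proof: both extract the recurrence $2^n n! = \sum_{k} \binom{n}{k}\,(\text{rank of the rank-}k\text{ cuspidal part})$ from the cuspidal semi-orthogonal decomposition supplied by the forgetful species structure, match it against the fixed-point recurrence for signed derangements, and invoke the invertibility of this recursion together with Theorem \ref{mainthm: cuspidal part fsec}(3). The only difference is cosmetic — you spell out the $K$-theoretic additivity, the symmetry reduction, and the fixed-point count a bit more explicitly than the paper does.
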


\begin{proof}
    Let $Sch(n)$ be the number of loopless and coloopless Schubert delta matroids on $[n]$. By the cuspidal decomposition of $\D(X_{B_n})$ and Theorem \ref{thm: fsec for cuspidal categories}, we get the following recurrence for $|S_n^{B}|$
     \[|S_n^{B}| = \sum_{A \subseteq [n]} Sch(n-|A|), \]
     where $Sch(0) = 1$. 
     
     Let $Der_B(n)$ be the number of signed derangements of size $n$. Then, by keeping track of the subset $A$ of fixed points, we have the recurrence
      \[ |S_n^{B}| = \sum_{A \subseteq [n]} Der_B(n -|A|).\]
      Since $Sch(n)$ and $Der_B(n)$ satisfy the same recurrence, they are the same numbers. 
\end{proof}

\bibliographystyle{plain} % We choose the "plain" reference style
\bibliography{ref.bib} % Entries are in the refs.bib file

\begin{thebibliography}{10}

\bibitem{Abe17}
Hiraku Abe, Megumi Harada, Tatsuya Horiguchi, and Mikiya Masuda.
\newblock The cohomology rings of regular nilpotent {Hessenberg} varieties in
  {Lie} type {A}.
\newblock {\em International Mathematics Research Notices},
  2019(17):5316--5388, 10 2017.

\bibitem{Abramovich2000}
D.~Abramovich and K.~Karu.
\newblock Weak semistable reduction in characteristic 0.
\newblock {\em Inventiones Mathematicae}, 139, 2000.

\bibitem{Adiprasito2018}
Karim Adiprasito, June Huh, and Eric Katz.
\newblock Hodge theory for combinatorial geometries.
\newblock {\em Annals of Mathematics}, 188, 2018.

\bibitem{Aguiar2017}
Marcelo Aguiar and Federico Ardila.
\newblock Hopf monoids and generalized permutahedra.
\newblock {\em arXiv preprint arXiv:1709.07504}, 2017.

\bibitem{Aguiar2010}
Marcelo Aguiar and Swapneel Mahajan.
\newblock Monoidal functors, species and {Hopf} algebras, 2010.

\bibitem{Altmann2020}
Klaus Altmann, Jarosław Buczyński, Lars Kastner, and Anna~Lena Winz.
\newblock Immaculate line bundles on toric varieties.
\newblock {\em Pure and Applied Mathematics Quarterly}, 16, 2020.

\bibitem{Altmann2023}
Klaus Altmann, Amelie Flatt, and Lutz Hille.
\newblock Extensions of toric line bundles.
\newblock {\em Mathematische Zeitschrift}, 304, 2023.

\bibitem{Ardila2003}
Federico Ardila.
\newblock The {Catalan} matroid.
\newblock {\em Journal of Combinatorial Theory. Series A}, 104, 2003.

\bibitem{Ardila2020}
Federico Ardila, Federico Castillo, Christopher Eur, and Alexander Postnikov.
\newblock Coxeter submodular functions and deformations of {Coxeter}
  permutahedra.
\newblock {\em Advances in Mathematics}, 365, 2020.

\bibitem{Ardila2023}
Federico Ardila and Mario Sanchez.
\newblock Valuations and the {Hopf} monoid of generalized permutahedra.
\newblock {\em International Mathematics Research Notices}, 2023, 2023.

\bibitem{Batyrev2011}
Victor Batyrev and Mark Blume.
\newblock The functor of toric varieties associated with {Weyl} chambers and
  {Losev}-{Manin} moduli spaces.
\newblock {\em Tohoku Mathematical Journal}, 63, 2011.

\bibitem{Beilinson1978}
Alexander~A Beilinson.
\newblock Coherent sheaves on {$\mathbb{P}^n$} and problems of linear algebra.
\newblock {\em Functional Analysis and Its Applications}, 12(3):214--216, 1978.

\bibitem{bernstein2006}
Joseph Bernstein and Valery Lunts.
\newblock {\em Equivariant sheaves and functors}.
\newblock Springer, 2006.

\bibitem{Bidkhori2012}
Hoda Bidkhori.
\newblock Lattice path matroid polytopes.
\newblock {\em arXiv preprint arXiv:1212.5705}, 2012.

\bibitem{Bjo1984}
A.~Björner.
\newblock Posets, regular {CW} complexes and {Bruhat} order.
\newblock {\em European Journal of Combinatorics}, 5, 1984.

\bibitem{Bondal2006}
A~Bondal.
\newblock Derived categories of toric varieties, {C}onvex and algebraic
  geometry, {Oberwolfach} conference reports, vol. 3, 2006.

\bibitem{Borovik2003}
Alexandre~V. Borovik, Izrail'~Moiseevich Gel'fand, Neil White, Alexandre~V
  Borovik, I.M. Gelfand, and Neil White.
\newblock {\em Coxeter matroids}.
\newblock Springer, 2003.

\bibitem{Braden2022}
Tom Braden, June Huh, Jacob~P. Matherne, Nicholas Proudfoot, and Botong Wang.
\newblock A semi-small decomposition of the {Chow} ring of a matroid.
\newblock {\em Advances in Mathematics}, 409, 2022.

\bibitem{Castravet2020}
Ana~Maria Castravet and Jenia Tevelev.
\newblock Derived category of moduli of pointed curves. i.
\newblock {\em Algebraic Geometry}, 7, 2020.

\bibitem{DeConcini95}
Corrado~De Concini and Claudio Procesi.
\newblock Wonderful models of subspace arrangements.
\newblock {\em Selecta Mathematica, New Series}, 1, 1995.

\bibitem{Costa2012}
Laura Costa and Rosa~Maria Miró-Roig.
\newblock Derived category of toric varieties with small {Picard} number.
\newblock {\em Central European Journal of Mathematics}, 10, 2012.

\bibitem{Cox2011}
David~A Cox, John~B Little, and Henry~K Schenck.
\newblock {\em Toric varieties}, volume 124.
\newblock American Mathematical Soc., 2011.

\bibitem{RJR16}
Rodrigo~Ferreira da~Rosa, David Jensen, and Dhruv Ranganathan.
\newblock Toric graph associahedra and compactifications of {$M_{0,n}$}.
\newblock {\em Journal of Algebraic Combinatorics}, 43, 2016.

\bibitem{HessenbergVar}
Filippo De~Mari, Claudio Procesi, and M.~Shayman.
\newblock Hessenberg varieties.
\newblock {\em Trans. Amer. Math. Soc.}, 332:529--534, 08 1992.

\bibitem{Derksen2012}
Harm Derksen and Alex Fink.
\newblock Valuative invariants for polymatroids.
\newblock {\em Advances in Mathematics}, 225(4):1840--1892, 2010.

\bibitem{Edmonds2003}
Jack Edmonds.
\newblock Submodular functions, matroids, and certain polyhedra.
\newblock {\em Lecture Notes in Computer Science (including subseries Lecture
  Notes in Artificial Intelligence and Lecture Notes in Bioinformatics)}, 2570,
  2003.

\bibitem{Eur2022}
Christopher Eur, Alex Fink, Matt Larson, and Hunter Spink.
\newblock Signed permutohedra, delta-matroids, and beyond.
\newblock {\em arXiv preprint arXiv:2209.06752}, 2022.

\bibitem{EHL22}
Christopher Eur, June Huh, and Matt Larson.
\newblock Stellahedral geometry of matroids.
\newblock {\em arXiv preprint arXiv:2207.10605}, 2022.

\bibitem{Eur2021}
Christopher Eur, Mario Sanchez, and Mariel Supina.
\newblock The universal valuation of {Coxeter} matroids.
\newblock {\em Bulletin of the London Mathematical Society}, 53, 2021.

\bibitem{Fang2011}
Bohan Fang, Chiu Chu~Melissa Liu, David Treumann, and Eric Zaslow.
\newblock A categorification of {Morelli}'s theorem.
\newblock {\em Inventiones Mathematicae}, 186, 2011.

\bibitem{Fang2012}
Bohan Fang, Chiu Chu~Melissa Liu, David Treumann, and Eric Zaslow.
\newblock T-duality and homological mirror symmetry for toric varieties.
\newblock {\em Advances in Mathematics}, 229, 2012.

\bibitem{Fujishige2013}
Satoru Fujishige.
\newblock {\em Submodular Functions and Optimization Second Edition},
  volume~53.
\newblock 2013.

\bibitem{Fulton1993}
William Fulton.
\newblock {\em Introduction to Toric Varieties. (AM-131)}.
\newblock 1993.

\bibitem{Gelfand1987}
I.~M. Gelfand, R.~M. Goresky, R.~D. MacPherson, and V.~V. Serganova.
\newblock Combinatorial geometries, convex polyhedra, and {Schubert} cells.
\newblock {\em Advances in Mathematics}, 63, 1987.

\bibitem{Grunbaum1967}
Branko Gr{\"u}nbaum, Victor Klee, Micha~A Perles, and Geoffrey~Colin Shephard.
\newblock {\em Convex polytopes}, volume~16.
\newblock Springer, 1967.

\bibitem{Hampe2017}
Simon Hampe.
\newblock The intersection ring of matroids.
\newblock {\em Journal of Combinatorial Theory. Series B}, 122, 2017.

\bibitem{Hara2017}
Wahei Hara.
\newblock Strong full exceptional collections on certain toric varieties with
  {Picard} number three via mutations.
\newblock {\em Matematiche}, 72, 2017.

\bibitem{Hille2006}
Lutz Hille and Markus Perling.
\newblock A counterexample to {King}'s conjecture.
\newblock {\em Compositio Mathematica}, 142, 2006.

\bibitem{Humphrey12}
James~E Humphreys.
\newblock {\em Introduction to {Lie} algebras and representation theory},
  volume~9.
\newblock Springer Science \& Business Media, 2012.

\bibitem{Joyal1981}
André Joyal.
\newblock Une théorie combinatoire des séries formelles, 1981.

\bibitem{Kapranov1992}
Mikhail Kapranov.
\newblock Chow quotients of {Grassmannian} i.
\newblock {\em arXiv preprint alg-geom/9210002}, 1992.

\bibitem{Karshon1993}
Yael Karshon and Susan Tolman.
\newblock The moment map and line bundles over presymplectic toric manifolds.
\newblock {\em Journal of Differential Geometry}, 38, 1993.

\bibitem{Katz2016}
Eric Katz.
\newblock {\em Matroid Theory for Algebraic Geometers}.
\newblock 2016.

\bibitem{King97}
Alastair King.
\newblock Tilting bundles on some rational surfaces.
\newblock {\em Unpublished Manuscript}, 1997.

\bibitem{Klivans2007}
Caroline~J. Klivans.
\newblock Threshold graphs, shifted complexes, and graphical complexes.
\newblock {\em Discrete Mathematics}, 307, 2007.

\bibitem{Larson}
Michał Lasoń and Mateusz Michałek.
\newblock On the full, strongly exceptional collections on toric varieties with
  {Picard} number three.
\newblock {\em Collectanea Mathematica}, 62, 2011.

\bibitem{Losev2000}
Andrey Losev and Yuri Manin.
\newblock New moduli spaces of pointed curves and pencils of flat connections.
\newblock {\em Michigan Mathematical Journal}, 48(1):443--472, 2000.

\bibitem{Mich2011}
Mateusz Michałek.
\newblock Family of counterexamples to {King}'s conjecture.
\newblock {\em Comptes Rendus Mathematique}, 349, 2011.

\bibitem{Molcho2021}
Sam Molcho.
\newblock Universal stacky semistable reduction.
\newblock {\em Israel Journal of Mathematics}, 242, 2021.

\bibitem{Morelli1993}
Robert Morelli.
\newblock The {K} theory of a toric variety.
\newblock {\em Advances in Mathematics}, 100, 1993.

\bibitem{Nadler2008}
David Nadler and Eric Zaslow.
\newblock Constructible sheaves and the {Fukaya} category.
\newblock {\em Journal of the American Mathematical Society}, 22, 2008.

\bibitem{Oda1987}
Tadao Oda.
\newblock {\em Convex Bodies and Algebraic Geometry}.
\newblock 1987.

\bibitem{Orlov1993}
D.~O. Orlov.
\newblock Projective bundles, monoidal transformations, and derived categories
  of coherent sheaves.
\newblock {\em Russian Academy of Sciences. Izvestiya Mathematics}, 41, 1993.

\bibitem{Postnikov2009}
Alexander Postnikov.
\newblock Permutohedra, associahedra, and beyond.
\newblock {\em International Mathematics Research Notices}, 2009, 2009.

\bibitem{Reiner1997}
Victor Reiner.
\newblock Non-crossing partitions for classical reflection groups.
\newblock {\em Discrete Mathematics}, 177, 1997.

\bibitem{Stanley2011}
Richard~P. Stanley.
\newblock {\em Enumerative Combinatorics}.
\newblock 2011.

\bibitem{Stembridge1994}
J.R. Stembridge.
\newblock Some permutation representations of {Weyl} groups associated with the
  cohomology of toric varieties.
\newblock {\em Advances in Mathematics}, 106(2):244--301, 1994.

\bibitem{Weibel1994}
Charles~A. Weibel.
\newblock {\em An Introduction to Homological Algebra}.
\newblock 1994.

\bibitem{White1986}
Neil White.
\newblock {\em Theory of matroids}.
\newblock Number~26. Cambridge University Press, 1986.

\bibitem{Zhou2019}
Peng Zhou.
\newblock Twisted polytope sheaves and coherent-constructible correspondence
  for toric varieties.
\newblock {\em Selecta Mathematica}, 25:1, 2019.

\end{thebibliography}

\end{document}